\definecolor{lightgray}{rgb}{0.8, 0.8, 0.8}
\definecolor{darkgray}{rgb}{0.7, 0.7, 0.7}
\definecolor{darkblue}{rgb}{0, 0, .4}
\renewcommand*{\backref}[1]{}
\renewcommand*{\backrefalt}[4]{%
  \ifcase #1 Not cited.%
  \or    Cited on page~#2.%
  \else   Cited on pages~#2.%
  \fi}
\newcounter{todocounter}
\theoremstyle{plain}
\newtheorem{theorem}{Theorem}[section]
\newtheorem{observation}[theorem]{Observation}
\newtheorem{proposition}[theorem]{Proposition}
\newtheorem{corollary}[theorem]{Corollary}
\newtheorem{conjecture}[theorem]{Conjecture}
\newtheorem{question}[theorem]{Question}
\newfont{\footsc}{cmcsc10 at 8truept}
\newfont{\footbf}{cmbx10 at 8truept}
\newfont{\footrm}{cmr10 at 10truept}
\renewenvironment{abstract}%
                {
                  \begin{list}{}%
                     {\setlength{\rightmargin}{1in}%
                      \setlength{\leftmargin}{1in}}%
                   \item[]\ignorespaces\begin{small}}%
                 {\end{small}\unskip\end{list}}
\newcommand{\Av}{\operatorname{Av}}
\newcommand{\Sub}{\operatorname{Sub}}
\newcommand{\exop}{\operatorname{ex}}
\newcommand{\Simples}{\operatorname{Si}}
\newcommand{\wt}{\operatorname{wt}}
\newcommand{\gr}{\mathrm{gr}}
\newcommand{\lgr}{\underline{\gr}}
\newcommand{\ugr}{\overline{\gr}}
\newcommand{\C}{\mathcal{C}}
\newcommand{\D}{\mathcal{D}}
\newcommand{\E}{\mathcal{E}}
\newcommand{\G}{\mathcal{G}}
\newcommand{\U}{\mathcal{U}}
\newcommand{\gridded}{\sharp}
\newcommand{\equivfig}{\approx}
\newcommand{\vvg}[1]{\node[fill=lightgray]{#1};}
\newcommand{\fnmatrix}[2]{\text{\begin{footnotesize}$\left(\begin{array}{#1}#2\end{array}\right)$\end{footnotesize}}}
\newcommand{\zpm}{0/\mathord{\pm} 1}
\newcommand{\Grid}{\operatorname{Grid}}
\newcommand{\Geom}{\operatorname{Geom}}
\newcommand{\st}{\::\:}
\newcommand{\verteq}{\rotatebox{90}{$=\,$}}
\title{{\sc Permutation Classes}\\ {\small To appear in \emph{The Handbook of Enumerative Combinatorics}, published by CRC Press.}}
\author{%
Vincent Vatter\\[-0.25ex]
\small Department of Mathematics\\[-0.5ex]
\small University of Florida\\[-0.5ex]
\small Gainesville, Florida USA\\[-1.5ex]
}
\date{}
\begin{document}
\maketitle

\pagestyle{main}

\begin{abstract}
\end{abstract}

\makeatletter
\@starttoc{toc}
\makeatother

\section{Introduction}

Hints of the study of patterns in permutations date back a century, to Volume I, Section III, Chapter V of MacMahon's 1915 magnum opus \emph{Combinatory Analysis}~\cite{macmahon:combinatory-ana:}. In that work, MacMahon showed that the permutations that can be partitioned into two decreasing subsequences (in other words, the $123$-avoiding permutations) are counted by the Catalan numbers. Twenty years later, Erd\H{o}s and Szekeres~\cite{erdos:a-combinatorial:} proved that every permutation of length at least $(k-1)(\ell-1)+1$ must contain either $12\cdots k$ or $\ell\cdots 21$. Roughly twenty-five years after Erd\H{o}s and Szekeres, Schensted's famous paper~\cite{schensted:longest-increas:} on increasing and decreasing subsequences was published.

Most, however, date the study of permutation classes to 1968, when Knuth published Volume 1 of \emph{The Art of Computer Programming}~\cite{knuth:the-art-of-comp:1}. In Section 2.2.1 of that book, Knuth introduced sorting with stacks and double-ended queues (deques), which leads naturally to the notion of permutation patterns. In particular, Knuth observed that a permutation can be sorted by a stack if and only if it avoids $231$ and showed that these permutations are also counted by the Catalan numbers. He inspired many subsequent papers, including those of Even and Itai~\cite{even:queues-stacks-a} in 1971, Tarjan~\cite{tarjan:sorting-using-n:} in 1972, Pratt~\cite{pratt:computing-permu:} in 1973, Rotem~\cite{rotem:on-a-correspond:} in 1975, and Rogers~\cite{rogers:ascending-seque:} in 1978.

Near the end of his paper, Pratt wrote that
\begin{quote}
From an abstract point of view, the [containment order] on permutations is even more interesting than the networks we were characterizing. This relation seems to be the only partial order on permutations that arises in a simple and natural way, yet it has received essentially no attention to date.
\end{quote}
Pratt's suggestion to study this order in the abstract was taken up a dozen years later by Simion and Schmidt in their seminal 1985 paper ``Restricted permutations''~\cite{simion:restricted-perm:}. The field has continually expanded since then, and is now the topic of the conference  \emph{Permutation Patterns}, held each year since its inauguration (by Albert and Atkinson) at the University of Otago in 2003.

Several overviews of the field have been published, including Kitaev's 494-page compendium \emph{Patterns in Permutations and Words}~\cite{kitaev:patterns-in-per:}, one chapter in B\'ona's undergraduate textbook \emph{A Walk Through Combinatorics}~\cite{bona:a-walk-through-:} and several in his monograph \emph{Combinatorics of Permutations}~\cite{bona:combinatorics-o:}, and Steingr{\'{\i}}msson's survey article~\cite{steingrimsson:some-open-probl:} for the 2013 \emph{British Combinatorial Conference}. In addition, the proceedings of the conference \emph{Permutation Patterns 2007}~\cite{:permutation-pat:} contains surveys by
Albert~\cite{albert:an-introduction:},
Atkinson~\cite{atkinson:permuting-machi:},
B\'ona~\cite{bona:on-three-differ:},
Brignall~\cite{brignall:a-survey-of-sim:},
Kitaev~\cite{kitaev:a-survey-on-par:},
Klazar~\cite{klazar:overview-of-som}, and
Steingr{\'{\i}}msson~\cite{steingrimsson:generalized-per:}
on various aspects of the field.

This survey differs significantly from prior overviews. This is partly because there is a lot of new material to discuss. In particular, Section~\ref{subsec-fox} presents Fox's results on growth rates of principal classes. After that, Section~\ref{sec-structure} is peppered with recent results, while Section~\ref{sec-growth-rates} presents some new results improving on those published. More significantly, this survey differentiates itself from previous summaries of the area by its focus on permutation classes in general.

In order to maintain this focus, a great many beautiful results have been omitted.
Thus, despite the impressive results of Elizalde~\cite{elizalde:the-most-and-th:}, consecutive patterns will not be discussed.
Nor will there be any discussion of mesh patterns, which began as a generalization of the ``generalized'' (now called vincular) patterns introduced by Babson and Steingr{\'{\i}}msson~\cite{babson:generalized-per:} in their classification of Mahonian statistics but have since been shown to be worthy of study on their own via the wonderful Reciprocity Theorem of Br{\"a}nd{\'e}n and Claesson~\cite{branden:mesh-patterns-a:}.
We similarly neglect two questions raised by Wilf in \cite{wilf:the-patterns-of:}: packing densities (which Presutti and Stromquist~\cite{presutti:packing-rates-o:} have shown can be incredibly interesting) and the topology of the poset of permutations (where McNamara and Steingr{\'{\i}}msson~\cite{mcnamara:on-the-topology:} have established some significant results).
Indeed, we even ignore the original application to sorting, despite the deep results of Albert and Bousquet-M\'elou~\cite{albert:permutations-so:} and Pierrot and Rossin~\cite{pierrot:2-stack-pushall:,pierrot:2-stack-sorting:}.
Alas, even this list of omitted topics contains omissions, for which I apologize.

\subsection{Basics}
\label{subsec-basics}

Throughout this survey we think of permutations in one-line notation, so a permutation of length $n$ is simply an ordering of the set $[1,n]=\{1,2,\dots,n\}$ of integers. The permutation $\pi$ \emph{contains} the permutation $\sigma$ of length $k$ if it has a subsequence of length $k$ that is \emph{order isomorphic} to $\sigma$, i.e., that has the same pairwise comparisons as $\sigma$. For example, the subsequence $38514$ is order isomorphic to $25413$, so $25413$ is contained in the permutation $36285714$. Permutation containment is perhaps best seen by drawing the \emph{plot} of a permutation, which is the set of points $\{(i,\pi(i))\}$ as shown on the right of Figure~\ref{fig-perm-contain}. Permutation containment is a partial order on the set of all finite permutations, so if $\sigma$ is contained in $\pi$ we write $\sigma\le\pi$. If $\sigma\not\le\pi$, we say that $\pi$ \emph{avoids} $\sigma$.

\index{permutation containment}
\index{permutation avoidance}

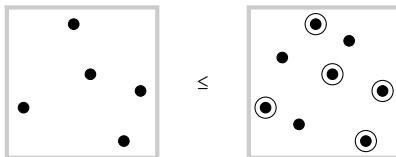
\begin{figure}
\begin{footnotesize}
\begin{center}
	\begin{tikzpicture}[scale=0.2222222222222, baseline=(current bounding box.center)]
		\draw [lightgray, ultra thick, line cap=round] (0,0) rectangle (9,9);
		\draw[fill=black] (1,3) circle (9pt);
		\draw[fill=black] (4,8) circle (9pt);
		\draw[fill=black] (5,5) circle (9pt);
		\draw[fill=black] (7,1) circle (9pt);
		\draw[fill=black] (8,4) circle (9pt);
	\end{tikzpicture}
	\quad
	\begin{tikzpicture}[scale=0.2222222222222, baseline=(current bounding box.center)]
		\node at (0,4) {$\le$};
	\end{tikzpicture}
	\quad
	\begin{tikzpicture}[scale=0.2222222222222, baseline=(current bounding box.center)]
		\draw [lightgray, ultra thick, line cap=round] (0,0) rectangle (9,9);
		\draw[fill=black] (1,3) circle (9pt);
		\draw[fill=black] (2,6) circle (9pt);
		\draw[fill=black] (3,2) circle (9pt);
		\draw[fill=black] (4,8) circle (9pt);
		\draw[fill=black] (5,5) circle (9pt);
		\draw[fill=black] (6,7) circle (9pt);
		\draw[fill=black] (7,1) circle (9pt);
		\draw[fill=black] (8,4) circle (9pt);
		\draw (1,3) circle (18pt);
		\draw (4,8) circle (18pt);
		\draw (5,5) circle (18pt);
		\draw (7,1) circle (18pt);
		\draw (8,4) circle (18pt);
	\end{tikzpicture}
\end{center}
\end{footnotesize}
\caption{The containment order on permutations.}
\label{fig-perm-contain}
\end{figure}

\index{permutation class}

The central objects of study in this survey are \emph{permutation classes}, which are downsets (a.k.a., lower order ideals) of permutations under the containment order. Thus if $\C$ is a class containing the permutation $\pi$ and $\sigma\le\pi$ then $\sigma$ must also lie in $\C$. Given any set $X$ of permutations, one way to obtain a permutation class is to take the \emph{downward closure} of $X$,
$$
\Sub(X)
=
\{\sigma\st\sigma\le\pi\mbox{ for some $\pi\in X$}\}.
$$
There are many other ways to specify a permutation class, for example as the set of permutations sortable by a particular machine, as the set of permutations that can be ``drawn on'' a figure in the plane (considered at the end of this subsection), or by a number of other constructions described in Section~\ref{sec-structure}. However, by far the most common way to define a permutation class is by avoidance:
$$
\Av(B)=\{\pi\st\pi\mbox{ avoids all $\beta\in B$}\}.
$$
If one permutation of $B$ is contained in another then we may remove the larger one without changing the class. Thus we may take $B$ to be an \emph{antichain}, meaning that no element of $B$ contains any others. In the case that $B$ is an antichain we call it the \emph{basis} of this class. The case where $B$ is a singleton has received considerable attention; we call such classes \emph{principal}.

\index{antichain}

The pictorial view shown in Figure~\ref{fig-perm-contain} makes it clear that the containment order has the eight symmetries of the square, which (from the permutation viewpoint) are generated by inverse and reverse. For example, the classes $\Av(132)$, $\Av(213)$, $\Av(231)$, and $\Av(312)$ are all symmetric (isomorphic as partially ordered sets), as are the classes $\Av(123)$ and $\Av(321)$. This shows that there are only $2$ essentially different principal classes avoiding a permutation of length $3$. There are $7$ essentially different principal classes avoiding a permutation of length $4$.

We are frequently interested in the enumeration of permutation classes. Thus letting $\C_n$ denote the set of permutations of length $n$ in the class $\C$, we wish to determine (either exactly or asymptotically) the behavior of the sequence $|\C_0|$, $|\C_1|$, $\dots$ (this sequence is called the \emph{speed} of the class in some contexts). One way of doing this is to explicitly compute the generating function of the class,
$$
\sum_{n\ge 0} |\C_n|x^n=\sum_{\pi\in\C} x^{|\pi|},
$$
where here $|\pi|$ denotes the length of $\pi$. We are often interested in whether this generating function is \emph{rational} (the quotient of two polynomials), \emph{algebraic} (meaning that there is a polynomial $p(x,y)\in\mathbb{Q}[x,y]$ such that $p(x,f(x))=0$), or \emph{$D$-finite} (if its derivatives span a finite dimensional vector space over $\mathbb{Q}(x)$).

In practice it is often quite difficult to compute generating functions of permutation classes, and thus we must content ourselves with the rough asymptotics of $|\C_n|$. To do so we define the \emph{upper} and \emph{lower growth rate} of the class $\C$ by
$$
\ugr(\C)=\limsup_{n\rightarrow\infty} \sqrt[n]{|\C_n|}
\quad\mbox{and}\quad
\lgr(\C)=\liminf_{n\rightarrow\infty} \sqrt[n]{|\C_n|},
$$
respectively. It is not known if these two quantities agree in general.

\index{growth rate}

\begin{conjecture}
\label{conj-gr-exists}
For every permutation class $\C$, $\ugr(\C)=\lgr(\C)$.
\end{conjecture}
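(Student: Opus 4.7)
The plan is to establish the conclusion via Fekete's superadditive lemma: if $\log|\C_{m+n}| \ge \log|\C_m| + \log|\C_n|$ for all $m, n \ge 0$, then $\lim_{n\to\infty} |\C_n|^{1/n}$ exists in $[0,\infty]$, forcing $\lgr(\C) = \ugr(\C)$. My goal would therefore be to exhibit an injection $\C_m \times \C_n \hookrightarrow \C_{m+n}$ for every $m$ and $n$.

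The natural candidate is the direct-sum map $(\pi, \sigma) \mapsto \pi \oplus \sigma$, which is manifestly injective into $S_{m+n}$. This approach succeeds immediately when $\C$ is closed under direct sums. More generally, for a principal class $\C = \Av(\beta)$ it succeeds whenever $\beta$ is sum-indecomposable, since any occurrence of $\beta$ in $\pi \oplus \sigma$ must lie entirely within either the $\pi$-block or the $\sigma$-block, and so the image of the map lies in $\Av(\beta)$. When $\beta$ is sum-decomposable it is necessarily skew-indecomposable, so by the reverse-or-inverse symmetry one obtains the analogous injection via $\ominus$; together this settles every principal class (essentially Arratia's observation).

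For a general non-principal class, however, neither $\pi \oplus \sigma$ nor $\pi \ominus \sigma$ need lie in $\C$, and every other natural ``merging'' operation seems to run into the same obstruction. I would therefore relax the target and seek an injection $\C_m \times \C_n \hookrightarrow \C_{N}$ with $N = m + n + o(m+n)$, which still forces $\lgr(\C) = \ugr(\C)$. A plausible route is to insert a short buffer permutation between $\pi$ and $\sigma$, chosen so as to destroy any copies of basis elements that would straddle the two blocks; alternatively, one might try to control the ratios $|\C_{n+1}|/|\C_n|$ directly, or pass to a judiciously chosen sub-downset of $\C$ on which direct-sum arguments are available and argue that it has the same growth rate.

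The main obstacle is that the hypothesis on $\C$ is purely order-theoretic: it is only assumed to be a downset in the pattern-containment order, and the basis $B$ may be an infinite antichain with no uniform structure. This is far too weak to supply any intrinsic combining operation, and in particular no uniform ``buffer'' works against an arbitrary basis. This is precisely why the conjecture, though known in many special cases, remains open in full generality.
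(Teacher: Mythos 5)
You were asked about Conjecture~\ref{conj-gr-exists}, and the paper does not prove it: it is stated explicitly as an open problem (``It is not known if these two quantities agree in general''), so there is no proof of the paper's to compare yours against. Your proposal, to its credit, does not claim otherwise. What you do establish is correct and is exactly the paper's Proposition~\ref{prop-arratia-gr} (Arratia), argued the same way: the direct-sum map $(\pi,\sigma)\mapsto\pi\oplus\sigma$ is injective into $\C_{m+n}$ when $\C$ is sum closed, Fekete's Lemma then gives a proper growth rate, and Observation~\ref{obs-principal-sum-closed} extends this to every principal class (you invoke a symmetry for the skew case, but the skew sum works directly, since a skew-indecomposable $\beta$ cannot straddle the two blocks of $\pi\ominus\sigma$). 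Your diagnosis of the obstruction in general is also accurate: a downset given by an arbitrary, possibly infinite-antichain basis carries no intrinsic combining operation, and no uniform ``buffer'' can be chosen against all bases at once.

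The speculative repairs in your third paragraph should therefore be read as directions rather than steps: the buffer-insertion injection into $\C_{m+n+o(m+n)}$ is not known to exist in general, and controlling the ratios $|\C_{n+1}|/|\C_n|$ directly is likewise open. The one route that has actually succeeded, in restricted regimes, is the last one you mention---passing to a well-chosen subclass on which sum-closure is available and showing it carries the whole upper growth rate. This is precisely what the paper does for small classes: Theorem~\ref{gr-atomic-grid-irreducible} shows that every class with $\ugr(\C)<\kappa$ has a proper growth rate equal to $0$, $1$, $2$, or the growth rate of a sum or skew closed subclass, using wpo, atomicity, and grid irreducibility. So your instinct is aligned with the state of the art, but the conjecture itself remains unproved, and your proposal does not (and could not be expected to) close that gap.
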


If the upper and lower growth rates of a class are equal, we refer to their common value as the \emph{(proper) growth rate} of the class. In order to establish a sufficient condition for the existence of proper growth rates, we need two definitions. Pictorially, the \emph{(direct) sum} of the permutations $\pi$ and $\sigma$, denoted by $\pi\oplus\sigma$, is shown on the left of Figure~\ref{fig-sums}. If $\pi$ has length $k$ and $\sigma$ has length $\ell$, we can also define the sum of $\pi$ and $\sigma$ by
$$
(\pi\oplus\sigma)(i)
=
\left\{\begin{array}{ll}
\pi(i)&\mbox{for $i\in[1,k]$},\\
\sigma(i-k)+k&\mbox{for $i\in[k+1,k+\ell]$}.
\end{array}\right.
$$
The analogous operation depicted on the right of Figure~\ref{fig-sums} is called the \emph{skew sum}.

\index{sum of permutations}
\index{skew sum of permutations}

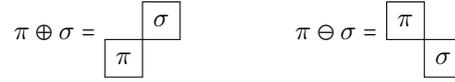
\begin{figure}
\begin{center}
	$\pi\oplus\sigma=$
	\begin{tikzpicture}[scale=0.5, baseline=(current bounding box.center)]
		\draw (0,0) rectangle (1,1);
		\draw (1,1) rectangle (2,2);
		\node at (0.5,0.5) {$\pi$};
		\node at (1.5,1.5) {$\sigma$};
	\end{tikzpicture}
\quad\quad\quad\quad
	$\pi\ominus\sigma=$
	\begin{tikzpicture}[scale=0.5, baseline=(current bounding box.center)]
		\draw (0,1) rectangle (1,2);
		\draw (1,0) rectangle (2,1);
		\node at (0.5,1.5) {$\pi$};
		\node at (1.5,0.5) {$\sigma$};
	\end{tikzpicture}
\end{center}
\caption{The sum and skew sum operations.}
\label{fig-sums}
\end{figure}

The permutation class $\C$ is said to be \emph{sum closed} (respectively, \emph{skew closed}) if $\pi\oplus\sigma\in\C$ (respectively, $\pi\ominus\sigma\in\C$) for every pair of permutations $\sigma,\pi\in\C$. The permutation $\pi$ is further said to be \emph{sum (respectively, skew) decomposable} if it can be expressed as a nontrivial sum (respectively, skew sum) of permutations, and \emph{sum (respectively, skew) indecomposable} otherwise. It is easy to establish that a class is sum (respectively, skew) closed if and only if all of its basis elements are sum (respectively, skew) indecomposable. By observing that a single permutation cannot be both sum and skew decomposable, we obtain the following.

\index{sum closed}

\begin{observation}
\label{obs-principal-sum-closed}
Every principal permutation class is either sum or skew closed.
\end{observation}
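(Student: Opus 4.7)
The plan is to apply, essentially verbatim, the two facts stated immediately before the observation: that a class is sum (respectively skew) closed iff every basis element is sum (respectively skew) indecomposable, and that no single permutation is both sum and skew decomposable. Let $\C=\Av(\beta)$ be a principal class, so its basis is the single permutation $\beta$. Then $\C$ is sum closed iff $\beta$ is sum indecomposable, and $\C$ is skew closed iff $\beta$ is skew indecomposable; hence it suffices to show that at least one of these holds for $\beta$.

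The only real content is the claim that $\beta$ cannot be simultaneously sum and skew decomposable, which I would verify directly from the positional definitions given in the excerpt. Suppose $\beta$ has length $n\ge 2$ and is both sum and skew decomposable. The sum decomposition supplies an index $1\le k<n$ with $\beta(i)\le k$ for $i\le k$ and $\beta(i)\ge k+1$ for $i>k$; in particular $\beta(1)\le k$ and $\beta(n)\ge k+1$. The skew decomposition supplies an index $1\le j<n$ with $\beta(i)\ge n-j+1$ for $i\le j$ and $\beta(i)\le n-j$ for $i>j$; in particular $\beta(1)\ge n-j+1$ and $\beta(n)\le n-j$. Combining the inequalities at position $1$ gives $n-j+1\le k$, while position $n$ gives $k+1\le n-j$, which are incompatible.

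Consequently $\beta$ is sum indecomposable or skew indecomposable (or both, if $|\beta|=1$), so $\C$ is sum closed or skew closed. The only step that requires any work is the two-line positional argument at positions $1$ and $n$; everything else is a direct appeal to the preceding equivalences, so I do not expect any real obstacle.
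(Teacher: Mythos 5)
Your proposal is correct and follows essentially the same route as the paper, which derives the observation from the two facts stated just before it (a class is sum/skew closed iff its basis elements are sum/skew indecomposable, and no permutation is both sum and skew decomposable); you merely spell out the latter fact with a short positional argument at the first and last entries, which is fine.
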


The sequence $\{a_n\}$ is said to be \emph{supermultiplicative} if $a_{m+n}\ge a_ma_n$ for all $m$ and $n$. Fekete's Lemma states that if the sequence $\{a_n\}$ is supermultiplicative then $\lim \sqrt[n]{a_n}$ exists and is equal to $\sup \sqrt[n]{a_n}$. A simple application of this lemma gives us the following result.

\begin{proposition}[Arratia~\cite{arratia:on-the-stanley-:}]
\label{prop-arratia-gr}
Every sum closed (or, by symmetry, skew closed) permutation class has a (possibly infinite) growth rate. In particular, this holds for every principal class.
\end{proposition}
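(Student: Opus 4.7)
The plan is to apply Fekete's Lemma (stated just before the proposition) to the counting sequence $a_n=|\C_n|$, once we have verified supermultiplicativity. So the first step is to establish that $a_{m+n}\ge a_m a_n$ whenever $\C$ is sum closed.

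To do this, I would consider the map
$$
\C_m\times\C_n\longrightarrow \C_{m+n},\qquad (\pi,\sigma)\longmapsto \pi\oplus\sigma.
$$
The fact that the image lies in $\C_{m+n}$ is immediate from the hypothesis that $\C$ is sum closed together with the fact that $\pi\oplus\sigma$ has length $m+n$. The key observation is that this map is injective: from the plot of $\tau=\pi\oplus\sigma$, the first $m$ entries are precisely those with values in $[1,m]$ and the last $n$ entries are precisely those with values in $[m+1,m+n]$, so $\pi$ and $\sigma$ can be uniquely recovered as the two pattern-standardizations of these blocks. Therefore $a_m a_n\le a_{m+n}$, i.e., $\{a_n\}$ is supermultiplicative.

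Fekete's Lemma then gives that $\lim_{n\to\infty}\sqrt[n]{a_n}$ exists (allowing the value $+\infty$) and equals $\sup_n\sqrt[n]{a_n}$. In particular, $\ugr(\C)=\lgr(\C)$, so $\C$ has a well-defined growth rate. The skew closed case is obtained by applying the same argument to the reverse of $\C$, or equivalently by replacing $\oplus$ with $\ominus$ throughout (the skew sum $\pi\ominus\sigma$ is reconstructed from its plot by the analogous horizontal/vertical splitting). Finally, the statement for principal classes follows at once from Observation~\ref{obs-principal-sum-closed}, which guarantees that any principal class is either sum or skew closed.

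There is no real obstacle here: the only content beyond quoting Fekete's Lemma is the injectivity of the sum (or skew sum) operation, which is transparent from the block structure of $\pi\oplus\sigma$ (respectively $\pi\ominus\sigma$). It is worth noting that the proposition allows the growth rate to be $+\infty$; this contingency is harmless because Fekete's Lemma applies uniformly in that case, returning $\sup\sqrt[n]{a_n}=+\infty$.
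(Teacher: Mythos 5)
Your proposal is correct and follows essentially the same argument as the paper: injectivity of the map $(\pi,\sigma)\mapsto\pi\oplus\sigma$ gives supermultiplicativity of $\{|\C_n|\}$, and Fekete's Lemma then yields the (possibly infinite) growth rate, with the skew closed case by symmetry and principal classes handled via Observation~\ref{obs-principal-sum-closed}. The extra detail you supply on recovering $\pi$ and $\sigma$ from the block structure is a fine elaboration of the paper's one-line injectivity claim.
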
%
\begin{proof}
Suppose $\C$ is a sum closed permutation class, and thus $\pi\oplus\sigma\in\C_{m+n}$ for all $\pi\in\C_m$ and $\sigma\in\C_n$. Moreover, a given $\tau\in\C_{m+n}$ arises in this way from at most one such pair so $|\C_{m+n}|\ge|\C_m||\C_n|$. This shows that the sequence $\{|\C_n|\}$ is supermultiplicative, and thus $\lim\sqrt[n]{|\C_n|}$ exists by Fekete's Lemma.
\end{proof}

In particular, $\gr(\C)\ge \sqrt[n]{|\C_n|}$ for all sum or skew closed classes $\C$ and all integers $n$, which can be used to establish lower bounds for growth rates of these classes. (We appeal to this fact in the proofs of both Proposition~\ref{prop-valtr} and Theorem~\ref{thm-fox-bound}.)

Continuing our exploration of the sum operation, for every permutation $\pi$ there are unique sum indecomposable permutations $\alpha_1,\dots,\alpha_k$ (called the \emph{sum components} of $\pi$) such that $\pi=\alpha_1\oplus\dots\oplus\alpha_k$. Therefore the permutations in a sum closed class can be viewed as sequences of sum indecomposable permutations. In particular, if a class is sum closed and the generating function for its nonempty sum indecomposable members is $s$, then the generating function for the class is $1/(1-s)$.

\index{sum components}
\index{layered permutation}

A permutation is \emph{layered} if it is the sum of decreasing permutations. Clearly every decreasing permutation is sum indecomposable, so by our previous remarks the generating function for the class of layered permutations is
$$
\frac{1}{1-\frac{x}{1-x}}=\frac{1-x}{1-2x}.
$$
(We could have instead observed that the layered permutations are in bijection with integer compositions.) It is not difficult to show that the basis of the class of layered permutations is $\{231,312\}$.

\index{separable permutation}

A permutation is \emph{separable} if it can be built from the permutation $1$ by repeated sums and skew sums. For example, the permutation $576984132$ is separable:
\begin{eqnarray*}
765984132
&=&32154\ominus 1\ominus 132\\
&=&(321\oplus 21)\ominus 1 \ominus (1\oplus 21)\\
&=&((1\ominus 1\ominus 1)\oplus(1\ominus 1))\ominus 1\ominus (1\oplus(1\ominus 1)).
\end{eqnarray*}
The term separable is due to Bose, Buss, and Lubiw~\cite{bose:pattern-matchin:}, who proved that the separable permutations are $\Av(2413,3142)$, although these permutations first appeared in the much earlier work of Avis and Newborn~\cite{avis:on-pop-stacks-i:}.

Recall that the \emph{little Schr\"oder number} indexed by $n-1$ counts the number of ways to insert parenthesis into a sequence of $n$ symbols in such a way that every pair of parentheses surrounds at least two symbols or parenthesized groups and no parentheses surround the entire sequence (see Stanley~\cite[Exercise 6.39.a]{stanley:enumerative-com:2}), while the \emph{large Schr\"oder numbers} are twice the little Schr\"oder numbers. It follows immediately from our decomposition above that the separable permutations are counted by the large Schr\"oder numbers. For example, the permutation from our example can be encoded by the pair
$$
\ominus, ((\bullet\bullet\bullet)(\bullet\bullet))\bullet(\bullet(\bullet\bullet)),
$$
where $\ominus$ indicates that the outermost division is a skew sum and each pair of parentheses within the expression denotes the opposite type of decomposition as the pair enclosing it. These numbers begin $1$, $2$, $6$, $22$, $90$, $394$, $1806$, $\dots$, and have the algebraic generating function
$$
\frac{3-x-\sqrt{1-6x+x^2}}{2}.
$$
It follows that the growth rate of the separable permutations is approximately $5.83$.


The pictorial perspective presented in Figure~\ref{fig-perm-contain} leads naturally to a geometric treatment of the containment order, in which permutations are objects of a moduli space. From this viewpoint, the fundamental objects are \emph{figures}, which are simply subsets of the plane. Given two figures $\Phi, \Psi\subseteq\mathbb{R}^2$, the figure $\Phi$ is \emph{involved} in the figure $\Psi$, denoted $\Phi\le\Psi$ if there are subsets $A, B\subseteq\mathbb{R}$ and increasing injections $\tau_x\st A\rightarrow\mathbb{R}$ and $\tau_y\st B\rightarrow\mathbb{R}$ such that
$$
\Phi\subseteq A\times B \mbox{ and } \tau(\Phi)\subseteq\Psi,
$$
where $\tau(\Phi)=\{(\tau_x(a), \tau_y(b))\st (a,b)\in\Phi\}$.

The involvement order is a \emph{preorder} on the collection of all figures (it is reflexive and transitive but not necessarily antisymmetric). If $\Phi\le\Psi$ and $\Psi\le\Phi$, then we say that $\Phi$ and $\Psi$ are \emph{equivalent} figures and write $\Phi\equivfig\Psi$. If two figures have only finitely many points, it can be shown that they are equivalent if and only if one can be transformed to the other by stretching and shrinking the axes. Figure~\ref{fig-equivfig} shows two examples of equivalent figures.

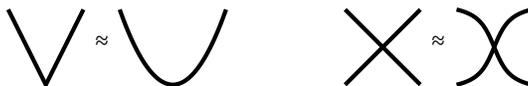
\begin{figure}
\begin{center}
	\begin{tikzpicture}[scale=0.5, baseline=(current bounding box.center)]
		\draw [ultra thick] (-1,2)--(0,0)--(1,2);
	\end{tikzpicture}
	$\equivfig$
	\begin{tikzpicture}[scale=0.5, baseline=(current bounding box.center)]
		\draw[ultra thick, domain=-1.41421356237:1.41421356237,smooth,variable=\x] plot ({\x},{\x*\x});
	\end{tikzpicture}
\quad\quad\quad\quad
	\begin{tikzpicture}[scale=0.5, baseline=(current bounding box.center)]
		\draw [ultra thick] (-1,1)--(1,-1);
		\draw [ultra thick] (-1,-1)--(1,1);
	\end{tikzpicture}
	$\equivfig$
	\begin{tikzpicture}[scale=0.5, baseline=(current bounding box.center)]
		\draw[ultra thick, domain=-1:1,smooth,variable=\x] plot ({\x},{atan(3*\x)/71.5650512});
		\draw[ultra thick, domain=-1:1,smooth,variable=\x] plot ({\x},{-atan(3*\x)/71.5650512});
	\end{tikzpicture}
\end{center}
\caption{Two pairs of equivalent figures.}
\label{fig-equivfig}
\end{figure}

We call a figure \emph{independent} if no two points of the figure lie on a common horizontal or vertical line. It is clear that every finite independent figure is equivalent to the plot of a unique permutation (which we could call its \emph{modulus} if adopting the moduli space perspective). Under this identification, the involvement order on equivalence classes of finite independent figures is isomorphic to the containment order on permutations. Every figure $\Phi\subseteq\mathbb{R}^2$ therefore defines a permutation class,
$$
\Sub(\Phi)
=
\{\pi\st \mbox{$\pi$ is equivalent to a finite independent figure involved in $\Phi$}\},
$$
which we call a \emph{figure class}. If $\pi\in\Sub(\Phi)$ then we say that $\pi$ can be \emph{drawn} on the figure $\Phi$. Two examples of this are shown in Figure~\ref{fig-drawnon}. For example, the class $\Sub(\mathsf{V})$ of permutations that can be drawn on the leftmost figure in Figure~\ref{fig-equivfig} contains all permutations that consist of a decreasing sequence followed by an increasing subsequence. It is not difficult to show that $\Sub(\textsf{V})=\Av(132, 231)$. The class $\Sub(\textsf{X})$ was studied by Elizalde~\cite{elizalde:the-x-class-and:}, who established a bijection between this class and a set of permutations studied by Knuth in Volume 3 of \emph{The Art of Computer Programming}~\cite{knuth:the-art-of-comp:3}. Both of these classes are examples of geometric grid classes, introduced in Section~\ref{subsec-geom-grid}.

\index{figure class}
\index{permutations drawn on an $\textsf{X}$}

\begin{figure}
\begin{center}
\begin{footnotesize}
	\begin{tikzpicture}[scale=0.1, baseline=(current bounding box.center)]
		\draw [thick, line cap=round] (-10,20)--(0,0)--(10,20);
		\useasboundingbox (current bounding box.south west) rectangle (current bounding box.north east);
		\draw[fill=black] (-7,14) circle (20pt) node [left] {$6$};
		\draw[fill=black] (-3,6) circle (20pt) node [left] {$3$};
		\draw[fill=black] (-1,2) circle (20pt) node [left] {$1$};
		\draw[fill=black] (2,4) circle (20pt) node [right] {$2$};
		\draw[fill=black] (4,8) circle (20pt) node [right] {$4$};
		\draw[fill=black] (5.5,11) circle (20pt) node [right] {$5$};
		\draw[fill=black] (8.5,17) circle (20pt) node [right] {$7$};
		\draw[fill=black] (10,20) circle (20pt) node [right] {$8$};
	\end{tikzpicture}
\quad\quad\quad\quad
	\begin{tikzpicture}[scale=0.1, baseline=(current bounding box.center)]
		\draw [thick, line cap=round] (-10,10)--(0,0)--(10,-10);
		\draw [thick, line cap=round] (-10,-10)--(0,0)--(10,10);
		\useasboundingbox (current bounding box.south west) rectangle (current bounding box.north east);
		\draw[fill=black] (-8,-8) circle (20pt) node [below right] {$1$};
		\draw[fill=black] (-6,6) circle (20pt) node [below left] {$8$};
		\draw[fill=black] (-4,4) circle (20pt) node [below left] {$7$};
		\draw[fill=black] (-1,1) circle (20pt) node [below left] {$5$};
		\draw[fill=black] (2,2) circle (20pt) node [above left] {$6$};
		\draw[fill=black] (3,-3) circle (20pt) node [above right] {$4$};
		\draw[fill=black] (5,-5) circle (20pt) node [above right] {$3$};
		\draw[fill=black] (7,-7) circle (20pt) node [above right] {$2$};
	\end{tikzpicture}
\end{footnotesize}
\end{center}
\caption{On the left, the permutation $63124578$ can be drawn on a $\textsf{V}$. On the right, the permutation $18756432$ can be drawn on an $\textsf{X}$.}
\label{fig-drawnon}
\end{figure}
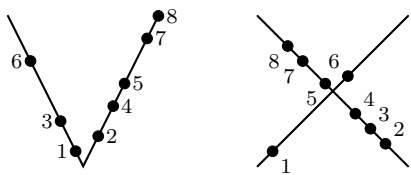

\subsection{Avoiding a permutation of length three}
\label{subsec-len3}

\index{$231$-avoiding permutations}
\index{$321$-avoiding permutations}

Here we briefly consider the two simplest nontrivial principal classes: $\Av(231)$ and $\Av(321)$. As mentioned in the introduction, both of these classes are counted by the Catalan numbers (and thus have growth rates of $4$), so there ought to be nice bijections between them and Dyck paths. Indeed there are, as shown in Figure~\ref{fig-231-321-bij}, and it is almost the same bijection for both classes.

\begin{figure}
\begin{center}
	\begin{tikzpicture}[scale=0.32, baseline=(current bounding box.center)]
		\foreach \i in {0,1,2,3,4,5,6,7,8}{
			\draw[color=lightgray] (0,\i)--(8,\i);
		}
		\foreach \i in {0,1,2,3,4,5,6,7,8}{
			\draw[color=lightgray] (\i,0)--(\i,8);
		}
		\draw[very thick, line cap=round, color=lightgray] (0,0)--(8,8);
		\draw[very thick, line cap=round] (0,0)--(3,0)--(3,2)--(5,2)--(5,3)--(6,3)--(6,6)--(8,6)--(8,8);
		\draw[fill=black] (0.5,5.5) circle (5pt); 
		\draw[fill=black] (1.5,1.5) circle (5pt); 
		\draw[fill=black] (2.5,0.5) circle (5pt);
		\draw[fill=black] (3.5,4.5) circle (5pt); 
		\draw[fill=black] (4.5,2.5) circle (5pt);
		\draw[fill=black] (5.5,3.5) circle (5pt);
		\draw[fill=black] (6.5,7.5) circle (5pt); 
		\draw[fill=black] (7.5,6.5) circle (5pt);
	\end{tikzpicture}
\quad\quad
	\begin{tikzpicture}[scale=0.32, baseline=(current bounding box.center)]
		\foreach \i in {0,1,2,3,4,5,6,7,8}{
			\draw[color=lightgray] (0,\i)--(8,\i);
		}
		\foreach \i in {0,1,2,3,4,5,6,7,8}{
			\draw[color=lightgray] (\i,0)--(\i,8);
		}
		\draw[very thick, line cap=round, color=lightgray] (0,0)--(8,8);
		\draw[very thick, line cap=round] (0,0)--(3,0)--(3,2)--(5,2)--(5,3)--(6,3)--(6,6)--(8,6)--(8,8);
		\draw[fill=black] (0.5,1.5) circle (5pt); 
		\draw[fill=black] (1.5,4.5) circle (5pt); 
		\draw[fill=black] (2.5,0.5) circle (5pt);
		\draw[fill=black] (3.5,5.5) circle (5pt); 
		\draw[fill=black] (4.5,2.5) circle (5pt);
		\draw[fill=black] (5.5,3.5) circle (5pt);
		\draw[fill=black] (6.5,7.5) circle (5pt); 
		\draw[fill=black] (7.5,6.5) circle (5pt);
	\end{tikzpicture}
\end{center}
\caption{The construction of Dyck paths from permutations avoiding $231$ (left) and $321$ (right).}
\label{fig-231-321-bij}
\end{figure}
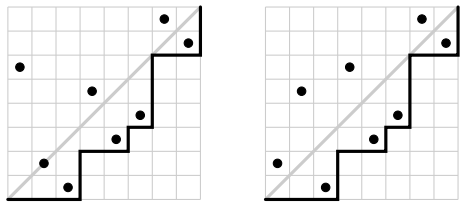

The bijection depicted in Figure~\ref{fig-231-321-bij} is originally due to Knuth~\cite{knuth:the-art-of-comp:1,knuth:the-art-of-comp:3}, though it was Krattenthaler~\cite{krattenthaler:permutations-wi:} who gave the nonrecursive formulation we describe. In this bijection, we plot a permutation and then draw the Dyck path that lies just below its right-to-left minima (an entry is a \emph{right-to-left minimum} if it is smaller than every entry to its right). It is then possible to show that the positions of the non-right-to-left minima can be determined from this Dyck path. There are at least eight more essentially different bijections between these two classes. For the rest of them we refer to Claesson and Kitaev's survey~\cite{claesson:classification-:}.

\index{right-to-left minimum}

Despite the elegance of this bijection, $\Av(231)$ and $\Av(321)$ are \emph{very} different classes. Indeed, Miner and Pak~\cite{miner:the-shape-of-ra:} make a compelling argument that there is no truly ``ultimate'' bijection between these two classes. As discussed in Section~\ref{sec-structure}, $\Av(231)$ has only countably many subclasses, while $\Av(321)$ has uncountably many. Indeed, Albert and Atkinson~\cite{albert:simple-permutat:} have proved that every proper subclass of $\Av(231)$ has a rational generating function, while a simple counting argument shows that $\Av(321)$ contains subclasses whose generating functions aren't even $D$-finite.

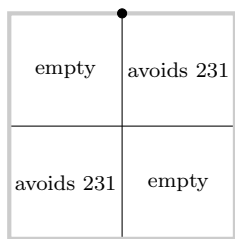
\begin{figure}
\begin{footnotesize}
\begin{center}
	\begin{tikzpicture}[scale=0.3, baseline=(current bounding box.center)]
		\draw (5,0)--(5,10);
		\draw (0,5)--(10,5);
		\draw [lightgray, ultra thick, line cap=round] (0,0) rectangle (10,10);
		\draw[fill=black] (5,10) circle (5.71428571429pt);
		\node at (2.5,2.5) {avoids $231$};
		\node at (2.5,7.5) {empty};
		\node at (7.5,2.5) {empty};
		\node at (7.5,7.5) {avoids $231$};
	\end{tikzpicture}
\end{center}
\end{footnotesize}
\caption{A $231$-avoiding permutation.}
\label{fig-231-decomp}
\end{figure}

Another way to see the contrast between $\Av(231)$ and $\Av(321)$ is to consider the decomposition of a $231$-permutation shown in Figure~\ref{fig-231-decomp}. As this figure shows, all entries to the left of the maximum in a $231$-avoiding permutation must lie below all entries to the right of the maximum. Going in the other direction, every permutation constructed in this manner avoids $231$. Thus if we let $f$ denote the generating function for the $231$-avoiding permutations (including the empty permutation), we see immediately that
$$
f=xf^2+1,
$$
so $f$ is the generating function for the Catalan numbers.

Despite the similarities of Figure~\ref{fig-231-321-bij}, there is no analogue of Figure~\ref{fig-231-decomp} for the $321$-avoiding permutations. Perhaps the best structural description of this class one can give is the following.

\begin{proposition}
\label{prop-321-merge}
The class of permutations that can be expressed as the union of two increasing subsequences is $\Av(321)$.
\end{proposition}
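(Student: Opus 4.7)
The proof is a two-way containment, so I would handle each direction separately.

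For the easy direction, suppose $\pi$ is the union of two increasing subsequences $I_1$ and $I_2$. If $\pi$ contained a $321$ pattern at positions $i<j<k$ with $\pi(i)>\pi(j)>\pi(k)$, then by the pigeonhole principle two of these three positions would lie in the same $I_m$, contradicting that $I_m$ is increasing. Hence $\pi\in\Av(321)$.

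For the converse, the plan is to exhibit an explicit partition: given $\pi\in\Av(321)$, let $L$ denote the set of positions of \emph{left-to-right maxima}, i.e., positions $i$ such that $\pi(i)>\pi(j)$ for all $j<i$, and let $L^c$ denote the remaining positions. By definition, the entries of $\pi$ at positions in $L$ form an increasing subsequence (it is immediate from the definition of a left-to-right maximum that the values at these positions are increasing in position order). The substantive step is the following claim: the entries at positions in $L^c$ also form an increasing subsequence. To prove this, suppose for contradiction that there exist $i<j$ in $L^c$ with $\pi(i)>\pi(j)$. Since $i\in L^c$, the position $i$ is not a left-to-right maximum, so there is some $k<i$ with $\pi(k)>\pi(i)$. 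Then $k<i<j$ and $\pi(k)>\pi(i)>\pi(j)$, which is a $321$ pattern, contradicting $\pi\in\Av(321)$.

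Combining both directions yields the equality. The only step requiring care is the contradiction argument for the subsequence on $L^c$; everything else is bookkeeping. I do not anticipate a serious obstacle, since the choice of ``left-to-right maxima'' is precisely the one that forces any would-be decreasing pair in the complement to extend to a full $321$ pattern. One could alternatively derive the same conclusion from Dilworth's theorem applied to the poset on $[1,|\pi|]$ where $i\preceq j$ iff $i\le j$ and $\pi(i)\le\pi(j)$, noting that antichains correspond to decreasing subsequences and chains to increasing ones; but the direct argument above is more elementary and self-contained.
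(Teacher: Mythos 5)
Your proposal is correct and is essentially the paper's argument in different clothing: the paper labels each entry by the length of the longest decreasing subsequence ending at it, and the label-$1$ entries are precisely your left-to-right maxima, so the two partitions coincide (the paper leaves the easy pigeonhole direction implicit). The only practical difference is that the labeling formulation generalizes verbatim to $k\cdots 21$-avoiders splitting into $k-1$ increasing subsequences, which the paper invokes later, whereas your left-to-right-maxima phrasing would need to be iterated to get that extension.
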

\begin{proof}
Label each entry of $\pi\in\Av(321)$ by the length of the longest decreasing subsequence that ends at that entry. Clearly we have only used the labels $1$ and $2$, and for each value of $i$ the entries labeled by $i$ form an increasing subsequence.
\end{proof}

\begin{figure}
\begin{center}
	\begin{footnotesize}
	\begin{tikzpicture}[scale=0.2, baseline=(current bounding box.center)]
		\draw [thick, line cap=round] (0,0)--(15,15);
		\draw [thick, line cap=round] (10,0)--(25,15);
		\useasboundingbox (current bounding box.south west) rectangle (current bounding box.north east);
		\draw[fill=black] (2,2) circle (10pt) node [above left] {$2$};
		\draw[fill=black] (6,6) circle (10pt) node [above left] {$5$};
		\draw[fill=black] (10,10) circle (10pt) node [above left] {$7$};
		\draw[fill=black] (11,1) circle (10pt) node [below right] {$1$};
		\draw[fill=black] (12,12) circle (10pt) node [above left] {$9$};
		\draw[fill=black] (13,3) circle (10pt) node [below right] {$3$};
		\draw[fill=black] (14,4) circle (10pt) node [below right] {$4$};
		\draw[fill=black] (18,8) circle (10pt) node [below right] {$6$};
		\draw[fill=black] (21,11) circle (10pt) node [below right] {$8$};
	\end{tikzpicture}
	\end{footnotesize}
\quad\quad
	\begin{tikzpicture}[scale=0.2, baseline=(current bounding box.center)]
		\draw [thick, line cap=round] (0,0)--(15,15);
		\draw [thick, line cap=round] (7,0)--(22,15);
		\draw [thick, line cap=round] (12,0)--(27,15);
		\draw [<->] (11,10)--(13.5,7.5);
		\draw [<->] (14.5,6.5)--(16,5);
		\node [above right] at (12.25,8.75) {$d_1$};
		\node [above right] at (15.25,5.75) {$d_2$};
	\end{tikzpicture}
\end{center}
\caption{On the left, a drawing of the permutation $257193468$ on two parallel lines. On the right, three parallel lines.}
\label{fig-321-lines}
\end{figure}
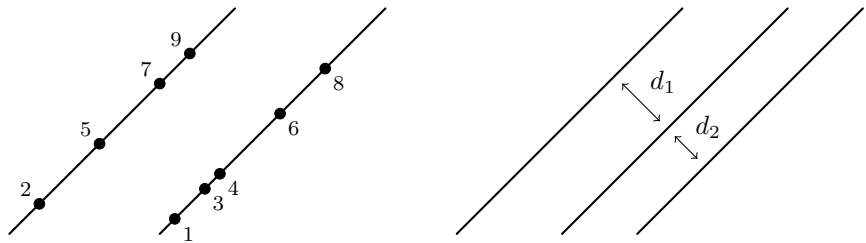

In his thesis, Waton proved the following geometric version of Proposition~\ref{prop-321-merge} (illustrated on the left of Figure~\ref{fig-321-lines}).

\begin{proposition}[Waton~\cite{waton:on-permutation-:}]
\label{prop-321-lines}
The class of permutations that can be drawn on any two parallel lines of positive slope is $\Av(321)$.
\end{proposition}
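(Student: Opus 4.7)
The proof has two directions, both mediated by Proposition~\ref{prop-321-merge}. The easier direction---every permutation drawable on two parallel lines of positive slope avoids $321$---is immediate: any finite set of points on a single positive-slope line is order-isomorphic to an increasing permutation, so a permutation drawn on $L_1 \cup L_2$ is a union of two increasing subsequences and hence lies in $\Av(321)$.

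For the converse, let $\pi \in \Av(321)$ and decompose $\pi = \alpha \cup \beta$ via Proposition~\ref{prop-321-merge}, taking $\alpha$ to be the set of left-to-right maxima. I aim to produce a figure equivalent to the plot of $\pi$ in which the $\alpha$-points lie on $L_1 : y = x$ and the $\beta$-points lie on $L_2 : y = x - C$ for some constant $C > 0$ to be chosen. Since figure equivalence allows independent monotone reparametrization of the two axes, I replace the native integer coordinates with strictly increasing sequences $x_1 < \cdots < x_n$ and $y_1 < \cdots < y_n$, sending the point at position $i$ to $(x_i, y_{\pi(i)})$. The line-membership conditions force $x_i = y_{\pi(i)}$ for $i \in \alpha$ and $x_i = y_{\pi(i)} + C$ for $i \in \beta$, so all that remains is that the $x_i$'s be strictly increasing. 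A case analysis on consecutive positions, using L2R-maximality, shows that this is automatic within $\alpha$ and within $\beta$, so the constraints reduce to two types: each $\alpha \to \beta$ transition (necessarily a descent with values $u = \pi(i) > \pi(i+1) = v$) yields $y_u - y_v < C$, and each $\beta \to \alpha$ transition (necessarily an ascent to a L2R maximum with values $u' < v'$) yields $y_{v'} - y_{u'} > C$.

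The main obstacle I foresee is showing that this finite system of inequalities is feasible in the positive gap variables $g_w = y_{w+1} - y_w$ and the constant $C$. The key combinatorial lemma---where $321$-avoidance enters---is that no ascent value-interval $[u_B, v_B]$ can be contained inside any descent value-interval $[v_A, u_A]$. Indeed, if $v_A \leq u_B < v_B \leq u_A$, then $\pi(i_B + 1) = v_B$ being a L2R maximum with $v_B \leq u_A = \pi(i_A)$ forces $i_B + 1 \leq i_A$; and $\pi(i_B) = u_B$ not being a L2R maximum produces some $k < i_B$ with $\pi(k) > u_B > v_A$, so the values at positions $k < i_B < i_A + 1$ form a $321$ pattern in $\pi$, a contradiction. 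Once nesting is excluded, a greedy choice of gap sizes---small inside each descent interval and compensatingly large in the rest of each ascent interval, built up segment-by-segment along $\pi$'s decomposition into runs between consecutive L2R maxima---makes every descent sum fall below a threshold $C$ chosen just beneath the minimum ascent sum, completing the drawing on $L_1 \cup L_2$.
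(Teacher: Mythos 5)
The paper itself gives no proof of this proposition (it is quoted from Waton's thesis), so your argument has to be judged on its own merits. The easy direction is fine, and your reduction of the hard direction to a system of inequalities is correct and careful: with the left-to-right maxima on $y=x$ and the remaining (necessarily increasing) entries on $y=x-C$, monotonicity of the $x$-coordinates is automatic within each part, and the only constraints are $y_u-y_v<C$ at each maximum-to-nonmaximum descent and $y_{v'}-y_{u'}>C$ at each nonmaximum-to-maximum ascent. Your nesting lemma is also true (in fact it follows even more directly than via a $321$ pattern: the bottom of a descent interval is a non-maximum occurring \emph{after} the maximum anchoring it, while the bottom of any ascent interval anchored at the same or a lower maximum is a non-maximum occurring \emph{earlier}, and the non-maxima increase from left to right).

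The genuine gap is the last step: you assert, but do not prove, that once nesting is excluded the gap system is feasible, and the greedy you describe (``small inside each descent interval and compensatingly large in the rest of each ascent interval'') does not obviously produce a single assignment satisfying all constraints simultaneously, because a gap can lie inside one descent interval while being precisely where some ascent interval must pick up its weight, and every descent must stay below the same threshold $C$ that every ascent must exceed. For instance, for $\pi=351624$ the descent transitions $5\,1$ and $6\,2$ give gap sets $\{1,2,3,4\}$ and $\{2,3,4,5\}$, whose union is exactly the gap set $\{1,\dots,5\}$ of the ascent transition $1\,6$; ``small inside descents, large elsewhere'' then leaves the ascent with no large gaps at all, and feasibility holds there only because each descent happens to be nested inside the ascent. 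In general one has crossing pairs in both directions (a descent anchored at a higher maximum pokes out above an ascent; a descent anchored at a lower maximum pokes out below a later ascent), and ruling out a simultaneous obstruction requires more than the non-nesting lemma: one has to exploit, say, that the ascent bottoms $a_j$, descent bottoms $d_j$ and tops $m_j$ all increase with $j$ and that $a_j<d_{j'}$ whenever $j\le j'$, and then give an explicit weighting or induction and verify it against \emph{all} pairs at once. Until that is done, the heart of the converse is missing. (A minor point you should also record: the statement quantifies over \emph{any} two parallel lines of positive slope, so you need the easy observation that all such pairs are equivalent figures, which is what licenses normalizing to $y=x$ and $y=x-C$ and choosing $C$ at your convenience.)
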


%

Our proof of Proposition~\ref{prop-321-merge} generalizes easily to show that every $k\cdots 21$-avoiding permutation can be expressed as the union of $k-1$ increasing subsequences. However, Waton showed that Proposition~\ref{prop-321-lines} does not extend in the same way. Consider three parallel lines at distances $d_1$ and $d_2$ from each other as depicted on the right of Figure~\ref{fig-321-lines}. Clearly every permutation that can be drawn on these lines avoids $4321$, but Waton proved that every different choice of the ratio $d_1/d_2$ leads to a different proper subclass of $\Av(4321)$.


%
%
%
%
%
%

\subsection{Wilf-equivalence}
\label{subsec-wilf-equiv}

As we saw in the last subsection, the permutations $231$ and $321$ are equally avoided. Instead of being a coincidence of small numbers, this turns out to be a common phenomenon. The classes $\C$ and $\D$ are said to be \emph{Wilf-equivalent} if they are equinumerous, i.e., if $|\C_n|=|\D_n|$ for every $n$. In this subsection we are primarily concerned with Wilf-equivalence of principal classes, so instead of saying that $\Av(\beta)$ and $\Av(\gamma)$ are Wilf-equivalent we simply say that $\beta$ and $\gamma$ are Wilf-equivalent.

\index{Wilf-equivalence}
\index{full rook placements}

Large swaths of Wilf-equivalences can be explained via a much stronger notion of equivalence, though doing so requires a shift of perspective to \emph{full rook placements (frps)}. These consist of a Ferrers board with a designated set of cells, called \emph{rooks} (here drawn as dots), so that each row and column contains precisely one rook. For example, both objects in Figure~\ref{fig-danger-zone} are frps. (We use French/Cartesian indexing throughout this survey, so for us a Ferrers board is a left-justified array of cells in which the number of cells in each row is at least the number of cells in the row above.)

There is a natural partial order on the set of all frps: given frps $R$ and $S$, we say that $R$ is \emph{contained} in $S$ if $R$ can be obtained from $S$ by deleting rows and columns. Furthermore, this partial order generalizes the containment order on permutations. To make this precise, let us call a frp \emph{square} if the underlying Ferrers board is square and say that the permutation $\pi$ of length $n$ corresponds to the $n\times n$ frp with rooks in the cells $(i,\pi(i))$ for every $i$. When restricted to square frps, the containment order on frps is equivalent to the containment order on the corresponding permutations.

Because of this correspondence between permutations and square frps, we say that a frp \emph{contains} the permutation $\sigma$ if it contains the square frp corresponding to $\sigma$ and otherwise say that it \emph{avoids} $\sigma$. Observe that the entire square containing $\sigma$ must be contained within the frp; for example, the frp below contains $12$ but avoids $21$.
\begin{center}
	\begin{tikzpicture}[scale=0.25, baseline=(current bounding box.center)]
		\foreach \i in {0,1,2}{
			\draw (0,\i)--(3,\i);
		}
		\draw (0,3)--(2,3);
		\foreach \i in {0,1,2}{
			\draw (\i,0)--(\i,3);
		}
		\draw (3,0)--(3,2);
		\draw[fill=black] (0.5,0.5) circle (8pt);
		\draw[fill=black] (1.5,2.5) circle (8pt);
		\draw[fill=black] (2.5,1.5) circle (8pt);
	\end{tikzpicture}
\end{center}
We say that the permutations $\beta$ and $\gamma$ are \emph{shape-Wilf-equivalent} if given any shape $\lambda$, the number of $\beta$-avoiding frps of shape $\lambda$ is the same as the number of $\gamma$-avoiding frps of shape $\lambda$. Shape-Wilf-equivalence implies Wilf-equivalence by considering only square shapes, but it also implies quite a bit more.

\index{shape-Wilf-equivalence}

\begin{proposition}[Babson and West~\cite{babson:the-permutation:}]
\label{prop-swe-sums}
If $\beta$ and $\gamma$ are shape-Wilf-equivalent, then for every permutation $\delta$, $\beta\oplus\delta$ and $\gamma\oplus\delta$ are also shape-Wilf-equivalent.
\end{proposition}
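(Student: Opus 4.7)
The goal is to verify that for every Ferrers shape $\lambda$, the number of $(\beta\oplus\delta)$-avoiding full rook placements (frps) on $\lambda$ equals the number of $(\gamma\oplus\delta)$-avoiding ones. My plan is to partition each set of frps on $\lambda$ by a canonical ``upper-right $\delta$-configuration'' and match the pieces via shape-Wilf-equivalence of $\beta$ and $\gamma$ applied to smaller Ferrers sub-shapes.

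\textbf{Step 1 (canonical $\delta$-copy).} For each frp $R$ on $\lambda$ that contains $\delta$, I would designate a canonical copy $K=K(R)$ of $\delta$ chosen to be ``northeast-maximal'': among all copies of $\delta$ in $R$, pick the one whose rook positions are componentwise as far up and to the right as possible, using a definite tie-breaking rule (first maximize the leftmost column used, then the lowest row used, and so on). The key structural fact I need is that any other $\delta$-copy $D$ in $R$ is dominated by $K(R)$ in the northeast partial order on rook sets.

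\textbf{Step 2 (residual Ferrers sub-shape and decomposition).} Let $c^\star$ and $r^\star$ be the leftmost column and the lowest row used by $K$. The intersection $\lambda(K)=\lambda\cap([1,c^\star-1]\times[1,r^\star-1])$ is itself a Ferrers shape, since $\lambda$ is. Grouping frps $R$ on $\lambda$ by the outer data consisting of $K$ together with all rooks of $R$ outside the rectangle $[1,c^\star-1]\times[1,r^\star-1]$, the remaining freedom in $R$ is precisely a full rook placement on $\lambda(K)$.

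\textbf{Step 3 (containment characterization and finish).} I claim such an $R$ contains $\beta\oplus\delta$ if and only if the residual frp on $\lambda(K)$ contains $\beta$. The reverse direction is immediate, since a $\beta$-copy in $\lambda(K)$ together with $K$ forms $\beta\oplus\delta$. The forward direction uses the dominance property: if some $\beta\oplus\delta$-copy in $R$ uses a $\delta$-part $D$ with $\beta$-part $B$ below-left of $D$, then $K$ dominates $D$, so $B$ lies below-left of $K$ as well, giving $B\subseteq\lambda(K)$. Partitioning by the outer data then yields
\[
|\{(\beta\oplus\delta)\text{-avoiding frps on }\lambda\}|=\sum_{\text{outer data}} |\{\beta\text{-avoiding frps on }\lambda(K)\}|,
\]
and likewise for $\gamma$. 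Shape-Wilf-equivalence of $\beta$ and $\gamma$ equates the summands term by term (the set of admissible ``outer data'' is the same for both), so the totals agree.

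\textbf{Main obstacle.} The delicate point is Step~1: defining the canonical $K(R)$ so that the domination property used in Step~3 genuinely holds. A naive choice such as the ``rightmost'' $\delta$-copy can fail, because a copy situated far to the right may sit in rather low rows and thus fail to dominate another copy that lies higher up but slightly to the left; in that event a legitimate $\beta\oplus\delta$-pattern in $R$ could be invisible from the residual on $\lambda(K)$. The construction has to be calibrated so that $K(R)$ simultaneously maximizes the lower-left rectangle available and dominates every competing $\delta$-copy in the northeast partial order --- and a secondary bookkeeping task is to verify that the ``outer data'' parametrizing the partition is indeed independent of whether one is counting $\beta$- or $\gamma$-avoiders, so that the term-by-term matching of counts on each $\lambda(K)$ actually yields the desired equality.
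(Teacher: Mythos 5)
The obstacle you flag at the end is not merely delicate bookkeeping; it is a genuine gap that defeats the argument as written. A copy $K(R)$ of $\delta$ that dominates every other copy --- even in the weak sense of simultaneously (weakly) maximizing the leftmost column and the lowest row of its bounding box --- need not exist, because the lower-left rectangles attached to different $\delta$-copies can be pairwise incomparable. Already for $\delta=1$ the copies are single rooks, and no rook need lie weakly northeast of all the others. Concretely, take $\beta=21$, $\delta=1$ (so $\beta\oplus\delta=213$) and $R$ the square placement of $546123$: your tie-break (maximize the leftmost column first) selects the rook $(6,3)$ as $K$, so $\lambda(K)$ is the rectangle of columns $1$--$5$ and rows $1$--$2$, whose rooks form $12$ and avoid $21$; yet $R$ contains $213$ via the entries $5,4,6$. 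So the forward direction of Step~3 fails, and with it the displayed partition identity. No cleverer canonical choice can repair this, since the needed dominating copy simply does not exist in general; there is also the secondary issue you mention, that which copy is ``canonical'' could in principle depend on the rooks you wish to treat as free residual data.

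The paper keeps your overall strategy (isolate a lower-left Ferrers region and apply the assumed $\beta$/$\gamma$ bijection inside it) but replaces the single canonical copy by a cell-wise definition: a cell of $R$ is \emph{dangerous} if some copy of $\delta$ lies entirely above and to its right, and the \emph{danger zone} is the set of all dangerous cells, i.e.\ the union over \emph{all} $\delta$-copies of their lower-left rectangles. This set is automatically a Ferrers board nestled in the bottom-left corner, and the frp containment convention (the whole bounding square of a pattern must lie inside the board) yields exactly the equivalence you wanted: a $\beta$-copy whose square lies in the danger zone has its top-right corner cell dangerous, hence some $\delta$-copy northeast of all of it, so $R$ avoids $\beta\oplus\delta$ precisely when the rooks of the danger zone (after deleting its rookless rows and columns) form a $\beta$-avoiding frp. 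Since the danger zone can be seen to be determined by rooks outside it, one may replace its contents using the shape-Wilf-equivalence of $\beta$ and $\gamma$ on that smaller Ferrers shape, keeping everything else fixed; this gives a bijection between $(\beta\oplus\delta)$-avoiding and $(\gamma\oplus\delta)$-avoiding frps of shape $\lambda$, which is the statement to be proved.
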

\begin{proof}
Suppose that there is a bijection between $\beta$-avoiding and $\gamma$-avoiding frps of every shape. Now fix a shape $\lambda$. We construct a bijection between $\beta\oplus\delta$-avoiding frps and $\gamma\oplus\delta$-avoiding frps of shape $\lambda$. Let $R$ be a $\beta\oplus\delta$-avoiding frp of shape $\lambda$.

\begin{figure}
$$
\begin{array}{ccc}
	\begin{tikzpicture}[scale=0.4, baseline=(current bounding box.center)]
		\draw [fill=lightgray, color=lightgray] (0,0)--(0,5)--(1,5)--(1,2)--(4,2)--(4,1)--(5,1)--(5,0)--(0,0);
		\foreach \i in {0,1,2,3,4,5}{
			\draw (0,\i)--(7,\i);
		}
		\draw (0,6)--(5,6);
		\draw (0,7)--(3,7);
		\foreach \i in {0,1,2,3}{
			\draw (\i,0)--(\i,7);
		}
		\foreach \i in {4,5}{
			\draw (\i,0)--(\i,6);
		}
		\foreach \i in {6,7}{
			\draw (\i,0)--(\i,5);
		}
		\draw[fill=black] (0.5,4.5) circle (4pt);
		\draw[fill=black] (1.5,5.5) circle (4pt);
		\draw[fill=black] (2.5,6.5) circle (4pt);
		\draw[fill=black] (3.5,0.5) circle (4pt);
		\draw[fill=black] (4.5,2.5) circle (4pt);
		\draw[fill=black] (5.5,1.5) circle (4pt);
		\draw[fill=black] (6.5,3.5) circle (4pt);
	\end{tikzpicture}
	&
	\begin{tikzpicture}[scale=0.4, baseline=(current bounding box.center)]
		\draw[->] (0,0)--(1,0);
	\end{tikzpicture}
	&
	\begin{tikzpicture}[scale=0.4, baseline=(current bounding box.center)]
		\draw [fill=lightgray, color=lightgray] (0,0)--(0,2)--(1,2)--(1,1)--(2,1)--(2,0)--(0,0);
		\draw (0,0)--(2,0);
		\draw (0,1)--(2,1);
		\draw (0,2)--(1,2);
		\draw (0,0)--(0,2);
		\draw (1,0)--(1,2);
		\draw (2,0)--(2,1);
		\draw[fill=black] (0.5,1.5) circle (4pt);
		\draw[fill=black] (1.5,0.5) circle (4pt);
	\end{tikzpicture}
\end{array}
$$
\caption{On the left, a frp with its danger zone for the permutation $12$ shaded. On the right, the frp that the danger zone contracts to after removing rookless rows and columns.}
\label{fig-danger-zone}
\end{figure}
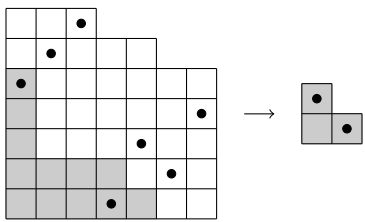

We call a cell of $R$ \emph{dangerous} if there is a copy of $\delta$ completely contained in the region above and to the right of the cell. The entire set of dangerous cells is called the \emph{danger zone} (see Figure~\ref{fig-danger-zone}). The danger zone forms a (possibly empty) Ferrers board nestled in the bottom-left corner of $R$. Ignoring the rookless rows and columns of the danger zone, we thus obtain a $\beta$-avoiding frp. We may then use the bijection between $\beta$-avoiding and $\gamma$-avoiding frps of that shape to produce a $\gamma\oplus\delta$-avoiding frp of shape $\lambda$, as desired.
\end{proof}

Only two shape-Wilf-equivalence results are known. We sketch bijective proofs for both.

\begin{theorem}[Backelin, West, and Xin~\cite{backelin:wilf-equivalenc:}]
\label{thm-inc-dec-wilf-equiv}
For every value of $k$, the permutations $k\cdots 21$ and $12\cdots k$ are shape-Wilf-equivalent.
\end{theorem}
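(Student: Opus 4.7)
The plan is to give a bijection $R \mapsto R'$ on full rook placements that preserves the Ferrers shape and exchanges the avoidance of $12\cdots k$ with the avoidance of $k\cdots 21$. I would build this bijection via the Robinson--Schensted correspondence in the square case, then extend it to arbitrary Ferrers shapes using Fomin's growth diagrams.

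For the square case (where $R$ corresponds to a permutation $\pi$), recall that RSK assigns $\pi$ a pair $(P,Q)$ of standard Young tableaux of a common shape $\mu$, and that $\mu_1$ is the length of the longest increasing subsequence of $\pi$ while $\mu_1'$ is the length of the longest decreasing subsequence. Consequently $\pi$ avoids $12\cdots k$ exactly when $\mu_1 < k$ and avoids $k\cdots 21$ exactly when $\mu_1' < k$, so the map $(P,Q) \mapsto (P^t, Q^t)$ already furnishes the desired bijection on square frps. To extend, I would use Fomin's growth-diagram encoding, which attaches a partition $\mu_{i,j}$ to each lattice point $(i,j)$ of the Ferrers board $\lambda$, grown from $\emptyset$ at the origin via local rules depending only on whether the interior cell just crossed contains a rook. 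The essential property is that $(\mu_{i,j})_1$ (respectively, $(\mu_{i,j})_1'$) records the longest increasing (respectively, decreasing) chain of rooks in the subregion southwest of $(i,j)$, so $R$ avoids $12\cdots k$ iff every $\mu_{i,j}$ has first row less than $k$, and avoids $k\cdots 21$ iff every $\mu_{i,j}$ has first column less than $k$. Because Fomin's local rules are self-conjugate, replacing each $\mu_{i,j}$ by its transpose $\mu_{i,j}'$ produces another valid growth diagram on the same board $\lambda$, yielding the required bijection.

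The main obstacle is verifying the two structural facts on which the extension rests: that Fomin's local rules do give a bijection between frps on $\lambda$ and admissible partition-labelings of its lattice points, and that these rules commute with partition-conjugation. Both reduce to writing out the local rule for a single cell in its handful of cases and checking each directly. Everything else --- the reduction from shape-Wilf-equivalence to the existence of a shape-preserving bijection, and the translation of pattern avoidance into conditions on the growth-diagram labels --- is essentially just unpacking definitions.
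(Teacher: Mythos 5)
Your overall strategy is the same as the paper's (which follows Krattenthaler's growth-diagram proof rather than the original Backelin--West--Xin argument), but the step you lean on to extend from squares to general Ferrers boards is not correct as stated. Fomin's local rules are \emph{not} self-conjugate: the rules that add a box in a \emph{specified row} (the ``bumping'' rule when the two neighboring labels agree and differ from the southwest label, and the rule that adds a box in the first row when the cell contains a rook) turn, under transposition, into rules that add a box in a specified \emph{column}, which is a different rule. Concretely, take the $2\times 2$ board with rooks forming the identity $12$: the corner labels are $\emptyset$, $(1)$, and $(2)$ at the northeast lattice point. Conjugating every label replaces $(2)$ by $(1,1)$ while its three southwest neighbors remain $(1)$, and no local rule (rook or no rook) produces $(1,1)$ from three copies of $(1)$. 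So ``replace each $\mu_{i,j}$ by $\mu_{i,j}'$'' does not yield a valid growth diagram, and the bijection as you describe it does not exist.

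The repair is exactly the point the paper emphasizes: conjugate only the sequence of partitions along the northwest--southeast boundary (the oscillating tableau), not the interior labels. The two facts you need are (i) the boundary sequence determines the entire growth diagram, and hence the frp, via the backward local rules, and conversely every admissible boundary sequence arises from a unique frp of that shape, so conjugation of boundary sequences induces a bijection on frps of shape $\lambda$; and (ii) pattern avoidance is detectable from the boundary labels alone (the first row of a label records the longest increasing chain, and the first column the longest decreasing chain, in the rectangle below and to its left, and the rectangles attached to boundary corners are the maximal ones inside $\lambda$). With those two facts the interior labels of the new diagram are whatever the backward rules produce --- they are generally \emph{not} the conjugates of the old interior labels, as the $2\times 2$ example already shows (the image of $12$ is $21$, whose interior label at the first cell is $\emptyset$, not $(1)$). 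Your square-case observation via $(P,Q)\mapsto(P^t,Q^t)$ is fine but only reproves ordinary Wilf-equivalence; the content of the theorem lives in the boundary argument, so that is the step you must carry out correctly.
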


By Proposition~\ref{prop-swe-sums} this implies that the permutations $k\cdots 21\oplus\beta$ and $12\cdots k\oplus\beta$ are Wilf-equivalent for every permutation $\beta$. Special cases of Theorem~\ref{thm-inc-dec-wilf-equiv} had been established before the general case above; West proved the $k=2$ case in his thesis~\cite{west:permutations-wi:}, while the $k=3$ case was shown by Babson and West~\cite{babson:the-permutation:}. Krattenthaler~\cite{krattenthaler:growth-diagrams:} was the first to give a truly nice bijection between $k\cdots 21$-avoiding and $12\cdots k$-avoiding frps, using the growth diagrams of Fomin~\cite{fomin:the-generalized:}.

\index{growth diagram}

\begin{figure}
\begin{scriptsize}
\begin{center}
	\begin{tikzpicture}[scale=0.69, baseline=(current bounding box.center)]
		\foreach \i in {0,1,2,3,4,5}{
			\draw (0,\i)--(7,\i);
		}
		\draw (0,6)--(5,6);
		\draw (0,7)--(3,7);
		\foreach \i in {0,1,2,3}{
			\draw (\i,0)--(\i,7);
		}
		\foreach \i in {4,5}{
			\draw (\i,0)--(\i,6);
		}
		\foreach \i in {6,7}{
			\draw (\i,0)--(\i,5);
		}
		\draw[fill=black] (0.5,0.5) circle (2.89855072464pt);
		\draw[fill=black] (1.5,6.5) circle (2.89855072464pt);
		\draw[fill=black] (2.5,2.5) circle (2.89855072464pt);
		\draw[fill=black] (3.5,4.5) circle (2.89855072464pt);
		\draw[fill=black] (4.5,5.5) circle (2.89855072464pt);
		\draw[fill=black] (5.5,1.5) circle (2.89855072464pt);
		\draw[fill=black] (6.5,3.5) circle (2.89855072464pt);
		\foreach \i in {0,1,2,3,4,5,6,7}{
			\node[above right] at (\i,0) {$0$};
		}
		\node[above right] at (0,1) {$0$};
		\foreach \i in {1,2,3,4,5,6,7}{
			\node[above right] at (\i,1) {$1$};
		}
		\node[above right] at (0,2) {$0$};
		\foreach \i in {1,2,3,4,5}{
			\node[above right] at (\i,2) {$1$};
		}
		\foreach \i in {6,7}{
			\node[above right] at (\i,2) {$2$};
		}
		\node[above right] at (0,3) {$0$};
		\foreach \i in {1,2}{
			\node[above right] at (\i,3) {$1$};
		}
		\foreach \i in {3,4,5}{
			\node[above right] at (\i,3) {$2$};
		}
		\foreach \i in {6,7}{
			\node[above right] at (\i,3) {$21$};
		}
		\node[above right] at (0,4) {$0$};
		\foreach \i in {1,2}{
			\node[above right] at (\i,4) {$1$};
		}
		\foreach \i in {3,4,5}{
			\node[above right] at (\i,4) {$2$};
		}
		\node[above right] at (6,4) {$21$};
		\node[above right] at (7,4) {$31$};
		\node[above right] at (0,5) {$0$};
		\foreach \i in {1,2}{
			\node[above right] at (\i,5) {$1$};
		}
		\node[above right] at (3,5) {$2$};
		\node[above right] at (4,5) {$3$};
		\node[above right] at (5,5) {$3$};
		\node[above right] at (6,5) {$31$};
		\node[above right] at (7,5) {$32$};
		\node[above right] at (0,6) {$0$};
		\node[above right] at (1,6) {$1$};
		\node[above right] at (2,6) {$1$};
		\node[above right] at (3,6) {$2$};
		\node[above right] at (4,6) {$3$};
		\node[above right] at (5,6) {$4$};
		\node[above right] at (0,7) {$0$};
		\node[above right] at (1,7) {$1$};
		\node[above right] at (2,7) {$2$};
		\node[above right] at (3,7) {$21$};
	\end{tikzpicture}
	\quad
	\begin{tikzpicture}[scale=0.5, baseline=(current bounding box.center)]
		\draw[<->] (0,0)--(1,0);
	\end{tikzpicture}
	\quad
	\begin{tikzpicture}[scale=0.69, baseline=(current bounding box.center)]
		\foreach \i in {0,1,2,3,4,5}{
			\draw (0,\i)--(7,\i);
		}
		\draw (0,6)--(5,6);
		\draw (0,7)--(3,7);
		\foreach \i in {0,1,2,3}{
			\draw (\i,0)--(\i,7);
		}
		\foreach \i in {4,5}{
			\draw (\i,0)--(\i,6);
		}
		\foreach \i in {6,7}{
			\draw (\i,0)--(\i,5);
		}
		\draw[fill=black] (0.5,5.5) circle (2.89855072464pt);
		\draw[fill=black] (1.5,2.5) circle (2.89855072464pt);
		\draw[fill=black] (2.5,6.5) circle (2.89855072464pt);
		\draw[fill=black] (3.5,1.5) circle (2.89855072464pt);
		\draw[fill=black] (4.5,0.5) circle (2.89855072464pt);
		\draw[fill=black] (5.5,4.5) circle (2.89855072464pt);
		\draw[fill=black] (6.5,3.5) circle (2.89855072464pt);
		\node[above right] at (7,0) {$0$};
		\node[above right] at (7,1) {$1$};
		\node[above right] at (7,2) {$11$};
		\node[above right] at (7,3) {$21$};
		\node[above right] at (7,4) {$211$};
		\node[above right] at (7,5) {$221$};
		\node[above right] at (6,5) {$211$};
		\node[above right] at (5,5) {$111$};
		\node[above right] at (5,6) {$1111$};
		\node[above right] at (4,6) {$111$};
		\node[above right] at (3,6) {$11$};
		\node[above right] at (3,7) {$21$};
		\node[above right] at (2,7) {$11$};
		\node[above right] at (1,7) {$1$};
		\node[above right] at (0,7) {$0$};
	\end{tikzpicture}
\end{center}
\end{scriptsize}
\caption{Krattenthaler's bijection, shown here converting a $321$-avoiding frp to a $123$-avoiding frp.}
\label{fig-krattenthaler-bij}
\end{figure}
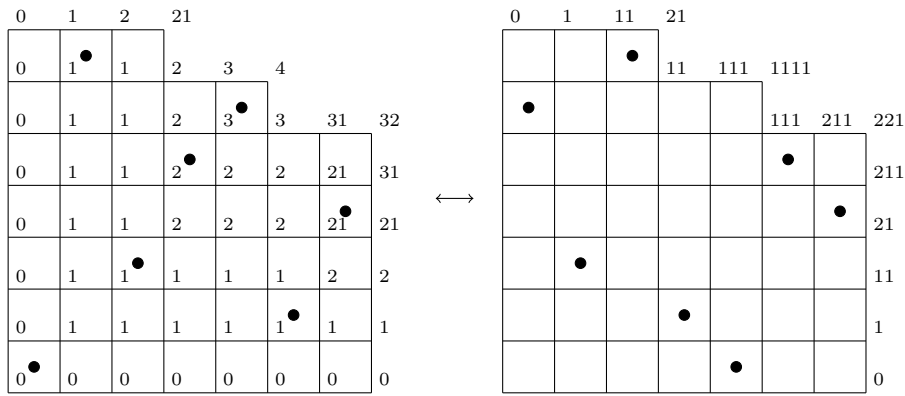

Consider the labeled frp shown on the left of Figure~\ref{fig-krattenthaler-bij}. In this diagram every corner is labeled by an integer partition (the labels lie slightly above and to the right of the corners). This partition (written in shortened notation) represents the shape of the resulting standard Young tableaux when the partial permutation lying below and to the left of that corner is passed through the Robinson--Schensted correspondence. As one can imagine, there are a variety of restrictions on such growth diagrams. For example, the labels along the northwest-southeast boundary form an \emph{oscillating tableaux}, meaning that each partition differs from the previous partition by precisely one box.

\index{oscillating tableaux}

We will not concern ourselves with the other requirements here, except to call labeled tableaux formed in this way \emph{valid}. One of the facts that Krattenthaler established is that given the oscillating tableaux along the border of a valid growth diagram, one can reconstruct all of the interior corner labels, and from these, the position of the rooks in the corresponding frp. Thus we need only concern ourselves with the labels of this border. It follows from Fomin's work that by conjugating the partitions labeling the border of a valid growth diagram one obtains another valid growth diagram.

Now recall that if a frp avoids $k\cdots 21$, its shape under Robinson--Schensted has fewer than $k$ parts (Schensted's Theorem~\cite{schensted:longest-increas:}). Equivalently, all parts along the border of its valid growth diagram will have fewer than $k$ parts. As Krattenthaler observed, this combination of Fomin's growth diagrams and Schensted's Theorem shows that if we conjugate each of those partitions, we obtain the border of a valid growth diagram in which no part is $k$ or larger. Therefore we have produced a unique frp avoiding $12\cdots k$, giving a bijection between $k\dots 21$-avoiding and $12\cdots k$-avoiding frps. Krattenthaler's bijection not only proves Theorem~\ref{thm-inc-dec-wilf-equiv} but also its analogue for involutions, first established by Bousquet-M\'elou and Steingr{\'{\i}}msson~\cite{bousquet-melou:decreasing-subs:}.

We move on to the second of two known examples of shape-Wilf-equivalence.

\begin{theorem}[Stankova and West~\cite{stankova:a-new-class-of-:}]
\label{thm-231-312-wilf-equiv}
The permutations $231$ and $312$ are shape-Wilf-equivalent.
\end{theorem}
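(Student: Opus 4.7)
The plan is to exhibit a bijection between $231$-avoiding and $312$-avoiding frps of each Ferrers shape $\lambda$, by induction on the number of rows (equivalently, rooks) of $\lambda$. When $\lambda$ has at most two rows, every frp vacuously avoids both patterns, so the identity map suffices.

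For the inductive step, I would single out the rook in the top row, say at column $c$ within the top row (which has $\lambda_n$ cells), and analyze the structural constraints imposed around this pivot by each of the two avoidance conditions. Since the top rook is the largest-valued rook, it can only occur as the ``$3$'' in a forbidden pattern; but the ``$3$'' sits in the middle position of $231$ and the leftmost position of $312$, so the induced local constraints are asymmetric. For $231$-avoidance, every rook to the left of the pivot must lie below every rook to its right, and a counting argument then forces the $c-1$ left columns to occupy exactly the rows $[1,c-1]$ and the $n-c$ right columns to occupy exactly the rows $[c,n-1]$; the remaining frp decomposes cleanly as an independent pair of smaller $231$-avoiding sub-frps on a $(c-1)\times(c-1)$ square and on a smaller Ferrers sub-shape. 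For $312$-avoidance, the rooks to the right of the pivot are only required to form a decreasing sequence by row, and there is no immediate decoupling of left and right.

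Summing over the pivot position $c$, the inductive hypothesis immediately yields a product formula for the $231$-side. To match this on the $312$-side, one needs an auxiliary bijection that absorbs the cross-column $312$ patterns (those of the form ``$3$ on left, $1$ on left, $2$ on right'') into a product structure analogous to the $231$-side. This is the principal obstacle, and I would attempt it through a secondary recursion that processes the forced decreasing right-side sequence one column at a time, each step either leaving it in place or swapping it with a carefully chosen left-side rook. The hard part will be verifying that such a sliding procedure terminates, preserves $312$-avoidance, and is invertible: one must show that the two sides of the decomposition can be reorganized into matching product forms to which induction can be applied. The bookkeeping is delicate, but no deeper structural insight beyond careful management of row and column indices at the boundary between the blocks seems to be required.
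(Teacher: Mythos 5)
Your proposal has a genuine gap, and in fact two. First, the clean decomposition you claim on the $231$-side is the standard decomposition of $231$-avoiding \emph{permutations} about their maximum (as in Figure~\ref{fig-231-decomp}), but it does not transfer to full rook placements on a general Ferrers board: a pattern occurrence in an frp only counts when the entire bounding square of the occurrence lies inside the board. With the top-row rook playing the role of the ``$3$'', a rook to its right whose column does not reach the top row may lie above a rook to the pivot's left without creating a legal copy of $231$, so the left rooks are \emph{not} forced into the bottom $c-1$ rows and the placement does not split into an independent $(c-1)\times(c-1)$ square block and a smaller Ferrers block. This sensitivity to the shape is exactly what makes shape-Wilf-equivalence strictly stronger than Wilf-equivalence (compare Theorem~\ref{thm-1342-2413-wilf-equiv}, where $1342$ and $2413$ are Wilf- but not shape-Wilf-equivalent), so any argument that in effect reduces to the square/permutation analysis is suspect from the start.

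Second, and more importantly, the entire content of the theorem is the step you defer: making the $312$-side match whatever structure you extract on the $231$-side. Your ``secondary recursion'' and ``sliding procedure'' are never defined, and their termination, preservation of $312$-avoidance, and invertibility---which you yourself identify as the hard part---are precisely what needs to be proved; asserting that no deeper structural insight is required does not discharge this. For comparison, the proof sketched in the paper (due to Bloom and Saracino) avoids a pivot decomposition altogether: it labels each corner of the northwest--southeast border of the frp with the length of the longest increasing subsequence of the sub-placement below and to the left, encodes the result as a labeled Dyck path characterized by the monotone, zero, and ($231$- or $312$-)tunnel properties, and then exchanges the two families of labeled paths by the height complementation $\ell\mapsto h-\ell+1$. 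Some global statistic of this kind, rather than local bookkeeping at a block boundary, appears to be what the problem actually demands.
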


Motivated by Fomin's growth diagrams, Bloom and Saracino~\cite{bloom:a-simple-biject:} gave a beautiful bijective proof of Theorem~\ref{thm-231-312-wilf-equiv}. Consider a $231$-avoiding frp, as shown in the upper-left diagram of Figure~\ref{fig-bloom-saracino}. This time, instead of partitions, we label each corner of the northwest-southeast border by the length of the longest increasing sequence in the partial permutation lying below and to the left of that corner. We then read these labels to construct a labeled Dyck path, as shown on the right of this figure.

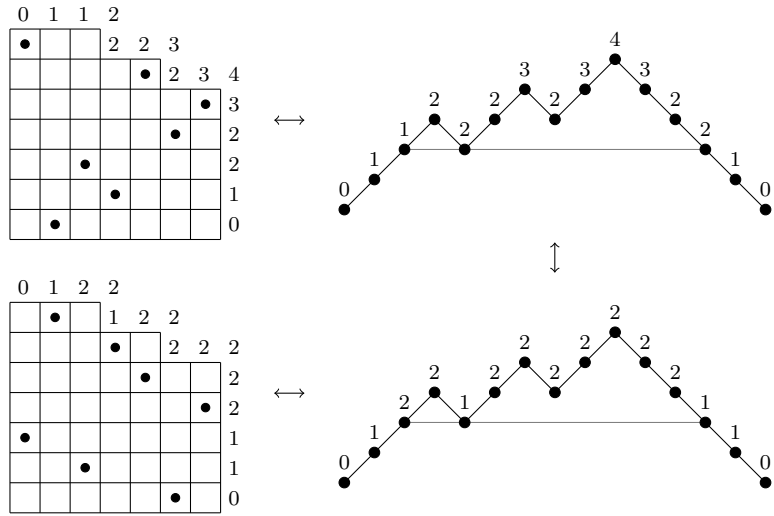
\begin{figure}
\begin{footnotesize}
$$
\begin{array}{ccc}
	\begin{tikzpicture}[scale=0.4, baseline=(current bounding box.center)]
		\foreach \i in {0,1,2,3,4,5}{
			\draw (0,\i)--(7,\i);
		}
		\draw (0,6)--(5,6);
		\draw (0,7)--(3,7);
		\foreach \i in {0,1,2,3}{
			\draw (\i,0)--(\i,7);
		}
		\foreach \i in {4,5}{
			\draw (\i,0)--(\i,6);
		}
		\foreach \i in {6,7}{
			\draw (\i,0)--(\i,5);
		}
		\draw[fill=black] (0.5,6.5) circle (4pt);
		\draw[fill=black] (1.5,0.5) circle (4pt);
		\draw[fill=black] (2.5,2.5) circle (4pt);
		\draw[fill=black] (3.5,1.5) circle (4pt);
		\draw[fill=black] (4.5,5.5) circle (4pt);
		\draw[fill=black] (5.5,3.5) circle (4pt);
		\draw[fill=black] (6.5,4.5) circle (4pt);
		\node[above right] at (0,7) {$0$};
		\node[above right] at (1,7) {$1$};
		\node[above right] at (2,7) {$1$};
		\node[above right] at (3,7) {$2$};
		\node[above right] at (3,6) {$2$};
		\node[above right] at (4,6) {$2$};
		\node[above right] at (5,6) {$3$};
		\node[above right] at (5,5) {$2$};
		\node[above right] at (6,5) {$3$};
		\node[above right] at (7,5) {$4$};
		\node[above right] at (7,4) {$3$};
		\node[above right] at (7,3) {$2$};
		\node[above right] at (7,2) {$2$};
		\node[above right] at (7,1) {$1$};
		\node[above right] at (7,0) {$0$};
	\end{tikzpicture}
	&
	\begin{tikzpicture}[scale=0.4, baseline=(current bounding box.center)]
		\draw[<->] (0,0)--(1,0);
	\end{tikzpicture}
	&
	\begin{tikzpicture}[scale=0.4, baseline=(current bounding box.center)]
		\draw[color=gray] (2,2)--(12,2);
		\draw (0,0)--(3,3)--(4,2)--(6,4)--(7,3)--(9,5)--(14,0);
		\draw[fill=black] (0,0) circle (5pt);
		\draw[fill=black] (1,1) circle (5pt);
		\draw[fill=black] (2,2) circle (5pt);
		\draw[fill=black] (3,3) circle (5pt);
		\draw[fill=black] (4,2) circle (5pt);
		\draw[fill=black] (5,3) circle (5pt);
		\draw[fill=black] (6,4) circle (5pt);
		\draw[fill=black] (7,3) circle (5pt);
		\draw[fill=black] (8,4) circle (5pt);
		\draw[fill=black] (9,5) circle (5pt);
		\draw[fill=black] (10,4) circle (5pt);
		\draw[fill=black] (11,3) circle (5pt);
		\draw[fill=black] (12,2) circle (5pt);
		\draw[fill=black] (13,1) circle (5pt);
		\draw[fill=black] (14,0) circle (5pt);
		\node[above=2pt] at (0,0) {$0$};
		\node[above=2pt] at (1,1) {$1$};
		\node[above=2pt] at (2,2) {$1$};
		\node[above=2pt] at (3,3) {$2$};
		\node[above=2pt] at (4,2) {$2$};
		\node[above=2pt] at (5,3) {$2$};
		\node[above=2pt] at (6,4) {$3$};
		\node[above=2pt] at (7,3) {$2$};
		\node[above=2pt] at (8,4) {$3$};
		\node[above=2pt] at (9,5) {$4$};
		\node[above=2pt] at (10,4) {$3$};
		\node[above=2pt] at (11,3){$2$};
		\node[above=2pt] at (12,2) {$2$};
		\node[above=2pt] at (13,1) {$1$};
		\node[above=2pt] at (14,0) {$0$};
	\end{tikzpicture}
\\
&&
	\begin{tikzpicture}[scale=0.4, baseline=(current bounding box.center)]
		\draw[<->] (0,0)--(0,1);
	\end{tikzpicture}
\\
	\begin{tikzpicture}[scale=0.4, baseline=(current bounding box.center)]
		\foreach \i in {0,1,2,3,4,5}{
			\draw (0,\i)--(7,\i);
		}
		\draw (0,6)--(5,6);
		\draw (0,7)--(3,7);
		\foreach \i in {0,1,2,3}{
			\draw (\i,0)--(\i,7);
		}
		\foreach \i in {4,5}{
			\draw (\i,0)--(\i,6);
		}
		\foreach \i in {6,7}{
			\draw (\i,0)--(\i,5);
		}
		\draw[fill=black] (0.5,2.5) circle (4pt);
		\draw[fill=black] (1.5,6.5) circle (4pt);
		\draw[fill=black] (2.5,1.5) circle (4pt);
		\draw[fill=black] (3.5,5.5) circle (4pt);
		\draw[fill=black] (4.5,4.5) circle (4pt);
		\draw[fill=black] (5.5,0.5) circle (4pt);
		\draw[fill=black] (6.5,3.5) circle (4pt);
		\node[above right] at (0,7) {$0$};
		\node[above right] at (1,7) {$1$};
		\node[above right] at (2,7) {$2$};
		\node[above right] at (3,7) {$2$};
		\node[above right] at (3,6) {$1$};
		\node[above right] at (4,6) {$2$};
		\node[above right] at (5,6) {$2$};
		\node[above right] at (5,5) {$2$};
		\node[above right] at (6,5) {$2$};
		\node[above right] at (7,5) {$2$};
		\node[above right] at (7,4) {$2$};
		\node[above right] at (7,3) {$2$};
		\node[above right] at (7,2) {$1$};
		\node[above right] at (7,1) {$1$};
		\node[above right] at (7,0) {$0$};
	\end{tikzpicture}
	&
	\begin{tikzpicture}[scale=0.4, baseline=(current bounding box.center)]
		\draw[<->] (0,0)--(1,0);
	\end{tikzpicture}
	&
	\begin{tikzpicture}[scale=0.4, baseline=(current bounding box.center)]
		\draw[color=gray] (2,2)--(12,2);
		\draw (0,0)--(3,3)--(4,2)--(6,4)--(7,3)--(9,5)--(14,0);
		\draw[fill=black] (0,0) circle (5pt);
		\draw[fill=black] (1,1) circle (5pt);
		\draw[fill=black] (2,2) circle (5pt);
		\draw[fill=black] (3,3) circle (5pt);
		\draw[fill=black] (4,2) circle (5pt);
		\draw[fill=black] (5,3) circle (5pt);
		\draw[fill=black] (6,4) circle (5pt);
		\draw[fill=black] (7,3) circle (5pt);
		\draw[fill=black] (8,4) circle (5pt);
		\draw[fill=black] (9,5) circle (5pt);
		\draw[fill=black] (10,4) circle (5pt);
		\draw[fill=black] (11,3) circle (5pt);
		\draw[fill=black] (12,2) circle (5pt);
		\draw[fill=black] (13,1) circle (5pt);
		\draw[fill=black] (14,0) circle (5pt);
		\node[above=2pt] at (0,0) {$0$};
		\node[above=2pt] at (1,1) {$1$};
		\node[above=2pt] at (2,2) {$2$};
		\node[above=2pt] at (3,3) {$2$};
		\node[above=2pt] at (4,2) {$1$};
		\node[above=2pt] at (5,3) {$2$};
		\node[above=2pt] at (6,4) {$2$};
		\node[above=2pt] at (7,3) {$2$};
		\node[above=2pt] at (8,4) {$2$};
		\node[above=2pt] at (9,5) {$2$};
		\node[above=2pt] at (10,4) {$2$};
		\node[above=2pt] at (11,3){$2$};
		\node[above=2pt] at (12,2) {$1$};
		\node[above=2pt] at (13,1) {$1$};
		\node[above=2pt] at (14,0) {$0$};
	\end{tikzpicture}
\end{array}
$$
\end{footnotesize}
\caption{Bloom and Saracino's bijection between $231$-avoiding (upper-left) and $312$-avoiding frps (bottom-left).}
\label{fig-bloom-saracino}
\end{figure}

\index{Dyck path}

We need a final term. A \emph{weak tunnel} in a Dyck path is a horizontal segment between two vertices of the path that stays weakly below the path (the paths shown in Figure~\ref{fig-bloom-saracino} each have a weak tunnel indicated in gray). Bloom and Saracino proved that these labeled Dyck paths satisfy three rules:
\begin{itemize}
\item \emph{Monotone property:} Labels increase by at most $1$ after an up step and decrease by at most $1$ after a down step.
\item \emph{Zero property:} The $0$ labels are precisely those on the $x$-axis.
\item \emph{${231}$-avoiding tunnel property:} Given two vertices at the same height connected by a weak tunnel, the label of the leftmost vertex is at most the label of the rightmost vertex. (This was called the ``diagonal property'' in \cite{bloom:pattern-avoidan:}.)
\end{itemize}
They also proved that there is a bijection between labeled Dyck paths and $231$-avoiding frps satisfying these three properties. Suppose we perform the same operation on $312$-avoiding frps, as shown on the bottom of Figure~\ref{fig-bloom-saracino}. Bloom and Saracino showed that the resulting labeled Dyck paths have all of the same properties except for one; the $231$-avoiding tunnel property becomes the
\begin{itemize}
\item \emph{${312}$-avoiding tunnel property:} Given two vertices at the same height connected by a weak tunnel, the label of the rightmost vertex is at most the label of the leftmost vertex.
\end{itemize}
To prove Theorem~\ref{thm-231-312-wilf-equiv}, we merely need to construct a bijection between labeled Dyck paths satisfying the monotone, zero, and $231$-avoiding tunnel properties and those that satisfy the monotone, zero, and $312$-avoiding tunnel properties. This (as is illustrated on the right of Figure~\ref{fig-bloom-saracino}) is done by fixing the labels on the $x$-axis at zero, and then replacing the label $\ell$ in the labeled Dyck path of a $231$-avoiding frp by $h-\ell+1$, where $h$ is the height of the corresponding vertex.

As no other shape-Wilf-equivalences are known, we must pose the following question.

\begin{question}
Do Theorems~\ref{thm-inc-dec-wilf-equiv} and \ref{thm-231-312-wilf-equiv} (and their implications) constitute all of the shape-Wilf-equivalences?
\end{question}

To finish mapping the known world of Wilf-equivalence of principal classes, we need one more result.

\begin{theorem}[Stankova~\cite{stankova:forbidden-subse:}]
\label{thm-1342-2413-wilf-equiv}
The permutations $1342$ and $2413$ are Wilf-equivalent, but not shape-Wilf-equivalent.
\end{theorem}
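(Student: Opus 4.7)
The statement combines two claims that I would handle separately. For the non-shape-Wilf-equivalence, a finite witness suffices: any Ferrers shape $\lambda$ on which the counts of $1342$-avoiding and $2413$-avoiding frps differ proves the claim. Because the two classes are Wilf-equivalent (so they agree on every square shape), $\lambda$ must be non-square. I would enumerate non-square Ferrers shapes by increasing semiperimeter and, for each one, brute-force count the frps of that shape avoiding $1342$ and those avoiding $2413$. The first discrepancy gives the separator. This step is finite verification with no conceptual obstacle.

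For the Wilf-equivalence $|\Av_n(1342)| = |\Av_n(2413)|$, I would follow Stankova's block-decomposition strategy. Write $\pi = \alpha_1 \oplus \cdots \oplus \alpha_k$ as its sum decomposition. A case analysis of how a forbidden pattern can split across direct summands shows that $\pi$ avoids $2413$ iff every $\alpha_i$ avoids $2413$ (since no nontrivial two-way split of the sequence $2,4,1,3$ places all prefix values below all suffix values), whereas $\pi$ avoids $1342$ iff $\alpha_1 \in \Av(1342)$ and $\alpha_j \in \Av(231)$ for $j \geq 2$ (reflecting the decomposition $1342 = 1 \oplus 231$ together with the containment $\Av(231) \subseteq \Av(1342)$, which holds because $342 \sim 231$ appears as a subpattern of $1342$). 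Writing $I_{\beta}$ for the generating function of sum-indecomposable $\beta$-avoiders, the equality of class generating functions reduces to the single identity
$$
\frac{1}{1 - I_{2413}(x)} \;=\; 1 + \frac{I_{1342}(x)}{1 - I_{231}(x)}.
$$
Since $I_{231}$ is classical (Catalan-derived), the task becomes counting indecomposable $1342$- and $2413$-avoiders and verifying this relation.

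The main obstacle is the identity itself. Unlike in Theorems~\ref{thm-inc-dec-wilf-equiv} and~\ref{thm-231-312-wilf-equiv}, no growth-diagram or simple cell-swap bijection is available, and in fact no truly elegant bijection between $\Av(1342)$ and $\Av(2413)$ is known (consistent with the negative half of the theorem). Stankova's approach is an intricate induction that refines the sum-indecomposable count by the position of the maximum, applies local transformations on the right flank of each block to reorganize its internal structure, and threads the argument through a generating tree whose labels track exactly the statistic needed to propagate the recurrence. The delicate point is that the local transformation must simultaneously preserve sum-indecomposability and control the $231$-containment profile that distinguishes the two sides of the identity; designing such a transformation, and verifying that it matches the counts in all boundary cases, is the hard core of the proof. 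A cleaner alternative would be to produce closed-form algebraic generating functions for each class independently and check that they coincide, but this still requires a structural analysis of $\Av(2413)$ of comparable depth.
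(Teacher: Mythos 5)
Your structural reduction is correct as far as it goes: $2413$ is sum indecomposable, so $\Av(2413)$ is sum closed with generating function $1/(1-I_{2413})$, and the only nontrivial sum split of $1342$ is $1\oplus 231$, so a sum decomposable permutation avoids $1342$ exactly when its first component avoids $1342$ and all later components avoid $231$; the identity you write down is therefore an equivalent reformulation of the Wilf-equivalence. The problem is that this identity \emph{is} the theorem, and you do not prove it. The paragraph describing ``Stankova's block-decomposition strategy'' --- an induction refining the indecomposable count by the position of the maximum, ``local transformations on the right flank,'' and ``a generating tree whose labels track exactly the statistic needed'' --- names no transformation, no label scheme, and no recurrence, and you yourself flag designing and verifying these as ``the hard core of the proof.'' As it stands the proposal reduces the statement to an unproved generating-function identity between $I_{1342}$ and $I_{2413}$, neither of which is elementary (the enumeration of $\Av(1342)$ is B\'ona's Theorem~\ref{thm-1342}, itself a substantial result), so the equinumerosity claim has a genuine gap rather than a checkable argument. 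The same issue, in milder form, affects the negative half: asserting that a brute-force search over non-square Ferrers shapes ``gives the separator'' presupposes that a discrepancy exists, which is exactly what is to be shown; without exhibiting a witness shape and its two counts, nothing has been verified.

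For context, the survey does not prove this result either: it cites Stankova's original paper and then points to Bloom's later bijective work (between $1423$-avoiders and $2413$-avoiders) and to the B\'ona and Bloom--Elizalde enumerations of $\Av(1342)$. So your framing of the problem is reasonable and your decomposition lemmas are sound, but to count as a proof you would need either to carry out Stankova's inductive/structural argument in detail, supply a bijection in the spirit of Bloom, or independently derive the two generating functions and check they agree --- plus an explicit Ferrers shape on which the $1342$- and $2413$-avoiding full rook placement counts differ.
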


Bloom~\cite{bloom:a-refinement-of:} was the first to describe a nice bijection between these classes (more precisely, he constructed a bijection between $1423$-avoiding permutations and $2413$-avoiding permutations). By showing that his bijection preserves certain permutation statistics, he was able to establish a conjecture of Dokos, Dwyer, Johnson, Sagan, and Selsor~\cite{dokos:permutation-pat:}.

\begin{figure}
\begin{center}
	\begin{tikzpicture}[scale=0.2, baseline=(current bounding box.center)]
		\draw [lightgray, ultra thick, line cap=round] (0,0) rectangle (13,13);
		\draw[fill=black] (1,2) circle (10pt);
		\draw[fill=black] (2,11) circle (10pt);
		\draw[fill=black] (3,4) circle (10pt);
		\draw[fill=black] (4,1) circle (10pt);
		\draw[fill=black] (5,6) circle (10pt);
		\draw[fill=black] (6,3) circle (10pt);
		\draw[fill=black] (7,8) circle (10pt);
		\draw[fill=black] (8,5) circle (10pt);
		\draw[fill=black] (9,10) circle (10pt);
		\draw[fill=black] (10,7) circle (10pt);
		\draw[fill=black] (11,12) circle (10pt);
		\draw[fill=black] (12,9) circle (10pt);
	\end{tikzpicture}
\end{center}
\caption{A typical basis element for the class of permutations sortable by two increasing stacks in series.}
\label{fig-inc-inc-basis}
\end{figure}
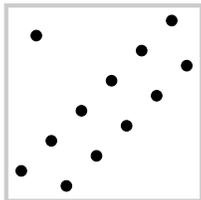

Finally, we remark that classes do not need to have the same number of basis elements to be Wilf-equivalent. For example, Atkinson, Murphy, and Ru\v{s}kuc~\cite{atkinson:sorting-with-tw:} showed that the class $\Av(1342)$ is Wilf-equivalent to the infinitely based class
$$
\Av(\{2\ (2k-1)\ 4\ 1\ 6\ 3\ 8\ 5\ \cdots\ (2k)\ (2k-3) \st k\ge 2\}),
$$
which consists of those permutations that can be sorted by two increasing stacks in series (for a visualization of these basis elements, see Figure~\ref{fig-inc-inc-basis}; a proof of this result using frps is given in Bloom and Vatter~\cite{bloom:two-vignettes-o:}). Burstein and Pantone~\cite{burstein:two-examples-of:} later showed that $\Av(1234)$ and $\Av(1324,3416725)$ are Wilf-equivalent, along with other \emph{unbalanced Wilf-equivalences}.

\index{unbalanced Wilf-equivalence}

\subsection{Avoiding a longer permutation}
\label{subsec-len4}

In the study of principal classes, there is a steep increase in difficulty when we increase the length of the basis element from $3$ to $4$. We begin this section with a routine bound, but end it with the easily-stated but notorious problem of computing the growth rate of $\Av(1324)$.

\begin{proposition}
\label{prop-bound-mono}
The growth rate of $\Av(k\cdots 21)$ is at most $(k-1)^2$.
\end{proposition}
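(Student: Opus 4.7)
The plan is to pass to the Robinson--Schensted (RSK) correspondence, since $\pi\in\Av(k\cdots 21)$ is exactly the condition that $\pi$ has no decreasing subsequence of length $k$, which (by Schensted's theorem, already invoked in the proof sketch of Theorem~\ref{thm-inc-dec-wilf-equiv}) translates into a bound on the shape of the insertion tableau. Under RSK, $\pi\in\Av_n(k\cdots 21)$ corresponds bijectively to pairs $(P,Q)$ of standard Young tableaux of the same shape $\lambda\vdash n$ whose first column has length at most $k-1$---i.e., shapes with at most $k-1$ rows. Letting $N_n$ denote the number of SYT with $n$ boxes and at most $k-1$ rows, this gives
\[
|\Av_n(k\cdots 21)| \;=\; \sum_{\substack{\lambda\vdash n\\ \ell(\lambda)\le k-1}}(f^\lambda)^2 \;\le\; \Bigl(\sum_{\substack{\lambda\vdash n\\ \ell(\lambda)\le k-1}} f^\lambda\Bigr)^{\!2} \;=\; N_n^{\,2}.
\]

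I would then bound $N_n$ by encoding each such SYT as the word in $[1,k-1]^n$ whose $i$th letter is the row index of the box containing $i$. This encoding is clearly injective (the tableau can be reconstructed left-to-right from the word), so $N_n\le (k-1)^n$. Combining, $|\Av_n(k\cdots 21)|\le (k-1)^{2n}$ for every $n$, hence $\sqrt[n]{|\Av_n(k\cdots 21)|}\le (k-1)^2$. Since $k\cdots 21$ is sum indecomposable for $k\ge 2$, the class $\Av(k\cdots 21)$ is sum closed, so by Proposition~\ref{prop-arratia-gr} it has a well-defined growth rate, which is therefore at most $(k-1)^2$.

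There is no substantive obstacle here: the argument is the standard two-ingredient combination of Schensted's theorem with the row-word encoding of SYT, glued by the elementary estimate $\sum a_i^2\le(\sum a_i)^2$ for nonnegative reals. The only minor point to verify is that the resulting inequality on $\sqrt[n]{|\Av_n|}$ really does bound the proper growth rate, which is immediate from Proposition~\ref{prop-arratia-gr} once one checks sum closure.
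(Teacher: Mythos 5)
Your proof is correct, but it takes a genuinely different route from the paper's. The paper argues entirely elementarily: by the generalization of Proposition~\ref{prop-321-merge}, each $k\cdots 21$-avoider can be partitioned into $k-1$ increasing subsequences (label each entry by the length of the longest decreasing subsequence ending at it), and the permutation is then reconstructed from two words of length $n$ over a $(k-1)$-letter alphabet---one recording the label of each position read left to right, the other the label of each value read bottom to top---giving the bound $(k-1)^{2n}$ directly, with no appeal to RSK. Your argument instead routes through the Robinson--Schensted correspondence and Schensted's theorem ($\pi$ avoids $k\cdots 21$ iff its shape has at most $k-1$ rows), bounds $\sum (f^\lambda)^2$ by $\bigl(\sum f^\lambda\bigr)^2$, and then encodes each standard Young tableau by its row word; all steps are sound, including the final appeal to sum closure and Proposition~\ref{prop-arratia-gr} for existence of the growth rate (which the paper's proof does not even bother to address, since the statement only requires an upper bound). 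The two proofs are structurally parallel---both ultimately encode by a pair of length-$n$ words over $k-1$ letters, yours being the row words of $P$ and $Q$---but the paper's version buys self-containedness and brevity, whereas yours buys a cleaner conceptual link to the exact count $\sum_{\ell(\lambda)\le k-1}(f^\lambda)^2$ and hence to Regev's Theorem~\ref{thm-regev}, making transparent exactly where the factor-of-precision is lost.
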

\begin{proof}
Let $\pi$ be a $k\cdots21$-avoiding permutation of length $n$. By the obvious generalization of Proposition~\ref{prop-321-merge} mentioned in Section~\ref{subsec-len3}, we can partition the entries of $\pi$ into $k-1$ increasing subsequences. Label these subsequences $1$ to $k-1$ and extend this labeling to their entries. We can easily reconstruct $\pi$ if we know the positions and values of the entries of each of these subsequences. We can record this information in two lists, one reading left to right, the other reading bottom to top. Thus there are at most $(k-1)^{2n}$ permutations of length $n$ that avoid $k\cdots 21$, as desired.
\end{proof}

There does not seem to be an elementary way to establish a matching lower bound for Proposition~\ref{prop-bound-mono}. B\'ona~\cite[Corollary 5.8]{bona:the-limit-of-a-:} proved that for every permutation $\beta$,
$$
\sqrt{\gr(\Av(\beta\ominus 21))}=1+\sqrt{\gr(\Av(\beta\ominus 1))},
$$
which implies that $\gr(\Av(k\cdots 21))=(k-1)^2$, but his proof is quite involved. The first derivation of $\gr(\Av(k\cdots21))$ follows from a much more general result of Regev.

\begin{theorem}[Regev~\cite{regev:asymptotic-valu:}]
\label{thm-regev}
The growth rate of $\Av(k\cdots 21)$ is $(k-1)^2$.
\end{theorem}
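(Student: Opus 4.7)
The upper bound $\gr(\Av(k\cdots 21))\le (k-1)^2$ is Proposition~\ref{prop-bound-mono}, so only the matching lower bound must be supplied. Note first that $k\cdots 21 = 1\ominus (k-1)\cdots 21$ is skew decomposable, hence sum indecomposable; by the remarks preceding Observation~\ref{obs-principal-sum-closed}, $\Av(k\cdots 21)$ is sum closed, and Proposition~\ref{prop-arratia-gr} guarantees that $\gr(\Av(k\cdots 21))$ exists (as a supremum). It therefore suffices to exhibit, along some infinite sequence of lengths $n$, at least $(k-1)^{2n}/p(n)$ permutations of length $n$ that avoid $k\cdots 21$, for some polynomial $p$.

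The natural tool is the Robinson--Schensted correspondence combined with Schensted's theorem, which asserts that $\pi$ avoids $k\cdots 21$ exactly when the common shape of its insertion and recording tableaux has at most $k-1$ rows. Consequently,
$$
|\Av_n(k\cdots 21)| \;=\; \sum_{\lambda\vdash n,\ \ell(\lambda)\le k-1} (f^{\lambda})^{2},
$$
where $f^\lambda$ denotes the number of standard Young tableaux of shape $\lambda$. Discarding all but one term, the lower bound reduces to choosing a single shape $\lambda=\lambda(n)$ of at most $k-1$ rows for which $f^\lambda\ge (k-1)^n/p(n)$.

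Take $n=(k-1)m$ and let $\lambda=(m^{k-1})$ be the rectangular shape. The hook length at the cell in row $i$ (from the top) and column $j$ (from the left) is $h(i,j)=(m-j)+(k-1-i)+1$, so the hook length formula gives
$$
f^{(m^{k-1})} \;=\; \frac{n!}{\prod_{a=0}^{k-2}(a+m)!/a!}.
$$
Applying Stirling's approximation to $n!$ and to each $(a+m)!$ (with $a\le k-2$ a fixed constant and $m\to\infty$), one finds that the leading exponential factor is precisely $(k-1)^n$, with a subexponential correction of polynomial order in $n$. Squaring yields a single term $(f^{(m^{k-1})})^2$ of order $(k-1)^{2n}/n^{O(1)}$, and hence $|\Av_n(k\cdots 21)|\ge (k-1)^{2n}/n^{O(1)}$ along $n=(k-1)m$. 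Taking $n$th roots and passing to the limit gives $\gr(\Av(k\cdots 21))\ge (k-1)^2$, completing the proof.

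The main obstacle is the Stirling estimate for $f^{(m^{k-1})}$: the exponential pieces $(k-1)^n$ coming from $n\log n$ and the pieces coming from $(k-1)m\log m$ must cancel cleanly, leaving only $n\log(k-1)$ plus a logarithmic remainder. The computation itself is routine, but one needs to track the sub-leading terms carefully enough to be sure that the polynomial factor in $n$ does not hide any exponential loss, so that $\limsup \sqrt[n]{(f^{(m^{k-1})})^2} = (k-1)^2$.
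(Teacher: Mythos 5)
Your argument is correct, but it does not follow the paper's route, because the paper offers no proof of this theorem at all: it is quoted as a consequence of Regev's much more general asymptotic result, and the surrounding text even remarks that there does not seem to be an elementary way to establish a lower bound matching Proposition~\ref{prop-bound-mono}. What you supply is the standard single-shape argument: Schensted's theorem turns avoidance of $k\cdots 21$ into the RSK shape having at most $k-1$ rows, the sum $\sum_\lambda (f^\lambda)^2$ is bounded below by the one rectangular term $\lambda=(m^{k-1})$ with $n=(k-1)m$, and the hook length formula plus Stirling gives $f^{(m^{k-1})}\ge (k-1)^n/n^{O(1)}$; the hook product of the $(k-1)\times m$ rectangle is indeed $\prod_{a=0}^{k-2}(m+a)!/a!$, and since $k$ is fixed the cancellation $n\log n-(k-1)m\log m=n\log(k-1)+O(\log n)$ works out exactly as you say, with only a polynomial loss. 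Your framing via sum closure is also sound: $k\cdots 21$ is sum indecomposable, so $\Av(k\cdots 21)$ is sum closed, Proposition~\ref{prop-arratia-gr} gives existence of the growth rate together with $\gr(\C)\ge\sqrt[n]{|\C_n|}$, and evaluating along your subsequence and invoking Proposition~\ref{prop-bound-mono} pins the value at $(k-1)^2$. The trade-off relative to the cited source is that Regev's theorem delivers far sharper information---the precise asymptotics of $|\Av_n(k\cdots 21)|$, including the polynomial order and the multiplicative constant, obtained by analyzing the full sum over shapes---whereas your single-term bound is more elementary (modulo RSK and the hook length formula) and suffices exactly because only the exponential growth rate is claimed here.
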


This immediately implies the following, which will be useful later when we bound the growth rate of principal classes avoiding layered permutations (Theorem~\ref{thm-layered-gr}).

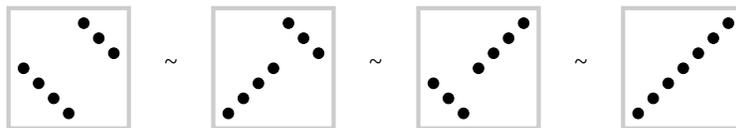
\begin{figure}
$$
	\begin{tikzpicture}[scale=0.2, baseline=(current bounding box.center)]
		\draw [lightgray, ultra thick, line cap=round] (0,0) rectangle (8,8);
		\draw[fill=black] (1,4) circle (10pt);
		\draw[fill=black] (2,3) circle (10pt);
		\draw[fill=black] (3,2) circle (10pt);
		\draw[fill=black] (4,1) circle (10pt);
		\draw[fill=black] (5,7) circle (10pt);
		\draw[fill=black] (6,6) circle (10pt);
		\draw[fill=black] (7,5) circle (10pt);
	\end{tikzpicture}
\quad\sim\quad
	\begin{tikzpicture}[scale=0.2, baseline=(current bounding box.center)]
		\draw [lightgray, ultra thick, line cap=round] (0,0) rectangle (8,8);
		\draw[fill=black] (1,1) circle (10pt);
		\draw[fill=black] (2,2) circle (10pt);
		\draw[fill=black] (3,3) circle (10pt);
		\draw[fill=black] (4,4) circle (10pt);
		\draw[fill=black] (5,7) circle (10pt);
		\draw[fill=black] (6,6) circle (10pt);
		\draw[fill=black] (7,5) circle (10pt);
	\end{tikzpicture}
\quad\sim\quad
	\begin{tikzpicture}[scale=0.2, baseline=(current bounding box.center)]
		\draw [lightgray, ultra thick, line cap=round] (0,0) rectangle (8,8);
		\draw[fill=black] (1,3) circle (10pt);
		\draw[fill=black] (2,2) circle (10pt);
		\draw[fill=black] (3,1) circle (10pt);
		\draw[fill=black] (4,4) circle (10pt);
		\draw[fill=black] (5,5) circle (10pt);
		\draw[fill=black] (6,6) circle (10pt);
		\draw[fill=black] (7,7) circle (10pt);
	\end{tikzpicture}
\quad\sim\quad
	\begin{tikzpicture}[scale=0.2, baseline=(current bounding box.center)]
		\draw [lightgray, ultra thick, line cap=round] (0,0) rectangle (8,8);
		\draw[fill=black] (1,1) circle (10pt);
		\draw[fill=black] (2,2) circle (10pt);
		\draw[fill=black] (3,3) circle (10pt);
		\draw[fill=black] (4,4) circle (10pt);
		\draw[fill=black] (5,5) circle (10pt);
		\draw[fill=black] (6,6) circle (10pt);
		\draw[fill=black] (7,7) circle (10pt);
	\end{tikzpicture}
$$
\caption{Demonstrating the Wilf-equivalence of a two-layer permutation and a monotone permutation of the same length. The first and last equivalences follow from Theorem~\ref{thm-inc-dec-wilf-equiv}, while the central equivalence is an application of symmetry.}
\label{fig-two-layer}
\end{figure}

\begin{corollary}
\label{cor-two-layer}
For any layered permutation $\beta$ of length $k$ and consisting of at most two layers, the growth rate of $\Av(\beta)$ is $(k-1)^2$.
\end{corollary}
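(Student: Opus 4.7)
My plan is to show the stronger statement that every layered permutation $\beta$ of length $k$ with at most two layers is Wilf-equivalent to the monotone permutation $12\cdots k$, after which Theorem~\ref{thm-regev} yields the claimed growth rate $(k-1)^2$. The single-layer case $\beta=k\cdots 21$ is exactly Theorem~\ref{thm-regev}, so I would concentrate on the two-layer case, writing $\beta = a\cdots 1\oplus b\cdots 1$ with $a+b=k$ and $a,b\ge 1$. Figure~\ref{fig-two-layer} already illustrates the chain of equivalences I want to formalize in general.

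First, I would apply Theorem~\ref{thm-inc-dec-wilf-equiv} to conclude that $a\cdots 1$ and $12\cdots a$ are shape-Wilf-equivalent. Feeding this into Proposition~\ref{prop-swe-sums} with $\delta = b\cdots 1$ shows that
\[
\beta \;=\; a\cdots 1 \oplus b\cdots 1 \quad\text{is shape-Wilf-equivalent to}\quad 12\cdots a \oplus b\cdots 1.
\]
Since shape-Wilf-equivalence specializes to Wilf-equivalence on square boards, the principal classes avoiding these two permutations have the same counting sequence.

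Next, I would invoke the reverse-complement symmetry of the containment order (one of the eight symmetries of the square discussed after Figure~\ref{fig-perm-contain}), which preserves $|\C_n|$ for any class. A direct computation shows that the reverse-complement of $12\cdots a\oplus b\cdots 1$ is $b\cdots 1\oplus 12\cdots a$, so these two permutations are Wilf-equivalent. Now I repeat the first step on the remaining decreasing layer: Theorem~\ref{thm-inc-dec-wilf-equiv} together with Proposition~\ref{prop-swe-sums} (this time using $\delta = 12\cdots a$) gives that $b\cdots 1 \oplus 12\cdots a$ is shape-Wilf-equivalent, and hence Wilf-equivalent, to $12\cdots b \oplus 12\cdots a = 12\cdots k$.

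Chaining the three Wilf-equivalences shows that $\Av(\beta)$ and $\Av(12\cdots k)$ are equinumerous, and Theorem~\ref{thm-regev} then finishes the proof. There is no genuine obstacle here; the entire argument amounts to bookkeeping on the chain depicted in Figure~\ref{fig-two-layer}, and the only subtle point worth stating carefully is that the middle step relies on a symmetry of the containment order (which preserves Wilf-equivalence but not shape-Wilf-equivalence), which is why the argument does not automatically extend to three or more layers.
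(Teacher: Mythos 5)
Your proof is correct and is exactly the paper's argument: the paper proves the two-layer case by the chain depicted in Figure~\ref{fig-two-layer}, whose outer equivalences come from Theorem~\ref{thm-inc-dec-wilf-equiv} (fed through Proposition~\ref{prop-swe-sums} with the remaining layer as $\delta$) and whose middle equivalence is the reverse-complement symmetry, just as you describe. Your closing observation about why the symmetry step blocks an extension to three or more layers is a nice bit of bookkeeping but does not change the fact that the route is the same.
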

\begin{proof}
The single layer case is Regev's Theorem~\ref{thm-regev}, while the two-layer case follows by Theorem~\ref{thm-inc-dec-wilf-equiv} and symmetry, as shown in Figure~\ref{fig-two-layer}.
\end{proof}

Gessel~\cite{gessel:symmetric-funct:} later gave an explicit formula for the generating functions of the classes $\Av(k\cdots 21)$ in terms of determinants. Stanley~\cite{stanley:recent-progress:} expressed the significance of Gessel's result by writing that
\begin{quote}
Gessel's theorem reduces the theorems of Baik, Deift, and Johansson to ``just'' analysis, viz., the Riemann-Hilbert problem in the theory of integrable systems, followed by the method of steepest descent to analyze the asymptotic behavior of integrable systems.
\end{quote}
(The quote refers to Baik, Deift, and Johansson's proof~\cite{baik:on-the-distribu:} that, after appropriate rescaling, the distribution of the longest increasing subsequence statistic on permutations of length $n$ converges to the Tracy-Widom distribution.)

\index{$4321$-avoiding permutations}

Gessel's result was later rederived by Bousquet-M\'elou~\cite{bousquet-melou:counting-permut:} using the kernel method. In the case $k=4$, she expressed the enumeration as an explicit sum,
$$
|\Av_n(4321)|
=
\frac{1}{(n+1)^2(n+2)}\sum_{i=0}^n {2i\choose i}{n+1\choose i+1}{n+2\choose i+2}.
$$
(Bousquet-M\'elou had given an earlier derivation of this formula in \cite{bousquet-melou:four-classes-of:}.) The generating function in the $k=4$ case is known to be $D$-finite but nonalgebraic, and no ``nice'' formulas are known for any larger values of $k$.

The Wilf-equivalences of Section~\ref{subsec-wilf-equiv} together with the trivial symmetries show that (from an enumerative perspective) there are only two more cases of principal classes avoiding a permutation of length $4$: $\Av(1342)$ and $\Av(1324)$. B\'ona was the first to enumerate $\Av(1342)$, by constructing a bijection between the skew indecomposable $1342$-avoiding permutations and $\beta(0,1)$-trees, which had been previously counted by Tutte~\cite{tutte:a-census-of-pla:}. (Because the class $\Av(1342)$ is skew closed, its generating function can be computed easily from the generating function for its skew indecomposables, as mentioned in Section~\ref{subsec-basics}.)

\index{$1342$-avoiding permutations}


\begin{theorem}[B\'ona~\cite{bona:exact-enumerati:}]
\label{thm-1342}
The generating function for $\Av(1342)$ is 
$$
\frac{32x}{1+20x-8x^2-(1-8x)^{3/2}},
$$
and thus the growth rate of this class is $8$.
\end{theorem}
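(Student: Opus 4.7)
The class $\Av(1342)$ has a single basis element, $1342$, which is skew indecomposable (its minimum entry sits in the first position, so no nontrivial skew decomposition is possible). By the criterion noted in Section~\ref{subsec-basics}, this means $\Av(1342)$ is skew closed, and the symmetric version of the sum-closed identity recorded there gives
$$
f(x) = \frac{1}{1 - s(x)},
$$
where $s(x)$ is the generating function for the nonempty skew-indecomposable members of $\Av(1342)$. The problem thus reduces to enumerating these skew-indecomposable permutations.

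The core of the proof is a bijection $\Phi$ between the skew-indecomposable members of $\Av(1342)$ and $\beta(0,1)$-trees, a family of labeled rooted plane trees first enumerated by Tutte~\cite{tutte:a-census-of-pla:} in his census of non-separable planar maps. The natural approach is to decompose a skew-indecomposable $\pi \in \Av(1342)$ recursively using the position of its minimum: writing $\pi = \alpha\,1\,\beta$, pattern avoidance of $1342$ forces strong structural restrictions on how entries of $\alpha$ and $\beta$ interleave (the combined behavior of any entry of $\alpha$ with the entire block $\beta$ is tightly controlled, since otherwise one quickly locates a $1342$-pattern using $1$ and three larger entries). These restrictions reduce the local interaction data to a finite set of glueing types, which one reads off as the label and branching at the root of $\Phi(\pi)$; recursion on the pieces produces the subtrees. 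The main obstacle is verifying that the tree produced actually satisfies the $\beta(0,1)$ conditions (leaves labeled $0$, each internal non-root label at most one more than the sum of its children's labels, and the root label equal to the sum of its children's labels), and that the procedure is a bijection rather than merely an injection. This is where the argument will be hardest, and where B\'ona's insight about the shape of skew-indecomposable $1342$-avoiders must do the real work.

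Once the bijection is established, Tutte's enumeration supplies an explicit algebraic equation satisfied by $s(x)$. Routine algebraic manipulation then converts $f(x) = 1/(1 - s(x))$ into the closed form
$$
f(x) = \frac{32x}{1 + 20x - 8x^2 - (1-8x)^{3/2}}.
$$
For the growth rate, the only branch singularity of $f(x)$ comes from the factor $(1 - 8x)^{3/2}$ and is located at $x = 1/8$. A direct check shows that the denominator is nonvanishing on $[0, 1/8]$ (at $x = 1/8$ it evaluates to $27/8$), so $x = 1/8$ is the unique dominant singularity of $f$. Standard singularity analysis then yields $|\Av_n(1342)| \sim C \cdot 8^n\, n^{-5/2}$ for some constant $C > 0$, and in particular $\gr(\Av(1342)) = 8$.
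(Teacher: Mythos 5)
Your outline follows B\'ona's original route---exactly the one this survey attributes to him just before the theorem: $1342$ is skew indecomposable, so $\Av(1342)$ is skew closed and $f=1/(1-s)$, where $s$ counts the nonempty skew indecomposable members; these are then matched with the $\beta(0,1)$-trees counted by Tutte, and the closed form plus the singularity at $x=1/8$ give the growth rate $8$. The framing is sound, and your numerical check that the denominator equals $27/8$ at $x=1/8$ is correct (minor quibble: the denominator does vanish at $x=0$, harmlessly, since the numerator does too). But as a proof the proposal has a genuine gap, and you name it yourself: the bijection $\Phi$ between skew indecomposable $1342$-avoiders and $\beta(0,1)$-trees is never constructed. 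Saying that the minimum entry forces ``strong structural restrictions'' reducible to ``a finite set of glueing types'' is a plan, not an argument; verifying the $\beta(0,1)$ label conditions and the invertibility of the map is precisely the content of B\'ona's theorem, and deferring it to ``B\'ona's insight'' leaves the enumeration of $s$ unestablished. Everything downstream (Tutte's equation, the algebra, the asymptotics) is routine only once that bijection exists.

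It is also worth noting that the survey's own sketch after the theorem statement takes a different path, due to Bloom and Elizalde: one counts the symmetric class $\Av(3124)$ by passing to board-minimal full rook placements (which automatically avoid $312$), encodes them as labeled Dyck paths satisfying the monotone, zero, $312$-avoiding tunnel, and peak properties, and extracts the generating function with a catalytic variable and Tutte's quadratic method. That derivation buys a self-contained functional-equation argument built on machinery already developed in the Wilf-equivalence section, whereas the $\beta(0,1)$-tree bijection, once carried out, buys a direct combinatorial correspondence with objects from Tutte's census of planar maps. Either way, the decisive step---the bijection in your route, or the Dyck-path encoding and quadratic-method solution in theirs---is the part your proposal would still need to supply.
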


Bloom and Elizalde~\cite{bloom:pattern-avoidan:} have recently shown how to derive this generating function using the techniques of Bloom and Saracino~\cite{bloom:a-simple-biject:} discussed in Section~\ref{subsec-wilf-equiv}, which we briefly sketch.

First, instead of counting $\Av(1342)$ they count the symmetric class $\Av(3124)$. A frp is \emph{board minimal} if its rooks do not lie in any smaller Ferrers board, or equivalently, if it has a rook in each of its upper-right corners. There is an obvious bijection, which we denote by $\chi$, between permutations and board minimal frps. It is also easy to see that were a frp from $\chi(\Av(3124))$ to contain $312$, there would be a rook in an upper-right corner above and to the right of this copy of $312$, a contradiction. Therefore these frps avoid $312$.

Next label the border of these frps as in Section~\ref{subsec-wilf-equiv} and convert them to Dyck paths. The corresponding Dyck paths must satisfy the monotone, zero, and $312$-avoiding tunnel properties, and in addition, because they arise from board minimal frps, they satisfy the \emph{peak property}: the labels rounding any peak (an up step followed by a down step) are $\ell$, $\ell+1$, $\ell$ for some value of $\ell$.


Thus we are reduced to counting the family of labeled Dyck paths satisfying the monotone, zero, $312$-avoiding tunnel, and peak properties. Bloom and Elizalde were able to find the generating function for these by introducing a catalytic variable and applying Tutte's quadratic method (which is described in Flajolet and Sedgewick~\cite[VII. 8.2]{flajolet:analytic-combin:}).

%
%
%

\index{$1324$-avoiding permutations}

This leaves a single case, that of $\Av(1324)$. As reported in \cite{elder:problems-and-co:}, at the conference \emph{Permutation Patterns 2005}, Zeilberger made the colorful claim that
\begin{quote}
Not even God knows $|\Av_{1000}(1324)|$.
\end{quote}
Steingr{\'{\i}}msson is less pessimistic, writing in \cite{steingrimsson:some-open-probl:} that
\begin{quote}
I'm not sure how good Zeilberger's God is at math, but I believe that some humans will find this number in the not so distant future.
\end{quote}
B\'ona currently holds the record for the lowest upper bound on the growth rate of $\Av(1324)$.

\begin{theorem}[B\'ona~\cite{bona:a-new-record-fo:}]
\label{thm-1324-upper-bound}
The growth rate of $\Av(1324)$ is at most $13.74$.
\end{theorem}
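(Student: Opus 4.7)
The plan is to refine the standard decomposition-based upper-bound argument for $\Av(1324)$ by tracking extra structural information beyond just the positions of left-to-right maxima. First I would invoke Proposition~\ref{prop-arratia-gr}: since $1324$ is sum indecomposable, $\Av(1324)$ is sum closed, and therefore it suffices to prove $|\Av_n(1324)| \le C \cdot 13.74^n$ for some constant $C$ and all $n$. Equivalently, using the sum-indecomposable decomposition described in Section~\ref{subsec-basics}, one may concentrate on bounding the number of sum-indecomposable $1324$-avoiders and recover the bound on the class via the $1/(1-s)$ formula, provided the radius of convergence of $s$ is at least $1/13.74$.

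Fix $\pi \in \Av_n(1324)$ and decompose it according to its left-to-right maxima $\pi(i_1) < \pi(i_2) < \cdots < \pi(i_k)$; this partitions the remaining $n-k$ entries into ``slots'' delimited by consecutive maxima. The structural observation is that $1324$-avoidance forces significant restrictions between slots: if some non-maximum entry $a$ precedes, in position, a larger non-maximum entry $b$, and $b$ lies between two left-to-right maxima the larger of which exceeds $a$, then the configuration $(a, \text{first maximum above }a, b, \text{larger maximum})$ forms a forbidden $1324$. Carrying this analysis out carefully, one can partition the non-maxima of each slot into a ``low'' subsequence (entries below the previous left-to-right maximum) and a ``high'' subsequence, with each subsequence further constrained to avoid certain smaller patterns that would combine with the chain of maxima to yield $1324$. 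Each low block is then order-isomorphic to a $1324$-avoider itself, while each high block must avoid $132$ in an appropriately relativized sense, giving Catalan-type bounds slot by slot.

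Assembling the pieces, I would express $|\Av_n(1324)|$ as a sum over $k$ (number of maxima) and over the compositions that describe the slot sizes, of a product of binomial coefficients times counts of low- and high-block fillings. An asymptotic estimate via Stirling's formula, coupled with a numerical optimization over the composition parameters $(k, \text{slot sizes})$, yields a bound of the form $\alpha^n$ with $\alpha < 13.74$. The main obstacle is tightness: handling the slots independently produces only the weaker $16^n$ bound known from Claesson--Jel\'inek--Steingr{\'{\i}}msson, so the crux of the argument is to track the cross-slot dependencies -- concretely, the fact that a high entry inside one slot forbids certain low entries in later slots -- and to incorporate this correlation into the optimization. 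Quantifying that savings precisely, and verifying that the optimized exponent lies below $13.74$, is the delicate part of the proof.
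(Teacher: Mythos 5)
There is a genuine gap here, on two levels. First, a concrete error at the outset: $1324$ is \emph{not} sum indecomposable --- it equals $1\oplus 213$ --- so $\Av(1324)$ is not sum closed. It is skew closed (cf.\ Observation~\ref{obs-principal-sum-closed}), so the existence of the growth rate via Proposition~\ref{prop-arratia-gr} survives, but your plan to bound the class through the generating function $1/(1-s)$ of its sum indecomposables is built on a false premise and would have to be transposed to the skew decomposition before anything else can proceed. Second, and more seriously, the entire quantitative content of the theorem is missing. Your slot decomposition by left-to-right maxima, handled slot by slot, gives (as you yourself note) nothing better than the Claesson--Jel{\'{\i}}nek--Steingr{\'{\i}}msson bound of $16$ coming from $\Av(1324)\subseteq\Av(132)\odot\Av(213)$ and Proposition~\ref{prop-merge-gr}. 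The step that would push the exponent below $13.74$ --- ``quantifying that savings precisely'' from cross-slot dependencies --- is exactly the theorem, and it is only asserted; the specific constant $13.74$ never emerges from any computation in your sketch. In addition, the structural claim producing the forbidden pattern is imprecise: taking the ``first maximum above $a$'' does not guarantee an entry exceeding $b$ in value, which is what the $3$ of a $1324$ occurrence requires, so even the combinatorial skeleton needs repair.

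For comparison, the result as cited in the survey is B\'ona's refinement of the very merge argument you mention: one colors the entries of a $1324$-avoider so that the red entries avoid $132$ and the blue entries avoid $213$, and then shows that the red and blue subsequences cannot be merged freely --- the correlations between the two color classes reduce the count of admissible merges from the $16^n$ of Proposition~\ref{prop-merge-gr} down to $13.74^n$. Your proposal gestures at the same phenomenon (dependencies between parts of a decomposition) but through a different, unexecuted decomposition, and without any argument that this particular scheme captures enough of the correlation to reach the stated constant.
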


\begin{figure}
\begin{center}
\begin{footnotesize}
\begin{tikzpicture}
  \matrix (A) [matrix of math nodes, row sep=-0.05mm, column sep=-0.05mm] {
    \vvg{1}\\
    \vvg{1}& 1\\ 
    \vvg{1}& \vvg{2}& 2& 1\\
    \vvg{1}& \vvg{2}& \vvg{5}& 6& 5& 3& 1\\
    \vvg{1}& \vvg{2}& \vvg{5}& \vvg{10}& 16& 20& 20&  15&   9&   4&   1\\
    \vvg{1}& \vvg{2}& \vvg{5}& \vvg{10}& \vvg{20}& 32& 51&  67&  79&  80&  68&  49&   29& \dots\\
    \vvg{1}& \vvg{2}& \vvg{5}& \vvg{10}& \vvg{20}& \vvg{36}& 61&  96& 148& 208& 268& 321&  351& \dots\\
    \vvg{1}& \vvg{2}& \vvg{5}& \vvg{10}& \vvg{20}& \vvg{36}& \vvg{65}& 106& 171& 262& 397& 568&  784& \dots\\
    \vvg{1}& \vvg{2}& \vvg{5}& \vvg{10}& \vvg{20}& \vvg{36}& \vvg{65}& \vvg{110}& 181& 286& 443& 664&  985& \dots\\
  };
\end{tikzpicture}
\end{footnotesize}
\end{center}
\caption{The number of $1324$-avoiding permutations, grouped into columns by their number of inversions.}
\label{fig-claesson-chart}
\end{figure}

B\'ona established Theorem~\ref{thm-1324-upper-bound} by refining the approach of Claesson, Jel{\'{\i}}nek, and Steingr{\'{\i}}msson~\cite{claesson:upper-bounds-fo:} presented in Section~\ref{subsec-merge}. Claesson, Jel{\'{\i}}nek, and Steingr{\'{\i}}msson made a further conjecture which, if true, would give a better upper bound. Consider the table shown in Figure~\ref{fig-claesson-chart}. In this table, the entry in column $k+1$ of row $n$ is the number of $1324$-avoiding permutations of length $n$ with precisely $k$ inversions. As the gray cells are meant to indicate, it appears that the entries along a column are weakly increasing and eventually constant. They proved that the columns are eventually constant, but could not show that they are weakly increasing. If this conjecture could be proved, it would give the improved bound of $e^{\pi\sqrt{2/3}}\le 13.01$ on the growth rate of $\Av(1324)$. Indeed this might be part of a much more general phenomenon; empirical evidence suggests that the analogous table for $\beta$-avoiding permutations has weakly increasing columns for all $\beta\neq 12\cdots k$.


Arratia~\cite{arratia:on-the-stanley-:} had conjectured that $\gr(\Av(\beta))\le (k-1)^2$ for all permutations $\beta$ of length $k$, and B\'ona~\cite{bona:the-limit-of-a-:} had gone even further, conjecturing that equality held if and only if $\beta$ was layered. We know now (by Fox's results presented in Section~\ref{subsec-fox}) that these conjectures are far from the truth, but the first known counterexample was the class $\Av(1324)$. Using quite a bit of computation and the insertion encoding (introduced by Albert, Linton, and Ru\v{s}kuc~\cite{albert:the-insertion-e:}), Albert, Elder, Rechnitzer, Westcott, and Zabrocki~\cite{albert:on-the-stanley-:} showed that
$$
\gr(\Av(1324))\ge 9.47. 
$$
Extrapolating from the amount their lower bounds improved as they increased the accuracy of their approximation, they guessed that the true value lies between $11$ and $12$. Madras and Liu~\cite{madras:random-pattern-:} later used Markov chain Monte Carlo methods to estimate that this growth rate lies between $10.71$ and $11.83$.

At present, the best estimate seems to be due to Conway and Guttmann~\cite{conway:on-the-growth-r:}. They extended the approach of Johansson and Nakamura~\cite{johansson:using-functiona:} (who had computed the first $31$ terms of the enumeration) to compute the first $36$ terms of the enumeration of this class and then applied the methodology laid out in Guttmann~\cite{guttmann:analysis-of-ser:} to approximate that
$$
|\Av_n(1324)|\sim C\mu^n\nu^{\sqrt{n}} n^g,
$$
where $C\approx 9.5$, $\nu\approx 0.04$, $g\approx -1.1$. and $\mu$---the growth rate---is approximately $11.60$.

Bevan~\cite{bevan:a-large-set-of-:} has recently established that $\gr(\Av(1324))>9.81$ by constructing a large subset of this class with a particularly regular structure. Sketching his proof would require quite a detour, but we attempt to convey a hint of his novel approach.

Given a permutation $\pi$, define the poset $P_\pi$ on the points $\{(i,\pi(i))\}$ in which $(i,\pi(i))\le (j,\pi(j))$ if both $i<j$ and $\pi(i)<\pi(j)$, i.e., if this pair of entries forms a noninversion in $\pi$. The drawing on the left of Figure~\ref{fig-1324-hasse} shows the Hasse diagram of the poset of a $1324$-avoiding permutation. Because this permutation avoids $1324$, its Hasse diagram does not contain a diamond:
\begin{center}
	\begin{tikzpicture}[scale=0.2, baseline=(current bounding box.center)]
		\draw [lightgray, ultra thick, line cap=round] (0,0) rectangle (5,5);
		\draw (1,1)--(2,3)--(4,4);
		\draw (1,1)--(3,2)--(4,4);
		\draw[black, fill=black] (1,1) circle (10pt);
		\draw[black, fill=black] (2,3) circle (10pt);
		\draw[black, fill=black] (3,2) circle (10pt);
		\draw[black, fill=black] (4,4) circle (10pt);
	\end{tikzpicture}
\end{center}

\begin{figure}
\begin{center}
	\begin{tikzpicture}[scale=0.2, baseline=(current bounding box.center)]
		\draw [lightgray, ultra thick, line cap=round] (0,0) rectangle (22,22);
		\draw (6,9)--(7,18);
		\draw (6,9)--(13,17);
		\draw (6,9)--(17,13);
		\draw (6,9)--(18,12);
		\draw (6,9)--(19,10);
		\draw (11,4)--(13,17);
		\draw (11,4)--(16,6);
		\draw (11,4)--(21,5);
		\draw (1,15)--(2,21);
		\draw (1,15)--(3,16);
		\draw (3,16)--(7,18);
		\draw (3,16)--(13,17);
		\draw (7,18)--(8,20);
		\draw (7,18)--(12,19);
		\draw (11,4)--(12,19);
		\draw (14,14)--(6,9);
		\draw (14,14)--(11,4);
		\draw (6,9)--(4,8);
		\draw (6,9)--(5,7);
		\draw (11,4)--(9,2); 
		\draw (15,1)--(16,6);
		\draw (15,1)--(21,5);
		\draw (16,6)--(17,13);
		\draw (16,6)--(18,12);
		\draw (16,6)--(19,10);
		\draw (19,10)--(20,11);
		\draw[black, fill=black] (1,15) circle (10pt);
		\draw[black, fill=black] (2,21) circle (10pt);
		\draw[black, fill=black] (3,16) circle (10pt);
		\draw[black, fill=black] (4,8) circle (10pt);
		\draw[black, fill=black] (5,7) circle (10pt);
		\draw[black, fill=black] (6,9) circle (10pt);
		\draw[black, fill=black] (7,18) circle (10pt);
		\draw[black, fill=black] (8,20) circle (10pt);
		\draw[black, fill=black] (12,19) circle (10pt);
		\draw[black, fill=black] (9,2) circle (10pt);
		\draw[black, fill=black] (10,3) circle (10pt);
		\draw[black, fill=black] (11,4) circle (10pt);
		\draw[black, fill=black] (13,17) circle (10pt);
		\draw[black, fill=black] (14,14) circle (10pt);
		\draw[black, fill=black] (15,1) circle (10pt);
		\draw[black, fill=black] (16,6) circle (10pt);
		\draw[black, fill=black] (17,13) circle (10pt);
		\draw[black, fill=black] (18,12) circle (10pt);
		\draw[black, fill=black] (19,10) circle (10pt);
		\draw[black, fill=black] (20,11) circle (10pt);
		\draw[black, fill=black] (21,5) circle (10pt);
	\end{tikzpicture}
\quad\quad
	\begin{footnotesize}
	\begin{tikzpicture}[scale=0.2, baseline=(current bounding box.center)]
		\draw[color=lightgray, fill=lightgray, rotate around={-45:(5.1,8.1)}] (5.1,8.1) ellipse (45pt and 55pt);
		\draw[color=lightgray, fill=lightgray, rotate around={-45:(10,3)}] (10,3) ellipse (25pt and 60pt);
		\draw [lightgray, ultra thick, line cap=round] (0,0) rectangle (22,22);
		\draw [lightgray, ultra thick, line cap=round] (0,14.5)--(22,14.5);
		\draw [lightgray, ultra thick, line cap=round] (14.5,0)--(14.5,22);
		\draw (1,15)--(2,21);
		\draw (1,15)--(3,16);
		\draw (3,16)--(7,18);
		\draw (3,16)--(13,17);
		\draw (7,18)--(8,20);
		\draw (7,18)--(12,19);
		\draw (14,14)--(6,9);
		\draw (14,14)--(11,4);
		\draw (6,9)--(4,8);
		\draw (6,9)--(5,7);
		\draw (11,4)--(9,2); 
		\draw (15,1)--(16,6);
		\draw (15,1)--(21,5);
		\draw (16,6)--(17,13);
		\draw (16,6)--(18,12);
		\draw (16,6)--(19,10);
		\draw (19,10)--(20,11);
		\draw[black, fill=black] (1,15) circle (10pt);
		\draw[black, fill=black] (2,21) circle (10pt);
		\draw[black, fill=black] (3,16) circle (10pt);
		\draw[black, fill=black] (4,8) circle (10pt);
		\draw[black, fill=black] (5,7) circle (10pt);
		\draw[black, fill=black] (6,9) circle (10pt);
		\draw[black, fill=black] (7,18) circle (10pt);
		\draw[black, fill=black] (8,20) circle (10pt);
		\draw[black, fill=black] (12,19) circle (10pt);
		\draw[black, fill=black] (9,2) circle (10pt);
		\draw[black, fill=black] (10,3) circle (10pt);
		\draw[black, fill=black] (11,4) circle (10pt);
		\draw[black, fill=black] (13,17) circle (10pt);
		\draw[black, fill=black] (14,14) circle (10pt);
		\draw[black, fill=black] (15,1) circle (10pt);
		\draw[black, fill=black] (16,6) circle (10pt);
		\draw[black, fill=black] (17,13) circle (10pt);
		\draw[black, fill=black] (18,12) circle (10pt);
		\draw[black, fill=black] (19,10) circle (10pt);
		\draw[black, fill=black] (20,11) circle (10pt);
		\draw[black, fill=black] (21,5) circle (10pt);
		\node [above right] at (14,14) {$\pi(i)$};
		\node [right] at (15,1) {$\pi(j)$};
	\end{tikzpicture}
	\end{footnotesize}
\end{center}
\caption{The drawing on the left shows the Hasse diagram of the poset of a $1324$-avoiding permutation. On the right, this Hasse diagram has been divided into three trees.}
\label{fig-1324-hasse}
\end{figure}
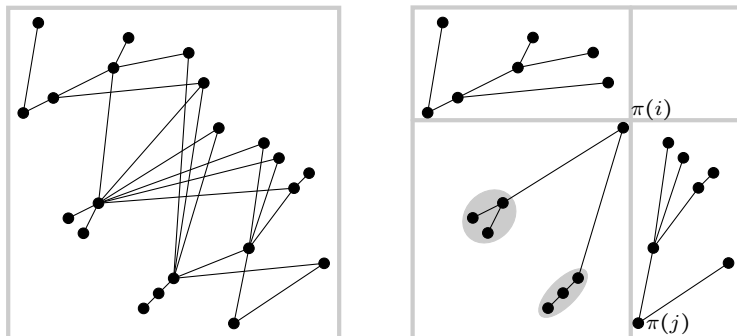

Therefore if we restrict the Hasse diagram to $\pi(1)$ and all entries lying above and to its right we see a rooted tree, as drawn on the right of Figure~\ref{fig-1324-hasse}. Next let $\pi(i)$ denote the greatest point lying below $\pi(1)$. If we restrict the Hasse diagram to $\pi(i)$ and all entries lying below and to its left, we see another rooted tree. Finally, for this example, if we let $\pi(j)$ denote the leftmost entry to the right of $\pi(i)$ and restrict the Hasse diagram to this entry and all entries above and to its right, we see yet another rooted tree. Let us call rooted trees like the first and third \emph{up trees} and trees like the second  \emph{down trees}.

Bevan's construction consists of alternating up trees and down trees, while interleaving their vertices. However, this alone is not sufficient to avoid $1324$. For example, the following interleaving of a down tree and an up tree contains $1324$.
\begin{center}
	\begin{tikzpicture}[scale=0.2, baseline=(current bounding box.center)]
		\draw [lightgray, ultra thick, line cap=round] (0,0) rectangle (9,8);
		\draw [lightgray, ultra thick, line cap=round] (5,0)--(5,8);
		\draw (4,7)--(2,5);
		\draw (2,5)--(1,3);
		\draw (4,7)--(3,2);
		\draw (6,1)--(7,4)--(8,6);
		\draw[fill=black] (1,3) circle (10pt);
		\draw[fill=black] (2,5) circle (10pt);
		\draw[fill=black] (3,2) circle (10pt);
		\draw[fill=black] (4,7) circle (10pt);
		\draw[fill=black] (6,1) circle (10pt);
		\draw[fill=black] (7,4) circle (10pt);
		\draw[fill=black] (8,6) circle (10pt);
		\draw (1,3) circle (20pt);
		\draw (2,5) circle (20pt);
		\draw (7,4) circle (20pt);
		\draw (8,6) circle (20pt);
	\end{tikzpicture}
\end{center}
Thus we must be careful in performing this interleaving. In constructing the permutation shown in Figure~\ref{fig-1324-hasse}, we avoided creating a copy of $1324$ by interleaving the vertices of the up trees with the principal subtrees of the down tree, as indicated by the shading in Figure~\ref{fig-1324-hasse} (a \emph{principal subtree} of a rooted tree is a component of the forest obtained after removing the root).

\section{Growth rates of principal classes}
\label{sec-principal}

In the 1980s, Stanley and Wilf independently conjectured that every proper permutation class (that is, every class except the class of all permutations) has a finite upper growth rate. Obviously, it suffices to prove this for principal classes. In this context, the growth rate of $\Av(\beta)$ is often (though not in this survey) called the \emph{Stanley--Wilf limit} of $\beta$.

\index{Stanley--Wilf limit}

Two noteworthy partial results were proved in the 20th century.  First B\'ona~\cite{bona:the-solution-of:} proved the Stanley--Wilf Conjecture for $\Av(\beta)$ when $\beta$ is a layered permutation (we prove a stronger result in Theorem~\ref{thm-layered-gr}). Then Alon and Friedgut~\cite{alon:on-the-number-o:} used a result of Klazar~\cite{klazar:a-general-upper:} about Davenport-Schinzel sequences to prove that for every $\beta$ there is a constant $c_\beta$ such that $|\Av_n(\beta)|<c_\beta^{n\gamma(n)}$ where $\gamma$ grows even more slowly than the inverse Ackermann function (the inverse Ackermann function itself is considered to be less than $5$ for all ``reasonable'' values of $n$).  This result caused Wilf's belief to waver; in 2002 he~\cite{wilf:the-patterns-of:} wrote that

\begin{quote}
This conjecture had been considered a ``sure thing,'' but the results of Alon and Friedgut seem to make it somewhat less certain because a similar bound, involving the Ackermann function, in the Davenport-Schinzel theory turns out to be best possible.
\end{quote}

In 2004, Marcus and Tardos~\cite{marcus:excluded-permut:} presented an elegant proof of the Stanley--Wilf Conjecture. About their proof, Zeilberger~\cite{zeilberger:opinion-58:-i-s:} wrote
\begin{quote}
Once I thought that proofs of long-standing conjectures had to be difficult. After all didn't brilliant people make them, and didn't these people, and many other brilliant people, unsuccessfully try to prove them? Isn't that a meta-proof that the proof, if it exists, should be long, difficult, and ugly? And if not, shouldn't they, at least, contain entirely new ideas and/or techniques?
\end{quote}

We present Marcus and Tardos' proof in Section~\ref{subsec-mt}. Of course, the proof of the Stanley--Wilf Conjecture only raises new questions. For example, what can be said about the relationship between the growth rate of $\Av(\beta)$ and the length of $\beta$? Many conjectures (some of which were mentioned in Section~\ref{subsec-len4}) have been made about this relationship, and most have been proven false. The first truly general result in this direction is due to Valtr, and appeared in a paper of Kaiser and Klazar~\cite{kaiser:on-growth-rates:}. We present a specialization.

\begin{proposition}
\label{prop-valtr}
For all sufficiently large $k$ and all permutations $\beta$ of length $k$, $\gr(\Av(\beta))>k^2/27$.
\end{proposition}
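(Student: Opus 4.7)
The plan is to combine Observation~\ref{obs-principal-sum-closed} and Proposition~\ref{prop-arratia-gr} with a first-moment calculation at a single well-chosen length. Since $\Av(\beta)$ is principal and hence sum- or skew-closed, Arratia's lemma gives $\gr(\Av(\beta)) \ge \sqrt[n]{|\Av_n(\beta)|}$ for every $n$, so it suffices to lower bound $|\Av_n(\beta)|$ at one convenient value of $n$.

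For this, I would use the simplest probabilistic argument. For a uniformly random $\pi \in S_n$, the expected number of $k$-element subsets of $[1,n]$ whose induced pattern is $\beta$ equals $\binom{n}{k}/k!$, because each fixed $k$-subset induces a uniformly random element of $S_k$. Summing over $\pi$ yields at most $n! \binom{n}{k}/k! \le n!\cdot n^k/(k!)^2$ permutations in $S_n$ that contain $\beta$, and therefore
$$
|\Av_n(\beta)| \ge n!\left(1 - \frac{n^k}{(k!)^2}\right).
$$
Choose $n = \lfloor 2^{-1/k}(k!)^{2/k}\rfloor$, which forces $n^k/(k!)^2 \le 1/2$ and thus $|\Av_n(\beta)| \ge n!/2$. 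By Stirling, $(k!)^{2/k} = (k/e)^2(1+o(1))$, so $n = (1+o(1))\,k^2/e^2$.

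A second application of Stirling gives $(n!/2)^{1/n} = (n/e)(1+o(1))$, hence
$$
\gr(\Av(\beta)) \;\ge\; \left(\tfrac{n!}{2}\right)^{1/n} \;=\; \frac{n}{e}(1+o(1)) \;=\; \frac{k^2}{e^3}(1+o(1)).
$$
Since $e^3 \approx 20.09 < 27$, this comfortably exceeds $k^2/27$ for all sufficiently large $k$. There is no real obstacle beyond bookkeeping: the proof is Markov's inequality plus two applications of Stirling, converted from a one-length bound into a growth-rate statement via Arratia. The only care needed is that the $o(1)$ terms and the harmless factor $2^{1/n}\to 1$ leave enough slack between $1/e^3$ and $1/27$, which is automatic since the ratio $e^3/27 \approx 0.744$ gives better than a twenty-five percent cushion.
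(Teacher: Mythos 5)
Your proof is correct and is essentially the paper's argument: both lower-bound $|\Av_n(\beta)|$ by $n!/2$ at a length $n=\Theta(k^2)$ via the same first-moment (Markov) bound on copies of $\beta$, and then pass to the growth rate using Proposition~\ref{prop-arratia-gr} together with Stirling's formula. The only difference is your choice of $n\approx k^2/e^2$ rather than $\lfloor k^2/9\rfloor$, which gives the marginally sharper constant $1/e^3$ in place of $1/27$.
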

\begin{proof}
Set $n=\lfloor k^2/9\rfloor$ (the integer part of $k^2/9$) and let $\beta$ be any pattern of length $k$. Using linearity of expectation and Stirling's Formula, we see that the probability that a permutation of length $n$ chosen uniformly at random contains $\beta$ is at most
$$
\frac{1}{k!}{n\choose k}
<
\frac{n^k}{(k!)^2}
\le
\frac{k^{2k}}{9^k\left(k/e\right)^{2k}}
<
\left(\frac{e^2}{9}\right)^k.
$$
This quantity tends to $0$ as $k\rightarrow\infty$ so for sufficiently large $k$ at least half of the permutations of length $n$ avoid $\beta$. Now Proposition~\ref{prop-arratia-gr} and Stirling's Formula show that
$$
\gr(\Av(\beta))
\ge
\sqrt[n]{|\Av_n(\beta)|}
\ge
\sqrt[n]{n!/2}
\ge
\frac{\sqrt[n]{\lceil k^2/9\rceil!}}{\sqrt[n]{n/2}}
\ge
\frac{k^2}{9e\sqrt[n]{n/2}}
>
k^2/27
$$
for large enough $k$, as desired.
\end{proof}

Therefore growth rates of principal class grow at least quadratically in terms of the length of the avoided pattern, but the upper bound conjectured by Arratia is false (as we saw in Section~\ref{subsec-len4}). The fact that Arratia's Conjecture failed for a layered permutation only lent more credence to the long-standing conjecture that among all permutations $\beta$ of length $k$, the permutation that is easiest to avoid (thereby giving the greatest growth rate) is layered. In his survey for the 2013 \emph{British Combinatorial Conference}, Steingr{\'{\i}}msson~\cite{steingrimsson:some-open-probl:} wrote that
\begin{quote}
Although the evidence is strong in support of the conjecture that the most easily avoided pattern of any given length is a layered pattern, there is currently no general conjecture that fits all the known data about the particular layered patterns with the most avoiders. However, there are some ideas about what form the most avoided layered patterns ought to have, and specific conjectures that have not been shown to be false.
\end{quote}
The oldest of the specific conjectures Steingr{\'{\i}}msson mentions appeared in Burstein's 1998 thesis~\cite{burstein:enumeration-of-:}. B\'ona~\cite{bona:new-records-in-:} made a competing conjecture in 2007 that the most easily avoided permutation of length $k$ was $1\oplus 21\oplus 21\oplus\cdots\oplus 21$ for odd $k$ and $1\oplus 21\oplus 21\oplus\cdots\oplus 21\oplus 1$ for even $k$.

After these conjectures were made, Claesson, Jel{\'{\i}}nek, and Steingr{\'{\i}}msson~\cite{claesson:upper-bounds-fo:} established that for every layered permutation $\beta$ of length $k$ the growth rate of $\Av(\beta)$ is less than $4k^2$ (we present their argument in Section~\ref{subsec-merge}). B\'ona~\cite{bona:on-the-best-upp:} refined this approach to show that for his conjectured easiest-to-avoid permutations, the corresponding growth rates were at most $2.25k^2$. Because of this, the function
$$
g(k)=\max\{\gr(\Av(\beta))\st |\beta|=k\}
$$
was widely believed to grow quadratically. Thus it was quite a shock when Fox~\cite{fox:stanley-wilf-li:} showed in 2014 that $g(k)$ grows faster than any polynomial. We prove a specialization of Fox's Theorem in Section~\ref{subsec-fox}.

Before moving on, because this section concerns estimates we briefly define the pieces of big-oh notation we use. Let $f(n)$ and $g(n)$ be positive functions. These functions are \emph{asymptotic}, written $f\sim g$, if $f(n)/g(n)\rightarrow 1$ as $n\rightarrow\infty$. We write $f=O(g)$ if there is a constant $C$ such that $f(n)\le Cg(n)$ for all sufficiently large $n$. We similarly write $f=\Omega(g)$ if there is a constant $c$ such that $f(n)\ge cg(n)$ for all sufficiently large $n$. Finally, we write $f=\Theta(g)$ if both $f=O(g)$ and $f=\Omega(g)$.

\subsection{Matrices and the interval minor order}
\label{subsec-int-minor}

In the context of principal classes, the most general results on growth rates have come from expanding the vista to the context of (zero/one) matrices. The study of patterns within matrices dates back to at least 1951, when Zarankiewicz~\cite{zarankiewicz:problem-p101:} posed what is now known as the Zarankiewicz Problem. While his problem is often phrased in terms of bipartite graphs, what he actually asked was
\begin{quote}
Soit $R_n$, o\`u $n>3$, un r\'eseau plan form\'e de $n^2$ points rang\'es en $n$ lignes et $n$ colonnes. Trouver le plus petit nombre naturel $k_2(n)$ tel que tout sous-ensemble de $R_n$ form\'e de $k_2(n)$ points contienne $4$ points situ\'es simultan\'ement dans $2$ lignes et dans $2$ colonnes de ce r\'eseau.
\end{quote}
Translated, the Zarankiewicz Problem asks for the minimum number of ones such that any way those ones are arranged in an $n\times n$ matrix there will be a $2\times 2$ submatrix of all ones.

Let us introduce some notation to discuss this and more general problems. The \emph{weight} of the matrix $M$ is the number of ones it contains, and we denote this quantity by $\wt(M)$. We also say that the matrix $P$ is a \emph{submatrix} of $M$ if $P$ can be obtained from $M$ by deleting rows, columns, and ones (in this last case, changing them to zeros). A generalization of Zarankiewicz's problem is to determine the asymptotics of the function
$$
\exop(n;P)=\max\{\wt(M) : \mbox{$M$ is an $n\times n$ matrix avoiding $P$ as a submatrix}\},
$$
and the quantity Zarankiewicz asked about was $\exop(n;J_2)+1$, where we denote by $J_k$ the $k\times k$ all-one matrix.

Shortly after Zarankiewicz posed his problem, K\H{o}v\'ari, S\'os, and Tur\'an~\cite{kovari:on-a-problem-of:} showed that the answer is asymptotic to $n^{3/2}$. We include a short proof below. The upper bound is from the original proof while the lower bound was presented by Reiman~\cite{reiman:uber-ein-proble:} in 1958.

\begin{theorem}
\label{thm-j2}
The function $\exop(n;J_2)$ is asymptotic to $n^{3/2}$.
\end{theorem}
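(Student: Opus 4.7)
The plan is a matched pair of arguments: a double-counting upper bound on $\exop(n; J_2)$ and an algebraic construction giving a matching lower bound.

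For the upper bound, let $M$ be an $n\times n$ matrix avoiding $J_2$ as a submatrix, and write $r_i$ for the number of ones in row $i$, so that $\wt(M) = \sum_{i=1}^{n} r_i$. First I would count, in two ways, the number of ordered triples consisting of a row $i$ together with an unordered pair of columns $\{j_1, j_2\}$ both of which contain a one in row $i$. Row $i$ contributes exactly $\binom{r_i}{2}$ such triples, so the total is $\sum_{i} \binom{r_i}{2}$. On the other hand, because $M$ avoids $J_2$, each pair of columns $\{j_1, j_2\}$ occurs in at most one row (otherwise the two rows in which it occurred, together with those two columns, would form a $J_2$). Hence
\[
\sum_{i=1}^{n} \binom{r_i}{2} \;\le\; \binom{n}{2}.
\]
Now apply Jensen's inequality (convexity of $x \mapsto \binom{x}{2}$) to obtain $n\binom{\wt(M)/n}{2} \le \binom{n}{2}$, which after simplification yields $\wt(M) \le \tfrac{1}{\sqrt{2}} n^{3/2} + O(n)$. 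In particular $\exop(n;J_2) = O(n^{3/2})$.

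For the matching lower bound, I would use the incidence matrix of a finite projective plane. If $q$ is a prime power, there exists a projective plane of order $q$, with $N := q^2 + q + 1$ points, $N$ lines, each line containing $q+1$ points, and any two lines meeting in exactly one point. Let $M$ be the $N \times N$ point–line incidence matrix. Then $\wt(M) = (q+1)N$, and the ``two lines meet in exactly one point'' property is precisely the statement that no two rows of $M$ have ones in the same pair of columns; equivalently, $M$ avoids $J_2$. Thus $\exop(N; J_2) \ge (q+1)N = (1+o(1))N^{3/2}$ along the sequence $N = q^2+q+1$.

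To pass from this sparse sequence of orders to general $n$, I would invoke Bertrand's postulate (or for a sharper bound, a stronger prime gap estimate): given $n$, choose a prime $q$ with $q^2+q+1 \le n$ and $q$ as large as possible, so that $q \sim n^{1/2}$. Pad the corresponding projective-plane matrix with zero rows and columns to reach dimension $n$; the resulting matrix still avoids $J_2$ and has weight $(q+1)(q^2+q+1) \sim n^{3/2}$. Combined with the upper bound, this gives $\exop(n;J_2) \sim n^{3/2}$.

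The main obstacle is the lower bound: the upper bound is a clean convexity argument, but matching it to the leading-order asymptotic $n^{3/2}$ is not something one can do with an ad hoc construction. The key input is algebraic—the existence of projective planes of every prime-power order—together with a number-theoretic step to handle arbitrary $n$. Without the projective-plane construction one can still show $\Omega(n^{4/3})$ type bounds from random constructions, but getting the right exponent truly depends on an explicit design-theoretic object.
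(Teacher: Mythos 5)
Your proof follows the paper's argument essentially step for step: the upper bound by counting pairs of ones within a row (each pair of columns can host at most one such pair, giving $\sum_i\binom{r_i}{2}\le\binom{n}{2}$) followed by a convexity estimate (the paper uses Cauchy--Schwarz where you use Jensen, which is the same step), and the lower bound via the incidence matrix of a projective plane of prime-power order, extended to all $n$ by density of primes (the paper likewise waves at ``basic results about the distribution of primes'').

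One concrete slip: your claimed upper bound $\wt(M)\le\tfrac{1}{\sqrt{2}}n^{3/2}+O(n)$ is false, and as written it contradicts your own lower bound of $(1-o(1))n^{3/2}$, so the final sentence ``combined with the upper bound, this gives $\exop(n;J_2)\sim n^{3/2}$'' does not follow from the two displayed bounds. Carrying out the convexity step correctly, $n\binom{W/n}{2}\le\binom{n}{2}$ with $W=\wt(M)$ gives $(W/n)^2-(W/n)\le n-1$, hence $W/n\le\tfrac{1}{2}\bigl(1+\sqrt{4n-3}\bigr)$ and $W\le n^{3/2}+O(n)$; the leading constant is $1$, which is exactly what matches the projective-plane construction. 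Also note that Bertrand's postulate alone only yields $q\gtrsim\tfrac{1}{2}\sqrt{n}$, hence a lower bound of order $\tfrac{1}{8}n^{3/2}$; to get the true asymptotic constant you do need the stronger prime-gap input ($p_{k+1}-p_k=o(p_k)$, available from the prime number theorem), as you parenthetically acknowledge.
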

\begin{proof}
We begin with the upper bound. Take $M$ to be a $J_2$-avoiding $n\times n$ matrix of weight $\exop(n;J_2)$. For each $i$, let $d_i$ denote the number of ones in row $i$. Call two ones in the same row a \emph{couple}, so row $i$ contains ${d_i\choose 2}$ couples. Clearly $M$ can have at most ${n\choose 2}$ total couples because the same two columns cannot participate as a couple in two different rows so
$$
{n\choose 2}\ge \sum_{i=1}^n {d_i\choose 2},
$$
Next we use the Cauchy-Schwarz inequality to conclude that $\sum d_i^2\ge \left(\sum d_i\right)^2/n$. By noting that $\wt(M)=\sum d_i$, we obtain
$$
n^2-n
\ge
\sum_{i=1}^n d_i^2 - \sum_{i=1}^n d_i
\ge
\frac{\left(\wt(M)\right)^2}{n}-\wt(M),
$$
showing that $\exop(n;J_2)$ does not grow asymptotically faster than $n^{3/2}$.

For the lower bound, let $q$ be a prime power and suppose that $n=q^2+q+1$. It is known that there is a finite projective plane of order $q$, with $n$ points and $n$ lines. Label the points $p_1,\dots,p_n$ and the lines $\ell_1,\dots,\ell_n$ and define the incidence matrix $M$ by setting its $(i,j)$ entry equal to one if and only if the point $p_i$ lies on the line $\ell_j$. Clearly $M$ does not contain $J_2$, because any two points determine a unique line. Moreover, a basic counting argument shows that in a projective plane of order $q$ every point lies on $q+1$ lines. Thus we have
$$
\wt(M)=(q+1)(q^2+q+1)\approx n^{3/2}.
$$
The fact that $\exop(n;J_2)\sim n^{3/2}$ follows from this and basic results about the distribution of primes.
\end{proof}

As K\H{o}v\'ari, S\'os, and Tur\'an~\cite{kovari:on-a-problem-of:} noted in their original paper, the upper bound in Theorem~\ref{thm-j2} can be generalized to show that $\exop(n;J_k)=O(n^{2-1/k})$ for all $k$ and they conjectured that this is the true order of magnitude, i.e., that $\exop(n;J_k)=\Theta(n^{2-1/k})$. While this conjecture is widely believed to be true, it remains open. The best general lower bound shows that $\exop(n;J_k)=\Omega(n^{2-2/(k+1)})$.

Almost 40 years passed before the function $\exop(n;P)$ was investigated for matrices other than $J_k$, in two papers published around the same time and, oddly enough, concerning the same forbidden pattern (up to symmetry). Bienstock and Gy{\H{o}}ri~\cite{bienstock:an-extremal-pro:} established that
$$
\exop\left(n; \fnmatrix{ccc}{1&1&\\1&&1}\right)=O(n\log n).
$$
(It is common practice in this field to suppress zeros.) Along with giving an independent proof of this result, F\"uredi~\cite{furedi:the-maximum-num:} also presented a construction (set $M(i,j)=1$ if and only if $j-i$ is a power of $2$) showing that this is the correct rate of growth. Both papers were motivated by problems in discrete geometry: Bienstock and Gy{\H{o}}ri were investigating the complexity of an algorithm for computing obstacle-avoiding rectilinear paths in the plane, while F\"uredi used this extremal function to bound the number of unit distances between the vertices of a convex polygon.

The first systematic investigation of these extremal functions was performed by F\"uredi and Hajnal~\cite{furedi:davenport-schin:} in 1992. They found the extremal functions (up to constant factors) for several matrices. Some of these extremal functions are quite exotic; for example, using the work of Hart and Sharir~\cite{hart:nonlinearity-of:} F\"uredi and Hajnal showed that
$$
\exop\left(n; \fnmatrix{cccc}{1&&1&\\&1&&1}\right)=\Theta(n\alpha(n)),
$$
where $\alpha(n)$ is the inverse Ackermann function. At the end of their article, they asked
\begin{quote}
is it true that the complexity of all permutation configurations are linear?
\end{quote}
The statement that $\exop(n;M_\beta)$ is linear in $n$ for every permutation $\beta$ became known as the F\"uredi--Hajnal Conjecture; here $M_\beta$ denotes the \emph{permutation matrix} of $\beta$, which is defined by $M_\beta(i,j)=1$ if and only if $\beta(i)=j$. Indeed, at the time Marcus and Tardos proved the F\"uredi--Hajnal Conjecture, they did not know about the Stanley--Wilf Conjecture, nor that Klazar had shown that the F\"uredi--Hajnal Conjecture implied the Stanley--Wilf Conjecture (we prove this in the next subsection).

Despite the rich history of these submatrix problems, later advances---especially those of Fox~\cite{fox:stanley-wilf-li:}---show that the permutation containment order is much more closely related to a different order on matrices, the \emph{interval minor} order. In retrospect, this order has featured in the proofs of all of the main results presented in this section, before it had ever been formally defined. Such a formal definition requires a few preliminaries. The \emph{contraction} of two adjacent rows of a matrix is obtained by replacing those two rows by a single row that has a one in a column if either of the original rows had a one in that column. We define the contraction of a column analogously. We further say that $P$ is a \emph{contraction} of $M$ if $P$ can be obtained from $M$ by a sequence of contractions of adjacent rows or columns.

\index{interval minor}

To give an alternative definition of contractions, given an $n\times n$ matrix $M$ and intervals $X,Y\subseteq [1,n]$, we denote by $M(X\times Y)$ the \emph{block} of $M$ consisting of those entries $M(i,j)$ with $(i,j)\in X\times Y=\{(x,y)\st x\in X, y\in Y\}$. The contraction $P$ of $M$ can be defined by a pair of sequences $1=c_1\le\cdots\le c_{t+1}=n+1$ (the column divisions) and $1=r_1\le\cdots\le r_{u+1}=n+1$ (the row divisions) where we set
$$
P(i,j)
=
\left\{\begin{array}{ll}
1&\mbox{if the block $M([c_i,c_{i+1})\times [r_j,r_{j+1}))$ contains a one and}\\
0&\mbox{otherwise.}
\end{array}\right.
$$
(We continue to use Cartesian coordinates so that the permutation matrix of $\pi$ looks like its plot.)

Finally, the matrix $P$ is an \emph{interval minor} of $M$ if $P$ is a submatrix of a contraction of $M$. In other words, the interval minor order allows for the contraction of adjacent rows or columns, and the deletion of ones. As we will see, at the coarsest level, questions about growth rates of principal permutation classes are intimately connected with the following question:
\begin{quote}
How much can an $n\times n$ matrix weigh if does not contain $J_k$ as an interval minor?
\end{quote}
We present two answers, in Sections~\ref{subsec-mt} (an upper bound) and \ref{subsec-fox} (a lower bound). The connection between this question and growth rates is explored in Sections~\ref{subsec-klazar} and \ref{subsec-cibulka}. We begin with two elementary observations.

\begin{observation}
\label{obs-minor-implies-sub}
If the matrix $M$ contains the permutation matrix $M_\beta$ as an interval minor then it also contains $M_\beta$ as a submatrix.
\end{observation}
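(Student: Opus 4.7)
The plan is to unfold the definition of interval minor step by step and then construct the desired submatrix directly by choosing witnessing ones from the appropriate blocks of $M$.

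First I would note that, by the definition given just above the observation, saying that $M_\beta$ is an interval minor of $M$ amounts to the following: there exist sequences of column divisions $1=c_1\le c_2\le\cdots\le c_{k+1}=n+1$ and row divisions $1=r_1\le r_2\le\cdots\le r_{k+1}=n+1$ such that the contraction $M'$ determined by these divisions admits $M_\beta$ as a submatrix. Since $M_\beta$ is itself $k\times k$ and has exactly one $1$ in each row and each column, being a submatrix of the $k\times k$ contraction $M'$ forces $M'(i,\beta(i))=1$ for every $i\in[1,k]$ (any extra $1$s in $M'$ are simply deleted when we pass to $M_\beta$). By the definition of contraction, this means that for each $i$ the block $M([c_i,c_{i+1})\times[r_{\beta(i)},r_{\beta(i)+1}))$ contains at least one $1$ of $M$.

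Next I would use these block witnesses to build a submatrix copy of $M_\beta$. For each $i\in[1,k]$, choose a point $(x_i,y_i)$ with $M(x_i,y_i)=1$ satisfying $c_i\le x_i<c_{i+1}$ and $r_{\beta(i)}\le y_i<r_{\beta(i)+1}$. Because the column intervals $[c_i,c_{i+1})$ are pairwise disjoint and ordered from left to right, the $x_i$ are distinct and $x_1<x_2<\cdots<x_k$. Likewise the row intervals $[r_j,r_{j+1})$ are pairwise disjoint and ordered, so the $y_i$ are distinct, and $y_i<y_{i'}$ holds if and only if $\beta(i)<\beta(i')$. Hence the $k$ points $(x_i,y_i)$ have distinct row and column coordinates, and their configuration is order-isomorphic to $\{(i,\beta(i))\}$; equivalently, the $k\times k$ matrix indexed by columns $x_1<\cdots<x_k$ and rows $y_1<\cdots<y_k$ contains $M_\beta$ (with possibly extra ones that may be deleted).

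There is really no obstacle here: once the definitions are correctly unpacked, the proof is a one-line pigeonhole-style selection. The only point that requires any care is keeping track of the bookkeeping between the two coordinate directions and the fact that $M_\beta$ has its $1$ in column $i$ at row $\beta(i)$, so that the chosen points $(x_i,y_i)$ realize the permutation $\beta$ rather than some relabelling of it.
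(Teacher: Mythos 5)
Your proof is correct; the paper states this as an observation with no written proof, and your unpacking of the definitions (one witness $1$ per block corresponding to a $1$ of $M_\beta$, with disjointness and ordering of the column and row intervals giving distinct, correctly ordered coordinates) is exactly the intended argument. The only small liberty is assuming the contraction is exactly $k\times k$: in general $M_\beta$ is a submatrix of a possibly larger contraction, but the same argument applies verbatim, since the $k$ selected column intervals and $k$ selected row intervals are still pairwise disjoint and appear in increasing order.
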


Due to this fact, when we say that a matrix contains a permutation matrix we needn't specify which order we are considering. Indeed, permutation matrices are essentially characterized by the condition in Observation~\ref{obs-minor-implies-sub}---it is an easy exercise to show that if $P$ is such that every matrix containing $P$ as an interval minor also contains $P$ as a submatrix then, modulo all-zero rows and columns, $P$ is a permutation matrix.

Observation~\ref{obs-minor-implies-sub} implies that if a matrix contains $J_k$ as an interval minor then it contains \emph{every} $k\times k$ permutation matrix. Thus to get an upper bound on $\gr(\Av(\beta))$ we can consider the set of matrices that avoid $J_{|\beta|}$ as an interval minor. In particular, the F\"uredi--Hajnal Conjecture will follow if we can establish that, for every $k$,  there is a constant $c_k$ such that $\wt(M)\le c_k n$ for every $n\times n$ matrix that does not contain $J_k$ as an interval minor.

\begin{figure}
\begin{footnotesize}
\begin{center}
	\begin{tikzpicture}[scale=0.3, baseline=(current bounding box.center)]
		\foreach \i in {0.5, 3.5, 6.5, 9.5}{
			\draw (\i,0.5)--(\i,9.5);
			\draw (0.5,\i)--(9.5,\i);
		}
		\node at (1,1) {$1$};
		\node at (4,2) {$1$};
		\node at (7,3) {$1$};
		\node at (2,4) {$1$};
		\node at (5,5) {$1$};
		\node at (8,6) {$1$};
		\node at (3,7) {$1$};
		\node at (6,8) {$1$};
		\node at (9,9) {$1$};
	\end{tikzpicture}
	\quad
	\begin{tikzpicture}[scale=0.3, baseline=(current bounding box.center)]
		\node at (0,4.5) {$\longrightarrow$};
	\end{tikzpicture}
	\quad
	\begin{tikzpicture}[scale=0.3, baseline=(current bounding box.center)]
		\foreach \i in {0.5, 3.5, 6.5, 9.5}{
			\draw (\i,0.5)--(\i,9.5);
			\draw (0.5,\i)--(9.5,\i);
		}
		\foreach \i in {2,5,8}{
			\foreach \j in {2,5,8}{
				\node at (\i,\j) {$1$};
			}
		}
	\end{tikzpicture}
\end{center}
\end{footnotesize}
\caption{A $9\times 9$ permutation matrix containing $J_3$ as an interval minor.}
\label{fig-perm-J3}
\end{figure}
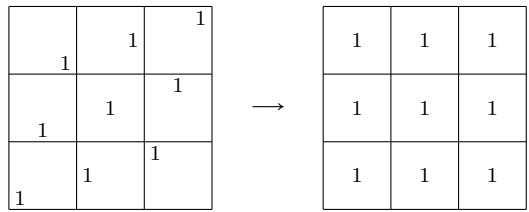


The link between avoiding $J_k$ and lower bounds on growth rates comes via the following fact, whose truth is demonstrated by Figure~\ref{fig-perm-J3}.

\begin{observation}
\label{obs-Jk-minor}
For every nonnegative integer $k$, there is a permutation $\beta$ of length $k^2$ whose permutation matrix contains $J_k$ as an interval minor.
\end{observation}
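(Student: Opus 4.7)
The plan is to exhibit an explicit permutation of length $k^2$ and verify that its permutation matrix contracts to $J_k$. Following the pattern suggested by Figure~\ref{fig-perm-J3} (where the permutation $147258369$ serves for $k=3$), a natural candidate is the permutation $\beta$ of length $k^2$ defined by
$$
\beta(k(a-1)+b)=k(b-1)+a
\quad\text{for all $a,b\in[1,k]$.}
$$
That is, write the positions $1,\dots,k^2$ as the list $(a,b)$ in lexicographic order, and $\beta$ sends the position $(a,b)$ to the value whose reverse-lexicographic index is the same pair.

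First I would verify that $\beta$ is indeed a permutation: the map $(a,b)\mapsto (b,a)$ is a bijection from $[1,k]^2$ to itself, so $\beta$ is a bijection on $[1,k^2]$. Next I would apply the contraction defined by the column divisions $c_i=k(i-1)+1$ and row divisions $r_j=k(j-1)+1$ for $1\le i,j\le k+1$, so that both the row and column indices are partitioned into $k$ consecutive blocks of size $k$. I need to check that every one of the resulting $k\times k$ blocks
$$
M_\beta\bigl([k(a-1)+1,ka]\times[k(b-1)+1,kb]\bigr)
$$
contains at least one $1$. For the pair $(a,b)\in[1,k]^2$, take the position $i=k(a-1)+b$, which lies in $[k(a-1)+1,ka]$; then by the definition of $\beta$, the value $\beta(i)=k(b-1)+a$ lies in $[k(b-1)+1,kb]$. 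Hence $M_\beta(i,\beta(i))=1$ is contained in the block indexed by $(a,b)$.

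Consequently, the contraction of $M_\beta$ with respect to these divisions is the $k\times k$ all-ones matrix $J_k$, and since every contraction is by definition an interval minor (via the trivial submatrix), $M_\beta$ contains $J_k$ as an interval minor. There is no real obstacle here — the only ``trick'' is choosing the permutation, and the construction above is simply the pattern that places exactly one point in each cell of the coarse $k\times k$ grid, which is precisely what is required so that every block is hit after contraction.
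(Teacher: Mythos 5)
Your construction is exactly the one the paper intends: the permutation $\beta(k(a-1)+b)=k(b-1)+a$ is the general form of the example $147258369$ shown in Figure~\ref{fig-perm-J3}, which is all the paper offers as justification, and your verification that each $k\times k$ block of the gridding contains a point is correct. So the proposal is correct and takes essentially the same approach as the paper, just spelled out in full generality.
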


Therefore if many permutation matrices avoid $J_k$ as an interval minor, they also avoid $M_\beta$ for some permutation $\beta$ of length $k^2$.

\subsection{The number of light matrices}
\label{subsec-klazar}

The main result of this subsection provides the link between the extremal F\"uredi--Hajnal Conjecture and the enumerative Stanley--Wilf Conjecture. We state it in a slightly different way than it was originally presented. Let us define the \emph{density} of the matrix $M$ as
$$
\delta(M)
=
\max\left\{\frac{\wt(P)}{m} \st \mbox{$P$ is an $m\times m$ interval minor of $M$}\right\}.
$$
Note that this definition measures the maximum average number of ones per row over all square interval minors of $M$. In particular, the density of a matrix can be greater than $1$; for example, the permutation matrix shown on the left of Figure~\ref{fig-perm-J3} has density at least $3$, because it contains $J_3$ as a minor.

We seek to bound the number of matrices of small density.

\begin{theorem}[Klazar~\cite{klazar:the-furedi-hajn:}]
\label{thm-klazar}
Let $d\ge 0$ be a constant. There are fewer than $15^{2dn}$ matrices of size $n\times n$ and density at most $d$.
\end{theorem}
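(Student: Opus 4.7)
The plan is to encode each $n \times n$ matrix $M$ of density at most $d$ as a labeled quadtree and then bound the number of admissible labelings using the density hypothesis. First I would reduce to the case $n = 2^t$, since an $n \times n$ matrix embeds injectively into an $n' \times n'$ matrix with $n' \le 2n$ a power of $2$ by appending zero rows and columns, and this embedding does not increase the density. For $M$ of side $n = 2^t$, the quadtree subdivides $M$ recursively into quadrants: at level $i$ we have a $2^i \times 2^i$ grid of blocks of side $n/2^i$, where level $0$ is $M$ itself and level $t$ consists of individual entries. Call a block \emph{nonempty} if it contains a $1$. At each nonempty block at level $i < t$, at least one of its four children at level $i+1$ is nonempty, so we label the node by the nonempty subset of $\{1,2,3,4\}$ indicating which children are nonempty, giving one of $2^4 - 1 = 15$ possibilities. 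From these labels the matrix is fully determined: one descends from the root, visiting at each step only the children indicated by the current label, until reaching the $1$-entries at level $t$.

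The density condition enters as follows. The $2^i \times 2^i$ matrix whose $(a,b)$ entry is $1$ iff the block in position $(a,b)$ at level $i$ is nonempty is precisely a contraction of $M$ with equal-sized row and column divisions of length $n/2^i$, hence a $2^i \times 2^i$ interval minor of $M$. By the assumption $\delta(M) \le d$, this contracted matrix has weight at most $2^i d$, so at most $2^i d$ blocks at level $i$ are nonempty. Summing over $i = 0, 1, \ldots, t-1$ gives a total of at most $d(2^t - 1) < dn$ labeled nodes. Thus the number of matrices of side $n = 2^t$ and density at most $d$ is at most $15^{dn}$, and the general case follows from the padding step with an extra factor of $2$ to yield $15^{2dn}$.

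The main obstacle is simply verifying that the nonempty-block indicator matrix at each level really is an interval minor of $M$ and that this interval minor is square with side $2^i$. This is immediate from the definition of contraction (the equal-sized block divisions are a valid choice of $c_j$'s and $r_j$'s), but it is the crucial bridge between the density hypothesis, which controls only interval minors, and the combinatorial count afforded by the quadtree. Everything else reduces to the reconstruction argument and the geometric sum $\sum 2^i d$.
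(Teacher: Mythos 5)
Your proof is correct and is essentially the paper's argument: the paper contracts $2\times 2$ blocks once (each nonzero entry of the contraction having at most $15$ possible preimage blocks, with the density hypothesis bounding the contraction's weight) and then inducts on $n$, which is exactly your quadtree count unrolled level by level. Your padding to a power of two and the geometric sum $\sum_i d\,2^i < dn$ simply replace the paper's ceilings and the factor contributed by the induction hypothesis.
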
%

\begin{proof}
We prove the theorem by induction on $n$. As the base case $n=1$ is trivial, we may assume that $n\ge 2$ and that the claim holds for all lesser values of $n$.

Let $M$ be an $n\times n$ matrix of density at most $d$ and denote by $M^{/2}$ the interval minor of $M$ formed by using the row and column divisions $\{1,3,5,\dots,n+1\}$, without deleting any ones. Thus $M^{/2}$ is formed by contracting $2\times 2$ blocks of $M$ (or possibly smaller blocks on the top and right), as shown in Figure~\ref{fig-perm-klazar}.

\begin{figure}
\begin{footnotesize}
\begin{center}
	\begin{tikzpicture}[scale=0.3, baseline=(current bounding box.center)]
		\foreach \i in {0.5, 2.5, 4.5, 6.5, 8.5, 9.5}{
			\draw (\i,0.5)--(\i,9.5);
			\draw (0.5,\i)--(9.5,\i);
		}
		\node at (1,2) {$1$};
		\node at (1,6) {$1$};
		\node at (2,1) {$1$};
		\node at (2,2) {$1$};
		\node at (3,3) {$1$};
		\node at (4,4) {$1$};
		\node at (4,9) {$1$};
		\node at (5,7) {$1$};
		\node at (5,8) {$1$};
		\node at (6,5) {$1$};
		\node at (6,7) {$1$};
		\node at (7,2) {$1$};
		\node at (7,5) {$1$};
		\node at (7,6) {$1$};
		\node at (7,7) {$1$};
		\node at (8,5) {$1$};
		\node at (8,6) {$1$};
		\node at (8,7) {$1$};
		\node at (9,3) {$1$};
		\node at (9,4) {$1$};
		\node at (9,5) {$1$};
	\end{tikzpicture}
	\quad
	\begin{tikzpicture}[scale=0.3, baseline=(current bounding box.center)]
		\node at (0,4.5) {$\longrightarrow$};
	\end{tikzpicture}
	\quad
	\begin{tikzpicture}[scale=0.3, baseline=(current bounding box.center)]
		\foreach \i in {0.5, 2.5, 4.5, 6.5, 8.5, 9.5}{
			\draw (\i,0.5)--(\i,9.5);
			\draw (0.5,\i)--(9.5,\i);
		}
		\node at (1.5,1.5) {$1$};
		\node at (1.5,5.5) {$1$};
		\node at (3.5,3.5) {$1$};
		\node at (3.5,9) {$1$};
		\node at (5.5,5.5) {$1$};
		\node at (5.5,7.5) {$1$};
		\node at (7.5,1.5) {$1$};
		\node at (7.5,5.5) {$1$};
		\node at (7.5,7.5) {$1$};
		\node at (9,3.5) {$1$};
		\node at (9,5.5) {$1$};
	\end{tikzpicture}
\end{center}
\end{footnotesize}
\caption{The interval minor $M^{/2}$ formed in the proof of Klazar's Theorem~\ref{thm-klazar}.}
\label{fig-perm-klazar}
\end{figure}
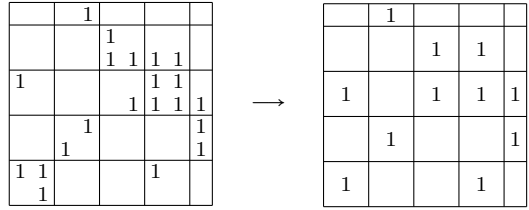

By the definition of density, $\wt(M^{/2})\le d\lceil n/2\rceil$. Now we ask how many $n\times n$ matrices could condense to a given $\lceil n/2\rceil\times \lceil n/2\rceil$ matrix $M^{/2}$. Each nonzero entry of $M^{/2}$ can come from at most $15$ possible blocks of $M$ (all blocks except for the all-zero block), while the zero entries of $M^{/2}$ can only come from an all-zero block of $M$. Since $M^{/2}$ has at most $d\lceil n/2\rceil$ nonzero entries, the number of $n\times n$ matrices that could condense to it is at most $15^{d\lceil n/2\rceil}$.

Therefore, the number of $n\times n$ matrices of density at most $d$ is at most $15^{d\lceil n/2\rceil}$ times the number of possibilities for $M^{/2}$, which by induction is less than $15^{2d\lceil n/2\rceil}$. In other words, the number of $n\times n$ matrices of density at most $d$ is less than
$$
15^{d\lceil n/2\rceil}\cdot 15^{2d\lceil n/2\rceil}
=
15^{3d\lceil n/2\rceil}
<
15^{2dn}
$$
for $n\ge 2$, completing the proof of the theorem.
\end{proof}

This result shows that the F\"uredi--Hajnal Conjecture implies the Stanley--Wilf Conjecture as follows: if the F\"uredi--Hajnal Conjecture is true, then for every permutation $\beta$ there is a constant $c_\beta$ such that $\exop(n; M_\beta)\le c_\beta n$ for all $n$. Therefore every $M_\beta$-avoiding matrix has density at most $c_\beta$. Klazar's Theorem~\ref{thm-klazar} shows that there are at most $(15^{2c_\beta})^n$ such matrices, so there are only exponentially many such permutation matrices, so $\gr(\Av(\beta))$ is finite.

As we have presented it, the bound in Klazar's Theorem~\ref{thm-klazar} is essentially best possible. To see this, let $M$ be any $n\times n$ matrix with $dn$ ones and density $d$. Then $M$ itself contains $2^{dn}=(2^d)^n$ submatrices of size $n\times n$ and density at most $d$. Thus to improve on the link between bounds in the F\"uredi--Hajnal Conjecture and those in the Stanley--Wilf Conjecture, one must restrict to permutation matrices. Via some quite delicate bounds, Cibulka~\cite{cibulka:on-constants-in:} showed that the number of $n\times n$ permutation matrices of density at most $d$ is at most $(2.88d^2)^n$. Fox~\cite{fox:stanley-wilf-li:} gives a much simpler proof that the $n$th root of this number is $O(d^2)$.


\subsection{Matrices avoiding $J_k$ are light}
\label{subsec-mt}

We now present Marcus and Tardos' proof of the F\"uredi--Hajnal Conjecture. The general approach is again to consider a specific contraction.

\begin{theorem}[Marcus and Tardos~\cite{marcus:excluded-permut:}]
\label{thm-marcus-tardos}
If the $n\times n$ matrix $M$ does not contain $J_k$ as an interval minor then
$$
\delta(M)\le 2k^4{k^2\choose k}.
$$
\end{theorem}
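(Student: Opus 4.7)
The plan is to reduce to a weight bound and prove it by induction on $n$. Since any $m\times m$ interval minor $P$ of $M$ is itself an interval minor of $M$ and hence also avoids $J_k$, it suffices to show that every $n\times n$ matrix $N$ avoiding $J_k$ satisfies $\wt(N) \leq cn$ with $c = 2k^4\binom{k^2}{k}$; dividing by $m$ then gives $\delta(M) \leq c$. I would induct on $n$, with base case $n \leq k^2$ trivial (as $\wt(N) \leq n^2 \leq cn$). For $n > k^2$, assume $k^2 \mid n$ (padding with zero rows and columns handles the general case at the cost of a constant factor absorbed into $c$). Partition $M$ into an $m\times m$ grid of $k^2\times k^2$ blocks with $m = n/k^2$. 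Call a block \emph{row-heavy} if its $1$s occupy at least $k$ of its rows, and \emph{column-heavy} analogously. Let $M'$ be the $m\times m$ contraction with $M'(i,j) = 1$ iff block $B_{i,j}$ contains a $1$; since $M'$ is itself an interval minor of $M$, it too avoids $J_k$, so by induction $\wt(M') \leq cm$.

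The crux is the claim that any block-row contains at most $(k-1)\binom{k^2}{k}$ row-heavy blocks (dually for column-heavy blocks in block-columns). To prove it, suppose some block-row contains more. Each row-heavy block selects, from among its rows containing $1$s, a $k$-subset of $[1,k^2]$ --- its \emph{row signature}. With only $\binom{k^2}{k}$ possible signatures, pigeonhole yields $k$ row-heavy blocks $B^{(1)}, \ldots, B^{(k)}$ (lying in block-columns $j_1 < \cdots < j_k$) sharing a common signature $\{r_1 < \cdots < r_k\}$. Partition the columns of $M$ into intervals $D_1, \ldots, D_k$ with $D_a$ containing block-column $j_a$, and partition the rows of $M$ into intervals $R_1, \ldots, R_k$ with $R_b$ containing the absolute row corresponding to $r_b$ in the given block-row. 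For each pair $(a,b)$, block $B^{(a)}$ has a $1$ at its relative row $r_b$, which lies in $R_b \cap D_a$; hence $J_k$ is an interval minor, a contradiction.

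With the claim established, the induction step combines three bounds: there are at most $2m(k-1)\binom{k^2}{k}$ heavy (row- or column-) blocks in total, each of weight $\leq k^4$; any non-zero block that is neither row- nor column-heavy has its $1$s in at most $k-1$ rows and $k-1$ columns, hence weight $\leq (k-1)^2$; and the number of non-zero blocks is at most $\wt(M') \leq cm$. Summing,
$$\wt(M) \leq 2m(k-1)k^4\binom{k^2}{k} + cm(k-1)^2.$$
Substituting $m = n/k^2$ and $c = 2k^4\binom{k^2}{k}$, the inequality $\wt(M) \leq cn$ reduces to $\frac{k-1}{2k-1} \leq 1$, which holds.

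The main obstacle is the key claim, and the crucial insight is that one must pigeonhole on the \emph{row signature} (the rows containing $1$s within a block) rather than on the set of columns containing $1$s. The shared row set $\{r_1, \ldots, r_k\}$ is exactly what lets us choose the row-intervals $R_b$ so each is guaranteed to intersect every $B^{(a)}$ in a $1$. Pigeonholing instead on column signatures alone would not suffice, since the $1$s within a single wide block could all lie in a single row, preventing us from choosing disjoint row-intervals all meeting the block non-trivially.
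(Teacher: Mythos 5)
Your proof is correct and takes essentially the same approach as the paper's: contract into $k^2\times k^2$ blocks, bound the number of heavy (tall/wide) blocks in each block-row by pigeonholing on the $k$-subset of rows containing ones (giving the threshold $(k-1)\binom{k^2}{k}$), and sum the three weight contributions inductively. Your direct induction with $q=k^2$ fixed from the outset (modulo the lightly treated padding when $k^2\nmid n$) is simply a cleaner packaging of the paper's approximate iteration of the same recurrence, and your ``row signature'' pigeonhole is the paper's contraction-of-tall-blocks argument stated directly.
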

\begin{proof}
Let $f_k(n)$ denote the maximum weight of an $n\times n$ matrix that avoids $J_k$ as an interval minor. We apply induction on $n$, noting that the $n=1$ case is trivial. Let $M$ be an $n\times n$ matrix of weight $f_k(n)$ not containing $J_k$ as an interval minor. We form the interval minor $M^{/q}$ by choosing our row and column divisions to be $\{1,q+1,2q+1,\dots n+1\}$; here $q$ is a parameter to be determined at the end of the proof. Thus we are contracting $M$ into $q\times q$ blocks (possibly including smaller blocks on the top and right). Of course, $M^{/q}$ also does not contain $J_k$ as an interval minor.

The cleverest part of the proof is the following definition. A $q\times q$ block of $M$ is called \emph{tall} if it contains ones in at least $k$ different rows and/or \emph{wide} if it contains ones in at least $k$ different columns. We begin by bounding the number of tall and/or wide blocks of $M$.

Suppose that more than $(k-1){q\choose k}$ tall blocks of $M$ correspond to entries in the same row of $M^{/q}$. We contract each such block into a single column. This gives an interval minor of $M$ containing $q$ rows and $(k-1){q\choose k}$ columns in which every column contains at least $k$ ones. By deleting extraneous ones as needed, we may assume that each column has precisely $k$ ones. However, that leaves us with only ${q\choose k}$ choices of columns for this interval minor, and thus (under our assumption about the number of tall blocks in this row) at least one column must occur at least $k$ times, showing that $M$ contains $J_k$ as an interval minor, a contradiction.

By symmetry, at most $(k-1){q\choose k}$ entries of each column of $M^{/q}$ correspond to wide blocks in $M$. As $M^{/q}$ has $\lceil n/q\rceil$ rows and columns, we have the bound
$$
\mbox{$\#$ of tall and/or wide entries of $M^{/q}$}\le 2\left\lceil\frac{n}{q}\right\rceil(k-1){q\choose k},
$$
which is linear in $n$. For these entries of $M^{/q}$ we use the trivial bound that they correspond to blocks with at most $q^2$ nonzero entries. The remaining entries of $M^{/q}$, corresponding to blocks that are neither tall nor wide, can have at most $(k-1)^2$ nonzero entries. Putting these bounds together, we obtain by induction that
\[
    \wt(M)
    \le
    2q^2\left\lceil\frac{n}{q}\right\rceil(k-1){q\choose k}
    +
    (k-1)^2f_k\left(\left\lceil \frac{n}{q}\right\rceil\right).
\]
In order to get a sense of the solution to this recurrence, we iterate it approximately:
\begin{eqnarray*}
f_k(n)
&\lesssim&
2qn(k-1){q\choose k}+(k-1)^2f_k\left(\frac{n}{q}\right),\\
&\lesssim&
2qn(k-1){q\choose k}+(k-1)^2
\left(2n(k-1){q\choose k}+(k-1)^2f_k\left(\frac{n}{q^2}\right	)\right),\\
&\lesssim&
2qn(k-1){q\choose k}
+
2n(k-1)^3{q\choose k}
+
2\frac{n}{q}(k-1)^5{q\choose k}
+\cdots.
\end{eqnarray*}
We can further approximate this quantity by viewing it as a geometric series with ratio $(k-1)^2/q$:
$$
f_k(n)
\lesssim
2qn(k-1){q\choose k}\sum_{i=0}^\infty\left(\frac{(k-1)^2}{q}\right)^i.
$$
This series has a finite sum so long as we choose $q>(k-1)^2$, in which case we get a linear upper bound on $\wt(M)$, completing the proof. In particular, setting $q=k^2$, we see that
$$
\sum_{i=0}^\infty \left(\frac{(k-1)^2}{k^2}\right)^i=\frac{k^2}{2k-1}<k,
$$
and the bound becomes
$$
f_k(n)
\lesssim
2k^4{k^2\choose k}n.
$$
While we presented only estimates above in order to motivate the choice of $q$, with this bound on $f_k(n)$ now discovered it would only take a simple inductive argument to give a rigorous proof.
\end{proof}

We have proved quite a bit more than the Stanley--Wilf Conjecture. Every proper downset of matrices in the interval minor order must avoid some matrix $J_k$. Thus every proper downset of matrices in this order has bounded density by Marcus and Tardos' Theorem~\ref{thm-marcus-tardos} and thus grows at most exponentially by Klazar's Theorem~\ref{thm-klazar}. Proper permutation classes are merely subsets of these downsets.

\subsection{Dense matrices contain many permutations}
\label{subsec-cibulka}

Having established upper bounds on growth rates, we now turn our attention to lower bounds. In this subsection we prove a simplified version of a result of Cibulka~\cite{cibulka:on-constants-in:}, showing that dense matrices contain many permutation matrices. In the following subsection, we use this result to show that growth rates of principal classes can be very large. Our first result in this direction, below, gives a lower bound for how many permutation matrices a dense matrix with a certain amount of regularity must contain.

\begin{proposition}[Cibulka~\cite{cibulka:on-constants-in:}]
\label{prop-cibulka-lemma3}
Suppose that the matrix $M$ has $n$ columns and $m\ge n$ rows, and that each column contains at least $n$ ones. Then $M$ contains at least $n!/{m\choose n}$ permutation matrices of size $n\times n$.
\end{proposition}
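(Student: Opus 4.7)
The plan is a double-counting argument based on the set of \emph{instances} of permutation submatrices. Let $A$ denote the set of permutations $\pi$ of $[1,n]$ whose permutation matrix $M_\pi$ is a submatrix of $M$, so that the quantity we want to lower bound is $|A|$. I would introduce the auxiliary count
$$
T = \#\bigl\{(S,\pi) \st S\subseteq[1,m],\ |S|=n,\ M_\pi\mbox{ is a submatrix of }M[S,[1,n]]\bigr\},
$$
where $M[S,[1,n]]$ denotes the submatrix of $M$ on the rows indexed by $S$ and all $n$ columns. Since each $\pi\in A$ is witnessed by at most $\binom{m}{n}$ different row-subsets $S$, we get the upper bound $T\le |A|\binom{m}{n}$, so it suffices to prove $T\ge n!$.

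The key step is the reinterpretation of $T$ as the number of injective tuples of rows supporting ones in the successive columns. Unwinding the definition of the submatrix order: if $S=\{s_1<\cdots<s_n\}$, then $(S,\pi)$ is an instance precisely when $M(s_i,\pi(i))=1$ for every $i$. Setting $r_j=s_{\pi^{-1}(j)}$, the rows $r_1,\ldots,r_n$ are a permutation of $S$, the equations become $M(r_j,j)=1$ for every column $j$, and the correspondence $(S,\pi)\leftrightarrow(r_1,\ldots,r_n)$ is a bijection with the set of tuples $(r_1,\ldots,r_n)\in[1,m]^n$ of \emph{distinct} rows such that $M(r_j,j)=1$ for each $j$.

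With this reformulation, bounding $T$ is routine: pick the $r_j$ greedily in order $j=1,2,\ldots,n$. Column $j$ contains at least $n$ ones by hypothesis, and at most $j-1$ of these can coincide with previously chosen rows $r_1,\ldots,r_{j-1}$, leaving at least $n-j+1$ legitimate choices. Multiplying gives $T\ge n(n-1)\cdots 1=n!$, and combined with $T\le|A|\binom{m}{n}$ this yields $|A|\ge n!/\binom{m}{n}$, as claimed.

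I do not expect a genuine obstacle here: the only subtlety is recognizing the right bijection so that the greedy count of distinct row choices lines up cleanly with the count of instances. Once that reformulation is in place, the proof is one line of double counting plus one line of greedy selection.
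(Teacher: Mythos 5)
Your proof is correct and is essentially the paper's argument: the paper also counts at least $n!$ "copies" by greedily choosing a one in a new row for each column from left to right, then divides by the at most $\binom{m}{n}$ row-sets that can witness any single permutation matrix. Your version just makes the underlying bijection between instances $(S,\pi)$ and tuples of distinct rows explicit, which the paper leaves informal.
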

\begin{proof}
Proceeding from left to right, we can find at least $n!$ copies of $n\times n$ permutation matrices in $M$ by choosing a nonzero entry in a new row from each column. Of course, some permutation matrices may be contained in $M$ many times. However, $M$ cannot contain more than ${m\choose n}$ copies of a given $n\times n$ permutation matrix because that is the number of ways to select the rows from which the nonzero entries are then chosen.
\end{proof}

The bound we actually use follows from Proposition~\ref{prop-cibulka-lemma3} via Stirling's Formula:
$$
\frac{n!}{{m\choose n}}
\ge
\frac{n!}{m^n/n!}
=
\frac{(n!)^2}{m^n}
\ge
\left(\frac{n^2}{e^2m}\right)^n.
$$

The main result of this subsection establishes the link between the weight of a matrix and the number of permutation matrices contained in it. Note that the conclusion of this theorem is only interesting for large densities $\delta(M)$.

\begin{theorem}[based on Cibulka~{\cite[Theorem 6]{cibulka:on-constants-in:}}]
\label{thm-cibulka}
For every $n\times n$ matrix $M$, there is some value of $m\ge 1$ such that $M$ contains $\gamma^m$ or more $m\times m$ permutation matrices, where $\gamma=\delta(M)^{1/9}/16$.
\end{theorem}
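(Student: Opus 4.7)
The plan is to start from a dense interval minor $P$ of $M$, transfer permutation counts from $P$ back to $M$, and then extract permutation matrices using the row/column averaging built into Proposition~\ref{prop-cibulka-lemma3}. The delicate part is to absorb the exponent $1/9$ into $\gamma$, which requires a carefully tuned multi-stage pruning.

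First, by the definition of density there exist an integer $m'$ and an $m'\times m'$ interval minor $P$ of $M$ with $\wt(P)\ge d\cdot m'$, where $d=\delta(M)$. If the permutation matrix $M_\pi$ is a submatrix of $P$, then $M_\pi$ is an interval minor of $P$, hence of $M$, and by Observation~\ref{obs-minor-implies-sub} it is a submatrix of $M$. Thus it suffices to produce many $m\times m$ permutation matrices inside $P$.

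Next, a two-sided averaging passes to a rectangular block of $P$ with controlled row and column weights. Let $H$ be the set of rows of $P$ whose weight is at least $d/2$; the rows outside $H$ carry at most $(m'-|H|)\cdot d/2\le m'd/2$ ones, so the rows of $H$ collectively carry at least $dm'/2$ ones. Restricting $P$ to $H$ and repeating the argument on columns produces a set $V$ with $|V|\ge d/4$, each column of which has weight at least $d/4$ in the restricted matrix. Call the resulting $|H|\times|V|$ submatrix $R$. Since $|H|$ could be as large as $m'$, I would further partition the rows of $H$ into consecutive groups of size $T$ (with $T$ a parameter to be optimised). For each column $j\in V$ some group contains at least $(d/4)\cdot T/|H|$ ones of column $j$; averaging once more over columns yields a single group $G$ and a subset $V'\subseteq V$ of size at least $|V|/2$ such that every column of the $T\times|V'|$ submatrix $R'=R[G\times V']$ has weight at least a constant times $dT/|H|$. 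Proposition~\ref{prop-cibulka-lemma3} then produces at least $\bigl(n^2/(e^2T)\bigr)^n$ distinct $n\times n$ permutation submatrices of $R'$, where $n=\min\bigl(|V'|,\Theta(dT/|H|)\bigr)$; by the first paragraph, each of these is a submatrix of $M$.

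The main obstacle is the final optimisation. The free parameter $T$ must be chosen to maximise $n^2/(e^2T)$ subject to the constraint $n=\Theta(dT/|H|)$ coming from the thinning step, and a direct (if somewhat unpleasant) calculation then shows that the optimal choice of $T$ makes this ratio at least $d^{1/9}/16$. Taking $m=n$ yields the claimed bound $\gamma^m$ with $\gamma=d^{1/9}/16$. When $|H|$ is already small (of order $d^{7/9}$ or less) the thinning step is superfluous and $R$ itself suffices. The exponent $1/9$ is not arbitrary: it reflects how a budget of three pruning stages---first discarding low-weight rows, then low-weight columns, then row groups by pigeonhole---must distribute the available power of $d$ across the parameters of Proposition~\ref{prop-cibulka-lemma3}.
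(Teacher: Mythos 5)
Your overall plan—pass to a dense interval minor $P$, transfer permutation submatrices back to $M$ via Observation~\ref{obs-minor-implies-sub}, and then apply Proposition~\ref{prop-cibulka-lemma3} once after averaging—breaks down at the final optimisation, and the failure is not a matter of an unpleasant calculation. After your pruning, every column of $R'=R[G\times V']$ has weight roughly $n\approx dT/(4|H|)$ while $R'$ has $T$ rows, so Proposition~\ref{prop-cibulka-lemma3} yields at least $\bigl(n^2/(e^2T)\bigr)^n$ permutation matrices with $n^2/(e^2T)\approx d^2T/(16e^2|H|^2)\le d^2/(16e^2|H|)$ (the best case $T=|H|$). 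But $|H|$ can be as large as $m'$, and $m'$ is not bounded in terms of $d=\delta(M)$: if the ones of $P$ are spread so that every row and column has weight about $d$ while $m'$ is enormous, then no choice of $T$ makes this ratio bounded below by any function of $d$, let alone $d^{1/9}/16$. Indeed for such a spread-out $P$ no submatrix at all satisfies the hypotheses of Proposition~\ref{prop-cibulka-lemma3} with parameters good enough to give $\gamma^n$, since a column of weight $d$ restricted to $m$ rows typically carries only about $md/m'$ ones. (There is also a smaller slip: the pigeonhole over groups serves only about $|V|T/|H|$ columns, not $|V|/2$; that part is repairable, but the optimisation is not.)

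The reason this cannot be patched within your framework is that you use the density of $M$ only once, to obtain $\wt(P)\ge dm'$, and thereafter use nothing but submatrix averaging inside $P$. The spread-out configuration above is consistent with those facts even though it cannot actually occur when $\delta(M)=d$ (its contractions are far denser than $d$); to exclude it you must re-use the interval-minor structure at smaller scales. That is precisely what the paper's proof does and what your plan lacks: it runs an induction on $n$ for the extremal function $f_\gamma(n)$, bounds the weight of the contraction $M^{/q}$ by the inductive hypothesis (legitimate because permutation submatrices of $M^{/q}$ transfer to $M$ by Observation~\ref{obs-minor-implies-sub}), and handles tall and wide blocks through the dichotomy between concentrated and non-concentrated groups, applying Proposition~\ref{prop-cibulka-lemma3} to matrices obtained by \emph{contracting} groups of blocks to single rows or columns rather than to submatrices of $P$. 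Those contraction steps, together with the induction, are exactly what defeat the thin-spread case, and they have no counterpart in your argument; the exponent $1/9$ emerges from balancing those stages, not from a three-stage submatrix pruning.
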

\begin{proof}
The argument resembles that used to prove Marcus and Tardos' Theorem~\ref{thm-marcus-tardos}, though with an additional clever idea and different parameters. In particular, instead of being given $k$ (formerly the size of the forbidden interval minor), here we choose $k$ to be the smallest perfect square such that $k\ge e^4\gamma^2$. Clearly we can satisfy this requirement for some $k$ at most $2e^4\gamma^2$.

Define $f_\gamma(n)$ to be the greatest possible weight of an $n\times n$ matrix that does not contain $\gamma^m$ or more $m\times m$ permutation matrices for any value of $m$. The theorem is equivalent to the fact that
\begin{equation}
\label{eqn-cibulka}
f_\gamma(n)<(16\gamma)^9 n,
\end{equation}
which we establish by induction on $n$. If $n\le (16\gamma)^9$, then \eqref{eqn-cibulka} holds trivially. Thus we may assume that $n>(16\gamma)^9$ and take $M$ to be an $n\times n$ matrix of weight $f_\gamma(n)$ that does not contain $\gamma^m$ or more $m\times m$ permutation matrices for any value of $m$.

Again we begin by forming the interval minor $M^{/q}$ by choosing our row and column divisions to be $\{1,q+1,2q+1,\dots n+1\}$, although this time we set $q=4k^2$ instead of $k^2$. We have by induction that $\wt(M^{/q})<(16\gamma)^9\lceil n/q\rceil$. The definitions of tall and wide are unchanged: a $q\times q$ block of $M$ is tall (respectively, wide) if it contains ones in at least $k$ different rows (respectively, columns).

The next step is to bound the number of tall/wide blocks of $M$, but this time we obtain a different bound because of our assumption that $M$ contains few permutation matrices. We bound the number of tall blocks that correspond to a single row of $M^{/q}$, since the bound for wide blocks in a column will follow by symmetry.

Given a set of tall blocks corresponding to entries in the same row of $M^{/q}$, we group them into contiguous sets of $k$ tall blocks apiece. Our bound comes from considering two different types of groups. First, if the ones in all of the blocks of a group of tall blocks lie in only $k^{3/2}$ rows, we call the group \emph{concentrated}. In this case, we form a minor by removing the all-zero rows of the block (this can be viewed as a contraction) and then contracting each block of the group to a single column. In doing so we obtain a matrix with $k$ columns and at most $k^{3/2}$ rows in which every column contains at least $k$ ones (because the original blocks were tall). Applying Proposition~\ref{prop-cibulka-lemma3} to this interval minor shows that it (and thus also $M$) contains at least
$$
\frac{k!}{{k^{3/2}\choose k}}
\ge
\left(\frac{k^2}{e^2k^{3/2}}\right)^k
=
\left(\frac{\sqrt{k}}{e^2}\right)^k
\ge
\gamma^k
$$
$k\times k$ permutation matrices. Thus $M$ cannot contain a concentrated group of tall blocks.

It remains to consider groups of tall blocks that are not concentrated. If we have a nonconcentrated group of tall blocks then these blocks can all be contracted into a single column with at least $k^{3/2}$ ones. Thus if $M$ contains $k^{3/2}$ nonconcentrated groups then by contracting each group into a single column we obtain a matrix with $k^{3/2}$ columns and $q=2k^2$ rows in which each column contains at least $k^{3/2}$ ones. Applying Proposition~\ref{prop-cibulka-lemma3} to this interval minor shows that it contains at least
$$
\frac{(k^{3/2})!}{{2k^2\choose k^{3/2}}}
\ge
\left(\frac{k^3}{2e^2k^2}\right)^{k^{3/2}}
=
\left(\frac{k}{2e^2}\right)^{k^{3/2}}
\ge
\left(\frac{e^2\gamma^2}{2}\right)^{k^{3/2}}
>
\gamma^{k^{3/2}}
$$
$k^{3/2}\times k^{3/2}$ permutation matrices. Therefore $M$ cannot contain $k^{3/2}$ nonconcentrated groups within the same row of blocks.

We now count the ones in $M$. A given row of $M^{/q}$ must correspond to fewer than $k^{5/2}$ tall blocks because when arranged in groups of $k$ tall blocks apiece, no group can be concentrated, and $M$ cannot contain $k^{3/2}$ nonconcentrated groups. Using the trivial bound that a $q\times q$ block can contain at most $q^2$ ones, we see that the combined weight of an entire row of tall blocks is at most $q^2k^{5/2}$. As there are $\lceil n/q\rceil$ such rows of blocks and also $\lceil n/q\rceil$ columns of blocks that are subject to analogous constraints, the total weight of all tall and/or wide blocks in $M$ is less than $2\lceil n/q\rceil q^2k^{5/2}$.

Next we must bound the weight of the blocks that are neither tall nor wide. Each such block has weight less than $k^2$. Moreover, the number of such blocks is at most $\wt(M^{/q})<(16\gamma)^9\lceil n/q\rceil$. Combining our bounds (and remembering that $q=4k^2$ while $k\le 2e^4\gamma^2$) shows that
\begin{eqnarray*}
\wt(M)
&<&
2\left\lceil \frac{n}{q}\right\rceil q^2k^{5/2} + (16\gamma)^9\left\lceil\frac{n}{q}\right\rceil k^2,\\
&<&
\left(4qk^{5/2} + 2\frac{(16\gamma)^9 k^2}{q}\right)n,\\
&=&
\left(16k^{9/2} + \frac{(16\gamma)^9}{2}\right)n.
\end{eqnarray*}
It can be now be checked that
$$
16k^{9/2}
\le
16\cdot 2^{9/2}e^{18}\gamma^9
<
\frac{(16\gamma)^9}{2},
$$
establishing \eqref{eqn-cibulka} and completing the proof.
\end{proof}

The clever new idea in the proof of Theorem~\ref{thm-cibulka} is that of concentration, which allows one to handle tall/wide blocks in either of two different ways. In Cibulka's original proof he also refined the notion of tall/wide blocks, distinguishing between tall, very tall, and ultratall blocks. With these additional considerations, he was able to lower the $9$ in the statement of the theorem to $4.5$.


%
%
%
%
%
%
%
%
%
%
%
%
%
%
%
%
%
%
%
%
%
%
%
%
%
%
%
%
%
%
%
%
%
%
%
%
%
%
%
%

\subsection{Dense matrices avoiding $J_k$}
\label{subsec-fox}

Our final result of this section shows that the formerly widely held belief that $\gr(\Av(\beta))$ grows quadratically in $|\beta|$ is false. This result, proved by Fox~\cite{fox:stanley-wilf-li:}, is established by means of an elegant construction.

Throughout this section we are concerned with a matrix $M$ of size $n\times n$ where $n$ is a power of $2$, say $n=2^r$. We divide the interval $[1,2^r]$ into a number of \emph{dyadic intervals}. The interval $[1,2^r]$ is itself dyadic. Its two halves, $[1,2^{r-1}]$ and $[2^{r-1}+1,2^r]$ are also dyadic. In turn, the halves of each of those intervals are dyadic, and this process continues until we reach the singletons, which are all dyadic. More formally, a dyadic interval is any interval of the form $[a2^b+1,(a+1)2^b]$ for nonnegative integers $a$ and $b$. A \emph{dyadic rectangle} is then a product of two dyadic intervals (in the same sense as the products we used to define contraction).

The number of dyadic intervals in $[1,2^r]$ is
$$
\sum_{b=0}^r 2^b=2^{r+1}-1=2n-1,
$$
and every element of $[1,n]$ lies in precisely $r+1$ such dyadic intervals (one of each size). The final observation we need to make about dyadic intervals is their most important property, at least from the viewpoint of our upcoming construction. Suppose that $I_1$ and $I_2$ are subintervals of $[1,n]$, and for $i\in\{1,2\}$ let $D_i$ denote the smallest dyadic interval containing $I_i$. If $D_1=D_2$, then both $I_1$ and $I_2$ must contain elements of both halves of this dyadic interval. Therefore we can conclude that the smallest dyadic intervals containing two \emph{disjoint} subintervals of $[1,n]$ are \emph{distinct}.

We are now ready to describe the construction of $M$. Order the dyadic subintervals of $[1,n]$ as $D_1,\dots,D_{2n-1}$ arbitrarily and let $A$ be a $(2n-1)\times (2n-1)$ \emph{auxiliary matrix}. We build $M$ from $A$ by setting
$$
M(i,j)
=
\left\{
\begin{array}{cl}
1&\mbox{if $A(k,\ell)=1$ for every pair $k,\ell$ with $(i,j)\in D_k\times D_\ell$,}\\
0&\mbox{otherwise.}
\end{array}
\right.
$$
The following result illustrates the usefulness of this construction.

\begin{proposition}[Fox~\cite{fox:stanley-wilf-li:}]
\label{prop-aux-matrix}
Suppose that $A$ and $M$ are related as above. If $A$ avoids $J_k$ as a submatrix, then $M$ avoids $J_k$ as an interval minor.
\end{proposition}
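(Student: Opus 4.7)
The plan is to prove the contrapositive: if $M$ contains $J_k$ as an interval minor, then $A$ contains $J_k$ as a submatrix. The strategy uses the key property stressed in the paragraph above the proposition---that the smallest dyadic intervals containing two disjoint subintervals of $[1,n]$ are distinct---as the only nontrivial fact.

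First, suppose $M$ contains $J_k$ as an interval minor. Unpack this: there exist disjoint row intervals $R_1,\dots,R_k$ and disjoint column intervals $C_1,\dots,C_k$ (in increasing order) such that for every pair $(a,b)$ the block $M(R_a\times C_b)$ contains at least one entry equal to $1$. For each $a$ let $D_{\kappa(a)}$ denote the smallest dyadic interval of $[1,n]$ containing $R_a$, and for each $b$ let $D_{\lambda(b)}$ denote the smallest dyadic interval containing $C_b$.

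Next, I would invoke the disjointness observation: because $R_1,\dots,R_k$ are pairwise disjoint, the indices $\kappa(1),\dots,\kappa(k)$ are pairwise distinct, and similarly $\lambda(1),\dots,\lambda(k)$ are pairwise distinct. Thus the rows $\kappa(1),\dots,\kappa(k)$ and the columns $\lambda(1),\dots,\lambda(k)$ pick out a genuine $k\times k$ submatrix of $A$.

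Finally, I would show every entry of this submatrix equals $1$. Fix $a,b$ and choose any $(i,j)\in R_a\times C_b$ with $M(i,j)=1$. By the defining construction of $M$, $A(k',\ell')=1$ for every pair $(k',\ell')$ such that $(i,j)\in D_{k'}\times D_{\ell'}$. Since $i\in R_a\subseteq D_{\kappa(a)}$ and $j\in C_b\subseteq D_{\lambda(b)}$, the pair $(\kappa(a),\lambda(b))$ qualifies, so $A(\kappa(a),\lambda(b))=1$. Hence the chosen $k\times k$ submatrix of $A$ is $J_k$ (up to the permutation of rows and columns that sorts the indices, which is harmless as $J_k$ is invariant under such permutations), contradicting the hypothesis.

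The main obstacle is purely conceptual: recognizing that the smallest enclosing dyadic interval gives a \emph{canonical} representative for each row interval and column interval, so that the distinctness property of smallest dyadic enclosures for disjoint intervals turns the ``row interval $R_a$'' into the ``row index $\kappa(a)$ of $A$'' without collision. Once this correspondence is set up, the rest of the proof is a direct unpacking of the definition of $M$.
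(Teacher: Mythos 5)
Your proof is correct and follows essentially the same route as the paper: contrapositive, pass from each (necessarily nonempty, pairwise disjoint) row and column interval of the $J_k$ minor to its smallest enclosing dyadic interval, use the distinctness observation to get genuine rows and columns of $A$, and then read off $A(\kappa(a),\lambda(b))=1$ directly from the definition of $M$. Nothing is missing.
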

\begin{proof}
Suppose that $M$ contains $J_k$ as an interval minor. Thus there are row and column divisions $1=r_1\le\cdots\le r_{k+1}=n+1$ and $1=c_1\le\cdots\le c_{k+1}=n+1$ such that for every $i$ and $j$, the block $M([c_i,c_{i+1})\times [r_j,r_{j+1}))$ contains a nonzero entry.

Now choose indices $\overline{c}_1,\dots,\overline{c}_k$ such that for every $i$, the smallest dyadic interval containing $[c_i,c_{i+1})$ is $D_{\overline{c}_i}$ and similarly choose indices $\overline{r}_1,\dots,\overline{r}_k$ such that for every $j$, the smallest dyadic interval containing $[r_j,r_{j+1})$ is $D_{\overline{r}_j}$. Our observation above shows that because the intervals we are considering are disjoint, these are sets of distinct indices. Thus it follows by the construction of $M$ that $A(\overline{c}_i,\overline{r}_j)=1$ for all indices $i$ and $j$, i.e., that $A$ contains $J_k$ as a submatrix, completing the proof.
\end{proof}

As one might expect for an extremal result, the proof of Fox's Theorem is probabilistic. In order to find a dense matrix $M$ avoiding $J_k$ as an interval minor we choose the entries of the auxiliary matrix $A$ uniformly at random and then (essentially) construct $M$ as above. While this part of the proof is conceptually straight-forward, the reader may find the requisite inequalities a bit laborious.

\begin{theorem}[Fox~\cite{fox:stanley-wilf-li:}]
\label{thm-fox-bound}
For all sufficiently large positive integers $k$ such that $\sqrt{k}/8$ is an integer, there exists a matrix of density at least $2^{\sqrt{k}/16}$ that avoids $J_k$ as an interval minor.
\end{theorem}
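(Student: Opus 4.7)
The plan is probabilistic. Set $r=\sqrt{k}/8$ (an integer by hypothesis), $n=2^r$, and $p=2^{-1/\sqrt{k}}$. Let $A$ be the $(2n-1)\times(2n-1)$ auxiliary matrix with entries drawn independently as Bernoulli$(p)$ random variables, and construct $M$ from $A$ as described above. Proposition~\ref{prop-aux-matrix} tells us that whenever $A$ avoids $J_k$ as a submatrix, $M$ avoids $J_k$ as an interval minor, so it suffices to exhibit one outcome in which $A$ is $J_k$-free and $\wt(M)\ge n\cdot 2^{\sqrt{k}/16}$.

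Each position $(i,j)\in[1,n]^2$ lies in exactly $(r+1)^2$ dyadic rectangles $D_{k'}\times D_{\ell'}$, and $M(i,j)=1$ precisely when $A(k',\ell')=1$ for every such rectangle; since distinct rectangles correspond to distinct entries of $A$, these events are independent and $\Pr[M(i,j)=1]=p^{(r+1)^2}$. Hence $\mathbb{E}[\wt(M)]=n^2p^{(r+1)^2}$. With the chosen parameters,
\[
\log_2\mathbb{E}[\wt(M)]=2r-(r+1)^2/\sqrt{k}=\tfrac{\sqrt{k}}{4}-\tfrac{\sqrt{k}}{64}-O(1)=\tfrac{15\sqrt{k}}{64}-O(1),
\]
which for sufficiently large $k$ exceeds $\log_2(2n\cdot 2^{\sqrt{k}/16})=1+3\sqrt{k}/16=1+\tfrac{12\sqrt{k}}{64}$, giving $\mathbb{E}[\wt(M)]\ge 2n\cdot 2^{\sqrt{k}/16}$.

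A union bound handles the bad events. If $X$ denotes the number of $k\times k$ all-ones submatrices of $A$, then $\mathbb{E}[X]\le\binom{2n-1}{k}^2p^{k^2}\le(2n)^{2k}p^{k^2}$, whose base-$2$ logarithm is $2k(r+1)-k^{3/2}=-\tfrac{3}{4}k^{3/2}+2k$, which tends to $-\infty$; hence $n^2\,\mathbb{E}[X]\le n\cdot 2^{\sqrt{k}/16}$ for large $k$. We finish by alteration: while $A$ still contains $J_k$, zero out one entry of some copy to produce $A'$ with no $J_k$ submatrix (this needs at most $X$ zeroings). Setting $A(k',\ell')$ to zero changes $M$ only within the rectangle $D_{k'}\times D_{\ell'}$, whose area is at most $n^2$, so the resulting matrix $M'$ satisfies $\wt(M')\ge\wt(M)-Xn^2$. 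Taking expectations, $\mathbb{E}[\wt(M')]\ge\mathbb{E}[\wt(M)]-n^2\,\mathbb{E}[X]\ge n\cdot 2^{\sqrt{k}/16}$, so some realization gives the desired matrix.

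The heart of the argument is the delicate tension between making $M$ heavy and keeping $A$ free of $k\times k$ all-ones submatrices: one wants $p^{(r+1)^2}$ to stay bounded below while $p^{k^2}$ is tiny. These two demands are reconciled by Fox's dyadic construction, which makes each entry of $M$ depend on only $(r+1)^2=(\log_2 n+1)^2$ entries of $A$ rather than a number exponential in $r$. Choosing $r\approx\sqrt{k}$ puts $(r+1)^2$ on the same scale as $k$ rather than $k^2$, opening a range of feasible $p$; the particular choice $r=\sqrt{k}/8$ delivers the exponent $\sqrt{k}/16$ in the conclusion.
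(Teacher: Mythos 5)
Your proof is correct and takes essentially the same route as the paper: the same dyadic auxiliary-matrix construction with the entries of $A$ chosen independently at random, Proposition~\ref{prop-aux-matrix} to transfer $J_k$-avoidance, and the same first-moment bounds on $\wt(M)$ and on the number of $J_k$ copies in $A$. The only differences are cosmetic---you take $p=2^{-1/\sqrt{k}}$ instead of $1-1/\sqrt{k}$, which keeps everything in base-$2$ logarithms, and you repair the bad event by alteration (zeroing at most $X$ entries of $A$, costing at most $Xn^2$ weight in $M$) where the paper simply replaces $M$ by the zero matrix when $A$ contains $J_k$; both give the stated density bound.
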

\begin{proof}
Suppose that $r=\sqrt{k}/8$ is an integer and set $n=2^r$ so that every integer in the interval $[1,n]$ lies in precisely $r+1$ dyadic intervals. The proof uses a parameter $q$, which is a probability that will be determined near the end and will satisfy $1/2>q\ge 4r/k$.

We begin by building the auxiliary matrix $A$ of size $(2n-1)\times (2n-1)$. Choose each entry of $A$ uniformly at random so that it is $1$ with probability $1-q$. Let $X$ denote the random variable counting the number of copies of $J_k$ occurring as a submatrix in $A$. Clearly
$$
\mathbb{E}[X]={2n-1\choose k}^2(1-q)^{k^2}.
$$
The standard bound $1-x\le e^{-x}$ shows that $(1-q)^{k^2}\le e^{-qk^2}$. For the binomial coefficient, we have
$$
{2n-1\choose k}<\frac{(2n)^k}{k!}<\frac{(2n)^k}{2^k}=n^{k},
$$
where the second inequality follows because $k\ge 4$. Combining these bounds yields
$$
\mathbb{E}[X]<n^{2k}e^{-qk^2}.
$$
The right-hand side of this inequality is maximized when $q$ is as small as possible. As
$$
q\ge \frac{4r}{k}=\frac{4\log_2 n}{k}>\frac{4\ln n}{k},
$$
we see that
$$
\mathbb{E}[X]<n^{2k}e^{-4k\ln n}=n^{-2k}.
$$
In particular,
$$
\operatorname{Pr}[\mbox{$A$ contains $J_k$ as a submatrix}]
\le
\mathbb{E}[X]
<
n^{-2k},
$$
so almost all such matrices avoid $J_k$ as a submatrix.

Now construct $M$ as in Proposition~\ref{prop-aux-matrix}. The probability that an entry of $M$ is equal to $1$ is thus $(1-q)^{(r+1)^2}$ because every entry lies in $(r+1)^2$ dyadic rectangles. It is worth remarking that the entries of $M$ are highly correlated (when viewed as $n^2$ random variables). In particular, the probability that $M$ is the zero matrix is at least $q$, because one of the entries of $A$ corresponds to the dyadic rectangle $[1,n]\times [1,n]$, and thus must be $1$ if $M$ is to have any nonzero entries. Of course, this correlation does not prevent us from appealing to linearity of expectation, which shows that
$$
\mathbb{E}[\wt(M)]=n^2(1-q)^{(r+1)^2}.
$$

However, we must guarantee that $M$ avoids $J_k$ as an interval minor, and this construction does not provide such a guarantee (though it makes it exceedingly likely). Therefore we take $M$ as above if it avoids $J_k$ as an interval minor (i.e., if $A$ avoids $J_k$ as a submatrix), and otherwise set $M$ equal to the zero matrix. From our previous computation of $\mathbb{E}[X]$, we see that
\begin{eqnarray*}
\mathbb{E}[\wt(M)]
&\ge&
n^2(1-q)^{(r+1)^2} - n^2\cdot\operatorname{Pr}[\mbox{$A$ contains $J_k$ as a submatrix}],\\
&>&
n^2(1-q)^{(r+1)^2} - n^{2-2k},\\
&>&
n^2(1-q)^{(r+1)^2} - 1.
\end{eqnarray*}
Thus there is an $n\times n$ matrix $M$ avoiding $J_k$ as an interval minor with weight at least this expectation.

To finish the proof we need to choose $q$ and then manipulate the inequalities to show that the density of $M$ is at least $2^{\sqrt{k}/16}$, which is equivalent to showing that $\wt(M)\ge n^{3/2}$. For the rest of the proof we assume that $k\ge 48^2=2308$ and set $q=1/\sqrt{k}$. As promised at the beginning of the proof,
$$
\frac{1}{2}
>
q
=
\frac{1}{\sqrt{k}}
>
\frac{4(\sqrt{k}/8)}{k}
=
\frac{4r}{k}.
$$
We now claim that for this value of $q$ and the particular matrix $M$ we have selected,
\begin{equation}
\label{eqn-fox-1}
\wt(M)
\ge
n^2(1-q)^{(r+1)^2}-1>n^2 2^{-3qr^2}-1>n^{3/2}.
\end{equation}
Of these three inequalities, the first follows from our choice of $M$ and the second requires the most work. Canceling the $1$ and the $n^2$, we want to show that $(1-q)^{(r+1)^2}>n^{-3qr}$. Taking logarithms of both sides, this is equivalent to
\begin{equation}
\label{eqn-fox-2}
(r+1)^2\log(1-q)>-(3\log 2) qr^2
\end{equation}
For $q<1/2$, we have the bound $\log(1-q)>-3q/2$ so the left-hand side of \eqref{eqn-fox-2} can be bounded by
$$
(r+1)^2\log(1-q)
>
-\frac{3}{2}(r+1)^2q.
$$
For $r\ge 6$ (which we have because $k\ge 48^2$),
$$
-\frac{3}{2}(r+1)^2>-(3\log 2)r^2\ \left(\approx -2.08 r^2\right),
$$
verifying \eqref{eqn-fox-2} and thus the second inequality of \eqref{eqn-fox-1}.

All that remains is to show the final inequality of \eqref{eqn-fox-1}. Recall that $n=2^r$, so
$$
n^2 2^{-3qr^2}-1
=
n^{2-3qr}-1
=
n^{2-3/8}-1.
$$
This quantity is greater than $n^{3/2}=2^{\sqrt{k}/16}n$ for $n\ge 4$, completing the proof.
\end{proof}

It remains only to connect this result to growth rates of principal classes, which we do via our previous definition of
$$
g(k)=\max\{\gr(\Av(\beta))\st |\beta|=k\}.
$$
Choose $k$ large enough so that Theorem~\ref{thm-fox-bound} holds. Recall by Observation~\ref{obs-Jk-minor} that there are permutations of length $k^2$ that contain a $J_k$ minor. Let $\beta$ be such a permutation and set $\ell=k^2$. Fox's Theorem shows that there is a matrix $M$ of density at least $2^{\ell^{1/4}/16}$ which avoids $J_k$ as a minor and thus also avoids the permutation matrix of $\beta$. Cibulka's Theorem~\ref{thm-cibulka} then shows that there is a value of $m$ such that $M$ contains $\gamma^m$ or more $m\times m$ permutation matrices, where
$$
\gamma
=
\frac{\delta(M)^{1/9}}{16}
=
\frac{2^{\ell^{1/4}/144}}{16}
=
2^{{\ell^{1/4}/144}-4}.
$$
Every permutation matrix contained in $M$ also avoids the permutation matrix of $\beta$. From our observation in Section~\ref{subsec-basics} about supermultiplicativity it follows that $\gr(\Av(\beta))\ge\gamma$, showing that
$$
g(k)=2^{\Omega(k^{1/4})}.
$$

Thus $g(k)$ grows faster than every polynomial. (There is no commonly agreed upon term for functions that grow like $2^{n^\alpha}$ for $0<\alpha<1$. Because such functions grow much more slowly than $2^n$, they are often called \emph{subexponential} when found as upper bounds for algorithmic problems. However, this term does not accurately convey how quickly such functions do grow, so terms such as \emph{stretched exponential} and \emph{mildly exponential} are also used.)



%
%
%
%
%
%
%
%
%
%
%
%
%
%
%
%
%
%
%
%
%
%

\section{Notions of structure}
\label{sec-structure}

We saw in the previous section that proper permutation classes have finite upper growth rates. However, none of those techniques are of any use if we want to know the precise growth rate of a class. Indeed, even if we only want to estimate growth rates, it is hard to imagine those techniques providing much insight. Thus it seems that to know more about growth rates of permutation classes we must take a detailed look at their structure.

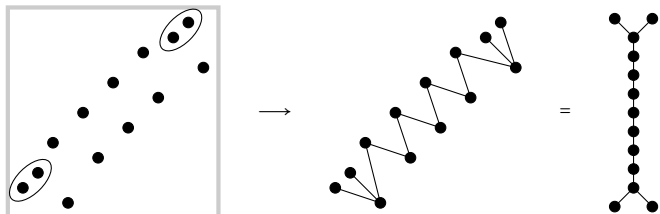
\begin{figure}
\begin{footnotesize}
\begin{center}
	\begin{tikzpicture}[scale=0.2, baseline=(current bounding box.center)]
		\draw [lightgray, ultra thick, line cap=round] (0,0) rectangle (14,14);
		\draw[fill=black] (1,2) circle (10pt);
		\draw[fill=black] (2,3) circle (10pt);
		\draw[fill=black] (3,5) circle (10pt);
		\draw[fill=black] (4,1) circle (10pt);
		\draw[fill=black] (5,7) circle (10pt);
		\draw[fill=black] (6,4) circle (10pt);
		\draw[fill=black] (7,9) circle (10pt);
		\draw[fill=black] (8,6) circle (10pt);
		\draw[fill=black] (9,11) circle (10pt);
		\draw[fill=black] (10,8) circle (10pt);
		\draw[fill=black] (11,12) circle (10pt);
		\draw[fill=black] (12,13) circle (10pt);
		\draw[fill=black] (13,10) circle (10pt);
		\draw[rotate around={-45:(1.5,2.5)}] (1.5,2.5) ellipse (25pt and 50pt);
		\draw[rotate around={-45:(11.5,12.5)}] (11.5,12.5) ellipse (25pt and 50pt);
	\end{tikzpicture}
	\quad
	\begin{tikzpicture}[scale=0.2, baseline=(current bounding box.center)]
		\node at (0,6.5) {$\longrightarrow$};
	\end{tikzpicture}
	\quad
	\begin{tikzpicture}[scale=0.2, baseline=(current bounding box.center)]
		\draw[fill=black] (1,2) circle (10pt);
		\draw[fill=black] (2,3) circle (10pt);
		\draw[fill=black] (3,5) circle (10pt);
		\draw[fill=black] (4,1) circle (10pt);
		\draw[fill=black] (5,7) circle (10pt);
		\draw[fill=black] (6,4) circle (10pt);
		\draw[fill=black] (7,9) circle (10pt);
		\draw[fill=black] (8,6) circle (10pt);
		\draw[fill=black] (9,11) circle (10pt);
		\draw[fill=black] (10,8) circle (10pt);
		\draw[fill=black] (11,12) circle (10pt);
		\draw[fill=black] (12,13) circle (10pt);
		\draw[fill=black] (13,10) circle (10pt);
		\draw (1,2)--(4,1);
		\draw (2,3)--(4,1);
		\draw (3,5)--(4,1);
		\draw (3,5)--(6,4);
		\draw (5,7)--(6,4);
		\draw (5,7)--(8,6);
		\draw (7,9)--(8,6);
		\draw (7,9)--(10,8);
		\draw (9,11)--(10,8);
		\draw (9,11)--(13,10);
		\draw (11,12)--(13,10);
		\draw (12,13)--(13,10);
	\end{tikzpicture}
	\quad
	\begin{tikzpicture}[scale=0.2, baseline=(current bounding box.center)]
		\node at (0,6.5) {$=$};
	\end{tikzpicture}
	\quad
	\begin{tikzpicture}[scale=0.125, baseline=(current bounding box.center)]
		\draw[fill=black] (-2,1) circle (16pt);
		\draw[fill=black] (2,1) circle (16pt);
		\foreach \i in {3,5,7,9,11,13,15,17,19}{
			\draw[fill=black] (0,\i) circle (16pt);
		}
		\draw[fill=black] (-2,21) circle (16pt);
		\draw[fill=black] (2,21) circle (16pt);
		\draw (-2,1)--(0,3);
		\draw (2,1)--(0,3);
		\draw (0,3)--(0,19);
		\draw (0,19)--(-2,21);
		\draw (0,19)--(2,21);
	\end{tikzpicture}
\end{center}
\end{footnotesize}
\caption{A member of the infinite antichain commonly denoted $U$.}
\label{fig-infinite-antichain}
\end{figure}

As we will see, there are a great many aspects of ``structure'' for permutation classes that have so far resisted unification. For a first aspect of structure, consider the permutation shown on the left of Figure~\ref{fig-infinite-antichain}. In the middle and right of this figure, two drawings of the \emph{permutation graph}, $G_\pi$, of this permutation are presented. This is the graph on the vertices $\{(i,\pi(i))\}$ in which $(i,\pi(i))$ and $(j,\pi(j))$ are adjacent if they form an inversion, i.e., $i<j$ and $\pi(i)>\pi(j)$. The drawing on the right of this figure shows that the permutation graph of this permutation is a \emph{split-end path}, that is, it is constructed from a path by adding four vertices, two adjacent to each leaf. Clearly we could modify this construction to build an infinite set of permutations whose permutation graphs are all split-end paths.

\index{infinite antichain}

It is easy to see that if $\sigma\le\pi$ then $G_\sigma$ is an induced subgraph of $G_\pi$, though the converse need not hold (for example, $G_\pi$ and $G_{\pi^{-1}}$ are isomorphic, but of course unless $\pi$ is an involution, it is not contained in $\pi^{-1}$). Therefore, since no split-end path is an induced subgraph of another, it follows that the infinite set of permutations built this way is an infinite antichain (recall that an antichain is a set of pairwise incomparable permutations). The same approach shows that the family of permutations alluded to in Figure~\ref{fig-inc-inc-basis} also forms infinite antichain; the graph of the permutation from that figure is shown below.
\begin{center}
	\begin{tikzpicture}[scale=0.25, baseline=(current bounding box.center)]
		\draw (3,0)--(23,0);
		\foreach \i in {5,7,9,11,13,15,17,19,21}{
			\draw (\i,0)--(13,3);
		}
		\foreach \i in {3,5,7,9,11,13,15,17,19,21,23}{
			\draw[fill=black] (\i,0) circle (8pt);
		}
		\draw[fill=black] (13,3) circle (8pt);
	\end{tikzpicture}
\end{center}

\index{well-quasi-order}
\index{well-partial-order}

A \emph{well-quasi-order (wqo)} is a \emph{quasi-order} (a binary and transitive, but not necessarily antisymmetric, binary relation) that contains neither an infinite strictly descending sequence nor an infinite antichain. Because containment is a partial order on permutations, we use the term \emph{well-partially-ordered (wpo)} to describe this property instead. Of course, permutation classes cannot contain infinite strictly decreasing sequences, so wpo is synonymous with the absence of infinite antichains in this context. Well-quasi/partial-orders have been studied quite extensively in a variety of contexts. Cherlin~\cite{cherlin:forbidden-subst:} surveys these investigations from a particularly general perspective.

The infinite antichain we constructed above from split-end paths lies in the class $\Av(321)$. Atkinson, Murphy, and Ru\v{s}kuc~\cite{atkinson:partially-well-:} showed that the only wpo principal classes are $\Av(1)$, $\Av(21)$, $\Av(231)$, and their symmetries. This supports our statement in Section~\ref{subsec-len3} that despite their Wilf-equivalence, the classes $\Av(231)$ and $\Av(321)$ are very different.

The following property of wpo classes can be tremendously useful.

\index{descending chain condition}

\begin{proposition}
\label{prop-wpo-subclasses-dcc}
The subclasses of a wpo permutation class satisfy the \emph{descending chain condition}, i.e., if $\C$ is a wpo class, there does not exist an infinite sequence $\C=\C^0\supsetneq \C^1\supsetneq \C^2\supsetneq\cdots$ of permutation classes.
\end{proposition}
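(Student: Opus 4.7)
The plan is to proceed by contradiction. Suppose toward contradiction that there is an infinite strictly descending chain of permutation classes
$$
\C=\C^0\supsetneq \C^1\supsetneq \C^2\supsetneq\cdots.
$$
For each $i\ge 1$ the strict containment $\C^{i-1}\supsetneq \C^i$ lets me select a permutation $\pi_i\in \C^{i-1}\setminus \C^i$. The whole sequence $\pi_1,\pi_2,\ldots$ lies in $\C$, which by hypothesis is wpo.

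Next I would invoke the standard characterization of well-quasi-orders: an order is wqo if and only if every infinite sequence of its elements contains an infinite nondecreasing subsequence (equivalently, contains some pair of indices $i<j$ with $\pi_i\le\pi_j$). In the permutation-containment setting this is immediate because there are no infinite strictly descending sequences, so the absence of an infinite antichain in $\C$ (wpo) forces the existence of indices $i<j$ with $\pi_i\le\pi_j$.

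Finally I would derive the contradiction from the downset property. Since $j-1\ge i$, we have $\C^{j-1}\subseteq \C^i$, and so $\pi_j\in\C^{j-1}\subseteq\C^i$. But $\C^i$ is a permutation class, hence a downset under containment, and $\pi_i\le\pi_j\in\C^i$ then forces $\pi_i\in\C^i$. This contradicts the choice $\pi_i\notin\C^i$, completing the proof.

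There is not really a hard step here; the only thing that could be called an obstacle is the appeal to the equivalent characterization of wqo (every infinite sequence has $\pi_i\le\pi_j$ for some $i<j$), and in the context of permutation classes where strictly descending sequences cannot exist this follows directly from Ramsey's theorem (or from the definition of an antichain) applied to the sequence $\pi_1,\pi_2,\ldots$. One could alternatively phrase the whole argument without naming this equivalence by extracting an infinite antichain explicitly: otherwise some $\pi_i$ is contained in infinitely many later $\pi_j$'s, and picking any such $j>i$ yields the same contradiction.
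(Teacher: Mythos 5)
Your proof is correct and follows essentially the same route as the paper: pick witnesses $\pi_i\in\C^{i-1}\setminus\C^i$, use the wpo hypothesis to produce indices $i<j$ with $\pi_i\le\pi_j$, and contradict the downset property of $\C^i$. The only (cosmetic) difference is how the comparable pair is obtained: the paper notes that the minimal elements of $\{\pi_1,\pi_2,\dots\}$ form a finite antichain so some later term lies above an earlier one, while you invoke the Ramsey-type ``good pair'' characterization of wqo, which is equally valid here since permutation containment has no infinite strictly descending sequences.
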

\begin{proof}
Suppose to the contrary that the wpo class $\C$ were to contain an infinite strictly decreasing sequence of subclasses $\C=\C^0\supsetneq \C^1\supsetneq \C^2\supsetneq\cdots$. For each $i\ge 1$, choose $\beta_i\in\C^{i-1}\setminus\C^i$.  The set of minimal elements of $\{\beta_1,\beta_2\ldots\}$ is an antichain and therefore finite, so there is an integer $m$ such that $\{\beta_1,\beta_2\ldots,\beta_m\}$ contains these minimal elements. In particular, $\beta_{m+1}\ge\beta_i$ for some $1\le i\le m$. However, we chose $\beta_{m+1}\in\C^m\setminus\C^{m+1}$, and because $\beta_{m+1}$ contains $\beta_i$, it does not lie in $\C^i$ and thus cannot lie in $\C^m$, a contradiction.
\end{proof}

In their 1996 paper, Noonan and Zeilberger~\cite{noonan:the-enumeration:} conjectured that every finitely based permutation class has a $D$-finite generating function. Clearly the finite basis hypothesis is necessary---because there are infinite antichains of permutations, there are uncountably many permutation classes with distinct generating functions, but only countably many $D$-finite generating functions with rational coefficients. The status of the Noonan-Zeilberger Conjecture is still unresolved, though Zeilberger no longer believes it to be true (as witnessed by his quote about $\Av(1324)$ presented in Section~\ref{subsec-len4}). Moreover, the work of Conway and Guttman~\cite{conway:on-the-growth-r:} suggests that $\Av(1324)$ has a non-$D$-finite generating function, giving us a concrete potential counterexample.

One might also ask about the other direction. If a class has a ``nice'' generating function, does it have a good deal of structure? Indeed, at the same conference, Zeilberger asked for necessary and sufficient conditions for a class to have a rational/algebraic/$D$-finite/... generating function~\cite{elder:problems-and-co:}. However, the work of Albert, Brignall, and Vatter~\cite{albert:large-infinite-:} shows that this question is almost certainly intractable.

To briefly sketch this argument, let $\C$ be a proper permutation class, so Marcus and Tardos' Theorem~\ref{thm-marcus-tardos} shows that there is some constant $\gamma$ such that $|\C_n|<\gamma^n$ for all $n$. By modifying the construction of the antichain in Figure~\ref{fig-infinite-antichain}, it can be shown that for every finite $\gamma$, there is an infinite antichain $A$ containing at least $\delta^n$ permutations of each sufficiently long length $n$ for some constant $\delta>\gamma$. Moreover, $A$ can be constructed in such a way that its downward closure, $\Sub(A)$, has a rational generating function. Therefore if we start with the class $\C\cup \Sub(A)$ and remove $|\C_n|$ elements of length $n$ from $A$ for every sufficiently long length $n$, we obtain a permutation class containing $\C$ that has (up to the addition of a polynomial) the same rational generating function as $\Sub(A)$, proving the following.

\begin{theorem}[Albert, Brignall, and Vatter~\cite{albert:large-infinite-:}]
\label{thm-contained-rational}
Every permutation class except for the class of all permutations is contained in a class with a rational generating function.
\end{theorem}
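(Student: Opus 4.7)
The plan is to enlarge $\C$ by adding the downward closure of a carefully engineered infinite antichain $A$, and then surgically remove a small subset of $A$ so that the resulting class has a rational generating function.

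\emph{Step 1 (exponential bound on $\C$).} Since $\C$ is proper, Marcus and Tardos' Theorem~\ref{thm-marcus-tardos} combined with Klazar's Theorem~\ref{thm-klazar} yields a constant $\gamma$ with $|\C_n|<\gamma^n$ for all $n$.

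\emph{Step 2 (a rich antichain).} I would modify the antichain of Figure~\ref{fig-infinite-antichain} to obtain an infinite antichain $A$ satisfying (i) $|A_n|\ge\delta^n$ for all sufficiently large $n$, where $\delta>\gamma$ is chosen arbitrarily large, and (ii) $\Sub(A)$ has a rational generating function. The standard construction produces only one antichain element per length, so it must be enriched. A natural approach is to retain the fixed ``anchor'' subpermutations at each end of every antichain element---these are what force incomparability---while allowing the interior to be an arbitrary word from a regular language over a finite alphabet of permutation building-blocks. Rationality of $\Sub(A)$ should then follow from the regular structure (compare with the grid-class perspective of Section~\ref{subsec-geom-grid}), and the antichain property should follow because the fixed anchors pairwise preclude one member of $A$ from being contained in another, regardless of the interior.

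\emph{Step 3 (correcting the count).} Let $\D_0=\C\cup\Sub(A)$, which is a permutation class since both terms are downward closed. Then $|(\D_0)_n|=|\Sub(A)_n|+|\C_n\setminus\Sub(A)_n|$, and for all sufficiently large $n$
\[
    |A_n\setminus\C_n| \;\ge\; \delta^n-\gamma^n \;\ge\; \gamma^n \;\ge\; |\C_n\setminus\Sub(A)_n|.
\]
Choose $B_n\subseteq A_n\setminus\C_n$ with $|B_n|=|\C_n\setminus\Sub(A)_n|$, let $B=\bigcup_n B_n$, and set $\D=\D_0\setminus B$. Each $\beta\in B$ is maximal in $\D_0$: it belongs to the antichain $A$ and is therefore maximal in $\Sub(A)$, and it lies outside $\C$, so by downward closure of $\C$ no element of $\C$ properly contains it either. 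Removing such maximal elements preserves the downset property, so $\D$ is a permutation class containing $\C$. By construction $|\D_n|=|\Sub(A)_n|$ for all sufficiently large $n$, so the generating functions of $\D$ and $\Sub(A)$ differ only by a polynomial and $\D$ inherits rationality.

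\emph{Main obstacle.} Step 1 is immediate from the cited theorems and Step 3 is bookkeeping; the technical content lies in Step 2. The difficulty is simultaneously achieving prescribed exponential growth and rationality of $\Sub(A)$: naively parametrising antichain members by middle segments drawn from an exponentially growing class will typically either create unintended containments (destroying the antichain condition) or yield a downset not describable by a finite automaton (destroying rationality). Reconciling these two requirements is the heart of the Albert--Brignall--Vatter argument.
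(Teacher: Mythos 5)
Your proposal follows the paper's own argument essentially verbatim: Marcus--Tardos (with Klazar's counting theorem) gives the exponential bound $|\C_n|<\gamma^n$, an enriched version of the antichain of Figure~\ref{fig-infinite-antichain} with at least $\delta^n$ members per sufficiently long length and rational $\Sub(A)$ is adjoined, and antichain elements are then removed so that the resulting class contains $\C$ and agrees with $\Sub(A)$ up to a polynomial. The paper likewise presents your Step 2 only as a sketch, deferring the construction to the cited work of Albert, Brignall, and Vatter, so your identification of that step as the technical heart is exactly right.
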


\index{perfect graphs}

In light of Theorem~\ref{thm-contained-rational}, it is clearly hopeless to attempt to establish a structural characterization of classes with rational (or algebraic, or $D$-finite, ...) generating functions. A similar issue arises in the definition of perfect graphs. In that context, we know that the chromatic number of a graph, $\chi(G)$, is at least its clique number, $\omega(G)$ and we would like to ask which graphs achieve equality, but we're faced with the problem that given any graph, its union with a sufficiently large complete graph will satisfy $\chi=\omega$. Thus we say that a graph is \emph{perfect} if $\chi=\omega$ for the graph and \emph{all of its induced subgraphs}. In the permutation class context, we parallel this by saying that a permutation class is \emph{strongly rational (respectively, strongly algebraic)} if it and all of its subclasses have rational (respectively, algebraic) generating functions. Our counting argument from before now yields the following implication.

\index{strongly rational permutation class}
\index{strongly algebraic permutation class}

\begin{proposition}
\label{prop-strong-wpo}
Every strongly algebraic permutation class is wpo.
\end{proposition}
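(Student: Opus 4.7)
The proof I would write proceeds by contrapositive: I assume $\C$ is not wpo and exhibit a subclass of $\C$ whose generating function fails to be algebraic. The plan is to combine an antichain-to-subclasses injection with the cardinality argument already invoked in Subsection~\ref{subsec-len3} (where it produced non-$D$-finite subclasses of $\Av(321)$).

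First, since $\C$ is not wpo, by definition it contains an infinite antichain $A = \{\alpha_1, \alpha_2, \ldots\}$. For each subset $S \subseteq A$, the downward closure $\Sub(S)$ is a subclass of $\C$. Because $A$ is an antichain, $\alpha_i \le \alpha_j$ forces $i=j$, so the maximal elements of $\Sub(S)$ under containment are exactly $S$; this makes the assignment $S \mapsto \Sub(S)$ injective and produces $2^{\aleph_0}$ distinct subclasses of $\C$.

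Next I would invoke the counting argument from before: among these $2^{\aleph_0}$ subclasses, uncountably many have pairwise distinct generating functions. On the other hand, only countably many algebraic formal power series with rational coefficients exist, since each such series is a root of some polynomial in the countable set $\mathbb{Q}[x,y]$. Combining these two facts, at least one $\Sub(S)$ must have a non-algebraic generating function, so $\C$ is not strongly algebraic.

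The main obstacle is justifying that $\{\Sub(S) : S \subseteq A\}$ realizes uncountably many \emph{distinct} generating functions, rather than merely being $2^{\aleph_0}$ distinct subclasses — injectivity of $S \mapsto \Sub(S)$ at the level of subclasses does not by itself preclude collisions at the level of power series. To address this I would first pass to a sub-antichain $\{\beta_1, \beta_2, \ldots\}$ with strictly increasing lengths $\ell_1 < \ell_2 < \cdots$ (available because each length supports only finitely many permutations). Writing $f_S$ for the generating function of $\Sub(\{\beta_i : i \in S\})$, the antichain property guarantees that $\beta_k \notin \Sub(\beta_j)$ for $j \neq k$, so the contribution of $\beta_k$ to the coefficient $[x^{\ell_k}] f_S$ equals $[k \in S]$ and is disjoint from the contributions of the $\beta_j$ with $j > k$. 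This diagonal structure is what I would leverage to extract enough information about $S$ from $f_S$ to force the map $S \mapsto f_S$ to have uncountable image, which completes the argument.
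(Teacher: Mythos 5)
Your overall strategy is the paper's: failure of wpo gives an infinite antichain, hence uncountably many subclasses, while only countably many power series with rational coefficients are algebraic; the paper disposes of the proposition with exactly this counting remark. You are also right to isolate the one delicate point, namely that one needs uncountably many distinct \emph{generating functions}, not merely uncountably many distinct subclasses. The gap is that your proposed resolution of that point does not work as stated. With $\beta_1,\beta_2,\dots$ of strictly increasing lengths $\ell_1<\ell_2<\cdots$ and $f_S$ the generating function of $\Sub(\{\beta_i\st i\in S\})$, the coefficient of $x^{\ell_k}$ in $f_S$ equals $[k\in S]$ \emph{plus} the number of permutations of length $\ell_k$ contained in some $\beta_j$ with $j\in S$ and $j>k$; this second term depends on the unknown tail of $S$, so membership of $k$ in $S$ cannot be read off that coefficient, and indeed the map $S\mapsto f_S$ need not be injective (removing $\beta_k$ can be compensated by changing the tail). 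Nor can uncountability of the image be rescued by a fiber-counting argument: a countable image would only force some fiber to be an uncountable family of pairwise incomparable subsets, and $(2^{\mathbb{N}},\subseteq)$ does contain uncountable antichains, so no contradiction arises from the ``diagonal structure'' alone.

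The missing idea is monotonicity along chains. If $S\subsetneq T$ then $\Sub(S)\subsetneq\Sub(T)$: any $\beta\in T\setminus S$ lies in $\Sub(T)$ but, by the antichain property, in no $\Sub(\alpha)$ with $\alpha\in S$. Since the coefficients of $f_S$ are termwise at most those of $f_T$, with strict inequality at length $|\beta|$, it follows that $f_S\neq f_T$. Now take an uncountable family of subsets of the antichain totally ordered by inclusion, for instance enumerate the antichain as $\alpha_1,\alpha_2,\dots$, fix a bijection $i\mapsto q_i$ with $\mathbb{Q}$, and set $S_r=\{\alpha_i\st q_i<r\}$ for $r\in\mathbb{R}$. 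This produces uncountably many pairwise distinct generating functions of subclasses of $\C$, and since only countably many such series are algebraic (each is a root of one of countably many polynomials in $\mathbb{Q}[x,y]$, each having finitely many power series roots), some subclass of $\C$ has a non-algebraic generating function. Substituting this for your final step (your reduction to strictly increasing lengths then becomes unnecessary), the proof is complete and is exactly the counting argument the paper invokes.
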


I have conjectured that this is actually the characterization of strongly algebraic permutation classes, though this conjecture has never before appeared in print.

\begin{conjecture}
\label{conj-wpo-algebraic}
A permutation class is strongly algebraic if and only if it is wpo.
\end{conjecture}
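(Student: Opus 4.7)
The forward implication is Proposition~\ref{prop-strong-wpo}, so what remains is to show that every wpo class is strongly algebraic. Since every subclass of a wpo class is itself wpo (any antichain in a subclass is an antichain in the parent class), it suffices to prove that every wpo permutation class has an algebraic generating function.

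The natural plan is to route through the substitution decomposition. The theorem of Albert and Atkinson asserts that a permutation class with only finitely many simple permutations is finitely based and has an algebraic generating function. Thus the conjecture would follow at once from the subsidiary claim that \emph{every wpo permutation class contains only finitely many simple permutations}.

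First I would attempt to prove this subsidiary claim directly. Suppose $\C$ is wpo and contains infinitely many simples $\sigma_1,\sigma_2,\ldots$. By the standard characterisation of well-quasi-orders (the combination of no infinite antichain with no infinite strictly descending chain forces every infinite sequence to contain an infinite increasing subsequence), we may pass to an infinite chain $\sigma_{i_1} < \sigma_{i_2} < \cdots$ of simples of strictly increasing length. The hope is then to manufacture an infinite antichain inside $\C$ out of one- or two-point extensions of the $\sigma_{i_j}$, contradicting wpo. The most direct tool for this is the theory of \emph{pin sequences} in simple permutations developed by Brignall, Huczynska, and Ru\v{s}kuc, which they use to characterise classes with only finitely many simples and from which infinite antichains can often be harvested.

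The main obstacle is that an infinite chain of simples does not on its own force an infinite antichain: a single geometric grid class can already contain an infinite chain of simples while remaining wpo. Thus the hard step is to argue that any infinite chain of simples in a wpo class either admits a pin-sequence perturbation yielding an antichain, or else comes from a ``monotone'' source that is itself already known to be strongly algebraic (so that the supposed simples can be absorbed into a well-understood substructure). A natural strengthening of this plan is to try to embed every wpo class directly into a bounded geometric grid class (or a modest generalisation thereof), from which algebraicity would follow from existing results on grid classes without even invoking Albert--Atkinson; this would however require upgrading the pin-sequence machinery into a full structural decomposition theorem for wpo classes. In summary, the plan is: (i) reduce, via the observation that wpo is hereditary, to proving algebraicity of $\C$ itself; (ii) show that $\C$ contains only finitely many simple permutations by extracting an antichain from any putative infinite chain of simples using pin sequences; and (iii) apply Albert--Atkinson. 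Step~(ii) is where the real work lies, and is almost certainly the reason the conjecture has so far resisted proof.
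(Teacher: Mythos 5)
There is a genuine gap, and it is worth stressing first that the statement you are trying to prove is stated in the paper as Conjecture~\ref{conj-wpo-algebraic}: the paper offers no proof (only the easy direction, Proposition~\ref{prop-strong-wpo}), so your argument cannot be checked against an existing one and must stand on its own. It does not. Your entire plan rests on the subsidiary claim that every wpo class contains only finitely many simple permutations, and that claim is false. The monotone grid class of a $2\times 1$ all-one matrix (equivalently, by Proposition~\ref{prop-forests-are-geoms}, a geometric grid class) contains the simple parallel alternations of every length, and the class $\Sub(\textsf{X})$ contains the wedge simple permutations of every length; yet by Theorem~\ref{thm-ggc-strong-rat} these classes are strongly rational, hence strongly algebraic, hence wpo by Proposition~\ref{prop-strong-wpo}. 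So step~(ii) of your plan cannot be repaired into a true statement, and Theorem~\ref{thm-fin-simples-alg} (Albert--Atkinson) cannot carry the whole conjecture: having finitely many simple permutations is a sufficient condition for strong algebraicity, not a consequence of wpo.

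You half-acknowledge this obstacle, but your proposed fallback---embedding every wpo class into a bounded geometric grid class (or a ``modest generalisation'')---also fails as stated. Geometrically griddable classes are strongly rational (Theorem~\ref{thm-ggc-strong-rat}), and strong rationality is inherited by subclasses by definition; but a wpo class need not be strongly rational. The separable permutations are wpo (they have only finitely many simple permutations, so Theorem~\ref{thm-fin-simples-alg} applies), yet their generating function is the Schr\"oder generating function, which is algebraic and not rational, so they cannot sit inside any geometrically griddable class. The only machinery in the paper that handles wpo classes with infinitely many simples is the inflation construction of Theorem~\ref{thm-ggc-inflations}, and no one has shown that every wpo class is captured by constructions of this kind---which is precisely why the statement remains a conjecture. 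In short: the reduction in step~(ii) is false, the fallback embedding is impossible in general, and no proof is obtained.
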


As of yet, very little has been established about strongly algebraic classes, but there are some results on strongly rational classes. Using the substitution decomposition (the topic of Section~\ref{subsec-subst-decomp}), Albert and Atkinson~\cite{albert:simple-permutat:} proved that every proper subclass of $\Av(231)$ has a rational generating function. More generally, Albert, Atkinson, and Vatter~\cite{albert:subclasses-of-t:} showed that in a strongly rational class, the sum indecomposable permutations also have a rational generating function. Other properties of strongly rational classes are discussed in Section~\ref{subsec-geom-grid}.

In the rest of this section we present several more notions of structure and study their interactions. Most of the tools developed here are applied in the final section, where we ask about the set of all growth rates of permutation classes.

\subsection{Merging and splitting}
\label{subsec-merge}

\index{merge of two permutation classes}

Here we consider a very coarse notion of structure. Given any two permutation classes $\C$ and $\D$, their \emph{merge}, $\C\odot\D$, consists of those permutations whose entries can be partitioned into two subsequences, one order isomorphic to a permutation in $\C$ and the order isomorphic to a permutation in $\D$ (usually thought of as coloring the entries of the permutation red or blue). For example, Proposition~\ref{prop-321-merge} shows that
$$
\Av(321)=\Av(21)\odot\Av(21),
$$
and this generalizes to any class of the form $\Av(k\cdots 21)$. What if we change one of the $\Av(21)$ classes to $\Av(12)$? Then we obtain the class of permutations that can be expressed as the union of an increasing subsequence and a decreasing subsequence. This class was introduced by Stankova~\cite{stankova:forbidden-subse:} who named it the class of \emph{skew-merged} permutations and computed its basis:
$$
\Av(21)\odot\Av(12)=\Av(2143, 3412).
$$
Atkinson~\cite{atkinson:permutations-wh:} was the first to compute the (algebraic) generating function of this class.

\index{skew-merged permutations}

It should be noted that the merge operation preserves very few properties. The example of $\Av(321)$ shows that it does not preserve wpo or rational generating functions, while the example of $\Av(4321)$ shows that it does not preserve algebraic generating functions. Moreover, it is relatively easy to construct examples of finitely based classes whose merge is not finitely based, so merge does not preserve finite bases (though K\'ezdy, Snevily, and Wang~\cite{kezdy:partitioning-pe:} proved that the merge of $\Av(12\cdots k)$ with $\Av(\ell\cdots 21)$ always has a finite basis). However, we can get a bound on the growth rates of merges by the following result (which seems to have first appeared, though implicitly, in Albert~\cite{albert:on-the-length-o:}).

\begin{proposition}
\label{prop-merge-gr}
For any two permutation classes $\C$ and $\D$,
$$
\ugr(\C\odot\D)\le\left(\sqrt{\ugr(\C)}+\sqrt{\ugr(\D)}\right)^2.
$$
\end{proposition}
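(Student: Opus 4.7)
The plan is to bound $|(\C\odot\D)_n|$ via a straightforward overcounting and then analyze the resulting binomial sum. Every $\pi\in(\C\odot\D)_n$ carries at least one two-coloring of its entries in which the red entries are order isomorphic to some $\sigma\in\C$ and the blue entries to some $\tau\in\D$. Such a colored permutation is completely specified by choosing the number $k$ of red entries, a $k$-subset of positions for them, a $k$-subset of values for them, the red pattern $\sigma\in\C_k$, and the blue pattern $\tau\in\D_{n-k}$. Since each uncolored permutation in $(\C\odot\D)_n$ arises from at least one such quintuple,
$$|(\C\odot\D)_n|\;\le\;\sum_{k=0}^{n}\binom{n}{k}^{2}|\C_k|\,|\D_{n-k}|.$$

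Next, I would fix $\epsilon>0$ and set $\alpha=\ugr(\C)+\epsilon$, $\beta=\ugr(\D)+\epsilon$. By definition of $\limsup$, there is a constant $M=M(\epsilon)$ with $|\C_n|\le M\alpha^n$ and $|\D_n|\le M\beta^n$ for all $n\ge 0$, where the finitely many small values are absorbed into $M$. The heart of the argument is the clean inequality
$$\sum_{k=0}^{n}\binom{n}{k}^{2}\alpha^{k}\beta^{n-k}\;\le\;\bigl(\sqrt\alpha+\sqrt\beta\bigr)^{2n},$$
which I would prove by writing the left-hand side as $\sum_{k}c_k^{2}$, where $c_k=\binom{n}{k}\alpha^{k/2}\beta^{(n-k)/2}\ge 0$, and invoking the elementary fact that $\sum_{k}c_k^{2}\le\bigl(\sum_{k}c_k\bigr)^{2}$ together with the binomial identity $\sum_{k}\binom{n}{k}\alpha^{k/2}\beta^{(n-k)/2}=(\sqrt\alpha+\sqrt\beta)^{n}$.

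Combining the two displays yields $|(\C\odot\D)_n|\le M^{2}(\sqrt\alpha+\sqrt\beta)^{2n}$, so taking $n$-th roots and sending $n\to\infty$ gives $\ugr(\C\odot\D)\le(\sqrt\alpha+\sqrt\beta)^{2}$. Letting $\epsilon\to 0$ then delivers the claim. There is no genuine obstacle in this plan: the only place any care is needed is in promoting the $\limsup$ bounds on $|\C_n|$ and $|\D_n|$ to inequalities that hold uniformly in $n$, and this is an entirely routine bookkeeping step. It is worth noting that the trivial bound $\sum c_k^{2}\le(\sum c_k)^{2}$ loses a factor of polynomial size in $n$, which disappears under $n$-th roots and is therefore inconsequential for growth rates; no finer estimate (Stirling, saddle point, etc.) is required.
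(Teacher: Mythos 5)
Your proposal is correct and follows essentially the same route as the paper: the same bound $|(\C\odot\D)_n|\le\sum_k\binom{n}{k}^2|\C_k||\D_{n-k}|$ obtained by choosing positions and values for the $\C$-part, the same collapse via $\sum_k c_k^2\le\bigl(\sum_k c_k\bigr)^2$ and the binomial theorem to get $\bigl(\sqrt{\alpha}+\sqrt{\beta}\bigr)^{2n}$. The only difference is that you carry out the $\epsilon$-bookkeeping explicitly, which the paper deliberately skips ("to avoid introducing epsilon, we sketch the proof from this point"), so your write-up is just a fully formal version of the same argument.
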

\begin{proof}
Given a permutation of length $k$ from $\C$ and another permutation of length $n-k$ from $\D$, there are ${n\choose k}^2$ ways to merge them to form a permutation of length $n$: choose $k$ positions to be occupied by the permutation from $\C$, then choose the $k$ values for this subsequence, and then the permutation is determined. Therefore
\begin{equation}
\label{eqn-merge-ugr}
|(\C\odot\D)_n|
\le
\sum_{k=0}^n {n\choose k}^2|\C_k||\D_{n-k}|
\le
\left(\sum_{k=0}^n {n\choose k}\sqrt{|\C_k||\D_{n-k}|}\right)^2.
\end{equation}
To avoid introducing epsilon, we sketch the proof from this point, though it is not difficult to make it formal. Suppose that $|\C_n|\approx\gamma^n$ and $|\D_n|\approx\delta^n$. Then \eqref{eqn-merge-ugr} becomes
$$
|(\C\odot\D)_n|
\lesssim
\left(\sum_{k=0}^n {n\choose k}\sqrt{\gamma^k\delta^{n-k}}\right)^2
=
\left(\sqrt{\gamma}+\sqrt{\delta}\right)^{2n}.
$$
Taking $n$th roots gives the desired inequality.
\end{proof}

\index{splittable permutation class}

Which classes can we obtain via merges? Let us say that the class $\C$ is \emph{splittable} if $\C$ is contained in $\D\odot\E$ for proper subclasses $\D,\E\subsetneq\C$. Thus $\Av(k\dots 21)$ is splittable for every $k$, as is the class of skew-merged permutations. But clearly the class $\Av(21)$ of increasing permutations is not splittable, as is (with a bit more thought) the class of layered permutations. In its fullest generality, this remains an open (and seemingly quite difficult) question.

\begin{question}
\label{ques-splittable}
Which permutation classes are splittable?
\end{question}

Question~\ref{ques-splittable} remains open even for principal classes. The case of $\Av(\beta)$ where $\beta$ is layered has been considered several times in the literature. B\'ona~\cite{bona:new-records-in-:} presented the first splittability result for such classes, though not in this language. His result was then generalized by Claesson, Jel{\'{\i}}nek, and Steingr{\'{\i}}msson~\cite{claesson:upper-bounds-fo:}, which was in turn generalized by Jel{\'{\i}}nek and Valtr to the following.


\begin{proposition}[Jel{\'{\i}}nek and Valtr~\cite{jelinek:splittings-and-:}]
\label{prop-split-1}
For all nonempty permutations $\alpha$, $\beta$, and $\gamma$, we have
$$
\Av(\alpha\oplus\beta\oplus\gamma)
\subseteq
\Av(\alpha\oplus\beta) \odot \Av(\beta\oplus\gamma).
$$
In particular, every principal class whose basis element is the sum of three (or more) nonempty permutations is splittable.
\end{proposition}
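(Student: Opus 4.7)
My plan is to prove the displayed containment by exhibiting, for each $\pi\in\Av(\alpha\oplus\beta\oplus\gamma)$, a $2$-coloring of the entries of $\pi$ into ``red'' and ``blue'' so that the red subsequence avoids $\alpha\oplus\beta$ and the blue subsequence avoids $\beta\oplus\gamma$; producing such a coloring places $\pi$ in the merge $\Av(\alpha\oplus\beta)\odot\Av(\beta\oplus\gamma)$ by definition.

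The natural candidate is to call an entry $e$ of $\pi$ \emph{blue} when its strict below-left region $BL(e)=\{f\in\pi:p(f)<p(e),\ v(f)<v(e)\}$ contains a copy of $\alpha$, and \emph{red} otherwise. The red-avoidance claim is then immediate: if the red entries contained a copy $A\oplus B$ of $\alpha\oplus\beta$, then for every $e$ in the $\beta$-part $B$ the copy $A$ lies in $BL(e)$, forcing $e$ to be blue and contradicting $e\in B\subseteq\text{red}$.

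The delicate step is verifying that the blue entries avoid $\beta\oplus\gamma$. Suppose for contradiction that the blue entries contain a copy $B'\oplus G'$ of $\beta\oplus\gamma$. Each entry of the $\beta$-part $B'$ is blue and so admits \emph{some} $\alpha$-copy in its below-left region; my goal is to extract a \emph{single} $\alpha$-copy $A$ strictly below and to the left of the \emph{entire} $B'$, for then $A$ together with $B'$ and $G'$ realizes $\alpha\oplus\beta\oplus\gamma$ in $\pi$, contradicting our hypothesis. The hard part will be this common-witness extraction: the most obvious candidate---the $\alpha$-copy certifying blueness of, say, the leftmost entry of $B'$---may have values that are not all below $\min v(B')$, so need not sit below-left of every entry of $B'$. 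To circumvent this, I will refine the definition of blueness by requiring the witness $\alpha$-copy to be extremal (e.g., lexicographically smallest with respect to maximum position, then maximum value), and then argue that the extremal witness for one entry of $B'$ must, using $\pi\in\Av(\alpha\oplus\beta\oplus\gamma)$ together with the blueness of \emph{every} entry of $B'$, also serve as a common witness for all of $B'$, yielding the required below-left $\alpha$-copy.

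With both claims established the coloring places $\pi$ in $\Av(\alpha\oplus\beta)\odot\Av(\beta\oplus\gamma)$, proving the main containment. The ``in particular'' statement is immediate: if $\delta=\alpha\oplus\beta\oplus\gamma$ is a sum of three nonempty permutations then both $\alpha\oplus\beta$ and $\beta\oplus\gamma$ are strictly contained in $\delta$, so $\Av(\alpha\oplus\beta)$ and $\Av(\beta\oplus\gamma)$ are proper subclasses of $\Av(\delta)$, witnessing its splittability.
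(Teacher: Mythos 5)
Your red/blue strategy is the right general shape (the paper also proves the containment by exhibiting a two-coloring), but the specific coloring you propose does not work, and the ``extremal witness'' repair cannot save it. The problem is that your blueness condition is purely existential: an entry is blue if and only if \emph{some} copy of $\alpha$ lies below and to its left, so choosing a canonical (extremal) witness changes nothing about which entries are blue --- it only changes the bookkeeping --- and the blue set can genuinely contain $\beta\oplus\gamma$. Concretely, take $\alpha=\gamma=1$ and $\beta=21$, so $\alpha\oplus\beta\oplus\gamma=1324$, $\alpha\oplus\beta=132$, $\beta\oplus\gamma=213$, and consider $\pi=34125$, which avoids $1324$. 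Under your coloring the red entries are exactly the left-to-right minima, namely the $3$ and the $1$, while the blue entries are $4,2,5$ (in positions $2,4,5$), which form the pattern $213=\beta\oplus\gamma$. So the common-witness extraction you hope for is not merely hard; it is false for this coloring, because the witnesses for the two entries of the blue $21$ (the $3$ for the entry $4$, and the $1$ for the entry $2$) cannot be replaced by any single entry below and to the left of both.

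The missing idea is that the coloring must be built dynamically rather than read off pointwise. The paper (following Jel\'{\i}nek and Valtr) colors the entries greedily from left to right, making $\pi(i)$ red unless (B1) doing so would complete a \emph{red} copy of $\alpha\oplus\beta$, or (B2) there is already a \emph{blue} entry below and to the left of $\pi(i)$; in either of those cases $\pi(i)$ is colored blue. Rule (B1) makes red-avoidance of $\alpha\oplus\beta$ true by construction (your corresponding step is fine), and rule (B2) --- the propagation of blueness into the upper-right quadrant of any blue entry --- is exactly what rules out configurations like $34125$ above: there, (B1) and (B2) never fire and everything is red. Verifying that the blue entries avoid $\beta\oplus\gamma$ is then still delicate, but it proceeds by taking the bottommost entry $\pi(k)$ of a hypothetical blue $\beta\oplus\gamma$, showing it must owe its color to (B2), passing to the leftmost blue entry $\pi(j)$ below-left of it (which must owe its color to (B1), hence ends an otherwise-red copy of $\alpha\oplus\beta$), and deriving a copy of $\alpha\oplus\beta\oplus\gamma$ in $\pi$ from the relative positions of these pieces. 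If you want to complete your write-up, you should adopt a coloring of this dynamic kind; no static ``$\alpha$ below-left'' criterion, however the witness is normalized, yields a correct partition.
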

\begin{proof}
Take $\pi\in\Av(\alpha\oplus\beta\oplus\gamma)$. We seek to color the entries of $\pi$ red and blue so that the red entries avoid $\alpha\oplus\beta$ and the blue entries avoid $\beta\oplus\gamma$. We proceed from left to right, coloring the entry $\pi(i)$ red unless
\begin{enumerate}
\item[(B1)] doing so would create a red copy of $\alpha\oplus\beta$, or
\item[(B2)] there is already a blue entry below and to the left of $\pi(i)$.
\end{enumerate}
If either of these conditions hold, we color $\pi(i)$ blue.

The red entries of $\pi$ avoid $\alpha\oplus\beta$ by definition, so it suffices to show that the blue entries avoid $\beta\oplus\gamma$. Suppose otherwise, and let $\pi(k)$ denote the bottommost entry of a blue copy of $\beta\oplus\gamma$ in $\pi$. If $\pi(k)$ were colored blue because of rule (B1), then $\pi$ would contain a copy of $\alpha\oplus\beta$ ending at $\pi(k)$, and thus this together with the blue copy of $\gamma$ shows that $\pi$ would contain $\alpha\oplus\beta\oplus\gamma$. As this leads to a contradiction, $\pi(k)$ must have been colored blue because of rule (B2).

\begin{figure}
\begin{footnotesize}
\begin{center}
	\begin{tikzpicture}[scale=0.35, baseline=(current bounding box.center)]
		\draw (0,1) rectangle (3,3);
		\node at (1.5,2) {red $\alpha$};
		\draw (3,3) rectangle (7,6);
		\node at (5,4) {mixed $\beta$};
		\draw (2,5) rectangle (11,10);
		\node at (6.5,7.5) {blue $\beta$};
		\draw (11,10) rectangle (14,12);
		\node at (12.5,11) {blue $\gamma$};
		\draw[fill=black] (2,7.5) circle (5.71428571429pt) node[left] {blue $\pi(i)$};
		\draw[fill=black] (7,3) circle (5.71428571429pt) node[below] {blue $\pi(j)$};
		\draw[fill=black] (9.5,5) circle (5.71428571429pt) node[below] {blue $\pi(k)$};
	\end{tikzpicture}
\end{center}
\end{footnotesize}
\caption{The final contradiction in the proof of Proposition~\ref{prop-split-1}.}
\label{fig-jel-valtr}
\end{figure}
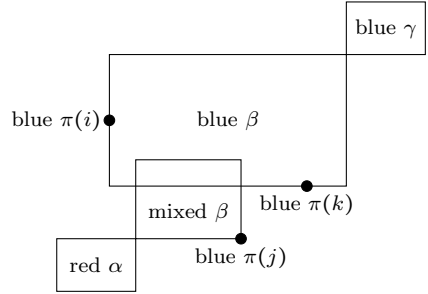

Let $\pi(j)$ denote the leftmost blue entry below and to the left of $\pi(k)$. By this choice of $\pi(j)$, we know that it was colored blue because of rule (B1), and thus it forms the rightmost entry of an otherwise-red copy of $\alpha\oplus\beta$ in $\pi$. We now seek to establish the contradiction shown in Figure~\ref{fig-jel-valtr}. Choose $\pi(i)$ to be the leftmost entry in the blue copy of $\beta\oplus\gamma$ which $\pi(k)$ lies in. Clearly if the red copy of $\alpha$ were to lie completely to the left of $\pi(i)$ then $\pi$ would contain $\alpha\oplus\beta\oplus\gamma$, so some entries of this copy of $\alpha$ must lie to the right of $\pi(i)$. However, this means that all of the entries of the mixed copy of $\beta$ must lie to the right of $\pi(i)$, and thus because $\pi(i)$ is blue, none of the red entries of this copy of $\beta$ may lie above $\pi(i)$. Finally, the blue entry $\pi(j)$ must also lie below $\pi(i)$ because it lies below $\pi(k)$. However, this implies that the mixed copy of $\beta$ must lie entirely below and to the left of the blue copy of $\gamma$, contradicting our assumption that $\pi$ avoids $\alpha\oplus\beta\oplus\gamma$.
\end{proof}

While the work of Jel{\'{\i}}nek and Valtr~\cite{jelinek:splittings-and-:} focuses on the abstract notions of splittability, the earlier work of Claesson, Jel{\'{\i}}nek, and Steingr{\'{\i}}msson~\cite{claesson:upper-bounds-fo:} was concerned with the applications of Propositions~\ref{prop-merge-gr} and (their version of) \ref{prop-split-1} to growth rates of principal classes with layered basis elements. In particular, we see that $\Av(1324)\subseteq\Av(132)\odot\Av(213)$, and thus the growth rate of $\Av(1324)$ is at most $16$. B\'ona~\cite{bona:a-new-record-fo:} has since refined this approach to give an upper bound of $13.74$.


Our next result was promised in the beginning of Section~\ref{sec-principal}.

\begin{theorem}[Claesson, Jel{\'{\i}}nek, and Steingr{\'{\i}}msson~\cite{claesson:upper-bounds-fo:}]
\label{thm-layered-gr}
For every layered permutation $\beta$ of length $k$, the growth rate of $\Av(\beta)$ is less than $4k^2$.
\end{theorem}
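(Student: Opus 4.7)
The plan is to combine Proposition~\ref{prop-split-1} (splittability of iterated sums), Proposition~\ref{prop-merge-gr} (the growth rate bound for merges), and Corollary~\ref{cor-two-layer} (the growth rate of an avoidance class for a layered permutation with at most two layers). Write $\beta=\lambda_1\oplus\cdots\oplus\lambda_m$ as the sum of its $m$ decreasing layers, and set $k_i=|\lambda_i|$, so that $k_1+\cdots+k_m=k$. When $m\le 2$, Corollary~\ref{cor-two-layer} immediately gives $\gr(\Av(\beta))=(k-1)^2<4k^2$, so the interesting case is $m\ge 3$.

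For $m\ge 3$, I would peel off one layer at a time. Applying Proposition~\ref{prop-split-1} with $\alpha=\lambda_1\oplus\cdots\oplus\lambda_{m-2}$, middle piece $\lambda_{m-1}$, and $\gamma=\lambda_m$ (all nonempty), one obtains
\[
\Av(\lambda_1\oplus\cdots\oplus\lambda_m)
\;\subseteq\;
\Av(\lambda_1\oplus\cdots\oplus\lambda_{m-1})\odot\Av(\lambda_{m-1}\oplus\lambda_m).
\]
Since $\odot$ is monotone in each argument, induction on $m$ yields
\[
\Av(\beta)\;\subseteq\;\Av(\lambda_1\oplus\lambda_2)\odot\Av(\lambda_2\oplus\lambda_3)\odot\cdots\odot\Av(\lambda_{m-1}\oplus\lambda_m),
\]
a merge of $m-1$ classes, where the $i$th factor is the avoidance class of a layered permutation of length $k_i+k_{i+1}$ consisting of exactly two layers.

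Corollary~\ref{cor-two-layer} then tells us each factor has growth rate $(k_i+k_{i+1}-1)^2$. Iterating Proposition~\ref{prop-merge-gr}, which extends to finitely many factors by an immediate induction, gives
\[
\sqrt{\ugr(\Av(\beta))}\;\le\;\sum_{i=1}^{m-1}(k_i+k_{i+1}-1)\;=\;2k-k_1-k_m-(m-1),
\]
where the equality records that each interior layer is counted twice while the two extreme layers are counted once. Since $k_1,k_m\ge 1$ and $m\ge 3$, the right-hand side is at most $2k-4$, and squaring yields $\ugr(\Av(\beta))\le(2k-4)^2<4k^2$ (the strict inequality uses $k\ge m\ge 3$), as required.

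The technical points to verify are routine: $\odot$ is monotone and associative (both are immediate from its definition via two-coloring of entries), and the two-variable bound in Proposition~\ref{prop-merge-gr} extends to an arbitrary finite merge by induction. The key conceptual step, and the one I expect to be the heart of the argument, is the telescoping decomposition into two-layer pieces provided by Proposition~\ref{prop-split-1}, which reduces the problem to ingredients that are already in hand.
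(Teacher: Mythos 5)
Your proof is correct and is essentially the paper's own argument: both rely on Proposition~\ref{prop-split-1} to telescope $\Av(\beta)$ into a merge of the two-layer classes $\Av(\lambda_i\oplus\lambda_{i+1})$, then combine Corollary~\ref{cor-two-layer} with an iterated Proposition~\ref{prop-merge-gr} to obtain $\sqrt{\gr(\Av(\beta))}\le 2k-k_1-k_m-m+1<2k$. The only cosmetic difference is that the paper peels off one layer at a time and carries the numerical bound through an induction on the number of layers, whereas you first unwind the full merge decomposition and then sum the square roots; the resulting bound is identical.
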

\begin{proof}
Given a positive integer $\ell$, we denote by $\delta_\ell$ the permutation $\ell\cdots 21$. We prove the stronger inequality that for a sequence $\ell_1,\dots,\ell_m$ of positive integers summing to $k$,
$$
\gr(\Av(\delta_{\ell_1}\oplus\cdots\oplus\delta_{\ell_m}))
\le
(2k-\ell_1-\ell_m-m+1)^2
<
4k^2.
$$
The proof is by induction on $m$. The $m=1$, $2$ cases follow from Corollary~\ref{cor-two-layer}, so we may assume that $m\ge 3$. By Propositions~\ref{prop-merge-gr} and \ref{prop-split-1},
\begin{eqnarray*}
\sqrt{\gr(\Av(\delta_{\ell_1}\oplus\cdots\oplus\delta_{\ell_m}))}
&<&
\sqrt{\gr(\Av(\delta_{\ell_1}\oplus\delta_{\ell_2}))}+\sqrt{\gr(\Av(\delta_{\ell_2}\oplus\cdots\oplus\delta_{\ell_m}))},\\
&\le&
(\ell_1+\ell_2-1)+(2(k-\ell_1)-\ell_2-\ell_m-(m-1)+1),\\
&=&
2k-\ell_1-\ell_m-m-1,
\end{eqnarray*}
as desired.
\end{proof}

We conclude this subsection by returning to a (very) special case of the splittability question. What about principal classes for which the basis element is the sum of just two permutations, i.e., those of the form $\Av(\alpha\oplus\beta)$? Here we can apply Proposition~\ref{prop-split-1} to see that
$$
\Av(\alpha\oplus\beta)
\subseteq
\Av(\alpha\oplus 1\oplus\beta)
\subseteq
\Av(\alpha\oplus 1) \odot \Av(1\oplus\beta).
$$
Thus so long as neither $\alpha\oplus 1$ nor $1\oplus\beta$ is equal to $\alpha\oplus\beta$, such classes are splittable. By symmetry, this leaves only those classes of the form $\Av(1\oplus\beta)$. We have already remarked that $\Av(12)$ is not splittable. Jel{\'{\i}}nek and Valtr~\cite{jelinek:splittings-and-:} showed that $\Av(132)$ is also not splittable. However, via an intricate argument, they were able to prove that every class of the form $\Av(1\oplus\beta)$ with $|\beta|\ge 3$ is splittable, which is the final piece needed to obtain the following result.

\begin{theorem}[Jel{\'{\i}}nek and Valtr~\cite{jelinek:splittings-and-:}]
\label{thm-split-2}
For all sum (or skew) decomposable permutations $\beta$ of length at least four, the class $\Av(\beta)$ is splittable.
\end{theorem}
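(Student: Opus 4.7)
The plan is to reduce the theorem to one irreducible case and attack that case with a tailored coloring argument. Using the reverse--complement symmetries of the containment order, which swap sum- and skew-decomposability and preserve splittability, I may assume $\beta$ is sum decomposable. Write $\beta=\alpha_1\oplus\cdots\oplus\alpha_m$ as a sum of nonempty sum indecomposable components, so $m\ge 2$ and $|\beta|=\sum|\alpha_i|\ge 4$.

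If $m\ge 3$, I apply Proposition~\ref{prop-split-1} directly with $\alpha^*=\alpha_1$, $\beta^*=\alpha_2\oplus\cdots\oplus\alpha_{m-1}$, and $\gamma^*=\alpha_m$ (each nonempty) to obtain
$$
\Av(\beta)\subseteq\Av(\alpha^*\oplus\beta^*)\odot\Av(\beta^*\oplus\gamma^*),
$$
where both factors are proper subclasses of $\Av(\beta)$ because each avoids a strict subpattern of $\beta$. If instead $m=2$ with both $|\alpha_1|,|\alpha_2|\ge 2$, I use the one-insertion trick noted just before the theorem: every $\pi\in\Av(\alpha_1\oplus\alpha_2)$ also avoids $\alpha_1\oplus 1\oplus\alpha_2$, and Proposition~\ref{prop-split-1} then yields
$$
\Av(\beta)\subseteq\Av(\alpha_1\oplus 1)\odot\Av(1\oplus\alpha_2),
$$
with both factors proper because $|\alpha_2|\ge 2$ on the left and $|\alpha_1|\ge 2$ on the right.

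The only remaining case is $m=2$ with $\alpha_1=1$ or $\alpha_2=1$. By a further symmetry reduction I may assume $\beta=1\oplus\beta'$ with $\beta'$ sum indecomposable of length at least $3$. This is the heart of the theorem and the main obstacle: Proposition~\ref{prop-split-1} is now powerless, since $1\oplus\beta'$ admits no nontrivial decomposition into three nonempty sum parts, and the one-insertion trick collapses because one of its factors becomes $\Av(\beta)$ itself. My plan is to prove a splitting of the form
$$
\Av(1\oplus\beta')\subseteq\Av(\beta')\odot\Av(1\oplus\delta)
$$
for a carefully chosen proper subpattern $\delta<\beta'$. The method is a greedy left-to-right two-coloring of each $\pi\in\Av(1\oplus\beta')$: color an entry red unless doing so would complete a red copy of $\beta'$, in which case color it blue. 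By construction the red subsequence avoids $\beta'$, so what must be shown is that $\delta$ can be chosen so the blue subsequence always avoids $1\oplus\delta$. To do this I will analyze, for each forced-blue entry $\pi(i)$, the near-copy of $\beta'$ to its lower-left that witnessed the forcing, and then use the hypothesis that $\pi$ avoids $1\oplus\beta'$ together with the sum indecomposability of $\beta'$ to rule out a blue $1\oplus\delta$.

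The principal obstacle will be the choice of $\delta$ and the verification that the blue obstructions interact coherently across all forced-blue entries; this is the intricate combinatorial content of Jel{\'{\i}}nek and Valtr's argument~\cite{jelinek:splittings-and-:}, and requires a delicate case analysis driven by the internal structure of $\beta'$.
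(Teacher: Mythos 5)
Your reduction of the easy cases is exactly the paper's: Proposition~\ref{prop-split-1} handles $m\ge 3$, and the one-insertion trick $\Av(\alpha_1\oplus\alpha_2)\subseteq\Av(\alpha_1\oplus1\oplus\alpha_2)\subseteq\Av(\alpha_1\oplus1)\odot\Av(1\oplus\alpha_2)$ handles $m=2$ with both components of length at least two, with the same properness checks; you also correctly isolate the residual case $\beta=1\oplus\beta'$ with $\beta'$ sum indecomposable of length at least three.

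For that residual case, however, you offer a plan rather than a proof, and this is a genuine gap---indeed it is exactly the piece the survey does not prove either, but attributes to the ``intricate argument'' of Jel\'{\i}nek and Valtr~\cite{jelinek:splittings-and-:}. Concretely: you posit a splitting $\Av(1\oplus\beta')\subseteq\Av(\beta')\odot\Av(1\oplus\delta)$ and a greedy left-to-right coloring, but you supply no candidate for $\delta$, no argument that the blue entries avoid $1\oplus\delta$, and no indication of where the hypothesis $|\beta'|\ge 3$ is used---yet it must be used somewhere, since $\Av(132)=\Av(1\oplus 21)$ is unsplittable, so any correct argument has to break for $\beta'=21$. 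Moreover, the engine of the coloring in Proposition~\ref{prop-split-1} is the three-part sum structure of the forbidden pattern (a ``mixed'' copy of the middle summand gets sandwiched between a red copy of the first summand and a blue copy of the last, recreating the full pattern); when $\beta'$ is sum indecomposable there is no such decomposition to exploit, so the argument cannot be a routine adaptation, and it is not even clear a priori that a two-factor splitting of the form you propose exists. Until this case is actually carried out (or explicitly invoked as a black box from \cite{jelinek:splittings-and-:}), the theorem is not proved.
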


\subsection{The substitution decomposition}
\label{subsec-subst-decomp}

\index{substitution decomposition}

While the merge construction can provide somewhat reasonable upper bounds on growth rates on splittable classes, it sheds little light on the exact enumeration problem. In this subsection, we investigate the substitution decomposition, which has proved very useful for computing generating functions of classes, especially when combined with the techniques of Section~\ref{subsec-geom-grid}. It should be noted that, as with most of the structural notions discussed in this section, the substitution decomposition has been studied for a wide variety of combinatorial objects. For a slightly outdated survey, we refer to M\"ohring and Radermacher~\cite{mohring:substitution-de:}. This concept dates back to a 1953 talk of Fra{\"{\i}}ss{\'e}~\cite{fraisse:on-a-decomposit:}, although its first significant application was in Gallai's 1967 paper~\cite{gallai:transitiv-orien:} (see \cite{gallai:a-translation-o:} for a translation).



\index{interval}
\index{inflation}
\index{simple permutation}

An \emph{interval} in the permutation $\pi$ is a set of contiguous indices $I=[a,b]$ such that the set of values $\pi(I)=\{\pi(i) : i\in I\}$ is also contiguous (four intervals are indicated in the permutation on the left of Figure~\ref{fig-subst-tree}).  Given a permutation $\sigma$ of length $m$ and nonempty permutations $\alpha_1,\dots,\alpha_m$, the \emph{inflation} of $\sigma$ by $\alpha_1,\dots,\alpha_m$,  denoted $\sigma[\alpha_1,\dots,\alpha_m]$, is the permutation of length $|\alpha_1|+\cdots+|\alpha_m|$ obtained by replacing each entry $\sigma(i)$ by an interval that is order isomorphic to $\alpha_i$ in such a way that the intervals themselves are order isomorphic to $\sigma$.  For example, the permutation shown in Figure~\ref{fig-subst-tree} is
\[
2413[1,132,321,12]=4\ 798\ 321\ 56. 
\]
Every permutation of length $n\ge 1$ has \emph{trivial} intervals of lengths $0$, $1$, and $n$; all other intervals are termed \emph{proper}. We further say that the empty permutation and the permutation $1$ are \emph{trivial}. A nontrivial permutation is \emph{simple} if it has no proper intervals.  The shortest simple permutations are thus $12$ and $21$, there are no simple permutations of length three, and the simple permutations of length four are $2413$ and $3142$. Simple permutations and inflations are linked by the following result. Its proof follows in a straight-forward manner once one establishes the fact that the intersection of two intervals is itself an interval.

\begin{proposition}[Albert and Atkinson~\cite{albert:simple-permutat:}]
\label{prop-simple-decomp-unique}
Every nontrivial permutation $\pi$ is an inflation of a unique simple permutation $\sigma$.  Moreover, if $\pi=\sigma[\alpha_1,\dots,\alpha_m]$ for a simple permutation $\sigma$ of length $m\ge 4$, then each $\alpha_i$ is unique. If $\pi$ is sum decomposable, then there is a unique sequence of sum indecomposable permutations $\alpha_1,\dots,\alpha_m$ such that $\pi=\alpha_1\oplus\cdots\oplus\alpha_m$. The same holds, mutatis mutandis, with sum replaced by skew sum.
\end{proposition}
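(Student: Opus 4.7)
The plan is to derive the proposition from two closure properties of intervals, then handle existence by a case analysis and uniqueness by exploiting the rigidity of simple permutations of length at least four.

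First I would prove two elementary lemmas about intervals. For intervals $I$ and $J$ of a permutation $\pi$: (i) $I \cap J$ is an interval, since two contiguous position sets intersect in a contiguous set and, using that $\pi$ is a bijection, $\pi(I \cap J) = \pi(I) \cap \pi(J)$ is the intersection of two contiguous value sets; (ii) if $I \cap J \neq \emptyset$, then $I \cup J$ is an interval, since overlapping contiguous sets have contiguous union (the value overlap following from $\pi(I) \cap \pi(J) = \pi(I \cap J) \neq \emptyset$).

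For existence, I would consider three cases for a nontrivial $\pi$ of length $n$. If $\pi$ is simple, take $\sigma = \pi$ and each $\alpha_i = 1$. If $\pi$ is sum decomposable, iteratively extract a maximal sum indecomposable initial block to obtain $\pi = \gamma_1 \oplus \cdots \oplus \gamma_k$ with each $\gamma_i$ sum indecomposable, and set $\sigma = 12\cdots k$; the skew case is symmetric. Otherwise, $\pi$ has a proper interval but is neither sum nor skew decomposable. I would then consider the maximal proper intervals of $\pi$: by (ii) two overlapping maximal proper intervals would merge into a larger interval, so they must either coincide or be disjoint. The possibility that their union equals all of $[1,n]$ can be ruled out by showing that such a configuration forces $\pi$ to be sum or skew decomposable (depending on the relative positions and values of the two intervals' boundaries). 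Together with singletons (which are always intervals), the maximal proper intervals partition $[1,n]$ into consecutive blocks $I_1, \ldots, I_m$; the quotient $\sigma$ of length $m$ is simple, for any proper interval of $\sigma$ would lift via (ii) to a proper interval of $\pi$ strictly containing some $I_i$, contradicting maximality.

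For uniqueness when $\pi = \sigma[\alpha_1, \ldots, \alpha_m]$ with $\sigma$ simple of length $m \geq 4$, I would show the blocks $I_1, \ldots, I_m$ are intrinsically characterized as the maximal proper intervals of $\pi$. The key observation is that a simple permutation of length at least three cannot have two consecutive positions with consecutive values (else those two positions would form a proper interval), so consecutive blocks of $\pi$ have value ranges separated by at least one unused value level. For any proper interval $J$ of $\pi$, let $\hat{J}$ be the union of those blocks $I_i$ meeting $J$; using lemmas (i) and (ii), either $J \subseteq I_i$ for a single $i$, or the set of block indices $\{i : I_i \cap J \neq \emptyset\}$ must be a proper interval of $\sigma$ of length at least two, which is impossible. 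Thus each $I_i$ is maximal, so the partition (and therefore $\sigma$ together with each $\alpha_i$) is uniquely determined by $\pi$. The uniqueness of the sum indecomposable decomposition follows by induction on $n$: given $\pi = \alpha_1 \oplus \cdots \oplus \alpha_k = \beta_1 \oplus \cdots \oplus \beta_\ell$ with all parts sum indecomposable, the initial position segments $[1,|\alpha_1|]$ and $[1,|\beta_1|]$ are both sum components of $\pi$, and if $|\alpha_1| < |\beta_1|$ then $\beta_1$ would itself be sum decomposable, a contradiction; hence $\alpha_1 = \beta_1$ and induction finishes the argument; the skew case is symmetric.

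The main obstacle is the case analysis in existence for ruling out two overlapping maximal proper intervals whose union is all of $[1,n]$ without $\pi$ being sum or skew decomposable, together with the parallel argument in uniqueness showing every proper interval of $\pi$ either lies in a single block or induces a proper interval of $\sigma$ of length at least two. Both are bookkeeping exercises tracking position and value ranges of overlapping intervals, relying crucially on the gap property that simple permutations of length at least three have no two consecutive entries differing by exactly one in value.
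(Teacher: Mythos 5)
Your overall route—the two closure lemmas for intervals, existence via maximal proper intervals, uniqueness via an intrinsic characterization of the blocks—is exactly the standard Albert--Atkinson argument that the paper alludes to (the paper itself gives no proof beyond the remark that everything follows once one knows the intersection of two intervals is an interval). However, two steps do not go through as written. First, in the sum decomposable case you set $\sigma=12\cdots k$, but $12\cdots k$ is not simple for $k\ge 3$ (its first two entries form a proper interval); the simple permutation underlying a sum decomposable $\pi$ is $12$, via $\pi=12[\alpha_1,\alpha_2]$, and it is precisely because that expression is not unique that the proposition asserts instead the uniqueness of the sequence of sum indecomposable summands (which your induction at the end does establish). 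So as stated, the existence claim is not proved in this case, and for uniqueness of $\sigma$ you should also note that a sum (or skew) decomposable $\pi$ cannot equal $\sigma[\alpha_1,\dots,\alpha_m]$ for simple $\sigma$ of length $m\ge 4$—this follows from your own block analysis, since the initial sum component and its complement are proper intervals that would each have to sit inside a single block.

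Second, in the uniqueness argument your dichotomy—every proper interval $J$ either lies in a single block $I_i$ or makes $S=\{i : I_i\cap J\neq\emptyset\}$ a \emph{proper} interval of $\sigma$ of length at least two—omits the case $S=[1,m]$, i.e.\ $J$ meeting every block, and the gap property you invoke (no two adjacent entries of a simple permutation have consecutive values) is not the right tool to close it: $13524$ satisfies that property yet begins with its minimum. What you need is that a simple permutation of length $m\ge 4$ cannot have an extreme value at an extreme position, since $\sigma(1)=1$, say, would make $\{2,\dots,m\}$ a proper interval. Granting that, if $J$ meets every block but $J\neq[1,n]$, then the values missing from $\pi(J)$ lie in the value ranges of $I_1$ and $I_m$, while contiguity of $\pi(J)$ forces the missing set to contain $1$ or $n$—values those ranges cannot contain because $\sigma(1),\sigma(m)\notin\{1,m\}$—a contradiction. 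Both repairs are short consequences of simplicity, but they are needed; with them your proof is correct and coincides with the cited one.
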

%

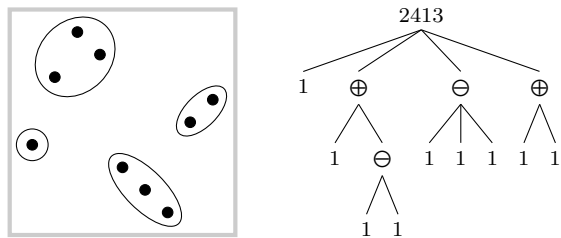
\begin{figure}
\begin{footnotesize}
\begin{center}
$$
\begin{array}{ccc}
	\begin{tikzpicture}[scale=0.3, baseline=(current bounding box.center)]
		\draw [lightgray, ultra thick, line cap=round] (0,0) rectangle (10,10);
		\draw[fill=black] (1,4) circle (6.6666666pt);
		\draw[fill=black] (2,7) circle (6.6666666pt);
		\draw[fill=black] (3,9) circle (6.6666666pt);
		\draw[fill=black] (4,8) circle (6.6666666pt);
		\draw[fill=black] (5,3) circle (6.6666666pt);
		\draw[fill=black] (6,2) circle (6.6666666pt);
		\draw[fill=black] (7,1) circle (6.6666666pt);
		\draw[fill=black] (8,5) circle (6.6666666pt);
		\draw[fill=black] (9,6) circle (6.6666666pt);
		\draw (1,4) circle (20pt);
		\draw[rotate around={-45:(2.9,7.9)}] (2.9,7.9) ellipse (45pt and 55pt);
		\draw[rotate around={45:(6,2)}] (6,2) ellipse (25pt and 60pt);
		\draw[rotate around={-45:(8.5,5.5)}] (8.5,5.5) ellipse (20pt and 40pt);
	\end{tikzpicture}
&&
	\begin{tikzpicture}[baseline=(current bounding box.center)]
	\tikzset{level distance=27pt}
	\Tree	[.$2413$	$1$
				[.$\bigoplus$ $1$ [.$\bigominus$ $1$ $1$ ] ]
				[.$\bigominus$ $1$ $1$ $1$ ]
				[.$\bigoplus$ $1$ $1$ ]
		]
	\end{tikzpicture}
\end{array}
$$
\end{center}
\end{footnotesize}
\caption{The plot of the permutation $479832156$ and its substitution decomposition tree.}
\label{fig-subst-tree}
\end{figure}

\index{substitution decomposition tree}
\index{substitution depth}

By recursively decomposing the permutation $\pi$ and its intervals as suggested by Proposition~\ref{prop-simple-decomp-unique}, we obtain a rooted tree called the \emph{substitution decomposition tree} of $\pi$ (an example is shown on the right of Figure~\ref{fig-subst-tree}). The \emph{substitution depth} of $\pi$ is the height of its substitution decomposition tree, so for example, the substitution depth of the permutation from Figure~\ref{fig-subst-tree} is $3$, while the substitution depth of every simple or monotone permutation is $1$, and the substitution depth of every nonmonotone layered permutation is at most $2$.

The main result of Albert and Atkinson's formative paper on the substitution decomposition is the following.

\begin{theorem}[Albert and Atkinson~\cite{albert:simple-permutat:}]
\label{thm-fin-simples-alg}
Every permutation class with only finitely many simple permutations is strongly algebraic (and thus in particular, wpo). Moreover, every permutation class with only finitely many simple permutations and bounded substitution depth is strongly rational.
\end{theorem}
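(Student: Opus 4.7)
The plan is to compute the generating function of $\C$ by iterating the substitution decomposition of Proposition~\ref{prop-simple-decomp-unique}. Every nonempty $\pi \in \C$ has a unique root label in its substitution tree: either $\pi$ is the single point; or $\pi$ is sum decomposable, uniquely $\alpha_1 \oplus \cdots \oplus \alpha_k$ with each $\alpha_i$ sum indecomposable; or $\pi$ is skew decomposable, similarly; or $\pi = \sigma[\alpha_1,\dots,\alpha_m]$ for a unique simple $\sigma \in \Simples(\C)$ of length $m \ge 4$ and unique nonempty components $\alpha_i$. The finite-simples hypothesis caps the number of simple root labels at finitely many, so provided each case can be expressed polynomially in the generating functions of finitely many subclasses of $\C$, we obtain a finite algebraic system for $f_\C$.

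The central technical step, and the main obstacle, is a product-structure lemma: for each simple $\sigma \in \Simples(\C)$ of length $m \ge 4$ there exist permutation classes $\D_{\sigma,1},\dots,\D_{\sigma,m}$, each a subclass of $\C$, such that $\sigma[\alpha_1,\dots,\alpha_m] \in \C$ exactly when $\alpha_i \in \D_{\sigma,i}$ for every $i$; an analogous statement should control which sequences of sum (respectively skew) indecomposable components yield a permutation in $\C$. I would attack this by first establishing that $\C$ is finitely based (a standard consequence of the finite-simples hypothesis, obtained by showing that any infinite basis antichain would, through substitution decomposition, force infinitely many simple permutations into $\C$), so that membership in $\C$ amounts to avoiding a finite list of patterns. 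Fixing a basis element $\beta$, an embedding $\beta \le \sigma[\alpha_1,\dots,\alpha_m]$ is governed by a ``profile'' recording how the entries of $\beta$ partition among the $m$ intervals of the inflation; the unique-decomposition property of Proposition~\ref{prop-simple-decomp-unique}, combined with the simplicity of $\sigma$ when $m \ge 4$, restricts these profiles to a finite list. When a single profile couples constraints across components, so pure avoidance in each $\alpha_i$ is not quite the right condition, one refines by introducing auxiliary ``query'' subclasses of $\C$ that record precisely which partial subpatterns of each $\beta$ have been realized in each component. Because both the basis and $\Simples(\C)$ are finite, only finitely many query subclasses can arise across the entire recursion.

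Granted the product-structure lemma, the substitution decomposition closes into a finite system of polynomial equations in finitely many generating-function unknowns, and elimination over $\mathbb{Q}(x)$ produces an algebraic equation satisfied by $f_\C$. Every subclass of $\C$ inherits the finite-simples hypothesis, so the same argument applied to each subclass shows that $\C$ is strongly algebraic; the wpo conclusion is then immediate from Proposition~\ref{prop-strong-wpo}. For the bounded-substitution-depth statement, each $\D_{\sigma,i}$ and each query subclass that appears has substitution depth strictly less than that of $\C$, so the recursion terminates after at most $d$ stages, where $d$ is the substitution depth of $\C$. The nonlinear system degenerates into a finite cascade of explicit substitutions built from rational operations (products, sums, and geometric-series closures handling the sum- and skew-indecomposable sequences), and the resulting generating function is rational in $x$.
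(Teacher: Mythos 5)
Your route is essentially the one taken in the cited source (the survey itself states this theorem without proof, quoting Albert and Atkinson): decompose each $\pi\in\C$ via Proposition~\ref{prop-simple-decomp-unique}, use the finiteness of the simple permutations in $\C$ and the finite basis of $\C$ to close the decomposition into a finite polynomial system, and observe that subclasses inherit the hypothesis, with the bounded-depth case collapsing the system into a finite cascade of rational operations. One caution: your ``product-structure lemma'' is false as literally stated. For a simple $\sigma$ of length $m\ge 4$ there need not exist classes $\D_{\sigma,1},\dots,\D_{\sigma,m}$ with $\sigma[\alpha_1,\dots,\alpha_m]\in\C$ if and only if $\alpha_i\in\D_{\sigma,i}$ for all $i$; avoidance of a basis element $\beta$ imposes a disjunction over the finitely many ways $\beta$ can distribute its entries among the cells of $\sigma$, so the set of admissible tuples is only a \emph{finite union} of Cartesian products of subclasses of $\C$. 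Your subsequent ``query subclass'' refinement is precisely the standard repair (it is how Albert and Atkinson argue, and it is the germ of the query-complete-properties generalization of Brignall, Huczynska, and Vatter), so the system remains finite and the argument goes through; just be aware that the clean single-product statement cannot be the intermediate lemma. The same point recurs, and deserves a sentence more than you give it, in the sum- and skew-decomposable cases: there the arity $k$ is unbounded, so you need the admissible sequences of sum indecomposable components to be described by finitely many classes arranged in a regular (finite-union-of-products-with-stars) pattern before the $1/(1-s)$-type geometric-series closure applies; a bounded number of coordinates can be ``active'' for any basis element, which is what makes this work. Your finite-basis justification is also only a gesture --- the usual argument places the basis elements inside an auxiliary class that again has finitely many simple permutations (via Theorem~\ref{thm-schmerl-trotter}-type bounds) and is therefore wpo, so its antichains are finite --- but the claim itself is correct and is part of the cited paper. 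With these repairs made explicit, your outline matches the original proof, including the deduction of strong algebraicity from heredity of the hypothesis and of strong rationality from the depth-bounded termination of the recursion.
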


For example, Theorem~\ref{thm-fin-simples-alg} shows that the class of separable permutations is strongly algebraic, because the only simple permutations in this class are $\{1,12,21\}$. However, there are (relatively speaking) few permutation classes with only finitely many simple permutations, so it may appear that the substitution decomposition is not as applicable as might be hoped. We remedy this when we present a generalization in Section~\ref{subsec-geom-grid} (Theorem~\ref{thm-ggc-inflations}).

A different generalization is due to Brignall, Huczynska, and Vatter~\cite{brignall:simple-permutat:}. They gave a proof of Theorem~\ref{thm-fin-simples-alg} using ``query-complete sets of properties'' showing that the conclusion holds not just for the permutation class itself but for a great many subsets of it. For example, if a permutation class contains only finitely many simple permutations then the generating function for the alternating (up-down) permutations, or the even permutations, or the involutions in the class is also algebraic.

\index{substitution closure}

Given two classes $\D$ and $\U$, the \emph{inflation} of $\D$ by $\U$ is defined as
\[
\D[\U]=\{\sigma[\alpha_1,\dots,\alpha_m]\st\mbox{$\sigma\in\D_m$ and $\alpha_1,\dots,\alpha_m\in\U$}\}.
\]
The class $\C$ is said to be \emph{substitution closed} if $\C[\C]\subseteq\C$. The \emph{substitution closure} $\langle\C\rangle$ of a class $\C$ can be defined in any number of ways, for example as the smallest substitution closed class containing $\C$, or as the largest permutation class with the same set of simple permutations as $\C$, or as
$$
\langle\C\rangle=\C\cup\C[\C]\cup\C[\C[\C]]\cup\cdots.
$$
We leave it to the reader to convince themselves that these are all equivalent definitions and then to establish the following.

\begin{proposition}
\label{prop-subst-closed}
A class is substitution closed if and only if all of its basis elements are simple permutations.
\end{proposition}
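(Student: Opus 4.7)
The plan is to deduce both directions from the unique substitution decomposition (Proposition~\ref{prop-simple-decomp-unique}) via short contradiction arguments. The geometric heart of the argument is a small lemma about how a simple permutation can embed into an inflation, which I would establish first.

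For the ($\Leftarrow$) direction, assume every basis element of $\C$ is simple, and let $\sigma,\alpha_1,\dots,\alpha_m\in\C$; I want to show $\pi=\sigma[\alpha_1,\dots,\alpha_m]\in\C$. The key observation is that \emph{if a simple permutation $\beta$ embeds in $\pi$, then $\beta$ embeds in $\sigma$ or in some $\alpha_i$}. To see this I would fix an embedding of $\beta$ into $\pi$ and note that each block $B_i$ of $\pi$ (the interval order isomorphic to $\alpha_i$) is an interval of $\pi$, so the entries of $\beta$ that land inside $B_i$ have both contiguous positions and contiguous values within $\beta$; that is, they form an interval of $\beta$. Simplicity of $\beta$ forces each such interval to be trivial, so either all of $\beta$ sits in a single block (yielding $\beta\le\alpha_i$) or each block receives at most one entry of $\beta$ (yielding $\beta\le\sigma$). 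With this lemma in hand, if $\pi$ were not in $\C$ it would contain some basis element $\beta$, which by hypothesis is simple; the lemma would then place $\beta$ inside one of $\sigma,\alpha_1,\dots,\alpha_m\in\C$, contradicting $\beta\notin\C$.

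For the ($\Rightarrow$) direction, assume $\C$ is substitution closed and let $\beta$ be any basis element, which I suppose for contradiction is not simple. Proposition~\ref{prop-simple-decomp-unique} writes $\beta=\tau[\gamma_1,\dots,\gamma_m]$ for a unique simple $\tau$ of length $m\ge 2$; and since $\beta$ is not itself simple, at least one $\gamma_i$ has length at least $2$, so $|\tau|<|\beta|$ and every $|\gamma_i|<|\beta|$. Picking one entry from each block of the decomposition shows $\tau\le\beta$, while plainly each $\gamma_i\le\beta$. Because $\beta$ is a basis element, all of its \emph{proper} subpatterns belong to $\C$, so $\tau,\gamma_1,\dots,\gamma_m\in\C$. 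Substitution closure then gives $\beta=\tau[\gamma_1,\dots,\gamma_m]\in\C$, contradicting $\beta$ being a basis element.

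The main obstacle is the block-interval observation in the first direction; everything else is bookkeeping on top of Proposition~\ref{prop-simple-decomp-unique}. That observation is where one really uses the interaction of ``interval in $\pi$'' with ``subsequence realizing $\beta$,'' and it is also the reason the two extremal cases (all of $\beta$ inside one block, or at most one entry of $\beta$ per block) exhaust the possibilities precisely when $\beta$ is simple.
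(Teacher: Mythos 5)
Your proof is correct; the paper in fact leaves this proposition as an exercise for the reader, so there is no official proof to compare against, and your argument is the standard way to fill it in. The block-intersection lemma---that a copy of a simple $\beta$ in $\sigma[\alpha_1,\dots,\alpha_m]$ meets each block in an interval of $\beta$, and hence lies entirely in one block or uses at most one entry per block---is justified correctly, and the converse direction correctly combines Proposition~\ref{prop-simple-decomp-unique} with the minimality of basis elements and the strict length inequalities $|\tau|,|\gamma_i|<|\beta|$.
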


Atkinson, Ru\v{s}kuc, and Smith~\cite{atkinson:substitution-cl:} investigated bases of substitution closures of principal classes. In a great many cases these bases are infinite, but they were able to compute several bases in the finite cases. For example, they showed that
$$
\langle\Av(321)\rangle
=
\Av(25314, 35142, 41352, 42513, 362514, 531642)
$$
by bounding the length of potential basis elements of this class and then conducting an exhaustive search by computer.

\index{deflatable permutation class}

If $\C\subseteq\langle\D\rangle$ for a proper subclass $\D\subsetneq\C$ then we say that $\C$ is \emph{deflatable}. We then have the following analogue of the splittability question.

\begin{question}
\label{ques-deflatable}
Which permutation classes are deflatable?
\end{question}

An equivalent definition is that the class $\C$ is deflatable if and only if one can find a permutation $\pi\in\C$ that is not contained in any simple permutation of $\C$. For example, the class of separable permutations is clearly deflatable, as it is equal to $\langle\{1,12,21\}\rangle$. It follows that $\Av(231)$, as a subclass of the separable permutations, is also deflatable. Neither of these classes is splittable, so there are classes that can be decomposed via inflations that can't be decomposed via the merge operation. In the other direction, $\Av(321)$ is trivially splittable, but it can be shown that it is not deflatable.

While using the substitution decomposition to enumerate deflatable classes has become a common technique (several applications of this approach are cited in Section~\ref{subsec-geom-grid}), Question~\ref{ques-deflatable} has only recently attracted attention. Interestingly, the most comprehensive result on deflatability to date, quoted below, shows that most of the classes known to be splittable by Theorem~\ref{thm-split-2} are \emph{not} deflatable. Thus deflatability and splittability are in some sense good complements to each other.

\begin{theorem}[Albert, Atkinson, Homberger, and Pantone~\cite{albert:deflatability-o:}]
\label{thm-deflatable}
For all choices of nonempty permutations $\alpha$, $\beta$, and $\gamma$, the class $\Av(\alpha\oplus\beta\oplus\gamma)$ is not deflatable. Moreover, for $|\alpha|,|\beta|\ge 2$, the class $\Av(\alpha\oplus\beta)$ is not deflatable.
\end{theorem}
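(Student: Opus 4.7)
The plan is to use the equivalent characterization of deflatability noted in the paragraph preceding the theorem: $\C$ is not deflatable iff every $\pi\in\C$ is contained in some simple $\sigma\in\C$. Thus for both parts I aim, given $\pi$ in the relevant class, to exhibit a simple $\sigma$ of the same class that contains $\pi$.

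The main building block is the following safe-corner observation. If $\sigma$ is any extension of $\pi$ in which a new point $p$ is placed at the leftmost position of $\sigma$ and given the largest value in $\sigma$, then $p$ cannot lie in any copy of $\alpha\oplus\beta\oplus\gamma$ inside $\sigma$: in such a copy $p$ would have to be leftmost, hence belong to $\alpha$, and thus have value strictly smaller than every entry of $\beta\cup\gamma$---impossible since it has the largest value. A symmetric argument handles a point placed at the rightmost position of $\sigma$ with the smallest value, and small extensions of this reasoning give entire ``staircases'' of safe points along the upper-left and lower-right boundaries of $\pi$.

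With this in hand, I would argue in stages. If $|\pi|<|\alpha|+|\beta|+|\gamma|-2$, the Schmerl--Trotter theorem (every permutation extends to a simple by adding at most two points) produces a simple $\sigma\supseteq\pi$ of length less than $|\alpha|+|\beta|+|\gamma|$, which trivially avoids the pattern. For larger $\pi$, I would recurse on the substitution decomposition: the subpermutation carried by any maximal proper interval of $\pi$ is a shorter element of the class, so by induction it embeds into a simple of the class, and splicing this in reduces the problem to simplifying the outer skeleton of $\pi$, a task handled by boundary staircases of safe corners. Since $\pi$ already avoids the forbidden pattern, any new copy in $\sigma$ would have to pass through one of the freshly added points, all of which have been certified safe.

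The principal obstacle is when the outer skeleton still has proper intervals lying strictly inside $\pi$'s bounding box, forcing insertion points into the interior where the safe-corner argument no longer applies. For Part 1, a copy of $\alpha\oplus\beta\oplus\gamma$ completed through an interior insertion would force $\pi$ to already contain one of the near-patterns $\alpha'\oplus\beta\oplus\gamma$, $\alpha\oplus\beta'\oplus\gamma$, or $\alpha\oplus\beta\oplus\gamma'$ in a configuration straddling the interval (where a primed pattern denotes removal of one entry); since each of these near-patterns is one entry short of the full forbidden pattern, the avoidance hypothesis on $\pi$ already rules out the dangerous interior placements. For Part 2, the hypothesis $|\alpha|,|\beta|\ge 2$ enters analogously: the near-patterns $\alpha'\oplus\beta$ and $\alpha\oplus\beta'$ still possess nontrivial decomposition on both sides, which again forbids the dangerous placements. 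The excluded cases $|\alpha|=1$ or $|\beta|=1$ are precisely those in which the near-pattern degenerates to $1\oplus\beta$ or $\alpha\oplus 1$ and so places essentially no constraint on $\pi$---and indeed $\Av(1\oplus 1)=\Av(12)$ is itself deflatable, showing that the hypothesis of Part 2 is tight.
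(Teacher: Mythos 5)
Your safe-corner observation is correct as far as it goes, but it cannot carry the weight you place on it. A point that is simultaneously leftmost and topmost (or rightmost and bottommost) lies outside the bounding box of $\pi$ in both coordinates, so it separates no pair of entries of $\pi$ and hence destroys none of $\pi$'s proper intervals; worse, two such corner points placed beside each other form an interval of size two themselves. So the insertions that are automatically safe are useless for making $\pi$ simple, and the ``staircase'' points you actually need are not automatically safe: a new entry above all of $\pi$ but horizontally interior to it must play the role of the maximum of $\gamma$ in any copy it participates in, and it completes a forbidden pattern as soon as $\pi$ contains $\alpha\oplus\beta\oplus\gamma'$ suitably placed to its left --- something the hypothesis on $\pi$ does not forbid. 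This is exactly where the fatal gap lies: you assert that ``the avoidance hypothesis on $\pi$ already rules out the dangerous interior placements'' because the near-patterns $\alpha'\oplus\beta\oplus\gamma$, $\alpha\oplus\beta'\oplus\gamma$, $\alpha\oplus\beta\oplus\gamma'$ are one entry short of the forbidden pattern. But avoiding $\alpha\oplus\beta\oplus\gamma$ does not imply avoiding these strictly smaller patterns; a typical member of the class contains them in abundance, and an interior insertion together with such an occurrence is precisely how a forbidden copy arises in the extension. Ruling this out requires choosing the positions and values of the inserted entries relative to where such near-occurrences can sit, and that construction --- the real content of the theorem --- is never supplied, neither in the three-term case nor in the two-term case, where the hypothesis $|\alpha|,|\beta|\ge 2$ would have to do genuine work rather than appear only in a concluding remark.

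Two smaller points. The Schmerl--Trotter theorem quoted in this survey says that every simple permutation of length $n$ contains a simple permutation of length $n-1$ or $n-2$; it does not say that every permutation embeds in a simple permutation after adding at most two entries, and that statement is false (the increasing permutation $12\cdots n$ needs on the order of $n/2$ additional entries before it is contained in a simple permutation), so your base case needs a different argument as well. Finally, note that the survey records that $\Av(1\oplus 23541)$ is deflatable while $\Av(1\oplus\beta)$ is not deflatable for $|\beta|\le 4$; so the exclusion of $|\alpha|=1$ or $|\beta|=1$ in the second statement is not merely about the degenerate class $\Av(12)$, and any correct proof must use $|\alpha|,|\beta|\ge 2$ in an essential way.
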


They also showed that all classes of the form $\Av(1\oplus\beta)$ with $|\beta|\le 4$ are not deflatable, but the class $\Av(1\oplus 23541)$ is deflatable.

Atkinson posed a much different conjecture about the simple permutations in a class at the conference \emph{Permutation Patterns 2007}. Let $\Simples_n(\C)$ denote the set of all simple permutations in $\C$ of length $n$.

\begin{conjecture}[Atkinson]
\label{conj-mike-dense}
The ratio $|\Simples_n(\C)|/|\C_n|$ tends to $0$ as $n\rightarrow\infty$ for every proper permutation class $\C$.
\end{conjecture}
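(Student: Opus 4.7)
The plan splits into two cases according to whether $\C$ contains only finitely or infinitely many simple permutations.

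If $\C$ contains only finitely many simples, then $|\Simples_n(\C)| = 0$ for all sufficiently large $n$, and the claim is immediate. So assume hereafter that $\C$ contains infinitely many simples.

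The approach is to argue that for each simple $\sigma \in \Simples_n(\C)$ there are many non-simple permutations in $\C$ of length $n+1$ obtained by perturbing $\sigma$. For each position $i\in[1,n]$, form the two inflations $\sigma[1,\ldots,1,12,1,\ldots,1]$ and $\sigma[1,\ldots,1,21,1,\ldots,1]$, each with the two-entry factor in position $i$; both are non-simple permutations of length $n+1$ (the inflated pair is a nontrivial interval). The uniqueness statement in Proposition~\ref{prop-simple-decomp-unique} ensures that distinct triples $(\sigma,i,\varepsilon)$ yield distinct permutations. If a positive fraction of these $2n$ inflations lie in $\C$, we obtain
$$
|\C_{n+1}| \ge c\,n\,|\Simples_n(\C)|
$$
for some constant $c > 0$. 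Combined with a bound of the form $|\C_{n+1}| \le \gamma\,|\C_n|$---which holds for sum-closed proper classes by supermultiplicativity and Marcus--Tardos (Theorem~\ref{thm-marcus-tardos}), and can be obtained in general by a careful application of Klazar's Theorem~\ref{thm-klazar}---this yields $|\Simples_n(\C)|/|\C_n| = O(1/n) \to 0$.

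The main obstacle is showing that a positive fraction of these inflations actually lie in $\C$. Since $\sigma$ itself avoids every basis element $\beta$ of $\C$, any copy of $\beta$ inside $\sigma[1,\ldots,12,\ldots,1]$ must use both of the inflated entries; so the forbidden positions $i$ are precisely those at which some basis element of $\C$ embeds in $\sigma$ using a horizontally (or vertically) consecutive pair at position $i$. The difficulty is to exploit the simplicity of $\sigma$ to conclude that only $o(n)$ positions can be forbidden, which appears to require sharp structural control over how fixed patterns embed into simple permutations---likely through the theory of pin sequences from Brignall's survey~\cite{brignall:a-survey-of-sim:}. I expect this to be the crux: a general proper class may have an infinite basis and arbitrarily complicated simples, and bounding the forbidden positions uniformly over all simples in $\C$ is precisely the step that keeps Atkinson's conjecture open in full generality.
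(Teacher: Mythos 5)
You have not proved this statement, and neither does the paper: Conjecture~\ref{conj-mike-dense} is stated as an open conjecture of Atkinson (posed at \emph{Permutation Patterns 2007}), and the surrounding text only analyzes the non-proper class of all permutations, where the ratio tends to $1/e^2$ rather than $0$. So the right comparison is between your strategy and the open problem itself, and your own closing sentence correctly identifies the crux that remains unproved: to get the factor of $n$ you need that, uniformly over all simple $\sigma\in\Simples_n(\C)$, all but $o(n)$ of the $2n$ one-point inflations $\sigma[1,\dots,12,\dots,1]$ and $\sigma[1,\dots,21,\dots,1]$ stay in $\C$. Nothing in the paper (pin sequences, Theorem~\ref{thm-bhv-ramsey}, etc.) gives such control for a general proper class, whose basis may be infinite and whose simples may be arbitrarily wild; note also that if only a bounded number of inflations per simple survived, you would only get $|\C_{n+1}|\ge c\,|\Simples_n(\C)|$, which even with a ratio bound shows the ratio is bounded, not that it tends to $0$.

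There is a second genuine gap you treat as routine: the inequality $|\C_{n+1}|\le\gamma\,|\C_n|$. Supermultiplicativity (Proposition~\ref{prop-arratia-gr}) gives \emph{lower} bounds $|\C_{m+n}|\ge|\C_m||\C_n|$ and, with Marcus--Tardos, only $|\C_n|\le\gamma^n$; a supermultiplicative sequence bounded by $\gamma^n$ can still have $a_{n+1}/a_n$ unbounded, so no constant-ratio bound follows from these two facts. Klazar's Theorem~\ref{thm-klazar} counts matrices of bounded density and likewise says nothing about consecutive ratios. The only elementary bound is $|\C_{n+1}|\le(n+1)|\C_n|$ (delete the maximum entry), which exactly cancels your factor of $n$. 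Whether $|\C_{n+1}|/|\C_n|$ is even bounded for general classes is itself unresolved territory (compare Conjecture~\ref{conj-gr-exists} on the mere existence of growth rates), so this step needs an argument, not a citation. In short, the skeleton (inflate simples at each position, count distinct non-simple children via Proposition~\ref{prop-simple-decomp-unique}) is a sensible heuristic for why the conjecture should hold, and it does work verbatim for substitution-closed classes granted a ratio bound, but both the forbidden-position estimate and the ratio bound are missing, and the first is precisely why the conjecture is still open.
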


What happens if we let $\C$ be the (nonproper) class of all permutations in Atkinson's Conjecture? It is relatively easy to show that the probability that a permutation of length $n$ contains an interval of size $3$ or greater tends to $0$ as $n\rightarrow\infty$, so (asymptotically) the only obstructions to simplicity are intervals of size $2$. Let $X$ denote the number of intervals of size $2$ in a permutation of length $n$ chosen uniformly at random. It is routine to compute that $\mathbb{E}[X]=2$. Moreover, in the 1940s, Kaplansky~\cite{kaplansky:the-asymptotic-:} and Wolfowitz~\cite{wolfowitz:note-on-runs-of:} showed (independently) that $X$ is asymptotically Poisson distributed, and thus $\Pr[X=0]=1/e^2$ (a more modern proof using what is known as the Chen-Stein Method appears in Corteel, Louchard, and Pemantle~\cite{corteel:common-interval:}, and it is not difficult to establish this result via Brun's Sieve if one is so inclined).

Therefore the number of simple permutations of length $n$ is asymptotic to $n!/e^2$. Albert, Atkinson, and Klazar~\cite{albert:the-enumeration:} gave refined asymptotics for this quantity, showing that it is
$$
\frac{n!}{e^2}\left(1-\frac{4}{n}+\frac{2}{n(n-1)}+O\left(\frac{1}{n^3}\right)\right).
$$
They also established that the sequence counting simple permutations of length $n$ is not $D$-finite.

Our next result can be used to verify that a given permutation class contains only finitely many simple permutations.

\begin{theorem}[Schmerl and Trotter~\cite{schmerl:critically-inde:}]\label{thm-schmerl-trotter}
Every simple permutation of length $n$ contains a simple permutation of length $n-1$ or $n-2$.
\end{theorem}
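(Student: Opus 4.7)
My plan is to prove the contrapositive: if $\pi$ is simple of length $n$ and no single-entry deletion $\pi^{(i)}$ is simple, then some two-entry deletion $\pi^{(i,j)}$ is simple. First I would handle small cases by inspection --- for $n\le 4$ the only simple permutations are $12$, $21$, $2413$, $3142$, each of which obviously contains a simple permutation of length $n-1$ or $n-2$ (for instance, $2413$ contains the simple permutation $12$ of length two). So assume $n\ge 5$ and that no $\pi^{(i)}$ is simple.

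For each index $i$, let $I_i$ be a proper interval of maximum cardinality in $\pi^{(i)}$. A short case analysis on the re-insertion of $\pi(i)$ into $\pi^{(i)}$ shows that, because $\pi$ itself is simple, $\pi(i)$ must sit in a very restricted location in the plot of $\pi$ relative to $I_i$: either positionally inside the column-span of $I_i$ but value-wise above or below its row-range, or value-wise inside the row-range but positionally to the left or right of the column-span. Indeed, if $\pi(i)$ were completely disjoint from the bounding box of $I_i$ in both coordinates, then $I_i$ would remain a proper interval of $\pi$; and if $\pi(i)$ were inside both the column-span and the row-range, then $I_i\cup\{\pi(i)\}$ would be an interval of $\pi$ of cardinality between $2$ and $n-1$. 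Both possibilities contradict the simplicity of $\pi$.

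Next I would fix $i$ so that $|I_i|$ attains the global maximum $k$ over all indices, and by applying one of the eight dihedral symmetries assume that $\pi(i)$ is positionally between two entries of $I_i$ but value-wise above its range. Write $L$ and $R$ for the entries of $I_i$ lying respectively to the positional left and right of position $i$, both nonempty by the splitting condition. I would pick $j$ to be the entry of $L$ immediately adjacent to $\pi(i)$ in position and claim that $\pi^{(i,j)}$ is simple. To verify this, suppose for contradiction that $I'$ is a proper interval of $\pi^{(i,j)}$. Lifting $I'$ back through the two deletions and repeatedly using the fact that the intersection and (when they meet) union of intervals is an interval, I would obtain one of three contradictions: $I'\cup\{\pi(j)\}$ is a proper interval of $\pi^{(i)}$ strictly larger than $I_i$, contradicting the maximality of $k$; or $I'$ together with $\pi(i)$ and $\pi(j)$ forms a proper interval of $\pi$, contradicting simplicity; or the analogous analysis applied to $\pi^{(j)}$ produces a proper interval there of cardinality exceeding $k$.

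The main obstacle I anticipate is the last step, specifically the subcase in which $I'$ straddles the two positions vacated by $\pi(i)$ and $\pi(j)$. There one must use crucially that $j$ was chosen immediately adjacent to $\pi(i)$: any ``ghost'' interval $I'$ crossing the gap either must extend further into $I_i$ (contradicting the extremality of $|I_i|=k$) or else collapses down to $I'$ lying entirely on one side of the gap, where one of the simpler cases applies. Working out this overlap bookkeeping cleanly --- and verifying that $j$ indeed lies in $L$ rather than $R$ in degenerate situations where $|L|=1$ or the orientation is reversed --- is where I expect the bulk of the technical work to lie.
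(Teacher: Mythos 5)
There is a genuine gap: the central claim of your third paragraph --- that after choosing $i$ to maximize $|I_i|$ and taking $j$ to be the entry of $L$ adjacent to $\pi(i)$, the permutation $\pi^{(i,j)}$ is simple --- is false, and it fails exactly on the permutations for which the ``$n-2$'' in the theorem is needed. Take the parallel alternation $\pi = 51627384$, which is simple and has no simple one-point deletion. The deletions maximizing the size of the largest proper interval are those of the entries $5$ and $4$ (each leaves an interval of size $n-2=6$), so your rule forces $i$ to be one of these; after normalizing by a symmetry, the prescribed $j$ is the entry of $I_i$ nearest to $\pi(i)$ on the relevant side, which back in $\pi$ means deleting the pair $\{5,4\}$ (or, with the mirrored choice, $\{5,6\}$ or $\{4,3\}$). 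None of these works: deleting $5$ and $4$ leaves $142536$, whose entries in positions $2$ through $5$ form the interval $\{2,3,4,5\}$; deleting $5$ and $6$ leaves $125364$, which begins with the interval $12$; deleting $4$ and $3$ leaves $314256$, which ends with the interval $56$. A simple two-point deletion does exist (remove the first two entries $5,1$ to get the length-$6$ parallel alternation $415263$), but it is not the one your recipe selects, and in the case $i$ corresponding to deleting $5$ the correct companion entry is not even in $I_i$. So the contradiction scheme you sketch (maximality of $k$, or an interval of $\pi$ itself) cannot be completed, because the conclusion it is meant to establish is simply not true for your choice of $(i,j)$.

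For comparison, the paper does not prove this theorem at all: it cites Schmerl and Trotter, who proved it for general binary irreflexive relational structures, and points to a streamlined permutation-specific proof by Brignall and Vatter. Both treatments hinge on the structural dichotomy the survey records right after the theorem: every simple permutation that is \emph{not} a parallel alternation contains a simple permutation with one fewer entry, and the (even) parallel alternations are handled separately as the exceptional family requiring two deletions. Your argument never isolates that family --- it tries a uniform extremal choice --- and, as the example shows, the parallel alternations are precisely where the uniform choice breaks. Any repair along your lines will have to detect when $\pi$ is a parallel alternation and treat that case by an explicit two-point deletion, at which point you are essentially reproducing the known proof structure.
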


It should be noted that Schmerl and Trotter proved their theorem in the more general context of binary, irreflexive relational structures, of which permutations are a special case. A streamlined proof of Schmerl and Trotter's theorem for the special case of permutations is given in Brignall and Vatter~\cite{brignall:a-simple-proof-:}.

\index{alternation}
\index{parallel alternation}
\index{wedge alternation}

The simple permutations that do not contain a simple permutation with one fewer entry are quite rare. We say that an \emph{alternation} is a permutation whose plot can be divided into two halves, by a single horizontal or vertical line, so that for every pair of entries from the same part there is an entry from the other part that \emph{separates} them, i.e., there is an entry from the other part that lies either horizontally or vertically between them.  A \emph{parallel alternation} is an alternation in which these two sets of entries form monotone subsequences, either both increasing or both decreasing (the permutation on the left of Figure~\ref{fig-simples-5ex} is a parallel alternation). The statement of Schmerl and Trotter's Theorem can be refined to say that every simple permutation that is not a parallel alternation contains a simple permutation with one fewer entry.

\begin{figure}
\begin{center}
\begin{footnotesize}
	\begin{tikzpicture}[scale=0.2222222222222, baseline=(current bounding box.center)]
		\draw [lightgray, ultra thick, line cap=round] (0,0) rectangle (9,9);
		\draw [lightgray, ultra thick, line cap=round] (0,4.5)--(9,4.5);
		\draw[fill=black] (1,5) circle (9pt);
		\draw[fill=black] (2,1) circle (9pt);
		\draw[fill=black] (3,6) circle (9pt);
		\draw[fill=black] (4,2) circle (9pt);
		\draw[fill=black] (5,7) circle (9pt);
		\draw[fill=black] (6,3) circle (9pt);
		\draw[fill=black] (7,8) circle (9pt);
		\draw[fill=black] (8,4) circle (9pt);
	\end{tikzpicture}
\quad\quad
	\begin{tikzpicture}[scale=0.2222222222222, baseline=(current bounding box.center)]
		\draw [lightgray, ultra thick, line cap=round] (0,0) rectangle (9,9);
		\draw [line cap=round] (5,1)--(5,8);
		\draw[fill=black] (1,2) circle (9pt);
		\draw[fill=black] (2,4) circle (9pt);
		\draw[fill=black] (3,6) circle (9pt);
		\draw[fill=black] (4,8) circle (9pt);
		\draw[fill=black] (5,1) circle (9pt);
		\draw[fill=black] (6,7) circle (9pt);
		\draw[fill=black] (7,5) circle (9pt);
		\draw[fill=black] (8,3) circle (9pt);
	\end{tikzpicture}
\quad\quad
	\begin{tikzpicture}[scale=0.2222222222222, baseline=(current bounding box.center)]
		\draw [lightgray, ultra thick, line cap=round] (0,0) rectangle (9,9);
		\draw [line cap=round] (2,8)--(2,1);
		\draw[fill=black] (1,4) circle (9pt);
		\draw[fill=black] (2,8) circle (9pt);
		\draw[fill=black] (3,3) circle (9pt);
		\draw[fill=black] (4,5) circle (9pt);
		\draw[fill=black] (5,2) circle (9pt);
		\draw[fill=black] (6,6) circle (9pt);
		\draw[fill=black] (7,1) circle (9pt);
		\draw[fill=black] (8,7) circle (9pt);
	\end{tikzpicture}
\quad\quad
	\begin{tikzpicture}[scale=0.2222222222222, baseline=(current bounding box.center)]
		\draw [lightgray, ultra thick, line cap=round] (0,0) rectangle (9,9);
		\draw [line cap=round] (1,2)--(6,2);
		\draw[fill=black] (1,2) circle (9pt) node[above] {$p_5$};
		\draw [line cap=round] (2,7)--(2,1);
		\draw[fill=black] (2,7) circle (9pt) node[above] {$p_6$};
		\draw[fill=black] (3,5) circle (9pt) node[right] {$p_1$};
		\draw[fill=black] (4,3) circle (9pt) node[left] {$p_2$};
		\draw [line cap=round] (5,1)--(5,5);
		\draw[fill=black] (5,1) circle (9pt) node[right] {$p_4$};
		\draw [line cap=round] (6,4)--(3,4);
		\draw[fill=black] (6,4) circle (9pt) node[right] {$p_3$};
		\draw [line cap=round] (7,8)--(7,5);
		\draw[fill=black] (7,8) circle (9pt) node[right] {$p_8$};
		\draw [line cap=round] (8,6)--(1,6);
		\draw[fill=black] (8,6) circle (9pt) node[below] {$p_7$};
	\end{tikzpicture}
\end{footnotesize}
\end{center}
\caption{From left to right, a parallel alternation, two wedge simple permutations, and a proper pin sequence.}
\label{fig-simples-5ex}
\end{figure}
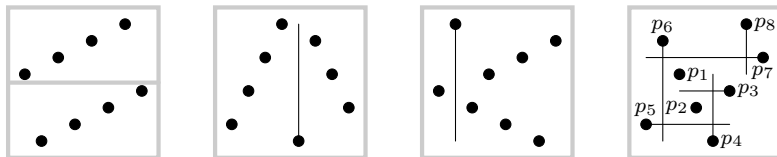

A \emph{wedge alternation} is an alternation in which the two halves of entries form monotone subsequences, one increasing and one decreasing. Wedge alternations can be made simple by the addition of a single entry in one of two positions, as shown in the second and third permutations of Figure~\ref{fig-simples-5ex}. We call the resulting permutations \emph{wedge simple permutations}. Brignall, Huczynska, and Vatter~\cite{brignall:decomposing-sim:} proved a Ramsey-like result for simple permutations, which states that every ``long enough'' simple permutation contains a large simple parallel alternation, a large wedge simple permutation, or a large member of a third family of simple permutations, called \emph{proper pin sequences}. This latter family (a member of which is shown on the far right of Figure~\ref{fig-simples-5ex}) is defined inductively. 

\index{proper pin sequence}
\index{axes-parallel rectangle}

An \emph{axes-parallel rectangle} is any rectangle of the form $X\times Y$ for intervals $X$ and $Y$. The \emph{rectangular hull} of a set of points in the plane is defined as the smallest axes-parallel rectangle containing them. Given independent points $\{p_1,\dots,p_i\}$ in the plane, a \emph{proper pin} for these points is a point $p$ that lies outside their rectangular hull and separates $p_i$ from $\{p_1,\dots,p_{i-1}\}$. A proper pin sequence is then constructed by starting with independent points $p_1$ and $p_2$, choosing $p_3$ to be a proper pin for $\{p_1,p_2\}$, then choosing $p_4$ to be a proper pin for $\{p_1,p_2,p_3\}$, and so on. It can be shown that every proper pin sequence is either simple or can be made simple by the removal of a single point. We can now state the theorem.

\begin{theorem}[Brignall, Huczynska, and Vatter~\cite{brignall:decomposing-sim:}]
\label{thm-bhv-ramsey}
There is a function $f(k)$ such that every simple permutation of length at least $f(k)$ contains a simple permutation of length at least $k$ that is either a parallel alternation, a wedge simple permutation, or a proper pin sequence.
\end{theorem}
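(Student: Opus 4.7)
The plan is to use the notion of pin sequences as the central tool. Given a simple permutation $\pi$, the strategy is to iteratively construct a long pin sequence inside $\pi$ and then apply a Ramsey-type analysis to its direction pattern to extract one of the three promised structures. The defining property of simplicity — that no proper interval exists — is precisely what lets us extend pin sequences as long as we have not exhausted $\pi$.

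First I would show that every simple permutation of length at least $N(L)$ contains a pin sequence of length $L$, where by a pin sequence I mean a sequence $p_1, p_2, \dots$ of entries of $\pi$ such that each $p_{i+1}$ lies outside the rectangular hull $R_i$ of $\{p_1, \dots, p_i\}$ and separates $p_i$ from the earlier points. Given $\{p_1, \dots, p_i\}$ with $i\ge 2$, either the set $S_i$ of entries of $\pi$ lying inside $R_i$ is all of $\pi$ (in which case one gets a direct bound on $n$ in terms of $i$) or $S_i$ fails to be an interval of $\pi$ by simplicity; any witness to this failure can be refined to a legal choice of $p_{i+1}$. A rough volume bound — the hull dimensions grow by at most a bounded amount each step and must remain at most $n$ — then yields $N(L)$ singly exponential in $L$.

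Next I would classify the pin sequence by its sequence of directions. Each pin $p_{i+1}$ lies above, below, left of, or right of $R_i$, yielding a word $w \in \{U,D,L,R\}^{L-2}$. Applying Ramsey's theorem to the pairs of consecutive letters in $w$ produces, for $L$ large enough in $k$, a long sub-pin-sequence whose direction word falls into one of three regular patterns. If two opposite letters alternate (as in $LRLR\cdots$), the geometric realization is a parallel alternation. If two perpendicular letters alternate (as in $LULU\cdots$), the realization is a wedge alternation, and the two initial pins of the sub-sequence supply the extra entry needed to upgrade this to a wedge simple permutation. Finally, if neither alternation is present, each successive pin strictly extends the hull in a new direction while separating $p_i$ from the earlier points, giving a proper pin sequence.

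Finally, each of these patterns is promoted to a simple permutation of the required form: a sufficiently long parallel alternation is simple, a wedge alternation together with its starting pins is a wedge simple permutation, and a proper pin sequence is either simple or becomes so after removing at most one endpoint, as the paper already notes. The main obstacle — and the technical heart of the argument — is the Ramsey case analysis of paragraph three, in particular verifying that the ``no immediate repetition or reversal'' case really does satisfy the full proper pin definition (outside the hull \emph{and} separating $p_i$ from all of $p_1,\dots,p_{i-1}$). This requires a careful pin-by-pin translation between the combinatorics of the direction word and the underlying geometry, together with quantitative control of how much of the pin sequence is lost at each application of Ramsey, which is what determines the final dependence of $f(k)$ on $k$.
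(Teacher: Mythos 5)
Your opening step is where the argument breaks. The ``pin sequence'' you propose to build already includes the separation condition (each $p_{i+1}$ separates $p_i$ from all earlier points), so it is by definition a \emph{proper} pin sequence; if every simple permutation of length $N(L)$ contained one of length $L$, the theorem would need only one outcome, and it genuinely needs three. Arbitrarily long parallel alternations and wedge simple permutations are simple, yet they contain proper pin sequences of only bounded length (try to extend a proper pin sequence inside the parallel alternation $51627384$: after three or four pins there is no point that is both outside the current hull and in the slab separating the last pin from the earlier ones). The simplicity argument you invoke only yields a weaker object: if the points inside the hull do not form an interval, there is a point outside the hull that \emph{slices} the hull somewhere, but you get no control over where it slices, so it need not separate $p_i$ from $\{p_1,\dots,p_{i-1}\}$. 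Distinguishing these slicing pins from separating pins, and extracting structure precisely when the separation condition persistently fails, is the technical heart of Brignall, Huczynska, and Vatter's proof; your step one assumes it away.

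The Ramsey step has a second gap: a subsequence of a pin sequence is not in general a pin sequence, because the rectangular hulls change when points are deleted, so neither externality nor separation is inherited; hence ``a long sub-pin-sequence whose direction word falls into a regular pattern'' is not something Ramsey's theorem delivers without substantial additional lemmas. The published argument is organized differently: one shows that if a simple permutation has no long proper pin sequence, then the failure of separation forces a long horizontal or vertical alternation; Erd\H{o}s--Szekeres applied to the two sides of that alternation produces a long parallel alternation or a long wedge alternation, and simplicity is used once more to supply the extra entry turning a wedge alternation into a wedge simple permutation (it is not automatic that ``the two initial pins'' do this). A minor further point: a single pin can enlarge the hull by an arbitrary amount, so the ``volume bound'' you cite for $N(L)$ is not valid as stated; the quantitative control comes from the case analysis itself, not from hull growth.
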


By considering each of the three types of simple permutations guaranteed by Theorem~\ref{thm-bhv-ramsey}, this result implies that there is a function $g(k)$ such that every simple permutation of length at least $g(k)$ contains two simple permutations of length $k$ that are either disjoint or nearly so (they might share a single entry).

This corollary puts the results of B\'ona~\cite{bona:the-number-of-p:,bona:permutations-wi:} under the substitution decomposition umbrella. He showed that for any value of $r$, the number of permutations with at most $r$ copies of $132$ has an algebraic generating function. (Mansour and Vainshtein~\cite{mansour:counting-occurr:} later described a way to compute these generating functions.) Clearly the class of $132$-avoiding permutations contains only finitely many simple permutations (it is a subclass of the separable permutations). To put this another way, all simple permutations of length at least $4$ contain $132$. Therefore, by the above corollary to Theorem~\ref{thm-bhv-ramsey}, all simple permutations of length at least $g(4)$ contain $2$ copies of $132$. More generally, the class of permutations with at most $r$ copies of $132$ does not contain any simple permutations of length $g^r(4)$ or longer, and thus has an algebraic generating function by Theorem~\ref{thm-fin-simples-alg}. (A simpler derivation of this implication of Theorem~\ref{thm-bhv-ramsey} is given in Vatter~\cite{vatter:small-configura:}.)


As shown in Brignall, Ru\v{s}kuc, and Vatter~\cite{brignall:simple-permutat:b}, Theorem~\ref{thm-bhv-ramsey} can be used to devise a procedure to determine whether a class (specified by a finite basis) contains infinitely many simple permutations. Bassino, Bouvel, Pierrot, and Rossin~\cite{bassino:an-algorithm-fo:} have since given a much more practical algorithm for this decision problem.

We conclude this subsection by returning to the splittability question. The following is a special case of the results of Jel{\'{\i}}nek and Valtr~\cite{jelinek:splittings-and-:}.

\begin{proposition}
\label{prop-nonmerge-subst}
Substitution closed classes are not splittable.
\end{proposition}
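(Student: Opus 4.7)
The plan is to refute splittability directly: for any proposed decomposition $\C \subseteq \D \odot \E$ with $\D, \E \subsetneq \C$, I would exhibit a single permutation in $\C$ that cannot be obtained as a merge. Start by picking $\alpha \in \C \setminus \D$ and $\beta \in \C \setminus \E$ (both non-empty, since the empty permutation lies in every class), and form the inflation
$$
\pi = \beta[\underbrace{\alpha, \alpha, \ldots, \alpha}_{|\beta|}].
$$
Because $\C$ is substitution closed and $\alpha, \beta \in \C$, the permutation $\pi$ belongs to $\C$.

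The bulk of the proof is a short two-case pigeonhole argument applied to any hypothetical red/blue coloring of $\pi$ that witnesses $\pi \in \D \odot \E$. In the first case, if some one of the $|\beta|$ inflated copies of $\alpha$ is monochromatically red, then the red subsequence contains a copy of $\alpha$, forcing $\alpha \in \D$ and contradicting the choice of $\alpha$. In the second case, every copy of $\alpha$ contains at least one blue entry; choosing one such entry per block yields a blue subsequence of length $|\beta|$ whose entries, by the definition of inflation, are arranged in the pattern of $\beta$, forcing $\beta \in \E$ and contradicting the choice of $\beta$.

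I do not anticipate any serious obstacle. The only point that warrants an explicit sentence is verifying the structural claim in the second case, namely that one representative chosen from each inflated block automatically produces a copy of the outer pattern $\beta$; this is immediate from the definition of inflation recalled above Proposition~\ref{prop-simple-decomp-unique}. No Ramsey iteration, enumeration, or growth-rate estimate is required, so the entire write-up should fit in a handful of lines.
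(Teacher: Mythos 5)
Your proof is correct. It follows the same basic strategy as the paper's --- use substitution closure to build an inflation inside $\C$ that defeats every red/blue coloring --- but the specific construction differs in a useful way. The paper first forms a single permutation $\rho=12[\sigma,\tau]=\sigma\oplus\tau$ lying in neither $\D$ nor $\E$, which requires $12\in\C$, so it must dispose of the case $12\notin\C$ (the decreasing class) separately; it then inflates $\rho$ by copies of itself, argues that every block must be bichromatic because $\rho\notin\D\cup\E$, and extracts a monochromatic copy of $\rho$ with one entry per block. Your asymmetric inflation $\beta[\alpha,\dots,\alpha]$ needs no auxiliary permutation and no preliminary case: either some block is entirely red, giving a red copy of $\alpha\notin\D$, or every block contains a blue entry, and one blue entry per block forms a blue copy of the outer pattern $\beta\notin\E$. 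Both arguments are a few lines long; yours is marginally cleaner in that the two cases contradict the choices of $\alpha$ and $\beta$ directly and the argument applies verbatim to every substitution closed class, including those avoiding $12$.
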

\begin{proof}
Let $\C$ be a substitution closed class.  If $12\notin\C$ then $\C$ must be the class of all decreasing permutations, which is clearly not splittable, so we may assume that $12\in\C$.  Now take two proper subclasses $\D,\E\subsetneq\C$ and choose $\sigma\in\C\setminus\D$ and $\tau\in\C\setminus\E$.  Because $12\in\C$, the permutation $\rho=\sigma\oplus\tau=12[\sigma,\tau]$ lies in $\C$ but in neither $\D$ nor $\E$.

Now consider $\pi=\rho[\rho,\dots,\rho]$.  By definition, $\pi\in\C$, but we claim that $\pi\notin\D\odot\E$.  Indeed, if we tried to color the entries of $\pi$ red and blue so that the red subpermutation lied in $\D$ and the blue subpermutation lied in $\E$, we would have to use both colors in each interval order isomorphic to $\rho$ (because $\rho\notin\D\cup\E$), but then there would be monochromatic copy of $\rho$ containing one entry per interval.
\end{proof}

By Proposition~\ref{prop-nonmerge-subst} and our previous estimates on the number of simple permutations, we see that if $\beta$ is chosen uniformly at random from all permutations of length $k$, there is (asymptotically) a $1/e^2$ probability that it is simple, and thus that $\Av(\beta)$ is not splittable. As Jel{\'{\i}}nek and Valtr~\cite{jelinek:splittings-and-:} note, it would be interesting to obtain a better understanding of the probability that $\Av(\beta)$ is not splittable.

\subsection{Atomicity}
\label{subsec-atomicity}

\index{atomic permutation class}

To motivate the notion of atomicity, let us continue with the splittability question. If the class $\C$ can be expressed as the union of two of its proper subclasses, then it is trivially contained in the merge of these two classes. We say that $\C$ is \emph{atomic} if it cannot be expressed as the union of two of its proper subclasses. Thus the splittability question is only interesting for atomic classes.

Atomicity was first used in the context of permutations by Atkinson in his 1999 paper ``Restricted permutations''~\cite{atkinson:restricted-perm:} (which anticipated many of the structural notions discussed in this section), where he showed that
$$
\Av(321,2143)
=
\Av(321,2143,2413)
\cup
\Av(321,2143,3142),
$$
and used this to enumerate the former class. In the wider context of relational structures, the notion has a much longer history, dating back to a 1954 article of Fra{\"{\i}}ss{\'e}~\cite{fraisse:sur-lextension-:}. It is not difficult to show that the {\it joint embedding property\/} is a necessary and sufficient condition for the permutation class $\C$ to be atomic; this condition states that for all $\sigma,\tau\in\C$, there is a $\pi\in\C$ containing both $\sigma$ and $\tau$. Fra{\"{\i}}ss{\'e} established another necessary and sufficient condition for atomicity, which we describe only in the permutation context (Fra{\"{\i}}ss{\'e} proved his results in the context of arbitrary relational structures). Given two linearly ordered sets $A$ and $B$ and a bijection $f:A\rightarrow B$, every finite subset $\{a_1<\cdots<a_n\}\subseteq A$ maps to a finite sequence $f(a_1),\dots,f(a_n)\in B$ that is order isomorphic to a unique permutation. We say that this permutation is order isomorphic to $f(\{a_1,\dots,a_n\})$ and call the set of all permutations that are order isomorphic to $f(X)$ for finite subsets $X\subseteq A$ the {\it age of $f$\/}. While this class is typically denoted $\operatorname{Age}(f: A\rightarrow B)$, given our previous definitions we choose instead to refer to it as $\Sub(f: A\rightarrow B)$.

\begin{theorem}[Fra{\"{\i}}ss{\'e}~\cite{fraisse:sur-lextension-:}; see also Hodges~{\cite[Section 7.1]{hodges:model-theory:}}]\label{atomic-tfae}
The following are equivalent for a permutation class $\C$:
\begin{enumerate}
\item[(1)] $\C$ is atomic,
\item[(2)] $\C$ satisfies the joint embedding property, and
\item[(3)] $\C=\Sub(f: A\rightarrow B)$ for a bijection $f$ and countable linear orders $A$ and $B$.
\end{enumerate}
\end{theorem}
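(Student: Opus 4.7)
My plan is to prove the three implications as a cycle, with (2) as the hub. The implications (1)$\Leftrightarrow$(2) are short dualities, (3)$\Rightarrow$(2) is a one-line observation, and the real work lies in building the infinite map for (2)$\Rightarrow$(3) via a Fra{\"\i}ss{\'e}-style limit.

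For (1)$\Rightarrow$(2), I will argue contrapositively. Assume $\C$ fails JEP, witnessed by $\sigma, \tau \in \C$ that have no common upper bound in $\C$. Set
$$
\D=\{\pi\in\C\st \sigma\not\le\pi\}\quad\text{and}\quad \E=\{\pi\in\C\st \tau\not\le\pi\}.
$$
Both are downward closed (hence permutation classes), both are proper subclasses of $\C$ (since $\sigma\notin\D$ and $\tau\notin\E$), and their union equals $\C$ (any $\pi\in\C$ outside $\D\cup\E$ would contain both $\sigma$ and $\tau$, contradicting our choice). Thus $\C$ is not atomic. The reverse direction (2)$\Rightarrow$(1) is similar: if $\C=\D\cup\E$ with both $\D,\E\subsetneq\C$ proper, choose $\sigma\in\C\setminus\D$ and $\tau\in\C\setminus\E$; a common upper bound $\pi$ would lie in $\D$ or $\E$ and then downward closure would put $\sigma$ or $\tau$ in the wrong subclass.

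The implication (3)$\Rightarrow$(2) is immediate: given $\sigma,\tau\in\Sub(f\st A\to B)$ with witnessing finite sets $X_\sigma,X_\tau\subseteq A$, the permutation order isomorphic to $f(X_\sigma\cup X_\tau)$ lies in $\Sub(f)$ and contains both $\sigma$ and $\tau$.

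The main obstacle, and the only step that requires real construction, is (2)$\Rightarrow$(3). The plan is to build a chain of permutations in $\C$ whose direct limit is the desired bijection. Enumerate $\C=\{\pi_1,\pi_2,\dots\}$ (possibly with repetitions if $\C$ is finite) and construct permutations $\rho_1\le\rho_2\le\cdots$ in $\C$ by $\rho_1=\pi_1$ and, given $\rho_i$, applying JEP to $\rho_i$ and $\pi_{i+1}$ to obtain some $\rho_{i+1}\in\C$ containing both. For each $i$, fix a specific embedding of $\rho_i$ into $\rho_{i+1}$, which amounts to an increasing injection of the position set $P_i=[1,|\rho_i|]$ into $P_{i+1}$ together with a compatible increasing injection of the value set $V_i=[1,|\rho_i|]$ into $V_{i+1}$. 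Taking direct limits along these order-preserving injections yields countable linearly ordered sets $A=\varinjlim P_i$ and $B=\varinjlim V_i$, and the position-to-value bijection defining each $\rho_i$ assembles into a bijection $f\st A\to B$.

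Finally, I will verify that $\Sub(f\st A\to B)=\C$. Every finite subset $X\subseteq A$ lies in the image of some $P_i\to A$, so the permutation order isomorphic to $f(X)$ is a subpermutation of $\rho_i$ and hence lies in $\C$. Conversely, any $\pi\in\C$ equals some $\pi_i$, is contained in $\rho_i$, and $\rho_i$ itself is order isomorphic to the restriction of $f$ to the image of $P_i$ in $A$; thus $\pi\in\Sub(f)$. The one subtlety is checking that the direct limits $A$ and $B$ are genuine linear orders, but this follows because a direct limit of finite linear orders under order-preserving injections inherits a well-defined total order.
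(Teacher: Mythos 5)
Your proposal is correct and follows essentially the same route as the paper: the same avoidance-subclass argument for the equivalence of atomicity and JEP, the same union-of-witnessing-sets observation for (3)$\Rightarrow$(2), and the same JEP-driven chain $\rho_1\le\rho_2\le\cdots$ dominating an enumeration of $\C$ for (2)$\Rightarrow$(3), with your direct-limit phrasing being just a cosmetic repackaging of the paper's nested sets $A_i\subseteq A_{i+1}$, $B_i\subseteq B_{i+1}$ and compatible maps $f_i$.
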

\begin{proof}
We first show that (1) and (2) are equivalent. Suppose to the contrary that $\C$ satisfies the joint embedding property but $\C\subseteq\D\cup\E$ for proper subclasses $\D,\E\subsetneq\C$. Clearly we may assume that neither $\D$ nor $\E$ is a subset of the other, so there are permutations $\sigma\in\D\setminus\E$ and $\tau\in\E\setminus\D$. By the joint embedding property there is some $\pi\in\C$ containing both of these permutations, but $\pi$ cannot lie in either $\D$ or $\E$, a contradiction. Next suppose that $\C$ does not satisfy the joint embedding property, so there are permutations $\sigma,\tau\in\C$ such that no permutation in $\C$ contains both. Therefore every permutation in $\C$ avoids either $\sigma$ or $\tau$, so we see that $\C$ is contained in the union of its proper subclasses $\C\cap\Av(\sigma)$ and $\C\cap\Av(\tau)$.

Next we show that (2) and (3) are equivalent. Suppose that $\C=\Sub(f: A\rightarrow B)$ and take $\sigma,\tau\in\C$. Thus $\sigma$ is order isomorphic to $f(A_\sigma)$ for a subset $A_\sigma\subseteq A$ and $\tau$ is order isomorphic to $f(A_\tau)$ for a subset $A_\tau\subseteq A$, so the permutation that is order isomorphic to $f(A_\sigma\cup A_\tau)$ contains both $\sigma$ and $\tau$. Next suppose that $\C$ satisfies the joint embedding property and list the elements of $\C$ as $\sigma_1,\sigma_2,\dots$. Define $\pi_0$ to be the empty permutation, and for $i\ge 1$, choose a permutation in $\C$ which contains both $\sigma_i$ and $\pi_{i-1}$ to be $\pi_i$. Thus every element of $\C$ is contained in some $\pi_i$ (and thus also all $\pi_j$ for $j\ge i$). Clearly we may choose $A_0$, $B_0$, and $f_0$ so that $\pi_0$ is order isomorphic to $f_0(A_0)$. Now for $i\ge 1$, we construct $A_{i}$, $B_{i}$, and $f_{i}$ so that $A_{i}\supseteq A_{i-1}$, $B_{i}\supseteq B_{i-1}$, $f_{i}(A_{i})$ is order isomorphic to $\pi_{i}$, and such that $f_{i}(a)=f_{i-1}(a)$ for all $a\in A_{i-1}$. The desired bijection is then $f=\lim_{i\rightarrow\infty} f_i$, with $A=\bigcup A_i$ and $B=\bigcup B_i$.
\end{proof}

Recall that every principal permutation class is either sum or skew closed (Observation~\ref{obs-principal-sum-closed}), so all principal classes satisfy the joint embedding property and are trivially atomic. Despite this, the notion of atomicity has proved to be quite useful in the study of permutation classes. In particular, we apply the following two propositions in our final subsection.

\begin{proposition}
\label{prop-wpo-atomic-union}
Every wpo permutation class can be expressed as a finite union of atomic classes.
\end{proposition}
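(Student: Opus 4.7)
The plan is to argue by contradiction, using the descending chain condition for subclasses of a wpo class provided by Proposition~\ref{prop-wpo-subclasses-dcc}. Suppose that $\C$ is wpo but cannot be expressed as a finite union of atomic subclasses. Then in particular $\C$ itself is not atomic, so by the very definition of atomicity we may write $\C=\D^{(1)}\cup\D^{(2)}$ with both $\D^{(1)},\D^{(2)}\subsetneq\C$ proper subclasses.

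The first key observation is that at least one of $\D^{(1)},\D^{(2)}$ is itself not a finite union of atomic subclasses. Indeed, if both admitted such decompositions, concatenating them would give a finite decomposition of $\C$ into atomic subclasses, contradicting our standing assumption. Pick a ``bad'' subclass $\C^{(1)}\in\{\D^{(1)},\D^{(2)}\}$. Since $\C^{(1)}$ is not a finite union of atomic subclasses, it is in particular not atomic, so the argument can be repeated inside $\C^{(1)}$ to produce a bad subclass $\C^{(2)}\subsetneq\C^{(1)}$. Iterating, I obtain an infinite strictly descending sequence of permutation classes
\[
\C\supsetneq\C^{(1)}\supsetneq\C^{(2)}\supsetneq\cdots,
\]
each of which is a subclass of the wpo class $\C$.

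This contradicts Proposition~\ref{prop-wpo-subclasses-dcc}, which forbids any such infinite descending chain inside a wpo class. Hence the assumption was false, and $\C$ is a finite union of atomic subclasses, as desired. The plan contains no real obstacle: the only piece of content beyond the descending chain condition is the bookkeeping observation that ``not a finite union of atomic subclasses'' is inherited by at least one piece of any two-piece decomposition, and this is immediate from the fact that a finite union of finite unions of atomic classes is again a finite union of atomic classes.
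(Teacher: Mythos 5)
Your proof is correct and rests on the same two ingredients as the paper's own argument: the fact that a non-atomic class splits as a union of two proper subclasses, and the descending chain condition of Proposition~\ref{prop-wpo-subclasses-dcc}. The paper packages this as a binary tree of decompositions whose finiteness follows from K\"onig's Lemma, while you inline that step by always descending into a ``bad'' piece to produce an infinite strictly descending chain; the difference is purely one of presentation.
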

\begin{proof}
Consider the binary tree whose root is the wpo class $\C$, all of whose leaves are atomic classes, and in which the children of the non-atomic class $\D$ are two proper subclasses $\D^1,\D^2\subsetneq\D$ such that $\D^1\cup\D^2=\D$.  Because $\C$ is wpo its subclasses satisfy the descending chain condition (Proposition~\ref{prop-wpo-subclasses-dcc}), so this tree contains no infinite paths and thus is finite by K\"onig's Lemma; its leaves are the desired atomic classes.
\end{proof}

Thanks to the following result, the problem of computing growth rates of wpo classes can be reduced to that of computing growth rates of atomic classes. We leave the proof as an easy exercise for the reader.

\begin{proposition}
\label{prop-wpo-atomic-gr}
The upper growth rate of a wpo permutation class is equal to the greatest upper growth rate of its atomic subclasses.
\end{proposition}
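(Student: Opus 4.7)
The plan is to combine Proposition~\ref{prop-wpo-atomic-union} with an elementary manipulation of $\limsup$ applied to finite unions. First, let $\C$ be a wpo permutation class and invoke Proposition~\ref{prop-wpo-atomic-union} to decompose it as a finite union $\C = \C^1 \cup \cdots \cup \C^m$ of atomic subclasses. Note that for any subclass $\D \subseteq \C$ we have $|\D_n| \le |\C_n|$ and hence $\ugr(\D) \le \ugr(\C)$, so the supremum of the upper growth rates over all atomic subclasses of $\C$ is at most $\ugr(\C)$. It therefore suffices to establish the reverse inequality and to verify that the supremum is in fact attained.

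For the reverse direction I would use the bound $|\C_n| \le \sum_{i=1}^m |\C^i_n| \le m \cdot \max_{1 \le i \le m} |\C^i_n|$, which after taking $n$th roots gives
$$
\sqrt[n]{|\C_n|} \;\le\; m^{1/n} \cdot \max_{1 \le i \le m} \sqrt[n]{|\C^i_n|}.
$$
Since $m^{1/n} \to 1$ and the maximum is over only finitely many sequences, a standard $\limsup$ argument yields $\limsup_n \max_i \sqrt[n]{|\C^i_n|} = \max_i \ugr(\C^i)$, from which $\ugr(\C) \le \max_i \ugr(\C^i)$ follows. As each $\C^i$ is one of the atomic subclasses of $\C$, the supremum is attained by some atomic subclass, completing the proof.

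The only substantive point requiring care is the exchange of $\limsup$ and $\max$ over finitely many sequences, for which I would include a one-line justification: given $\epsilon > 0$, for each $i$ choose $N_i$ such that $\sqrt[n]{|\C^i_n|} < \ugr(\C^i) + \epsilon$ for all $n \ge N_i$; then for $n \ge \max_i N_i$ the maximum is bounded by $\max_i \ugr(\C^i) + \epsilon$, and letting $\epsilon \to 0$ gives the desired inequality. No other obstacle is anticipated, as the entire argument is a short deduction from the structural decomposition in Proposition~\ref{prop-wpo-atomic-union}.
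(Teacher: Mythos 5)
Your proof is correct, and it is exactly the intended argument: the paper leaves this as an easy exercise immediately after Proposition~\ref{prop-wpo-atomic-union}, and the expected route is precisely to combine that finite decomposition into atomic subclasses with the standard fact that the upper growth rate of a finite union is the maximum of the upper growth rates. Your handling of the $\limsup$/$\max$ exchange and the observation that the supremum over all atomic subclasses is attained by one of the finitely many pieces are both sound, so nothing is missing.
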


The notion of representing permutation classes as ages raises several interesting questions. For instance, define $\mathfrak{T}(A,B)$ as the set of all permutation classes that can be expressed as $\Sub(f: A\rightarrow B)$. Given two linear orders $A$ and $B$, we might ask if we can characterize $\mathfrak{T}(A,B)$. Atkinson, Murphy, and Ru\v{s}kuc~\cite{atkinson:pattern-avoidan:} were the first to investigate this question; they characterized the classes of the form $\Sub(f: \mathbb{N}\rightarrow\mathbb{N})$, which they called \emph{natural classes}. It is worth noting that every principal class is, up to symmetry, a natural class---sum closed classes are natural classes, while skew closed classes can be expressed as $\Sub(f: -\mathbb{N}\rightarrow\mathbb{N})$. Huczynska and Ru\v{s}kuc~\cite{huczynska:pattern-classes:} later studied classes of the form $\Sub(f: A\rightarrow\mathbb{N})$ for arbitrary linear orders $A$, which they called \emph{supernatural classes}. Of the many interesting questions that remain open, we quote their ``contiguity question'':

\begin{question}[Huczynska and Ru\v{s}kuc~\cite{huczynska:pattern-classes:}]
If $\C\in\mathfrak{T}(i\mathbb{N}, \mathbb{N})\cap \mathfrak{T}(k\mathbb{N},\mathbb{N})$, is $\C\in\mathfrak{T}(j\mathbb{N},\mathbb{N})$ for all $i\le j\le k$?
\end{question}

A stronger version of the joint embedding property, called amalgamation, has also been considered. Let $\C$ be a permutation class containing the permutation $\rho$. We say that $\C$ is \emph{${\rho}$-amalgamable} if given two permutations $\sigma,\tau\in\C$, each with a marked copy of $\rho$, we can find a permutation $\pi\in\C$ containing both $\sigma$ and $\tau$ such that the two marked copies of $\rho$ coincide. A class is called \emph{homogeneous} if it is $\rho$-amalgamable for every $\rho\in\C$. Cameron~\cite{cameron:homogeneous-per:} proved that there are precisely five infinite homogeneous permutation classes: $\Av(12)$, $\Av(21)$, the layered permutations, the reversed layered permutations, and the class of all permutations. As Cameron noted, the permutation case turned out to be far simpler than several noteworthy cases considered before, such as the undirected graph case (established in 1980 by Lachlan and Woodrow~\cite{lachlan:countable-ultra:}), the tournament case (due to Lachlan~\cite{lachlan:countable-homog:}), and the directed graph case (Cherlin~\cite{cherlin:the-classificat:}).

We conclude this subsection by relating amalgamation and splittability.

\begin{proposition}[Jel{\'{\i}}nek and Valtr~\cite{jelinek:splittings-and-:}]
Every permutation class that is not $1$-amalgamable is splittable.
\end{proposition}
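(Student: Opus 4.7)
The plan is to translate the failure of $1$-amalgamability into a red/blue coloring of the entries of an arbitrary $\pi\in\C$ so that the red subsequence lies in $\C\cap\Av(\sigma)$ and the blue subsequence lies in $\C\cap\Av(\tau)$, where $\sigma,\tau\in\C$ are the amalgamation-failing witnesses. Since the failing witnesses must both have length at least two (otherwise the marked permutation of length one trivially amalgamates with anything), the classes $\C\cap\Av(\sigma)$ and $\C\cap\Av(\tau)$ are proper subclasses of $\C$, and this splitting will complete the proof.

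First I would fix the setup. By the hypothesis, there exist marked permutations $(\sigma,m_\sigma),(\tau,m_\tau)\in\C$ such that no marked $(\pi,m)\in\C$ embeds both with the marks identified. For a marked permutation $(\pi,m)$, let its four \emph{quadrants} be the sub-permutations of $\pi$ obtained by restricting to entries lying in each of the four regions cut out by $m$. Then an embedding $\sigma\le\pi$ sending $m_\sigma$ to $m$ exists precisely when, in each of the four regions, the quadrant of $(\pi,m)$ contains the corresponding quadrant of $(\sigma,m_\sigma)$ as a pattern. Call an entry $m$ of $\pi$ a \emph{$\sigma$-site} when this happens, and analogously a \emph{$\tau$-site}. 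The failure of $1$-amalgamability says exactly that no entry of any $\pi\in\C$ is simultaneously a $\sigma$-site and a $\tau$-site.

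Next I would define the coloring. Given $\pi\in\C$, color an entry $m$ red if it is not a $\sigma$-site of $\pi$, and blue otherwise; in the blue case $m$ is a $\sigma$-site and hence, by the preceding sentence, not a $\tau$-site. Every entry thus receives exactly one color. Let $\pi_R$ and $\pi_B$ denote the red and blue subsequences. The crux is the following monotonicity observation: for any red entry $m$, each quadrant of $(\pi_R,m)$ is contained in the corresponding quadrant of $(\pi,m)$, so if some quadrant of $(\pi,m)$ fails to contain the corresponding quadrant of $(\sigma,m_\sigma)$, the same quadrant of $(\pi_R,m)$ also fails to contain it (pattern avoidance is inherited by sub-permutations). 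Consequently no entry of $\pi_R$ is a $\sigma$-site of $\pi_R$. Since any copy of $\sigma$ in $\pi_R$ would turn the image of $m_\sigma$ into a $\sigma$-site, we conclude $\pi_R\in\Av(\sigma)$. Symmetrically $\pi_B\in\Av(\tau)$.

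Putting the pieces together, every $\pi\in\C$ admits a decomposition showing $\pi\in(\C\cap\Av(\sigma))\odot(\C\cap\Av(\tau))$, with both factors proper subclasses of $\C$ because $\sigma,\tau\in\C$ while $\sigma\notin\Av(\sigma)$ and $\tau\notin\Av(\tau)$. I do not see a real obstacle here; the only subtle point is the monotonicity step, which requires noticing that the failure witness for $m$ not being a $\sigma$-site is a missing pattern in one specific quadrant, and missing patterns are preserved under passing to sub-permutations.
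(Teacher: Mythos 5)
Your overall plan---color each entry of $\pi\in\C$ according to whether it can host the marked entry of $\sigma$, then use the failure of $1$-amalgamability to force the other color class to avoid $\tau$---is exactly the Jel\'{\i}nek--Valtr argument the paper gives. The problem is that your write-up routes it through a claim that is false: that an embedding of $\sigma$ into $\pi$ sending $m_\sigma$ to $m$ exists \emph{precisely when} each of the four quadrants of $(\pi,m)$ contains the corresponding quadrant of $(\sigma,m_\sigma)$. Only the forward implication holds; the quadrant condition says nothing about order relations between entries in \emph{different} quadrants. For instance, take $\sigma=3142$ with the entry of value $1$ marked, and $\pi=4132$ with its entry of value $1$ marked: the upper-left quadrants are both a single entry, the upper-right quadrants are both order isomorphic to $21$, and the other quadrants are empty, yet $4132$ does not contain $3142$ at all (an embedding would need the value of the left entry to lie between the values of the two right entries). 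This matters because your ``$\sigma$-site'' is defined by the quadrant condition, and your step ``the failure of $1$-amalgamability says exactly that no entry is simultaneously a $\sigma$-site and a $\tau$-site'' uses the false direction: non-amalgamability only rules out an entry being the image of $m_\sigma$ and of $m_\tau$ under \emph{actual embeddings}, whereas an entry can satisfy both quadrant conditions while admitting neither embedding. Consequently, when a blue entry (a quadrant-$\sigma$-site) plays the role of $m_\tau$ in a copy of $\tau$ inside $\pi_B$, no contradiction follows, and the claim $\pi_B\in\Av(\tau)$ is not established.

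The repair is to discard the quadrant machinery: define a $\sigma$-site of $\pi$ to be an entry that is the image of $m_\sigma$ under some embedding of $\sigma$ into $\pi$, and similarly for $\tau$-sites. With this definition the translation of non-$1$-amalgamability is literally correct, and both avoidance claims are immediate, with no monotonicity lemma needed: a copy of $\sigma$ inside the red subsequence is a copy of $\sigma$ inside $\pi$ whose marked image is a red entry, contradicting that red entries are not $\sigma$-sites; a copy of $\tau$ inside the blue subsequence makes a blue entry a $\tau$-site of $\pi$, contradicting that blue entries, being $\sigma$-sites, cannot be $\tau$-sites. This is precisely the paper's one-paragraph proof (with your colors interchanged), so your strategy is the right one; it is the purported quadrant characterization, on which your argument leans at the crucial step, that fails.
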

\begin{proof}
Suppose that the class $\C$ is not $1$-amalgamable, so there are permutations $\sigma,\tau\in\C$ with marked entries $\sigma(i)$ and $\tau(j)$ respectively such that no permutation in $\C$ contains copies of $\sigma$ and $\tau$ in which the entries corresponding to $\sigma(i)$ and $\tau(j)$ coincide. We claim that $\C$ is equal to the merge of its proper subclasses $\C\cap\Av(\sigma)$ and $\C\cap\Av(\tau)$. Let $\pi\in\C$ be arbitrary. Color an entry of $\pi$ red if it can play the role of $\sigma(i)$ in a copy of $\sigma$ in $\pi$, and blue otherwise. None of the red entries can play the role of $\tau(j)$ and none of the blue entries can play the role of $\sigma(i)$, showing that the red entries avoid $\tau$ and the blue entries avoid $\sigma$, proving the result.
\end{proof}

\section{The set of all growth rates}
\label{sec-growth-rates}

The final topic of this survey, the set of growth rates of all permutation classes, relies on the rich toolbox of structural results collected in the previous section as well as the theory of the three different flavors of grid classes introduced in Sections~\ref{subsec-mono-grid}--\ref{subsec-gen-grid}. Like the ideas of the previous section, growth rates have been studied for a wide variety of combinatorial objects, for example, posets, set partitions, graphs, ordered graphs, tournaments, and ordered hypergraphs. For details of the work on growth rates in this more general context, we refer to Bollob\'as' survey for the 2007 \emph{British Combinatorial Conference}~\cite{bollobas:hereditary-and-}.

Characterizing the set of growth rates of permutation classes is easy at the very low end of the spectrum. The number of permutations of length $n$ in a class must be an integer, and if it is ever $0$ then the class is empty for all larger lengths. Thus there are no growth rates properly between $0$ and $1$. The next jump takes more work, and is referred to as the \emph{Fibonacci Dichotomy}.

\begin{theorem}[Kaiser and Klazar~\cite{kaiser:on-growth-rates:}]
\label{thm-fib-dichotomy}
Suppose that $\C$ is a permutation class. If $|\C_n|$ is ever less than the $n$th Fibonacci number, then $|\C_n|$ is eventually polynomial.
\end{theorem}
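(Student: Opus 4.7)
The theorem is a ``growth rate gap'' statement: between polynomial growth (rate $1$) and the golden ratio $\varphi = (1+\sqrt{5})/2$ there are no permutation-class growth rates. The plan is to establish this via a structural dichotomy, by identifying a finite collection of ``Fibonacci obstructions''---permutation classes whose enumeration is at least $F_n$ at every length---and showing that a class containing none of them must have eventually polynomial enumeration.

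A prototypical Fibonacci obstruction is the class $\mathcal{L}_2 = \Av(321, 231, 312)$ of layered permutations all of whose layers have size at most two. These are in bijection with compositions of $n$ into parts from $\{1,2\}$, so $|\mathcal{L}_{2,n}| = F_{n+1}$. Applying the eight dihedral symmetries of the containment order to $\mathcal{L}_2$, together with a few related modifications (e.g.\ inflations of $\mathcal{L}_2$'s basic simple permutations by singletons), yields a finite list of minimal obstructions. The first step is to show: if $|\C_n| < F_n$ for some $n$, then $\C$ can contain none of these obstructions as a subclass, because each already has at least $F_n$ permutations of length $n$.

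The heart of the argument is the reverse direction: a class $\C$ avoiding every Fibonacci obstruction has polynomial enumeration. Here I would use the Schmerl-Trotter theorem (Theorem~\ref{thm-schmerl-trotter}) together with its Ramsey-type refinement (Theorem~\ref{thm-bhv-ramsey}) to show that $\C$ contains only finitely many simple permutations---indeed only simple permutations of bounded size, since any long parallel alternation, wedge simple permutation, or proper pin sequence would generate a Fibonacci-rate subclass. By Theorem~\ref{thm-fin-simples-alg}, $\C$ then has an algebraic generating function. To strengthen this to polynomial growth, I would analyze the substitution decomposition trees of permutations in $\C$: each permutation is built by inflating one of a fixed finite set of simple skeletons by intervals drawn from a controlled collection, and the avoidance of Fibonacci obstructions forces those intervals to lie in monotone grid classes of bounded dimension. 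A finite union of such building blocks gives polynomial coefficient growth.

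The principal obstacle will be the second half of the structural dichotomy: pinning down the \emph{complete} finite list of Fibonacci obstructions and proving that every class with asymptotic complexity $\geq \varphi$ contains one of them. This requires a delicate case analysis on the simple permutations in $\C$, and in particular a careful treatment of the three families---parallel alternations, wedge simple permutations, and proper pin sequences---produced by Theorem~\ref{thm-bhv-ramsey}. Only after ruling out all such ``moderately complex'' configurations does one arrive at a $\C$ whose substitution structure is genuinely monotone in nature and whose enumeration is therefore polynomial.
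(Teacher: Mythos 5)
Your plan has a genuine gap, and it surfaces in both halves of the proposed dichotomy. The obstruction list cannot consist only of $\mathcal{L}_2=\bigoplus\{1,21\}$, its symmetries, and layered-type modifications, and---more importantly---``finitely many simple permutations'' is nowhere near the right dividing line. Consider the juxtaposition of an increasing and a decreasing sequence, $\Grid\fnmatrix{rr}{1&-1}$, which is the downward closure of the wedge alternations split by a vertical line. This class avoids $2143$ and $3412$, so it contains neither $\bigoplus\{1,21\}$ nor $\bigominus\{1,12\}$ (the only symmetries of $\mathcal{L}_2$); it avoids $2413$ and $3142$, so by Theorem~\ref{thm-schmerl-trotter} it contains no simple permutations of length four or more, and all of the machinery you invoke (Theorem~\ref{thm-bhv-ramsey}, Theorem~\ref{thm-fin-simples-alg}) is silent about it; yet it has growth rate $2$ and is certainly not eventually polynomial. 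So any correct obstruction list must contain the closures of wedge and parallel alternations, which are not generated from $\mathcal{L}_2$ at all---this is precisely the ``principal obstacle'' you defer, and your steps 2--4 would wrongly certify the class above as polynomial.

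Even granting a complete obstruction list, the route from the substitution decomposition to polynomial enumeration does not go through. Algebraicity of the generating function says nothing about polynomial growth, and your key structural claim is false as stated: having the inflated intervals lie in ``monotone grid classes of bounded dimension'' gives nothing, since such grid classes are themselves exponential (already $\Grid\fnmatrix{rr}{1&-1}$ grows at rate $2$), and there is no ``fixed finite set of simple skeletons''---sum-decomposable permutations have unboundedly many sum components, and inflating $12\cdots k$ by the two-element set $\{1,21\}$ is exactly where Fibonacci growth lives. What is actually needed is that every permutation of the class is a bounded disjoint union of monotone intervals occupying pairwise independent rows and columns, i.e.\ $\C\subseteq\Grid(M)$ for a signed permutation matrix $M$. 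The paper reaches this in two stages: exclusion of $\bigoplus\{1,21\}$ and $\bigominus\{1,12\}$ gives monotone griddability (Theorem~\ref{thm-mono-griddable}), and the bound on alternations---this is where the wedge/parallel obstructions enter---lets one split the gridding so that no two cells share a row or column (Proposition~\ref{prop-chop-gridding}); eventual polynomiality then comes from a separate counting argument identifying subclasses of such grid classes with downsets of vectors in $\mathbb{N}^t$. None of these three steps is supplied by your sketch, and the substitution-decomposition toolkit you propose cannot replace them because the polynomial/Fibonacci boundary is governed by monotone gridding structure, not by the set of simple permutations.
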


The Fibonacci numbers referred to above are the \emph{combinatorial} Fibonacci numbers, which begin $1,1,2,3,5,\dots$ for $n=0,1,2,3,4,\dots$. Classes with this enumeration certainly exist. For example, both $\bigoplus\{1,21\}$ and $\bigominus\{1,12\}$ are counted by the Fibonacci numbers. Huczynska and Vatter~\cite{huczynska:grid-classes-an:} gave a proof of Theorem~\ref{thm-fib-dichotomy} using monotone grid classes, which we sketch in Section~\ref{subsec-mono-grid}. We note in passing that while versions of Theorem~\ref{thm-fib-dichotomy} (i.e., jumps from polynomial to superpolynomial growth) are known to exist for a variety of different combinatorial structures (see Klazar~\cite{klazar:on-growth-rates:} for numerous examples), the most general possible form of such a result remains open, as discussed in Pouzet and Thi\'{e}ry~\cite{pouzet:some-relational:}.

It follows that the first three growth rates of permutation classes are $0$, $1$, and $\varphi$, where $\varphi$ is the golden ratio. Next consider the classes $\bigoplus\Sub(k\cdots 21)$. It is easy to see that these classes are counted by (one version of) the \emph{${k}$-generalized Fibonacci numbers}, i.e., that they have generating functions of the form
$$
\frac{1}{1-x-x^2-\cdots-x^k}.
$$
The growth rates of these classes are the largest roots of the polynomials $x^k-x^{k-1}-\cdots-1$, or equivalently (by multiplying by $x-1$), the largest roots of the polynomials $x^{k+1}-2x^k+1$. These classes converge to the class of layered permutations, and thus their growth rates converge to $2$. Kaiser and Klazar~\cite{kaiser:on-growth-rates:} showed that these are the only growth rates of permutation classes below $2$, making $2$ the least accumulation point of growth rates. Their work was later generalized by Balogh, Bollob\'as, and Morris~\cite{balogh:hereditary-prop:ordgraphs}, who showed that growth rates of ordered graph classes take on precisely the same values below $2$ (every permutation class can be viewed as a class of ordered graphs, but not vice versa).

This result has since been extended in Vatter~\cite{vatter:small-permutati:} where the growth rates up to
\[
\kappa=\mbox{the unique real root of $x^3-2x^2-1$}\approx 2.21 
\]
were characterized. One implication of this result is that growth rates of permutation classes and ordered graph classes diverge above $2$. Sections~\ref{subsec-geom-grid} and \ref{subsec-gen-grid} introduce the machinery (geometric and generalized grid classes) needed to study small permutation classes, while Section~\ref{subsec-spc} sketches the proof of the following result. 

\begin{figure}
\begin{footnotesize}
\begin{center}
	\begin{tikzpicture}[scale=6, baseline=(current bounding box.center)]
		\foreach \i in {0, 1, 1.618033989, 1.839286755, 1.927561975, 1.965948237,
1.983582843, 1.991964197, 1.99603118, 1.99802947, 1.999018633,
1.999510402, 1.999755501, 1.999877833, 1.999938939, 1.999969475,
1.999984739, 1.99999237, 1.999996185, 1.999998093, 1.999999046, 2,
2.065994892, 2.094309372, 2.10691934, 2.112673684, 2.115338658,
2.116583568, 2.117167971, 2.11744306, 2.117572742, 2.117633925,
2.117662803, 2.117676437, 2.117682874, 2.117685913, 2.117687349,
2.117688026, 2.117688346, 2.117688498, 2.117688569, 2.117688603,
2.117688633, 2.130395435, 2.147899036, 2.158663882, 2.16349256,
2.165685458, 2.16668849, 2.167149114, 2.16736111, 2.167458792,
2.167503829, 2.167524599, 2.167534181, 2.1675386, 2.16754064,
2.16754158, 2.167542014, 2.167542214, 2.167542307, 2.167542349,
2.167542369, 2.167542378, 2.167542386, 2.17455941, 2.180707499,
2.183384113, 2.185153079, 2.18715546, 2.188063256, 2.188476303,
2.188664608, 2.188750543, 2.188789783, 2.188807705, 2.188815892,
2.188819632, 2.18882134, 2.188822121, 2.188822477, 2.18882264,
2.188822715, 2.188822749, 2.188822764, 2.188822771, 2.188822775,
2.188822777, 2.194600051, 2.196189702, 2.196517555, 2.197383553,
2.197776008, 2.197954192, 2.198035171, 2.198071993, 2.19808874,
2.198096358, 2.198099823, 2.198101399, 2.198102117, 2.198102443,
2.198102591, 2.198102659, 2.19810269, 2.198102704, 2.19810271,
2.198102713, 2.198102714, 2.198102715, 2.200667783, 2.201514427,
2.201897435, 2.202071016, 2.202149758, 2.202185496, 2.20220172,
2.202209087, 2.202212431, 2.20221395, 2.20221464, 2.202214953,
2.202215095, 2.20221516, 2.202215189, 2.202215202, 2.202215208,
2.202215211, 2.202215212, 2.202215213, 2.202215213, 2.203364002,
2.203742556, 2.203913979, 2.203991679, 2.204026915, 2.204042897,
2.204050148, 2.204053437, 2.20405493, 2.204055607, 2.204055914,
2.204056054, 2.204056117, 2.204056145, 2.204056158, 2.204056164,
2.204056167, 2.204056168, 2.204056169, 2.204056169, 2.204056169,
2.204573466, 2.204743865, 2.204821072, 2.204856071, 2.20487194,
2.204879137, 2.204882401, 2.204883881, 2.204884552, 2.204884857,
2.204884995, 2.204885057, 2.204885086, 2.204885099, 2.204885104,
2.204885107, 2.204885108, 2.204885109, 2.204885109, 2.204885109,
2.204885109, 2.205118783, 2.205195756, 2.205230642, 2.205246458,
2.205253629, 2.205256881, 2.205258355, 2.205259024, 2.205259327,
2.205259465, 2.205259527, 2.205259555, 2.205259568, 2.205259574,
2.205259577, 2.205259578, 2.205259578, 2.205259579, 2.205259579,
2.205259579, 2.205259579, 2.205365319, 2.205400153, 2.205415943,
2.205423102, 2.205426348, 2.20542782, 2.205428487, 2.20542879,
2.205428927, 2.205428989, 2.205429017, 2.20542903, 2.205429036,
2.205429039, 2.20542904, 2.20542904, 2.205429041, 2.205429041,
2.205429041, 2.205429041, 2.205429041, 2.205476935, 2.205492713,
2.205499867, 2.20550311, 2.20550458, 2.205505247, 2.205505549,
2.205505686, 2.205505749, 2.205505777, 2.205505789, 2.205505795,
2.205505798, 2.205505799, 2.2055058, 2.2055058, 2.2055058, 2.2055058,
2.2055058, 2.2055058, 2.2055058, 2.205527504, 2.205534655, 2.205537896,
2.205539366, 2.205540033, 2.205540335, 2.205540472, 2.205540534,
2.205540562, 2.205540575, 2.205540581, 2.205540583, 2.205540585,
2.205540585, 2.205540585, 2.205540585, 2.205540586, 2.205540586,
2.205540586, 2.205540586, 2.205540586, 2.205550423, 2.205553665,
2.205555134, 2.205555801, 2.205556103, 2.20555624, 2.205556302,
2.20555633, 2.205556343, 2.205556349, 2.205556351, 2.205556352,
2.205556353, 2.205556353, 2.205556353, 2.205556353, 2.205556353,
2.205556353, 2.205556353, 2.205556353, 2.205556353, 2.205560813,
2.205562283, 2.205562949, 2.205563251, 2.205563388, 2.20556345,
2.205563478, 2.205563491, 2.205563497, 2.205563499, 2.205563501,
2.205563501, 2.205563501, 2.205563501, 2.205563501, 2.205563502,
2.205563502, 2.205563502, 2.205563502, 2.205563502, 2.205563502,
2.205565524, 2.20556619, 2.205566492, 2.205566629, 2.205566691,
2.205566719, 2.205566732, 2.205566738, 2.20556674, 2.205566741,
2.205566742, 2.205566742, 2.205566742, 2.205566742, 2.205566742,
2.205566742, 2.205566742, 2.205566742, 2.205566742, 2.205566742,
2.205566742, 2.205567659, 2.205567961, 2.205568098, 2.20556816,
2.205568188, 2.205568201, 2.205568207, 2.205568209, 2.205568211,
2.205568211, 2.205568211, 2.205568212, 2.205568212, 2.205568212,
2.205568212, 2.205568212, 2.205568212, 2.205568212, 2.205568212,
2.205568212, 2.205568212, 2.205568627, 2.205568764, 2.205568826,
2.205568854, 2.205568867, 2.205568873, 2.205568876, 2.205568877,
2.205568877, 2.205568878, 2.205568878, 2.205568878, 2.205568878,
2.205568878, 2.205568878, 2.205568878, 2.205568878, 2.205568878,
2.205568878, 2.205568878, 2.205568878, 2.205569066, 2.205569128,
2.205569157, 2.205569169, 2.205569175, 2.205569178, 2.205569179,
2.205569179, 2.20556918, 2.20556918, 2.20556918, 2.20556918, 2.20556918,
2.20556918, 2.20556918, 2.20556918, 2.20556918, 2.20556918, 2.20556918,
2.20556918, 2.20556918, 2.205569265, 2.205569293, 2.205569306,
2.205569312, 2.205569315, 2.205569316, 2.205569316, 2.205569317,
2.205569317, 2.205569317, 2.205569317, 2.205569317, 2.205569317,
2.205569317, 2.205569317, 2.205569317, 2.205569317, 2.205569317,
2.205569317, 2.205569317, 2.205569317, 2.205569356, 2.205569368,
2.205569374, 2.205569377, 2.205569378, 2.205569378, 2.205569379,
2.205569379, 2.205569379, 2.205569379, 2.205569379, 2.205569379,
2.205569379, 2.205569379, 2.205569379, 2.205569379, 2.205569379,
2.205569379, 2.205569379, 2.205569379, 2.205569379, 2.205569396,
2.205569402, 2.205569405, 2.205569406, 2.205569407, 2.205569407,
2.205569407, 2.205569407, 2.205569407, 2.205569407, 2.205569407,
2.205569407, 2.205569407, 2.205569407, 2.205569407, 2.205569407,
2.205569407, 2.205569407, 2.205569407, 2.205569407, 2.205569407}{
			\draw (\i,-0.1)--(\i,0.1);
		}
		\draw[very thick] (0,0)--(2.4,0);
		\draw[fill=black] (2.355257,-0.1) rectangle (2.4,0.1);
		\draw (0,-0.1) node[below] {$0$};
		\draw (1,-0.1) node[below] {$1$};
		\draw (1.618033989,-0.1) node[below] {$\varphi$};
		\draw (2,-0.1) node[below] {$2$};
		\draw (2.205569407,-0.1) node[below] {$\kappa$};
		\draw (2.2804132035,-0.1) node[below] {?};
	\end{tikzpicture}
\end{center}
\end{footnotesize}
\caption{The set of all growth rates of permutation classes, as presently known.}
\label{fig-set-of-growth-rates}
\end{figure}
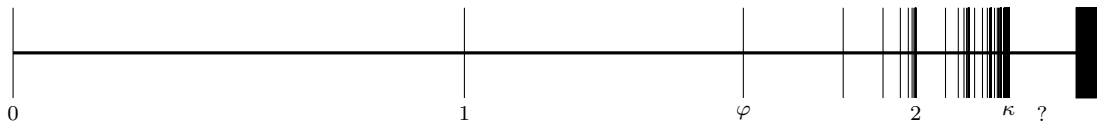

\begin{theorem}[Vatter~\cite{vatter:small-permutati:}]
\label{thm-spc-gr}
The sub-$\kappa$ growth rates of permutation classes consist precisely of $0$, $1$, $2$, and roots of the families of polynomials (for all nonnegative $k$ and $\ell$)
\begin{itemize}
\item $x^{k+1}-2x^k+1$ (the sub-$2$ growth rates, identified by Kaiser and Klazar~\cite{kaiser:on-growth-rates:}),
\item $(x^3-2x^2-1)x^{k+\ell}+x^\ell+1$,
\item $(x^3-2x^2-1)x^k+1$ (accumulation points of growth rates which themselves accumulate at $\kappa$),
\item $x^4-x^3-x^2-2x-3$, $x^5-x^4-x^3-2x^2-3x-1$, $x^3-x^2-x-3$, and $x^4-x^3-x^2-3x-1$ .
\end{itemize}
\end{theorem}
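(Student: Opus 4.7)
The plan is to combine the Kaiser–Klazar analysis of sub-$2$ growth rates with the theory of generalized grid classes from Section~\ref{subsec-gen-grid} and the atomicity machinery of Section~\ref{subsec-atomicity}. First, I would establish that every permutation class $\C$ with $\gr(\C)<\kappa$ is well-partially-ordered. The point is that the known infinite antichains in permutations (the split-end path antichain of Figure~\ref{fig-infinite-antichain}, the increasing-oscillation antichain underlying Figure~\ref{fig-inc-inc-basis}, and their relatives constructed via pin sequences) all force growth rates of at least $\kappa$ on their downward closures, so an infinite antichain inside a sub-$\kappa$ class would already violate the hypothesis. With wpo in hand, Proposition~\ref{prop-wpo-atomic-union} writes $\C$ as a finite union of atomic classes and Proposition~\ref{prop-wpo-atomic-gr} reduces the problem to determining growth rates of sub-$\kappa$ atomic classes.

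Second, I would prove a structure theorem saying that every sub-$\kappa$ atomic class is contained in a geometric (and in fact generalized) grid class $\Grid(M)$ whose cell graph has one of a short list of admissible shapes: a monotone path, a monotone path with a single pendant cell, or one of finitely many small exceptional configurations, possibly with a bounded amount of substitutional ``fringe'' permitted in each cell. The Kaiser–Klazar argument already packs sub-$2$ atomic classes into a single row or column of monotone cells; the extra headroom up to $\kappa$ allows bending this row into an L-shape, attaching a short pendant, or allowing at most one cell to carry a separable (rather than monotone) filling. The atomic hypothesis prevents the class from splitting into disjoint grid pieces, while the growth rate bound prevents any individual cell or attachment from contributing growth rate exceeding $\kappa$.

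Third, for each admissible skeleton I would compute the generating function directly using the substitution decomposition (Section~\ref{subsec-subst-decomp}) together with a transfer-matrix argument along the path of cells. A straight monotone path of length $k$ reproduces the Kaiser–Klazar classes with growth rates given by roots of $x^{k+1}-2x^k+1$. Adding a pendant of length $\ell$ produces the two-parameter family whose growth rates are roots of $(x^3-2x^2-1)x^{k+\ell}+x^\ell+1$; letting $\ell\to\infty$ produces the one-parameter family of accumulation points given by roots of $(x^3-2x^2-1)x^k+1$, which in turn accumulate at $\kappa$. The four remaining polynomials come from the handful of exceptional skeletons that are too small to be absorbed into the parameterized families but whose growth rates nevertheless lie below $\kappa$, and are verified by direct computation of the rational generating function in each case.

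The main obstacle will be the structure theorem in the second step: characterizing exactly which cell graphs $M$ give $\gr(\Grid(M))<\kappa$. This requires combining the geometric grid class machinery (so that cells can genuinely be enumerated as independent monotone blocks up to boundary effects) with a careful lower-bound argument showing that any skeleton outside the admissible list either supports a construction of density at least $\kappa$ or contains a small sub-skeleton whose grid class growth rate is already known to exceed $\kappa$. Once this dichotomy is in place, the remaining work — enumerating the finite list of admissible configurations and extracting the dominant root of each generating function — is essentially bookkeeping, though care is needed to verify that the three nested accumulation phenomena (the first family accumulating at $2$, the second at the roots of the third, and the third at $\kappa$) exhaust all growth rates in the interval and no value is missed.
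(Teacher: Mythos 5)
There are two genuine gaps, and they are at the heart of the argument rather than at its periphery. First, your opening step --- that every class of growth rate below $\kappa$ is wpo because ``the known infinite antichains'' all have downward closures of growth rate at least $\kappa$ --- is circular. Nothing limits a hypothetical sub-$\kappa$ class to containing only antichains built from the oscillation or split-end-path motifs; infinite antichains have never been classified, and ruling them out below $\kappa$ is precisely one of the hard outputs of the theory, not an input. In the paper this is obtained only after the gridding machinery is in place: one shows every sub-$\kappa$ class is $\G_{\kappa-\epsilon}$-griddable (Proposition~\ref{prop-G-gamma}), that $\G_{\kappa-\epsilon}$ sits inside a bounded-depth inflation of $\tilde{\mathcal{O}}^{k}$ and is therefore strongly rational (Proposition~\ref{prop-cell-class-strong-rat}, via Theorem~\ref{thm-ggc-inflations}), and only then does wpo (indeed strong rationality, Theorem~\ref{thm-spc-strong-rat}) of small classes follow. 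So you cannot invoke Propositions~\ref{prop-wpo-atomic-union} and \ref{prop-wpo-atomic-gr} at the outset without first building this scaffolding.

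Second, your proposed structure theorem and the source you assign to the polynomial families are wrong. The gridding one actually gets (Theorem~\ref{thm-spc-gridding}) has connected components that are single cells or pairs of monotone cells --- not monotone paths with pendants --- and the reduction is then an iterated descent (grid, restrict to a component, pass to an atomic subclass, regrid by a grid irreducible subclass, using the descending chain condition) terminating in a class that is atomic and grid irreducible, hence sum or skew closed (Theorem~\ref{gr-atomic-grid-irreducible}). The listed polynomials then arise from analyzing which counting sequences of sum indecomposable permutations can occur in sum closed classes of growth rate below $\kappa$, i.e.\ from generating functions of the form $1/(1-s)$, not from transfer matrices along grid skeletons. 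Concretely, your identification of the sub-$2$ family $x^{k+1}-2x^k+1$ with monotone-path grid classes conflicts with Bevan's Theorem~\ref{thm-mono-grid-gr}: a monotone grid class whose row-column graph is a path has growth rate $4\cos^2(\pi/m)$ for some $m$, a set accumulating at $4$, whereas the sub-$2$ rates come from the sum closed classes $\bigoplus\Sub(k\cdots 21)$ counted by $k$-generalized Fibonacci numbers. The same mismatch affects the pendant families $(x^3-2x^2-1)x^{k+\ell}+x^\ell+1$; these encode constrained sum indecomposable sequences (the factor $x^3-2x^2-1$ being the minimal polynomial of $\kappa$), not L-shaped or pendant skeletons. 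Without the reduction to sum closed classes, a direct enumeration of grid skeletons cannot see, let alone exhaust, these families, so the completeness of your list would not follow.
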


Theorem~\ref{thm-spc-gr} shows that $\kappa$ distinguishes itself on the number-line of growth rates by being the first accumulation point of accumulation points. It is also the least growth rate at which one finds uncountably many permutation classes; indeed, Klazar~\cite{klazar:on-the-least-ex:} originally defined $\kappa$ as
$$
\kappa=\inf \{\gamma \st \mbox{uncountably many classes $\C$ satisfy $\ugr(\C)<\gamma$}\}.
$$
One direction follows without too much work---the growth rate of the downward closure of the infinite antichain from Figure~\ref{fig-infinite-antichain} is equal to $\kappa$, so there are uncountably many classes of growth rate $\kappa$. The other direction will be proved in Section~\ref{subsec-spc}. The phase transition at $\kappa$ also has ramifications for exact enumeration:

\begin{theorem}[Albert, Ru\v{s}kuc, and Vatter~\cite{albert:inflations-of-g:}]
\label{thm-spc-strong-rat}
Every permutation class with growth rate less than $\kappa$ is strongly rational.
\end{theorem}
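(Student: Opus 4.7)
The plan has three steps, with all the real work hidden in the last.

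First, I would show that every class $\C$ with $\ugr(\C) < \kappa$ is wpo. Fix $\gamma$ with $\ugr(\C) < \gamma < \kappa$; by the defining property of $\kappa$ recalled above (whose ``countable'' direction is established elsewhere in Section~\ref{subsec-spc}), only countably many permutation classes $\D$ satisfy $\ugr(\D) < \gamma$. Every subclass of $\C$ belongs to this collection, so $\C$ has only countably many subclasses. An infinite antichain in $\C$ would produce $2^{\aleph_0}$ distinct subclasses (one downward closure per subset of the antichain), a contradiction. Hence $\C$ is wpo. Since subclasses of $\C$ are themselves sub-$\kappa$, proving the theorem reduces to showing that every sub-$\kappa$ class has a rational generating function; strong rationality will then follow by applying the conclusion to each subclass.

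Second, I would argue this by well-founded induction on sub-$\kappa$ classes ordered by inclusion, the descending chain condition being supplied by Proposition~\ref{prop-wpo-subclasses-dcc} applied to the wpo class $\C$. If $\C$ is not atomic, then $\C = \D \cup \E$ for proper subclasses $\D,\E\subsetneq\C$; all of $\D$, $\E$, and $\D\cap\E$ are sub-$\kappa$ classes strictly contained in $\C$, so by the inductive hypothesis each has a rational generating function, and rationality for $\C$ follows from the identity $|\C_n| = |\D_n| + |\E_n| - |(\D\cap\E)_n|$.

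Third---the base case and the heart of the proof---I must show that every \emph{atomic} class with growth rate below $\kappa$ has a rational generating function. Here I would appeal to the explicit structure theorem underlying Theorem~\ref{thm-spc-gr}: every atomic sub-$\kappa$ class lies inside a short list of shapes arising as generalized grid classes (or suitable inflations thereof) of the constrained form suggested by the polynomial families listed in that theorem, and rationality for such grid classes follows from the transfer-matrix/automaton description of generalized grid classes developed in Sections~\ref{subsec-geom-grid} and~\ref{subsec-gen-grid}. The main obstacle is precisely this last step: carrying out the case analysis that forces each atomic sub-$\kappa$ class into one of the restricted grid shapes---ruling out every richer configuration through growth-rate calculations---is exactly the technical heart of~\cite{vatter:small-permutati:}, while everything above it is essentially formal.
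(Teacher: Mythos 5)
Your plan has a genuine gap, and in fact two of its three steps lean on results that are logically \emph{downstream} of the theorem you are trying to prove. First, the ``countable'' direction of Klazar's characterization of $\kappa$ --- that for $\gamma<\kappa$ only countably many classes satisfy $\ugr(\C)<\gamma$ --- is exactly what the paper says ``will be proved in Section~\ref{subsec-spc}''; in this development it is a \emph{consequence} of the structure theory (indeed it follows from Theorem~\ref{thm-spc-strong-rat} itself: strongly rational $\Rightarrow$ wpo $\Rightarrow$ every subclass finitely based $\Rightarrow$ countably many subclasses), so you cannot use it as an input to deduce wpo. Moreover, even granting wpo, that property buys essentially nothing toward rationality: the separable permutations are wpo with an algebraic, non-rational generating function, so the reduction to atomic classes via Propositions~\ref{prop-wpo-subclasses-dcc} and~\ref{prop-wpo-atomic-union} does not bring you closer to the base case --- atomicity plays no role in the paper's rationality argument (it is used only for the growth-rate reduction, Theorem~\ref{gr-atomic-grid-irreducible}).

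Second, the step you defer to is the entire theorem, and the sources you cite for it cannot supply it. Theorem~\ref{thm-spc-gr} is a statement about the \emph{set of growth rates}, not about generating functions, and its proof in the paper itself uses Proposition~\ref{prop-cell-class-strong-rat} (strong rationality of the cell class), so invoking ``the structure theorem underlying Theorem~\ref{thm-spc-gr}'' here is circular. There is also no transfer-matrix result giving rationality of \emph{generalized} grid classes: a generalized grid class is only as nice as its cell classes, and rationality in this paper comes solely from geometric grid classes (Theorem~\ref{thm-ggc-strong-rat}) and from inflations $\C[\U]$ of geometrically griddable classes by strongly rational classes (Theorem~\ref{thm-ggc-inflations}). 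The argument that is actually needed, and that is missing from your outline, runs: (i) every small class is $\G_{\kappa-\epsilon}$-griddable (Proposition~\ref{prop-G-gamma}); (ii) $\G_{\kappa-\epsilon}\subseteq\langle\mathcal{O}\rangle$ with bounded substitution depth and only short oscillations, which allows the gridding to be ``chopped'' (Theorem~\ref{thm-spc-gridding}) so that no two nonempty cells interact, whence the gridding matrix is a forest and $\Grid=\Geom$ (Proposition~\ref{prop-forests-are-geoms}); (iii) $\G_{\kappa-\epsilon}$ is strongly rational because it sits inside a bounded iterated inflation of the geometrically griddable class $\tilde{\mathcal{O}}^{k}$ (Proposition~\ref{prop-cell-class-strong-rat}); and (iv) $\C\subseteq\Geom(M)[\G_{\kappa-\epsilon}]$ is then strongly rational by Theorem~\ref{thm-ggc-inflations}, which handles all subclasses at once with no induction. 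Until your proposal contains some version of (i)--(iv), it is a reduction of the theorem to itself.
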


\index{small permutation class}

For this variety of reasons we call classes of growth rate less than $\kappa$ \emph{small}. At the other end of the spectrum, Albert and Linton~\cite{albert:growing-at-a-pe:} constructed an uncountable set of growth rates of permutation classes, and conjectured that at some point the set of growth rates of permutation classes contains all subsequent real numbers. Let us define
$$
\lambda=\inf\{\gamma\st\mbox{every real number $x>\gamma$ is the growth rate of a permutation class}\}.
$$
By refining their techniques, Vatter~\cite{vatter:permutation-cla} showed that $\lambda$ exists and is less than $2.49$. Bevan~\cite{bevan:intervals-of-pe:} has since lowered this bound, showing that $\lambda<2.36$. Together, this result and Theorem~\ref{thm-spc-gr} establish the number-line of growth rates shown in Figure~\ref{fig-set-of-growth-rates}.

\begin{figure}
\begin{footnotesize}
\begin{center}
	\begin{tikzpicture}[scale=0.2, baseline=(current bounding box.center)]
		\draw [lightgray, ultra thick, line cap=round] (0,0) rectangle (12,12);
		\draw[fill=black] (1,2) circle (10pt);
		\draw[fill=black] (2,3) circle (10pt);
		\draw[fill=black] (3,5) circle (10pt);
		\draw[fill=black] (4,1) circle (10pt);
		\draw[fill=black] (5,7) circle (10pt);
		\draw[fill=black] (6,4) circle (10pt);
		\draw[fill=black] (7,9) circle (10pt);
		\draw[fill=black] (8,6) circle (10pt);
		\draw[fill=black] (9,10) circle (10pt);
		\draw[fill=black] (10,11) circle (10pt);
		\draw[fill=black] (11,8) circle (10pt);
		\draw[rotate around={-45:(1.5,2.5)}] (1.5,2.5) ellipse (25pt and 50pt);
		\draw[rotate around={-45:(9.5,10.5)}] (9.5,10.5) ellipse (25pt and 50pt);
	\end{tikzpicture}
\quad\quad
	\begin{tikzpicture}[scale=0.2, baseline=(current bounding box.center)]
		\draw [lightgray, ultra thick, line cap=round] (0,0) rectangle (11,11);
		\draw[fill=black] (1,2) circle (10pt);
		\draw[fill=black] (2,3) circle (10pt);
		\draw[fill=black] (3,5) circle (10pt);
		\draw[fill=black] (4,1) circle (10pt);
		\draw[fill=black] (5,7) circle (10pt);
		\draw[fill=black] (6,4) circle (10pt);
		\draw[fill=black] (7,10) circle (10pt);
		\draw[fill=black] (8,6) circle (10pt);
		\draw[fill=black] (9,8) circle (10pt);
		\draw[fill=black] (10,9) circle (10pt);
		\draw[rotate around={-45:(1.5,2.5)}] (1.5,2.5) ellipse (25pt and 50pt);
		\draw[rotate around={-45:(9.5,8.5)}] (9.5,8.5) ellipse (25pt and 50pt);
	\end{tikzpicture}
\quad\quad
	\begin{tikzpicture}[scale=0.2, baseline=(current bounding box.center)]
		\draw [lightgray, ultra thick, line cap=round] (0,0) rectangle (12,12);
		\draw[fill=black] (1,3) circle (10pt);
		\draw[fill=black] (2,2) circle (10pt);
		\draw[fill=black] (3,5) circle (10pt);
		\draw[fill=black] (4,1) circle (10pt);
		\draw[fill=black] (5,7) circle (10pt);
		\draw[fill=black] (6,4) circle (10pt);
		\draw[fill=black] (7,9) circle (10pt);
		\draw[fill=black] (8,6) circle (10pt);
		\draw[fill=black] (9,10) circle (10pt);
		\draw[fill=black] (10,11) circle (10pt);
		\draw[fill=black] (11,8) circle (10pt);
		\draw[rotate around={45:(1.5,2.5)}] (1.5,2.5) ellipse (25pt and 50pt);
		\draw[rotate around={-45:(9.5,10.5)}] (9.5,10.5) ellipse (25pt and 50pt);
	\end{tikzpicture}
\quad\quad
	\begin{tikzpicture}[scale=0.2, baseline=(current bounding box.center)]
		\draw [lightgray, ultra thick, line cap=round] (0,0) rectangle (11,11);
		\draw[fill=black] (1,3) circle (10pt);
		\draw[fill=black] (2,2) circle (10pt);
		\draw[fill=black] (3,5) circle (10pt);
		\draw[fill=black] (4,1) circle (10pt);
		\draw[fill=black] (5,7) circle (10pt);
		\draw[fill=black] (6,4) circle (10pt);
		\draw[fill=black] (7,10) circle (10pt);
		\draw[fill=black] (8,6) circle (10pt);
		\draw[fill=black] (9,8) circle (10pt);
		\draw[fill=black] (10,9) circle (10pt);
		\draw[rotate around={45:(1.5,2.5)}] (1.5,2.5) ellipse (25pt and 50pt);
		\draw[rotate around={-45:(9.5,8.5)}] (9.5,8.5) ellipse (25pt and 50pt);
	\end{tikzpicture}
\end{center}
\end{footnotesize}
\caption{An infinite antichain used to make an interval of growth rates.}
\label{fig-gr-interval}
\end{figure}
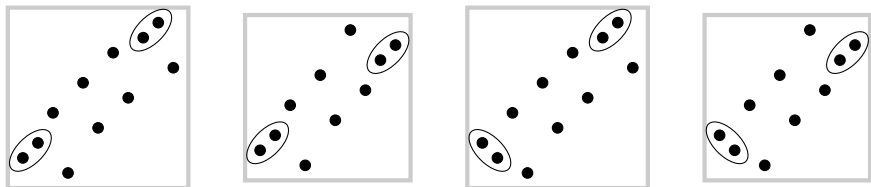

The results about $\lambda$ have been proved by constructing families of sum closed classes with a great deal of flexibility in their growth rates. For example, one of the constructions from \cite{vatter:permutation-cla} features the infinite antichain shown in Figure~\ref{fig-gr-interval}. Let $A$ denote the members of this antichain of length at least $5$. Clearly $A$ contains two permutations of each of these lengths, both of which are sum indecomposable. We can compute the generating function for $\bigoplus\Sub(A)$ if we know how many sum indecomposable permutations of each length are properly contained in a member of $A$. This sequence can be shown to be
$$
(r_n)=1,1,3,5,6,6,6,6,\dots
$$
(starting at $n=1$). Therefore the sequence counting sum indecomposable permutations in $\bigoplus\Sub(A)$ is
$$
(t_n)=1,1,3,5,8,8,8,8,\dots.
$$
If we want to construct a subclass of $\bigoplus\Sub(A)$ we may choose any subset of $A$ together with all of the sum indecomposable permutations properly contained the members of $A$. Thus for any sequence $(s_n)$ which satisfies $r_n\le s_n\le t_n$ for all $n$ we can construct a permutation class with generating function $1/(1-\sum s_nx^n)$. The growth rates of such classes can be shown to consist of the entire interval of real numbers between approximately $2.49$ and $2.51$.

\subsection{Monotone grid classes}
\label{subsec-mono-grid}

As mentioned in Section~\ref{subsec-merge}, Stankova~\cite{stankova:forbidden-subse:} computed the basis of the skew-merged permutations,
$$
\Av(21)\odot\Av(12)=\Av(2143, 3412).
$$
This class is the prototypical example of a monotone grid class, although we need some notation to introduce these in general.

The definition of monotone grid classes is quite similar to the definition of interval minors from Section~\ref{subsec-int-minor}, although here we are subdividing the plot of a permutation, rather than a matrix. Let $\pi$ be a permutation of length $n$ and choose intervals $X,Y\subseteq [1,n]$. We write $\pi(X\times Y)$ to denote the permutation that is order isomorphic to those entries with indices from $X$ and values from $Y$.

\begin{figure}
\begin{footnotesize}
\begin{center}
	\begin{tikzpicture}[scale=0.2, baseline=(current bounding box.center)]
		\draw[lightgray, ultra thick] (0,6.5)--(12,6.5);
		\draw[lightgray, ultra thick] (3.5,0)--(3.5,12);
		\draw[lightgray, ultra thick] (9.5,0)--(9.5,12);
		\draw[lightgray, ultra thick, line cap=round] (0,0) rectangle (12,12);
		\draw[fill=black] (1,2) circle (10pt);
		\draw[fill=black] (2,4) circle (10pt);
		\draw[fill=black] (3,5) circle (10pt);
		\draw[fill=black] (4,6) circle (10pt);
		\draw[fill=black] (5,11) circle (10pt);
		\draw[fill=black] (6,10) circle (10pt);
		\draw[fill=black] (7,3) circle (10pt);
		\draw[fill=black] (8,9) circle (10pt);
		\draw[fill=black] (9,1) circle (10pt);
		\draw[fill=black] (10,8) circle (10pt);
		\draw[fill=black] (11,7) circle (10pt);
	\end{tikzpicture}
\end{center}
\end{footnotesize}
\caption[A monotone gridding of a permutation.]{A $\fnmatrix{rrr}{0&-1&-1\\1&-1&0}$-gridding of a permutation. Here the column divisions are given by $c_1=1$, $c_2=4$, $c_3=10$, and $c_4=12$, while the row divisions are $r_1=1$, $r_2=7$, and $r_3=12$.}
\label{fig-mono-gridding}
\end{figure}
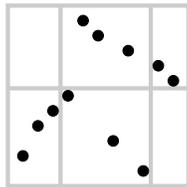

Given a $t\times u$ matrix $M$ consisting of $0$, $1$, and $-1$ entries, an $M$-gridding of the permutation $\pi$ of length $n$ is a choice of column divisions $1=c_1\le\cdots\le c_{t+1}=n+1$ and row divisions $1=r_1\le\cdots\le r_{u+1}=n+1$ such that for all $i$ and $j$, $\pi([c_i,c_{i+1})\times [r_j,r_{j+1}))$ is increasing if $M(i,j)=1$, decreasing if $M(i,j)=-1$, and empty if $M(i,j)=0$. Figure~\ref{fig-mono-gridding} shows an example. The class of all permutations possessing $M$-griddings is called the \emph{monotone grid class} of $M$, and denoted by $\Grid(M)$.

\index{monotone grid class}
\index{grid class}

Our first major result characterizes the classes contained in $\Grid(M)$ for some finite $\zpm$ matrix $M$. We call such classes \emph{monotonically griddable} (thus there are both monotone grid classes and monotonically griddable classes, and it is important to be recognizant of this distinction). First we need to establish an alternate characterization of monotone griddability. Recall from Section~\ref{subsec-subst-decomp} that an axes-parallel rectangle is any rectangle of the form $X\times Y$ for intervals $X$ and $Y$. If $R=X\times Y$ is an axes-parallel rectangle, we let $\pi(R)$ denote the permutation which is order isomorphic to the entries of $\pi$ lying in $R$, that is, with indices in $X$ and values in $Y$. The axes-parallel rectangle $R$ is then \emph{nonmonotone} for the permutation $\pi$ if $\pi(R)$ is nonmonotone. We further say that the line $L$ \emph{slices} the rectangle $R$ if $L$ intersects the interior of $R$. Finally, a collection $\mathfrak{L}$ of lines and a collection $\mathfrak{R}$ of rectangles, we say that $\mathfrak{L}$ slices $\mathfrak{R}$ if every rectangle in $\mathfrak{R}$ is sliced by some line in $\mathfrak{L}$.

\begin{proposition}
\label{prop-slice-nonmonotone-rects}
The class $\C$ is monotonically griddable if and only if there is a constant $\ell$ such that for every permutation $\pi\in\C$, the collection of axes-parallel nonmonotone rectangles of $\pi$ can be sliced by a collection of $\ell$ vertical or horizontal lines.
\end{proposition}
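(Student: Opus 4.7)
My plan is to prove both directions by translating slicings into monotone griddings and back.

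For the forward implication, I would observe that if $\C\subseteq\Grid(M)$ for some $t\times u$ matrix $M$, then any $\pi\in\C$ comes equipped with an $M$-gridding whose interior column and row divisions provide $t-1$ vertical and $u-1$ horizontal lines. If some nonmonotone axes-parallel rectangle of $\pi$ were unsliced by every one of these lines, it would sit inside a single cell of the gridding; but each cell contains either an increasing, decreasing, or empty sequence of entries, so $\pi(R)$ would be monotone---a contradiction. Thus $\ell=t+u-2$ suffices, and this direction is essentially immediate.

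For the reverse implication, my plan is: given $\pi\in\C$ equipped with a slicing by $a$ vertical and $b$ horizontal lines with $a+b\le\ell$ (which I may assume to lie between consecutive entries of $\pi$), observe that these lines partition the plot of $\pi$ into at most $(a+1)(b+1)\le(\ell+1)^2$ cells. Each cell is itself an axes-parallel rectangle whose interior is crossed by none of the $\ell$ lines, so by hypothesis it must be monotone. This immediately yields an $M_\pi$-gridding of $\pi$ by a $\zpm$ matrix $M_\pi$ of dimensions at most $(\ell+1)\times(\ell+1)$.

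The main obstacle is that the matrix $M_\pi$ produced depends on $\pi$, whereas monotone griddability requires a single matrix. The fix is to observe that the collection $\mathcal{M}$ of all $\zpm$ matrices of dimensions at most $(\ell+1)\times(\ell+1)$ is finite (it has at most $3^{(\ell+1)^2}$ elements), and to assemble all of them into one large matrix $M$ by placing them as diagonal blocks with zeros off-diagonal. Any $N$-gridding of a permutation extends to an $M$-gridding by placing every entry inside the diagonal block corresponding to $N$---that is, by collapsing all column and row divisions outside that block---so that the off-block cells have zero width or height and are automatically empty, consistent with their zero entries in $M$. Hence $\Grid(N)\subseteq\Grid(M)$ for every $N\in\mathcal{M}$, so $\C\subseteq\Grid(M)$ and $\C$ is monotonically griddable.
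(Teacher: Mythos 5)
Your proof is correct and follows essentially the same route as the paper: the gridding lines themselves serve as the slicing lines in one direction, and in the other direction the slicing lines give a bounded-size monotone gridding for each $\pi$, after which one passes to a single matrix by combining all finitely many bounded-size $\zpm$ matrices as diagonal blocks (the paper's $M^{\oplus}$, the direct sum). You merely spell out the inclusion $\Grid(N)\subseteq\Grid(M^{\oplus})$ that the paper leaves implicit.
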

\begin{proof}
First, if $\C\subseteq\Grid(M)$ for a $\zpm$ matrix $M$ of size $t\times u$, then every $\pi\in\C$ has an $M$-gridding with at most $t+u-2$ vertical and horizontal lines and by definition these lines must slice every nonmonotone rectangle of $\pi$.

For the other direction, suppose that there is a constant $\ell$ such that for every $\pi\in\C$ there is a collection $\mathfrak{L}_\pi$ of vertical and horizontal lines that slice every nonmonotone rectangle of $\pi$. These lines define a monotone gridding of $\pi$ of size $t\times u$ with $t+u\le \ell+2$, i.e., they show that $\pi\in\Grid(M)$ for some $\zpm$ matrix of this size. There are only finitely many such matrices, so letting $M^{\oplus}$ denote the direct sum of all such matrices we see that $\C\subseteq\Grid(M^{\oplus})$.
\end{proof}

We can now state and prove the characterization of the monotonically griddable classes. One direction is clear. Using the notation
$$
\oplus^a 21=\underbrace{21\oplus\cdots\oplus 21}_{\mbox{\footnotesize $a$ copies of $21$}},
$$
it follows that if $\oplus^a 21\in\Grid(M)$ then $M$ must have at least $a$ rows and $a$ columns. Analogously, $\ominus^b 12$ cannot lie in the grid class of a matrix that is smaller than $b\times b$. In other words, if $\C$ is monotonically griddable, it must avoid $\oplus^a 21$ and $\ominus^b 12$ for some values of $a$ and $b$. As we prove below, this is also a sufficient condition for monotone griddability. The proof we give relies on Stankova's computation of the basis of the skew-merged permutations.

\begin{theorem}[Huczynska and Vatter~\cite{huczynska:grid-classes-an:}]
\label{thm-mono-griddable}
A permutation class is monotonically griddable if and only if it does not contain arbitrarily long sums of $21$ or skew sums of $12$.
\end{theorem}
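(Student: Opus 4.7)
The forward direction is immediate. Suppose $\C \subseteq \Grid(M)$ for a $\zpm$ matrix $M$ of dimensions $s \times t$. If $\oplus^a 21 \in \C$, then in any $M$-gridding of this permutation the $a$ copies of $21$ must occupy distinct cells of the gridding arranged strictly increasing in both coordinates, since successive summands are separated both positionally and by value. This forces $a \le \min(s,t)$; the argument for $\ominus^b 12$ is symmetric.

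For the backward direction, assume $\C$ avoids $\oplus^a 21$ and $\ominus^b 12$ for fixed $a, b$. By Proposition~\ref{prop-slice-nonmonotone-rects}, it suffices to produce a constant $\ell = \ell(a, b)$ so that every $\pi \in \C$ admits a family of at most $\ell$ horizontal and vertical lines slicing each of its nonmonotone axes-parallel rectangles. I would induct on $a + b$, with the base cases $a = 1$ or $b = 1$ trivial: $\pi$ is then monotone and contains no nonmonotone rectangles at all.

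For the inductive step, take $\pi \in \C$ with $a, b \ge 2$. The plan is to carve $\pi$ into a bounded number of vertical strips, each contained in a strictly smaller class, using a greedy leftmost procedure: the first strip is the longest prefix of $\pi$ lying in $\Av(\oplus^{a-1} 21, \ominus^b 12)$, after which one iterates on the remaining suffix. Each strip is monotonically griddable by induction. To bound the number of strips, I would argue that every break point must be witnessed by a fresh copy of $\oplus^{a-1} 21$ terminating at that boundary, and since $\pi$ itself avoids $\oplus^a 21$, these witnesses cannot overlap too much, yielding a bound of $O(a)$ strips. Concatenating the inductive griddings with one extra vertical line between each pair of strips then produces a gridding of $\pi$ of size $f(a, b)$ for some function $f$.

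The main obstacle is showing rigorously that the greedy process terminates in boundedly many steps: the naive argument that each break point contributes a distinct copy of $\oplus^{a-1} 21$ is subtle, because consecutive boundaries may share witnessing entries. One safeguard is to run a symmetric horizontal process based on $\ominus^{b-1} 12$ in parallel, producing horizontal cuts; once both greedy processes terminate, every resulting cell avoids both $\oplus^2 21$ and $\ominus^2 12$ and thus lies in the skew-merged class $\Av(2143, 3412)$, which is itself monotonically griddable by a fixed small matrix (using the partition of a skew-merged permutation into an increasing and a decreasing subsequence). The final monotone gridding of $\pi$ is then the union of these cut lines with the fixed griddings of the leaves, whose total count depends only on $a$ and $b$.
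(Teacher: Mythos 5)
Your forward direction is fine, but the backward direction has a genuine gap at exactly the point you flag, and the proposed safeguard does not repair it. A break point between consecutive greedy strips is witnessed by a copy of $\oplus^{a-1}21$ (or of $\ominus^b 12$, a case you ignore) inside that strip plus one new entry, but witnesses attached to different break points can sit in skew position relative to one another, so they never assemble into a copy of $\oplus^a 21$; avoidance of $\oplus^a 21$ therefore gives no bound on the number of strips. Concretely, take $a=b=2$ and $\pi=n\cdots 21$, which lies in $\Av(\oplus^2 21,\ominus^2 12)=\Av(2143,3412)$: your strips must avoid $21$, hence are increasing, hence are singletons, and the greedy process produces $n$ vertical cuts. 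Running the symmetric horizontal process in parallel adds horizontal cuts but does nothing to reduce those $n$ vertical ones, and the claim that the resulting cells avoid $\oplus^2 21$ and $\ominus^2 12$ reverses the containment order: a cell avoiding $\oplus^{a-1}21$ with $a-1\ge 2$ need not avoid the shorter pattern $\oplus^2 21$. (Relatedly, the base cases $a=1$ or $b=1$ alone cannot support the induction; the case $a=b=2$ needs separate treatment.)

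The paper's proof avoids counting strips altogether. Still inducting on $a+b$ and reducing to Proposition~\ref{prop-slice-nonmonotone-rects}, it takes $a=b=2$ as a base case via Stankova's basis $\Av(2143,3412)$ of the skew-merged permutations, giving $\ell(2,2)=2$. In the inductive step (say $a\ge 3$, $b\ge 2$), either $\pi$ avoids $2143$ and induction applies directly, or one fixes a \emph{single} copy of $2143$ in $\pi$ and uses the four lines bordering that copy to partition the entries into four regions: the two regions aligned with the copy avoid $\oplus^{a-1}21$, the two others avoid $\ominus^{b-1}12$, and every nonmonotone rectangle is either sliced by one of the four lines or lies entirely inside a region. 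This yields the recursion $\ell(a,b)=2\ell(a-1,b)+2\ell(a,b-1)+4$ and closes the argument. If you want to salvage your outline, this ``one fixed copy plus four bounding lines'' device is what should replace the greedy strip decomposition.
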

\begin{proof}
As we have already observed the other direction of the proof, let $\C$ be a permutation class avoiding $\oplus^a 21$ and $\ominus^b 12$. We prove by induction on $a+b$ that there is a function $\ell(a,b)$ such that given any permutation $\pi\in\C$, the nonmonotone rectangles of $\pi$ can be sliced by a collection of $\ell(a,b)$ vertical or horizontal lines.

If either $a$ or $b$ equals $1$ then $\C$ consists solely of monotone permutations and thus we may set $\ell(1,b)=\ell(a,1)=0$. The next case to consider is $a=b=2$, meaning that $\C$ avoids both $2143$ and $3412$. Thus $\C$ is a subclass of the skew-merged permutations, and so we may set $\ell(2,2)=2$.

\begin{figure}
\begin{footnotesize}
\begin{center}
	\begin{tikzpicture}[scale=0.2, baseline=(current bounding box.center)]
		\filldraw[fill=lightgray, draw=white] (4,4) rectangle (10,10);
		\draw (0,4)--(10,4);
		\draw (4,0)--(4,10);
		\draw [lightgray, ultra thick, line cap=round] (0,0) rectangle (10,10);
		\draw[fill=black] (2,4) circle (10pt);
		\draw[fill=black] (4,2) circle (10pt);
		\draw[fill=black] (6,8) circle (10pt);
		\draw[fill=black] (8,6) circle (10pt);
		\node[below] at (5,0) {(I)};
	\end{tikzpicture}
\quad\quad
	\begin{tikzpicture}[scale=0.2, baseline=(current bounding box.center)]
		\filldraw[fill=lightgray, draw=white] (4,6) rectangle (0,10);
		\draw (0,6)--(10,6);
		\draw (4,0)--(4,10);
		\draw [lightgray, ultra thick, line cap=round] (0,0) rectangle (10,10);
		\draw[fill=black] (2,4) circle (10pt);
		\draw[fill=black] (4,2) circle (10pt);
		\draw[fill=black] (6,8) circle (10pt);
		\draw[fill=black] (8,6) circle (10pt);
		\node[below] at (5,0) {(II)};
	\end{tikzpicture}
\quad\quad
	\begin{tikzpicture}[scale=0.2, baseline=(current bounding box.center)]
		\filldraw[fill=lightgray, draw=white] (0,0) rectangle (6,6);
		\draw (0,6)--(10,6);
		\draw (6,0)--(6,10);
		\draw [lightgray, ultra thick, line cap=round] (0,0) rectangle (10,10);
		\draw[fill=black] (2,4) circle (10pt);
		\draw[fill=black] (4,2) circle (10pt);
		\draw[fill=black] (6,8) circle (10pt);
		\draw[fill=black] (8,6) circle (10pt);
		\node[below] at (5,0) {(III)};
	\end{tikzpicture}
\quad\quad
	\begin{tikzpicture}[scale=0.2, baseline=(current bounding box.center)]
		\filldraw[fill=lightgray, draw=white] (6,4) rectangle (10,0);
		\draw (0,4)--(10,4);
		\draw (6,0)--(6,10);
		\draw[lightgray, ultra thick, line cap=round] (0,0) rectangle (10,10);
		\draw[fill=black] (2,4) circle (10pt);
		\draw[fill=black] (4,2) circle (10pt);
		\draw[fill=black] (6,8) circle (10pt);
		\draw[fill=black] (8,6) circle (10pt);
		\node[below] at (5,0) {(IV)};
	\end{tikzpicture}
\quad\quad
\end{center}
\end{footnotesize}
\caption{The four regions in the proof of Theorem~\ref{thm-mono-griddable}.}
\label{fig-mono-griddable}
\end{figure}
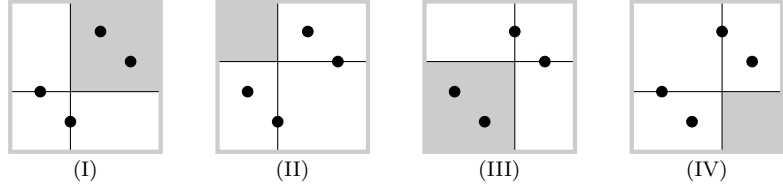

For the inductive step, we may assume by symmetry that $a\ge 3$ and $b\ge 2$. Take an arbitrary $\pi\in\C$. If $\pi$ avoids $\oplus^2 21=2143$ then its nonmonotone rectangles can be sliced by $\ell(2,b)$ vertical and horizontal lines by induction. Thus we may assume that $\pi$ contains $2143$. We fix a specific copy of $2143$ in $\pi$ and then partition the entries of $\pi$ into the four regions shown in Figure~\ref{fig-mono-griddable}. Clearly every nonmonotone rectangle either lies completely in one of these regions or is sliced by one of the four lines bordering these regions. Now notice that the entries in regions (I) and (III) avoid $\oplus^{a-1} 21$, while those in regions (II) and (IV) avoid $\ominus^{b-1} 12$. This shows that we can set
$$
\ell(a,b)=2\ell(a-1,b)+2\ell(a,b-1)+4,
$$
completing the proof.
\end{proof}

This result allows us to sketch the proof of the Fibonacci Dichotomy (Theorem~\ref{thm-fib-dichotomy}). Suppose that $\C$ is a permutation class and $|\C_n|$ is less than the $n$th Fibonacci number. The classes $\bigoplus\{1,21\}$ and $\bigominus\{1,12\}$ are both counted by the Fibonacci numbers. Therefore $\C$ cannot contain either of these classes, and thus is monotonically griddable by Theorem~\ref{thm-mono-griddable}.

Next we recall the definition of alternations, first encountered in Section~\ref{subsec-subst-decomp}. An alternation is a permutation whose plot can be divided into two parts, by a single horizontal or vertical line, so that for every pair of entries from the same part there is an entry from the other part that separates them. A parallel alternation is one in which the two halves of the alternation form monotone subsequences, either both increasing or both decreasing, while for a wedge alternation one of these is increasing and the other is decreasing. It follows from the Erd\H{o}s-Szekeres Theorem that every sufficiently long alternation contains a long parallel or wedge alternation. If $\C$ were to contain arbitrarily long alternations of either type, its growth rate would be at least $2$. Thus there is a bound on the length of alternations in $\C$. We now appeal to the following result.

\begin{proposition}[Huczynska and Vatter~\cite{huczynska:grid-classes-an:}]
\label{prop-chop-gridding}
Suppose that $\C$ is monotonically griddable and that the length of alternations in $\C$ is bounded. Then $\C\subseteq\Grid(M)$ for a $\zpm$ matrix $M$ in which no two nonzero entries share a row or column.
\end{proposition}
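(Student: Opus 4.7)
The plan is to refine an arbitrary $M_0$-gridding of each $\pi\in\C$ (where $\C\subseteq\Grid(M_0)$ for some $t\times u$ matrix $M_0$ supplied by monotone griddability) into a finer gridding whose matrix $M$ has no two nonzero cells in the same row or column. The bounded-alternation hypothesis, say alternations in $\C$ have length at most $L$, will be used to control how the entries lying in different cells of a common row or column of $M_0$ can interact positionally (respectively, by value).

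For a column $i$ of $M_0$ with nonzero cells in rows $j_1<\dots<j_s$, first form the ``cell word'' $w_i$ that labels each entry of $\pi$ in column $i$ (read by position) according to which of the rows $j_1,\dots,j_s$ its value lies in. Given any two labels $j_a,j_b$, the restriction of $w_i$ to $\{j_a,j_b\}$ is a word in two letters; choosing one entry per run extracts a subpermutation whose two parts lie on opposite sides of the horizontal line separating rows $j_a$ and $j_b$ and alternate positionally, so this is a parallel alternation when $(i,j_a)$ and $(i,j_b)$ have the same sign in $M_0$ and a wedge alternation otherwise. Either way, its length is at most $L$. A two-step pigeonhole — first on the label contributing the most runs to $w_i$, then on the label most frequently following it — then bounds the total number of runs of $w_i$ by $O(Ls^2)=O(Lu^2)$. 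Inserting column dividers of the gridding of $\pi$ at these run boundaries splits column $i$ into $O(Lu^2)$ sub-columns, each meeting entries from only one cell of $M_0$. Doing this inside every column of $M_0$ produces an intermediate matrix $M_1$, still with $u$ rows, in which each column has at most one nonzero cell.

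The symmetric argument applied to the rows of $M_1$ (now using the ``value word'' $v_j$ along row $j$ that labels each entry by its sub-column of $M_1$, and extracting alternations with a vertical separating line) cuts each row of $M_1$ into boundedly many sub-rows, each meeting entries from only one sub-column of $M_1$. This row refinement can split the single cell inside a sub-column of $M_1$ across several new sub-rows; the remedy uses monotonicity. Because each sub-column of $M_1$ meets only one monotone cell of $M_0$, its entries are a monotone subsequence, so the partition of its values among the new sub-rows induces a partition of its positions into consecutive intervals. Adding column dividers at these positional boundaries refines each sub-column once more; in the resulting matrix $M$, every sub-sub-column lies in a single sub-row by construction, and every sub-row lies in a single sub-column, hence a single sub-sub-column, by the previous step.

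The matrix $M$ has size bounded in terms of $t$, $u$, and $L$, inherits its $\zpm$ entries from $M_0$, and has no two nonzero entries in the same row or column, and every $\pi\in\C$ admits an $M$-gridding by construction. The principal technical obstacle is the pigeonhole that converts an excess of runs in $w_i$ (or $v_j$) into a long alternation: one must verify both the combinatorial lower bound on the length of the extracted alternation in terms of the number of runs and the number $s$ of active cells, and also that the extracted subpermutation is genuinely an alternation of parallel or wedge type, with the correct horizontal or vertical separating line, so that the bounded-alternation hypothesis can be brought to bear.
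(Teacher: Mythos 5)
The survey itself states this proposition without proof (it is quoted from Huczynska and Vatter), so there is no in-paper argument to compare against; judged on its own merits, your outline is correct and is the natural argument: excessive interleaving between two cells of a column (or row) yields, by taking one entry per run, an alternation split by the corresponding horizontal (or vertical) grid line, so the bound $L$ on alternations bounds the number of runs, and your three-stage refinement---slice columns, then rows, then columns once more, using monotonicity of each cell to convert the value partition induced by the new sub-rows into consecutive positional blocks---does produce a gridding whose matrix has at most one nonzero entry in each row and each column, of size bounded in terms of $t$, $u$ and $L$. The pigeonhole you flag as the main obstacle does go through: if the cell word has $R$ runs over $s\le u$ letters, some ordered pair of letters occurs as adjacent runs at least about $R/s^2$ times, these adjacencies are disjoint and ordered, and picking one entry from each constituent run gives an alternation (with the dividing grid line, and separation positional or by value as appropriate) of length roughly $2R/s^2$ lying in $\C$, whence $R=O(Ls^2)$; the parenthetical classification into parallel versus wedge type is harmless but unnecessary, since the hypothesis bounds all alternations. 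The one step you elide is uniformity: your construction produces, for each $\pi\in\C$, \emph{some} bounded-size matrix with the desired property, and these matrices may vary with $\pi$, whereas the proposition asserts a single $M$ gridding all of $\C$. This is repaired exactly as in Proposition~\ref{prop-slice-nonmonotone-rects}: there are only finitely many $\zpm$ matrices of the bounded size arising, and the direct sum of those having at most one nonzero entry per row and column again has that property and grids every member of $\C$.
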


This result shows that if $\C$ has sub-Fibonacci enumeration, then it is contained in the monotone grid class of a ``signed permutation matrix''. All that remains is to show that such classes have eventually polynomial enumeration. Both Kaiser and Klazar~\cite{kaiser:on-growth-rates:} and Huczynska and Vatter~\cite{huczynska:grid-classes-an:} did so by bijectively associating such classes with downsets of vectors in $\mathbb{N}^t$ and then appealing to a 1976 \emph{Monthly} problem posed (and solved) by Stanley~\cite{stanley:solution-to-pro:}. Homberger and Vatter~\cite{homberger:on-the-effectiv:} present a constructive proof that gives an algorithm for computing these polynomials.

Monotone grid classes were first introduced (under a different name) in Murphy and Vatter~\cite{murphy:profile-classes:}, where the focus was on which monotone grid classes are wpo. Let $M$ be a $\zpm$ matrix of size $t\times u$. The \emph{cell graph} of $M$ is the graph on the vertices $\{(i,j) : M(i,j)\neq 0\}$ in which $(i,j)$ and $(k,\ell)$ are adjacent if the corresponding cells of $M$ share a row or a column and there are no nonzero entries between them in this row or column.  We say that the matrix $M$ is a \emph{forest} if its cell graph is a forest.  Viewing the absolute value of $M$ as the adjacency matrix of a bipartite graph, we obtain a different graph, its \emph{row-column graph}, which is the bipartite graph on the vertices $x_1,\dots,x_t,y_1,\dots,y_u$ where there is an edge between $x_i$ and $y_j$ if and only if $M(i,j)\neq 0$. It is not difficult to show that the cell graph of a matrix is a forest if and only if its row-column graph is also a forest (a formal proof is given in Vatter and Waton~\cite{vatter:on-partial-well:}).

\begin{theorem}[Murphy and Vatter~\cite{murphy:profile-classes:}]
\label{thm-mono-grid-wpo}
The grid class $\Grid(M)$ is wpo if and only if the cell graph of $M$ is a forest.
\end{theorem}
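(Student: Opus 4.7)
The plan is to prove each direction separately. For the forward direction, suppose the cell graph of $M$ contains a cycle; I will construct an explicit infinite antichain in $\Grid(M)$ modeled on the antichain $U$ from Figure~\ref{fig-infinite-antichain}. The key observation is that a cycle in the cell graph allows us to trace out a closed "winding" pattern through adjacent nonzero cells: each edge of the cycle corresponds to two cells sharing a row or column, and by placing long monotone sequences in those cells and interleaving them appropriately we produce arbitrarily long permutations whose graphs behave like increasing oscillations. By decorating the two endpoints with the "split-end" gadgets of Figure~\ref{fig-infinite-antichain} (four extra entries forming a distinctive pattern), we obtain a sequence of permutations no one of which contains another.

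For the reverse direction, suppose the cell graph of $M$ is a forest. I would proceed by induction on the number of nonzero cells of $M$. The base case, when $M$ has at most one nonzero cell, is immediate since $\Grid(M)$ is then a class of monotone permutations, which is wpo by the Erd\H{o}s--Szekeres Theorem. For the inductive step, since the cell graph is a forest (and has at least two nonzero cells), it has a leaf vertex; pick such a leaf cell $(i,j)$ and let $M'$ be the matrix obtained by zeroing out $M(i,j)$. The cell graph of $M'$ is still a forest, so by induction $\Grid(M')$ is wpo.

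The main step is then to show that attaching this one leaf cell back preserves wpo. Working with \emph{gridded permutations}---permutations equipped with a fixed $M$-gridding---any gridded permutation $\pi$ in $\Grid(M)$ is encoded by the gridded permutation $\pi'\in\Grid(M')$ obtained by deleting the entries in cell $(i,j)$, together with the data describing how the monotone subsequence in cell $(i,j)$ interleaves with the (at most one) neighboring nonzero cell. Because $(i,j)$ is a leaf, this interleaving data is essentially a word encoding the positions of cell-$(i,j)$ entries among the entries of the adjacent cell (or a pair of monotone sequences if $(i,j)$ is isolated), which lies in a well-quasi-ordered set by Higman's Theorem on subword order over a finite alphabet. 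Pairing this with the inductive wpo on $\Grid(M')$ gives wpo on gridded permutations of $\Grid(M)$, and since every permutation in $\Grid(M)$ admits at least one gridding, wpo transfers to the ungridded class.

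The hard part will be making the gridding manipulations rigorous: a single permutation admits many $M$-griddings, and one must be careful that the encoding and the order relations on encodings correctly refine permutation containment. The standard device is to verify that if two gridded permutations satisfy $\sigma\le\pi$ as gridded objects (meaning $\sigma$ embeds into $\pi$ cell by cell), then $\sigma\le\pi$ as permutations, so an infinite antichain in $\Grid(M)$ would pull back, via some choice of griddings, to an infinite antichain of gridded permutations---contradicting the wpo established above.
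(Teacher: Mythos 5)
The serious gap is in the inductive step of your forest direction. First, a small but real error: a leaf cell $(i,j)$ of the cell graph need not interact only with its unique cell-graph neighbour. The cell graph joins a cell only to the \emph{nearest} nonzero cell in its row or column, so a leaf can share its row with several nonzero cells, and the values of its entries interleave with the entries of \emph{all} cells in that row, not just the adjacent one. More importantly, even after correcting the interleaving data, the step ``pairing this with the inductive wpo on $\Grid(M')$ gives wpo on gridded permutations of $\Grid(M)$'' does not follow. The word you attach to $\pi$ is defined relative to the ground set of $\pi'$, so comparing two encoded objects $(\sigma',v)$ and $(\pi',w)$ only makes sense relative to a particular embedding of $\sigma'$ into $\pi'$; this is not a product of two independent quasi-orders, and neither the closure of wqo under finite products nor Higman's theorem over a finite alphabet applies directly to such relatively labelled data. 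That transfer is precisely the hard content of the theorem. The known proofs sidestep it with a single global encoding: because the cell graph is a forest, the gridding can be straightened onto the standard figure (this is Proposition~\ref{prop-forests-are-geoms}, $\Grid(M)=\Geom(M)$, cf.\ Figure~\ref{fig-straighten-forest}), the points of a gridded permutation can be read off in order along the segments, and every gridded permutation is then the image of a word over the finite cell alphabet in such a way that subwords map to gridded subpermutations; Higman's theorem gives wqo in one shot, and your (correct) final observation that wqo of gridded permutations transfers to $\Grid(M)$ finishes the argument. This is also how the survey itself recovers this direction, via Theorem~\ref{thm-ggc-strong-rat}; the survey does not reprove the theorem but cites Murphy and Vatter.

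Your converse direction has the right idea---grid classes whose cell graph contains a cycle do admit infinite antichains obtained by winding monotone sequences around the cycle and anchoring the ends, in the spirit of Figure~\ref{fig-infinite-antichain}---but as written it is an assertion, not a proof, and the description is muddled: a closed cycle has no ``two endpoints.'' What is actually done is to wind an open oscillating path around the cycle (ending back in the cell where it began, possibly after several circuits) and to attach anchor gadgets at the two ends of that \emph{path}, with the gadgets chosen compatibly with the signs of the cells they occupy. The crux, which you omit, is the incomparability verification: one must show that any embedding of one member into another is forced to map anchors to anchors, hence the oscillating middle into the oscillating middle, which pins down the length and rules out proper containment. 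Without that argument, and without the sign bookkeeping, this direction is not yet established.
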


Theorem~\ref{thm-mono-grid-wpo} has since been generalized in several directions. In the next subsection we will see a generalization of the wpo part of this result (Theorem~\ref{thm-ggc-strong-rat}). Brignall~\cite{brignall:grid-classes-an:} has presented another generalization of Theorem~\ref{thm-mono-grid-wpo} in the context of the generalized grid classes of Section~\ref{subsec-gen-grid}.

Somewhat surprisingly, we know almost nothing about the bases of monotone grid classes. The skew-merged permutations have a finite basis, and Waton~\cite{waton:on-permutation-:} showed that the monotone grid class of $J_2$ (the $2\times 2$ all-one matrix) has a finite basis. Thus we remain far away from the following.

\begin{conjecture}
\label{conj-mono-grid-basis}
Every monotone grid class has a finite basis.
\end{conjecture}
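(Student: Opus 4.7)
The plan is to bound the length of any basis element of $\Grid(M)$ by a function of the dimensions $t \times u$ of $M$, hence showing the basis is finite. Let $\beta$ be a basis element, so $\beta \notin \Grid(M)$ while every proper subpermutation $\beta - e$ lies in $\Grid(M)$. The overall strategy is to exploit the $|\beta|$-many $M$-griddings of these subpermutations, one per entry $e$, to construct an $M$-gridding of $\beta$ itself, contradicting $\beta \notin \Grid(M)$ once $|\beta|$ exceeds a threshold depending on $M$.

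As a preparatory observation, note that since proper subpermutations of $\beta$ lie in $\Grid(M)$, they avoid $\oplus^a 21$ and $\ominus^b 12$ for constants $a, b$ determined by $M$ via Theorem~\ref{thm-mono-griddable}. Consequently $\beta$ itself avoids $\oplus^{a+1} 21$ and $\ominus^{b+1} 12$ and in particular lies in some larger monotone grid class. This alone does not bound $|\beta|$, but it places $\beta$ in a setting where the structural results of Section~\ref{subsec-mono-grid} apply meaningfully.

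The key step is to introduce a canonical $M$-gridding for every permutation in $\Grid(M)$---say, the lexicographically leftmost choice of column dividers followed by the bottommost choice of row dividers---and to track how the resulting canonical gridding $G_e$ of $\beta - e$ varies with $e$. Each $G_e$ labels the entries of $\beta - e$ by cells of $M$, giving at most $tu$ possible labels per entry. A pigeonhole argument should then produce, once $|\beta|$ is large enough, two entries $e, e'$ whose griddings $G_e$ and $G_{e'}$ induce consistent labels on the shared entries $\beta \setminus \{e, e'\}$. If one can additionally fit $e$ and $e'$ locally into consistent cells, the two griddings amalgamate into an $M$-gridding of $\beta$, yielding the contradiction.

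The main obstacle is a stability problem: the canonical gridding of $\beta - e$ can change globally when a single entry is removed, because the leftmost valid column divider may shift by many positions in response to a local perturbation, cascading through the subsequent dividers. The heart of the proof must therefore be a stability lemma asserting that $G_e$ and $G_{e'}$ differ only in cells ``near'' $e$ and $e'$ after suitable normalization. This is essentially the content of Stankova's proof for the skew-merged class and of Waton's for $\Grid(J_2)$, and it should be manageable when the cell graph of $M$ is a forest by exploiting the well-partial-order structure from Theorem~\ref{thm-mono-grid-wpo}. Handling arbitrary $M$ is where the conjecture's genuine difficulty lies; a successful attack may ultimately require either a refined canonical form for griddings or a reduction to the geometric grid class framework of Section~\ref{subsec-geom-grid}, where finite-basis phenomena are better understood.
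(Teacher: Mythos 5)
The statement you are attempting is stated in the paper as an open conjecture (Conjecture~\ref{conj-mono-grid-basis}); the paper offers no proof, noting only that the two known cases are the skew-merged permutations and Waton's result for $\Grid(J_2)$. Your proposal does not close it, and the gap is the one you name yourself: the ``stability lemma.'' Without it the rest of the argument has no content. Each canonical gridding $G_e$ of $\beta-e$ is an entire cell-labeling of $\beta-e$, and there are exponentially many such labelings, so no pigeonhole on the $|\beta|$ choices of $e$ produces two griddings agreeing on the common entries unless you already know that removing one entry perturbs the canonical gridding only locally --- which is precisely what you cannot prove. Moreover, even granting two griddings that assign consistent cell labels to $\beta\setminus\{e,e'\}$, consistency of labels does not amalgamate into an $M$-gridding of $\beta$: you would need the actual column and row dividers to be compatible and the two exceptional entries to fit into cells of the correct sign, and nothing in the sketch addresses this.

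Two of your supporting claims are also off. Stankova's and Waton's finite-basis results are not obtained by a gridding-stability argument of the kind you describe, so they do not supply a template; and the case where the cell graph of $M$ is a forest is not where the difficulty lies --- there $\Grid(M)=\Geom(M)$ by Proposition~\ref{prop-forests-are-geoms}, and every geometrically griddable class is finitely based by Theorem~\ref{thm-ggc-strong-rat}, so that case is already settled by existing machinery. The genuinely open case is when the cell graph contains a cycle, exactly where $\Geom(M)\subsetneq\Grid(M)$, where $\Grid(M)$ fails to be wpo (Theorem~\ref{thm-mono-grid-wpo}) and contains infinite antichains, and where one should expect canonical griddings to be least stable. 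So the proposal reduces the conjecture to an unproved lemma whose hard case is untouched; as written it is a plausible plan of attack, not a proof.
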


We have used grid classes to establish the Fibonacci dichotomy, but what about growth rates of grid classes themselves? In exploring this question, Bevan~\cite{bevan:growth-rates-of:} uncovered a beautiful connection between permutation patterns and algebraic graph theory. Recall that the \emph{spectral radius} of a graph is the largest eigenvalue of its adjacency matrix.

\begin{theorem}[Bevan~\cite{bevan:growth-rates-of:}]
\label{thm-mono-grid-gr}
The growth rate of $\Grid(M)$ exists and is equal to the square of the spectral radius of the row-column graph of $M$.
\end{theorem}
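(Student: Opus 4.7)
My plan is to count gridded permutations of $\Grid(M)$ via a cell-matrix encoding, apply Stirling's formula to recast the enumeration as a variational problem over probability distributions on the edges of $G_M$, and then identify the optimum with the Perron eigenvalue of $AA^T$, where $A=|M|$ is the biadjacency matrix of $G_M$.

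To begin, I would establish that a gridded permutation of length $n$ in $\Grid(M)$ is equivalent to the data consisting of a nonnegative integer matrix $E=(e_{ij})$ supported on the nonzero cells of $M$ with $\sum e_{ij}=n$, together with, for each column index $i$, a word of length $a_i=\sum_j e_{ij}$ over the row alphabet containing $e_{ij}$ copies of letter $j$, and an analogous word for each row index $j$. Thus
$$
g(n):=\#\{\text{gridded permutations of length }n\}=\sum_{E}\prod_i \binom{a_i}{e_{i,1},\ldots,e_{i,u}}\prod_j\binom{b_j}{e_{1,j},\ldots,e_{t,j}},
$$
where $b_j=\sum_i e_{ij}$. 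Each permutation admits at most $\binom{n+1}{t-1}\binom{n+1}{u-1}=n^{O(1)}$ griddings, so $|\Grid(M)_n|$ and $g(n)$ share the same exponential growth rate, reducing the problem to an analysis of $g(n)$.

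Next, setting $p_{ij}=e_{ij}/n$, $q_i=a_i/n$, and $r_j=b_j/n$, Stirling's formula gives
$$
\log\left(\prod_i\binom{a_i}{e_{i,\cdot}}\prod_j\binom{b_j}{e_{\cdot,j}}\right)=nF(p)+O(\log n),
$$
where, with $H$ denoting Shannon entropy,
$$
F(p)=2H(p)-H(q)-H(r)=\sum_{ij}p_{ij}\log\frac{q_ir_j}{p_{ij}^2}.
$$
Bounding the sum term by term gives $g(n)\le n^{O(1)}\exp(nF^\ast)$, where $F^\ast=\max_p F(p)$ is taken over probability measures on the edges of $G_M$; the matching lower bound follows by rounding a maximizer $p^\ast$ to an integer matrix $E^\ast$ via $e^\ast_{ij}=\lfloor np^\ast_{ij}\rfloor$, corrected by $O(1)$ to meet the sum constraint, yielding the single summand $n^{-O(1)}\exp(nF^\ast)$. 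Together these yield $g(n)^{1/n}\to e^{F^\ast}$ and in particular establish existence of the growth rate.

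The heart of the proof is the identification $F^\ast=\log\rho(G_M)^2$. Noting $\partial q_i/\partial p_{ij}=\partial r_j/\partial p_{ij}=1$, Lagrange multipliers applied to $F$ under $\sum p_{ij}=1$ yield, at any interior critical point, $p_{ij}^2=c^2q_ir_j$ for every edge $(i,j)$, with $c>0$ a normalizing constant. Substituting into the marginal identities $q_i=\sum_j p_{ij}$ and $r_j=\sum_i p_{ij}$ produces, entrywise,
$$
\sqrt{q}=c\,A\sqrt{r},\qquad \sqrt{r}=c\,A^T\sqrt{q},
$$
so $\sqrt{q}$ is an eigenvector of $AA^T$ with eigenvalue $1/c^2$. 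The value of $F$ at such a critical point simplifies to $\log(1/c^2)$, and maximizing picks out the Perron eigenvector, giving $F^\ast=\log\lambda_{\max}(AA^T)=\log\rho(G_M)^2$. The main technical obstacle will be ruling out boundary maxima: the argument requires $p^\ast_{ij}>0$ on every edge, which I expect to secure by a local perturbation showing that any candidate with $p^\ast_{ij}=0$ on an edge $(i,j)$ can be strictly improved by redistributing a small amount of mass there, since the marginal gain in $2H(p)$ exceeds the loss in $H(q)+H(r)$. If $G_M$ is disconnected, the argument reduces to each component, the optimum living on the component whose biadjacency matrix has the largest Perron eigenvalue, which is $\rho(G_M)$ by definition.
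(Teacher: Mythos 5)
Your proposal is correct, but it takes a genuinely different route from the one the paper attributes to Bevan. You enumerate gridded permutations directly: the bijection with a cell-count matrix $E$ plus one interleaving word per column and per row is valid (monotonicity forces the within-cell matching), the multinomial formula and the polynomial bound on the number of griddings per permutation are right, and the Stirling reduction to maximizing $F(p)=2H(p)-H(q)-H(r)$ over edge-supported distributions, followed by the Lagrange computation $p_{ij}=c\sqrt{q_ir_j}$ and Perron--Frobenius identification $F^\ast=\log\lambda_{\max}(AA^T)=\log\rho(G_M)^2$, is sound; your mixture observation also handles disconnected row-column graphs cleanly, since the component-entropy terms cancel exactly. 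Bevan's argument, as sketched in Section~\ref{subsec-mono-grid}, instead encodes gridded permutations (up to polynomial factors) as \emph{balanced} tours of length $2n$ in the row-column graph and then shows that the balance restriction does not affect the asymptotics of tours, whose growth is governed by the spectral radius; your variational step is essentially a quantitative, self-contained version of that last fact (a large-deviations computation over edge-usage frequencies), which buys you existence of the proper growth rate and the spectral identification without importing walk-counting machinery, at the cost of an analytic optimization in place of Bevan's more combinatorial encoding. One caution on your flagged obstacle: the perturbation argument as stated can fail when an entire row or column carries zero mass, since then the $\epsilon\log(1/\epsilon)$ gain in $2H(p)$ is cancelled by identical terms in $H(q)+H(r)$; the cleaner repair is to note that any maximizer is an interior critical point of the problem restricted to its support subgraph $H$, giving value $\log\rho(H)^2\le\log\rho(G_M)^2$, while the matching lower bound comes from the explicit distribution built from the Perron vectors of $AA^T$, so no boundary analysis is actually needed.
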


It is well beyond the scope of this survey to attempt to sketch Bevan's proof, but we nonetheless attempt to convey some essence of his approach. Every connected component in the row-column graph of $M$ corresponds to some submatrix of $M$, and we call this submatrix a \emph{connected component} of $M$. It is not difficult to establish the following result, which was first used in Vatter~\cite{vatter:small-permutati:}.

\begin{proposition}
\label{prop-grid-component}
The upper growth rate of $\Grid(M)$ is equal to the greatest upper growth rate of the monotone grid class of a connected component of $M$.
\end{proposition}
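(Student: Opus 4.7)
The plan is as follows. One direction is nearly trivial: for each connected component $M_a$, any $M_a$-gridding of a permutation extends to an $M$-gridding simply by leaving the remaining cells empty, so $\Grid(M_a)\subseteq\Grid(M)$. This yields $\ugr(\Grid(M))\ge\max_a\ugr(\Grid(M_a))$.

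For the reverse direction, the first step will be to observe that the connected components of the cell graph induce a partition of both the rows and the columns of $M$. Indeed, consecutive nonzero cells in a row (or column) are adjacent in the cell graph, so all nonzero cells of a row belong to a single component; declare a row or column (with no nonzero cells) to belong to any component arbitrarily. Writing $I_a$ and $J_a$ for the columns and rows assigned to $M_a$, every nonzero cell $(i,j)$ of $M$ satisfies $i\in I_a$ and $j\in J_a$ for the same $a$.

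Next I would analyze what an $M$-gridding of $\pi\in\Grid(M)_n$ looks like. Let the gridding be specified by column sizes $s_1,\dots,s_t$ (where $s_i=c_{i+1}-c_i$) and row sizes $t_1,\dots,t_u$; both sum to $n$. For each component $a$, set $n_a=\sum_{i\in I_a}s_i$; then because of the first step, an entry of $\pi$ lies in a column assigned to $a$ if and only if its value lies in a row assigned to $a$, and in particular $n_a=\sum_{j\in J_a}t_j$. Restricting $\pi$ to these positions (equivalently, to these values) yields a permutation $\pi_a\in\Grid(M_a)_{n_a}$ whose natural gridding has column sizes $(s_i)_{i\in I_a}$ and row sizes $(t_j)_{j\in J_a}$. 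Crucially, the tuple $((s_i),(t_j),(\pi_a))$ determines $\pi$ uniquely, because the order in which the component position-intervals and value-intervals are interleaved is dictated by the fixed order of columns and rows of $M$.

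This establishes the upper bound
\[
|\Grid(M)_n|\le\sum_{n_1+\cdots+n_k=n}\prod_a\binom{n_a+|I_a|-1}{|I_a|-1}\binom{n_a+|J_a|-1}{|J_a|-1}\,|\Grid(M_a)_{n_a}|,
\]
where the binomials count compositions of $n_a$ into $|I_a|$ and $|J_a|$ nonnegative parts. Set $\gamma=\max_a\ugr(\Grid(M_a))$. For any $\epsilon>0$ and all sufficiently large $n_a$ we have $|\Grid(M_a)_{n_a}|\le(\gamma+\epsilon)^{n_a}$; the binomial factors and the number of compositions $n_1+\cdots+n_k=n$ are all polynomial in $n$. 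Hence $|\Grid(M)_n|\le p(n)(\gamma+\epsilon)^n$ for some polynomial $p$, so $\ugr(\Grid(M))\le\gamma+\epsilon$. Letting $\epsilon\to 0$ gives $\ugr(\Grid(M))\le\gamma$, completing the proof.

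There is no serious obstacle here; the only step requiring care is the first one, verifying that the component structure partitions the rows and columns consistently so that griddings decompose cleanly — once that is in place, the counting is routine.
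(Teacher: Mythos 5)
Your proof is correct. Note that the survey itself gives no argument for Proposition~\ref{prop-grid-component}: it is stated with the remark that it ``is not difficult to establish'' and a citation to the small permutation classes paper, so there is no in-text proof to compare against; your write-up simply supplies the standard argument that the remark alludes to. The two directions you give are exactly the right ones: $\Grid(M_a)\subseteq\Grid(M)$ by padding with empty cells, and for the converse the injective encoding of a gridded permutation by the column sizes, row sizes, and the restricted gridded permutations $\pi_a$, followed by the observation that the compositions and binomial factors contribute only polynomially, so the upper growth rate is controlled by $\max_a\ugr(\Grid(M_a))$. Two small points worth being aware of. First, the paper defines a connected component of $M$ via the \emph{row-column graph} rather than the cell graph; these induce the same partition of the nonzero cells (two nonzero cells lie in the same row-column component if and only if they are joined in the cell graph), so your formulation is equivalent, but you should say so since the cell graph is not the paper's definition here. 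Second, your key consistency claim --- that an entry lies in a column block of $I_a$ if and only if its value lies in a row block of $J_a$, and hence $\sum_{i\in I_a}s_i=\sum_{j\in J_a}t_j$ --- rests on the fact that in any $M$-gridding every entry sits in a nonzero cell and every column or row of $M$ with no nonzero cells must receive an empty block; this is immediate from the gridding definition, but it is the one place where a sentence of justification is needed, and with it the reconstruction and the counting go through exactly as you describe.
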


Spectral radii follow the analogous pattern---the spectral radius of the graph $G$ is equal to the greatest spectral radius of a connected component of $G$. Therefore it suffices to consider grid classes with connected row-column graphs. A \emph{tour} on a graph is a walk (a sequence of not-necessarily distinct vertices, each connected by an edge) that ends where it began. Bevan showed that if the row-column graph of $M$ is connected, then the number of permutations of length $n$ in $\Grid(M)$ is, up to a polynomial factor, equal to the number of \emph{balanced} tours of length $2n$ in the row-column graph of $G$, where a balanced tour is a tour that traverses every edge the same number of times in each direction. He then showed that this balance condition does not affect the asymptotics of tours, establishing the theorem.

As a consequence of Bevan's Theorem~\ref{thm-mono-grid-gr}, we may appeal to the significant literature on algebraic graph theory to determine the possible growth rates of monotone grid classes. First, obviously every growth rate of a monotone grid class is an algebraic integer, because it is the square of an eigenvalue of a zero/one matrix. More strikingly, if the row-column graph of $M$ is a cycle, then the growth rate of $\Grid(M)$ is equal to $4$, no matter how long the cycle. Also, if the growth rate of a monotone grid class is less than this, then it is equal to $4\cos^2(\pi/k)$ for some integer $k\ge 3$. Finally, for every growth rate $\gamma\ge 2+\sqrt{5}\approx 4.24$, there is a monotone grid class with growth rate arbitrarily close to $\gamma$.

\begin{figure}
\begin{footnotesize}
$$
\begin{array}{ccc}
	\begin{tikzpicture}[scale=1, baseline=(current bounding box.center)]
		\foreach \i in {0,1,2}{
			\draw [lightgray, ultra thick, line cap=round] (0,\i)--(3,\i);
		}
		\foreach \i in {0,1,2,3}{
			\draw [lightgray, ultra thick, line cap=round] (\i,0)--(\i,2);
		}
		\draw [thick, line cap=round] (3,1) arc [radius=1, start angle=0, end angle=90];
		\draw [thick, line cap=round] (0,1)--(1,0);
		\draw [thick, line cap=round] (1,0)--(2,1);
		\draw [thick, line cap=round] (2,0)--(3,1);
		\draw [thick, line cap=round] (0,1)--(1,2);
		\draw [thick, line cap=round] (1,1)--(2,2);
	\end{tikzpicture}
&&
	\begin{tikzpicture}[scale=1, baseline=(current bounding box.center)]
		\foreach \i in {0,1,2}{
			\draw [lightgray, ultra thick, line cap=round] (0,\i)--(3,\i);
		}
		\foreach \i in {0,1,2,3}{
			\draw [lightgray, ultra thick, line cap=round] (\i,0)--(\i,2);
		}
		\draw [thick, line cap=round] (2,2) arc [radius=1, start angle=180, end angle=270];
		\draw [thick, line cap=round] (0,1)--(1,0);
		\draw [thick, line cap=round] (1,0)--(2,1);
		\draw [thick, line cap=round] (2,0)--(3,1);
		\draw [thick, line cap=round] (0,1)--(1,2);
		\draw [thick, line cap=round] (1,1)--(2,2);
	\end{tikzpicture}
\end{array}
$$
\end{footnotesize}
\caption[Two pictures which have no joint embedding in a particular monotone grid class.]{Waton proved that the monotone grid class of the matrix $\fnmatrix{rrr}{1&1&-1\\-1&1&1}$ is equal to the union of the two figure classes shown above.}
\label{fig-waton-nonatomic}
\end{figure}

The final question we address in this subsection is whether monotone grid classes are atomic. Many monotone grid classes (such as the class of skew-merged permutations) are atomic. However, monotone grid classes are not, in general, atomic. In his thesis~\cite{waton:on-permutation-:}, Waton established necessary and sufficient conditions for a grid class to be atomic. Let $M$ be a $\zpm$ matrix. Given a cycle in its row-column graph, we say that the \emph{sign} of the cycle is the product of the entries corresponding to its edges. So, for example, the matrix
$$
\fnmatrix{rrr}{1&1&-1\\-1&1&1}
$$
contains a positive cycle (corresponding to columns $1$ and $3$) and two negative cycles (corresponding to columns $1$ and $2$ and columns $2$ and $3$). Indeed, the grid class of this matrix is not atomic; Waton showed that it is the union of the two figure classes shown in Figure~\ref{fig-waton-nonatomic}. We conclude with his full characterization.

\begin{theorem}[Waton~\cite{waton:on-permutation-:}]
The monotone grid class $\Grid(M)$ is atomic if and only if the row-column graph of $M$ does not have a connected component containing a negative cycle together with any other cycle.
\end{theorem}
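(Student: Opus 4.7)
The natural approach is via the joint embedding property characterization of atomicity (Theorem~\ref{atomic-tfae}). The plan proceeds by first reducing to the connected case, then analyzing each of the good and bad cases separately via a figure-class correspondence.

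\textbf{Reduction to the connected case.} If the row-column graph of $M$ decomposes into connected components with underlying submatrices $M_1,\ldots,M_s$, then these submatrices occupy pairwise disjoint sets of rows and columns of $M$. Therefore a permutation lies in $\Grid(M)$ if and only if its plot decomposes into subpermutations, one from each $\Grid(M_i)$, arranged in row- and column-disjoint rectangles that may be interleaved arbitrarily. Given $\pi,\sigma\in\Grid(M)$, a common extension can be built by first finding a common extension in each $\Grid(M_i)$ separately and then interleaving; conversely any common extension restricts componentwise. Thus $\Grid(M)$ is atomic if and only if each $\Grid(M_i)$ is. So I may assume $M$ has connected row-column graph, and the goal becomes to show that $\Grid(M)$ is atomic iff $M$ has at most one cycle, or all of its cycles are positive.

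\textbf{``If'' direction: a canonical figure.} For a connected $M$ satisfying the hypothesis, I plan to associate a single planar figure $\Phi_M$ such that $\Grid(M)=\Sub(\Phi_M)$. The figure $\Phi_M$ has one monotone arc per nonzero cell, placed in the natural grid position, with slope sign matching the entry of $M$; the arcs may need to curve so that the endpoints belonging to a shared row or column meet up. The structural hypothesis on $M$ is exactly what permits such a consistent drawing: a tree presents no cycle constraints at all; a single cycle (even a negative one) imposes only one closing condition, which can be met by bending; multiple cycles that are all positive impose mutually compatible conditions. Once $\Grid(M)=\Sub(\Phi_M)$ is verified, atomicity follows because two independent finite subfigures of $\Phi_M$ can always be realized simultaneously inside $\Phi_M$ by interleaving their points along the arcs, producing the required joint embedding.

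\textbf{``Only if'' direction.} Suppose the connected $M$ contains a negative cycle $C_1$ together with a distinct cycle $C_2$. Then no single canonical figure as above exists; instead, $\Grid(M)$ splits as a union of at least two genuinely distinct figure classes, one for each incompatible ``orientation'' of the cells along $C_1\cup C_2$, exactly as in the Waton example depicted in Figure~\ref{fig-waton-nonatomic}. I would then construct two long permutations $\pi,\sigma\in\Grid(M)$, each densely populating cells along $C_1\cup C_2$ but realising the two orientations respectively, and argue that any permutation $\tau\in\Grid(M)$ containing both would force a single gridding of $\tau$ that is simultaneously compatible with both orientations, contradicting the frustration imposed by the negative cycle. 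By Theorem~\ref{atomic-tfae}, failure of joint embedding implies non-atomicity.

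\textbf{Main obstacle.} The technical heart of the argument is the ``if'' direction in the presence of a single negative cycle: the canonical figure $\Phi_M$ must be drawn with a nontrivial bend to close the cycle (a straight-segment drawing is impossible in the Euclidean plane precisely because the sign product around the cycle is $-1$), and one must then verify carefully that every permutation of $\Grid(M)$ really does embed as an independent subfigure of this twisted $\Phi_M$. Once this geometric step is handled, the multiple-positive-cycles subcase reduces to the tree case by spanning-tree arguments, and the ``only if'' direction follows from a comparatively explicit obstruction.
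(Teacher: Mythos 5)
The paper itself contains no proof of this statement---it is quoted directly from Waton's thesis---so there is no in-paper argument to compare yours against; judged on its own terms, your outline has a genuine gap at its heart, namely the ``if'' direction. The mechanism ``one monotone arc per nonzero cell, bent so that endpoints belonging to a shared row or column meet up'' cannot produce the required figure in the cases that actually need proving. If a cell's two connection points lie on one horizontal and one vertical edge of that cell, those two points already form an ascending or descending pair, determined solely by which corner the two edges flank; any monotone arc inside the cell containing (or limiting to) both points is forced to have that orientation, no matter how it bends. Around a cycle of the cell graph this forces an alternating sign pattern: for a $2\times2$ cycle the only pattern realizable by such a closed planar curve is the ``circle'' pattern $\fnmatrix{rr}{1&-1\\-1&1}$. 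Consequently for $M=J_2=\fnmatrix{rr}{1&1\\1&1}$---whose unique cycle is positive, so $\Grid(M)$ must come out atomic---no meeting-up figure exists at all, and dropping the meeting-up requirement merely returns the standard figure, which gives $\Geom(M)\subsetneq\Grid(M)$ by Proposition~\ref{prop-forests-are-geoms}. The same parity obstruction rules out a lone negative cycle (an odd number of $-1$ cells cannot close up), so the claim that ``a single cycle (even a negative one) imposes only one closing condition, which can be met by bending'' is false: the closing condition is a sign/orientation obstruction, not a metric one. Thus the crucial cases (all cycles positive, or a unique possibly-negative cycle) are not handled, and even where a closed figure does exist the asserted equality $\Grid(M)=\Sub(\Phi_M)$ is a substantial unproved claim---exactly the point where straight-segment figures fail for non-forests. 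The natural home for this direction is not a planar figure but condition (3) of Theorem~\ref{atomic-tfae}, i.e.\ an age representation $\Sub(f\colon A\rightarrow B)$ with suitably chosen orders (equivalently a cylindrical/annular drawing), or a direct combinatorial verification of joint embedding for gridded permutations.

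Two further soft spots. First, your component reduction: the implication ``each $\Grid(M_i)$ atomic $\Rightarrow$ $\Grid(M)$ atomic'' is plausible with care, but the converse as you justify it (``any common extension restricts componentwise'') does not follow, because a copy of $\sigma\in\Grid(M_i)$ inside a common extension $\pi\in\Grid(M)$ may use entries from several components, so the restriction of $\pi$ to component $i$ need not contain $\sigma$. In practice you must prove non-atomicity of $\Grid(M)$ directly whenever some component is bad, e.g.\ by exhibiting $\Grid(M)$ as a union of two proper subclasses as in Figure~\ref{fig-waton-nonatomic}; this is also where your ``only if'' sketch is only a gesture, since the missing ingredient is a gridding-independent invariant (your ``orientation'') that persists under pattern containment rather than being attached to one chosen gridding of $\tau$. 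Second, ``figure classes are atomic by interleaving'' is not true for arbitrary figures: the three-point figure $\{(0,0),(1,1),(1,-1)\}$ has $\Sub(\Phi)=\{1,12,21\}$ (plus the empty permutation), which is not atomic. For figures built from monotone arcs one can repair this by sliding one embedding off the finitely many rows and columns used by the other, but that argument, too, needs to be made rather than asserted.
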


\subsection{Geometric grid classes}
\label{subsec-geom-grid}

\index{geometric grid class}

In Section~\ref{subsec-basics} we introduced the class of permutations that can be drawn on an $\textsf{X}$ and called it $\Sub(\textsf{X})$, though to avoid cumbersome notation we denote it by $\mathcal{X}$ here. We return to this class throughout this subsection because it is the prototypical example of a \emph{geometric} grid class. Clearly the class $\mathcal{X}$ is a subclass of the skew-merged permutations, but it is not all of the skew-merged permutations. For example, the permutation $3142$ cannot be drawn on an $\textsf{X}$ because once we place the $3$, $1$, and $4$ on the $\textsf{X}$, there is no place for the $2$ to lie simultaneously above the $1$ and to the right of the $4$:
\begin{center}
\begin{footnotesize}
	\begin{tikzpicture}[scale=1, baseline=(current bounding box.center)]
		\draw[fill=black] (0.5,1.5) circle (2pt) node[below] {$3$};
		\draw[fill=black] (0.75,0.75) circle (2pt) node[below] {$1$};
		\draw[fill=black] (1.75,1.75) circle (2pt) node[left] {$4$};
		\foreach \i in {0,1,2}{
			\draw [lightgray, ultra thick, line cap=round] (0,\i)--(2,\i);
		}
		\foreach \i in {0,1,2}{
			\draw [lightgray, ultra thick, line cap=round] (\i,0)--(\i,2);
		}
		\draw [thick, line cap=round] (0,2)--(2,0);
		\draw [thick, line cap=round] (0,0)--(2,2);
		\draw (0.75,0.75)--(2,0.75);
		\draw (1.75,1.75)--(1.75,0);
		\draw [lightgray, ultra thick] (1.7,0)--(1.8,0);
		\draw [lightgray, ultra thick] (2,0.7)--(2,0.8);
	\end{tikzpicture}
\end{footnotesize}
\end{center}
By symmetry, $2413$ also cannot be drawn on an $\textsf{X}$, so $\mathcal{X}$ is also a subclass of the separable permutations. Indeed, it is not hard to see that $\mathcal{X}$ is the class of skew-merged separable permutations,
$$
\mathcal{X}=\Av(2143,2413,3142,3412),
$$
because every permutation drawn on an $\textsf{X}$ must have some point that is at least as far away from the center of the $\textsf{X}$ as every other point. Therefore every permutation in $\mathcal{X}$ is of the form $1\oplus\pi$, $\pi\oplus 1$, $1\ominus\pi$, or $\pi\ominus 1$ for some $\pi\in\mathcal{X}$. This leads quickly to the generating function of the class. From the above, we have
$$
\mathcal{X}
=
\{1\}
\cup
\left(1\oplus\mathcal{X}\right)
\cup
\left(\mathcal{X}\oplus 1\right)
\cup
\left(1\ominus\mathcal{X}\right)
\cup
\left(\mathcal{X}\ominus 1\right).
$$
Moreover, $\left(1\oplus\mathcal{X}\right)\cap \left(\mathcal{X}\oplus 1\right)=\left(1\oplus\mathcal{X}\oplus 1\right)$, so we see (by symmetry) that if $f$ is generating function for nonempty permutations in $\mathcal{X}$, $f=x+4xf-2x^2f$. Thus the generating function for $\mathcal{X}$ is
$$
\frac{x}{1-4x+2x^2}.
$$ 
It follows that the growth rate of $\mathcal{X}$ is $2+\sqrt{2}\approx 3.41$. This is significantly lower than the growth rate of the skew-merged permutations ($4$, which follows from Atkinson~\cite{atkinson:permutations-wh:} or Bevan's Theorem~\ref{thm-mono-grid-gr}) or that of the separable permutations (approximately $5.83$, as we saw in Section~\ref{subsec-basics}).

\begin{figure}
\begin{footnotesize}
\begin{center}
	\begin{tikzpicture}[scale=1, baseline=(current bounding box.center)]
		\foreach \i in {0,1,2}{
			\draw [lightgray, ultra thick, line cap=round] (0,\i)--(3,\i);
		}
		\foreach \i in {0,1,2,3}{
			\draw [lightgray, ultra thick, line cap=round] (\i,0)--(\i,2);
		}
		\draw [thick, line cap=round] (0,2)--(2,0);
		\draw [thick, line cap=round] (1,1)--(2,2);
		\draw [thick, line cap=round] (2,1)--(3,2);
		\draw [thick, line cap=round] (2,1)--(3,0);
		\draw[fill=black] (0.3,1.7) circle (2pt);
		\draw[fill=black] (0.8,1.2) circle (2pt);
		\draw[fill=black] (1.3,0.7) circle (2pt);
		\draw[fill=black] (1.8,1.8) circle (2pt);
		\draw[fill=black] (2.4,1.4) circle (2pt);
		\draw[fill=black] (2.5,0.5) circle (2pt);
		\draw[fill=black] (2.6,1.6) circle (2pt);
	\end{tikzpicture}
\end{center}
\end{footnotesize}
\caption[The permutation $6327415$ lies in a geometric grid class.]{The permutation $6327415$ lies in the geometric grid class of the matrix $\fnmatrix{rrr}{-1&1&1\\0&-1&-1}$.}
\label{fig-ggc-example}
\end{figure}

Suppose that we have a $\zpm$ matrix $M$, as in the definition of a monotone grid class. The \emph{standard figure} of $M$, denoted $\Lambda_M$, is the figure in $\mathbb{R}^2$ consisting of two types of line segments for every pair of indices $i,j$ such that $M(i,j)\neq 0$:
\begin{itemize}
\item the increasing open line segment from $(i-1,j-1)$ to $(i,j)$ if $M(i,j)=1$ or
\item the decreasing open line segment from $(i-1,j)$ to $(i,j-1)$ if $M(i,j)=-1$.
\end{itemize}
The \emph{geometric grid class} of $M$, denoted $\Geom(M)$, is the set of all permutations that can be drawn on $\Lambda_M$ (an example is shown in Figure~\ref{fig-ggc-example}). The inclusion $\Geom(M)\subseteq\Grid(M)$ always holds, but our example of $\mathcal{X}$ shows that the geometric grid class of $M$ may be a proper subclass of its monotone grid class. Indeed, this happens precisely when $M$ contains a cycle:

\begin{figure}
\begin{footnotesize}
\begin{center}
	\begin{tikzpicture}[scale=0.2222222222222, baseline={(0,0)}]
		\draw [lightgray, ultra thick, line cap=round] (0,0) rectangle (9,9);
		\draw [lightgray, ultra thick, line cap=round] (0,3.5)--(9,3.5);
		\draw [lightgray, ultra thick, line cap=round] (6.5,0)--(6.5,9);
		\draw (0,3.5)--(2,3)--(4,2)--(5,1)--(6.5,0);
		\draw (0,3.5)--(1,4)--(3,6)--(6,8)--(6.5,9);
		\draw (6.5,3.5)--(7,5)--(8,7)--(9,9);
		\draw[fill=black] (1,4) circle (9pt);
		\draw[fill=black] (2,3) circle (9pt);
		\draw[fill=black] (3,6) circle (9pt);
		\draw[fill=black] (4,2) circle (9pt);
		\draw[fill=black] (5,1) circle (9pt);
		\draw[fill=black] (6,8) circle (9pt);
		\draw[fill=black] (7,5) circle (9pt);
		\draw[fill=black] (8,7) circle (9pt);
		\draw[<->, line cap=round] (-0.5,3.5)--(-0.5,9);
	\end{tikzpicture}
\quad\quad
	\begin{tikzpicture}[scale=0.2222222222222, baseline={(0,0)}]
		\draw [lightgray, ultra thick, line cap=round] (0,0) rectangle (9,10);
		\draw [lightgray, ultra thick, line cap=round] (0,3.5)--(9,3.5);
		\draw [lightgray, ultra thick, line cap=round] (6.5,0)--(6.5,10);
		\draw (0,3.5)--(2,3)--(4,2)--(5,1)--(6.5,0);
		\draw [thick, line cap=round] (0,3.5)--(6.5,10);
		\draw (6.5,3.5)--(7,5.5)--(8,8)--(9,10);
		\draw[fill=black] (1,4.5) circle (9pt);
		\draw[fill=black] (2,3) circle (9pt);
		\draw[fill=black] (3,6.5) circle (9pt);
		\draw[fill=black] (4,2) circle (9pt);
		\draw[fill=black] (5,1) circle (9pt);
		\draw[fill=black] (6,9.5) circle (9pt);
		\draw[fill=black] (7,5.5) circle (9pt);
		\draw[fill=black] (8,8) circle (9pt);
		\draw[<->, line cap=round] (-0.5,0)--(-0.5,3.5);
	\end{tikzpicture}
\quad\quad
	\begin{tikzpicture}[scale=0.2222222222222, baseline={(0,0)}]
		\draw [lightgray, ultra thick, line cap=round] (0,-3) rectangle (9,10);
		\draw [lightgray, ultra thick, line cap=round] (0,3.5)--(9,3.5);
		\draw [lightgray, ultra thick, line cap=round] (6.5,-3)--(6.5,10);
		\draw [thick, line cap=round] (0,3.5)--(6.5,-3);
		\draw [thick, line cap=round] (0,3.5)--(6.5,10);
		\draw (6.5,3.5)--(7,5.5)--(8,8)--(9,10);
		\draw[fill=black] (1,4.5) circle (9pt);
		\draw[fill=black] (2,1.5) circle (9pt);
		\draw[fill=black] (3,6.5) circle (9pt);
		\draw[fill=black] (4,-0.5) circle (9pt);
		\draw[fill=black] (5,-1.5) circle (9pt);
		\draw[fill=black] (6,9.5) circle (9pt);
		\draw[fill=black] (7,5.5) circle (9pt);
		\draw[fill=black] (8,8) circle (9pt);
		\draw[<->, line cap=round] (6.5,-3.5)--(9,-3.5);
	\end{tikzpicture}
\quad\quad
	\begin{tikzpicture}[scale=0.2222222222222, baseline={(0,0)}]
		\draw [lightgray, ultra thick, line cap=round] (0,-3) rectangle (13,10);
		\draw [lightgray, ultra thick, line cap=round] (0,3.5)--(13,3.5);
		\draw [lightgray, ultra thick, line cap=round] (6.5,-3)--(6.5,10);
		\draw [thick, line cap=round] (0,3.5)--(6.5,-3);
		\draw [thick, line cap=round] (0,3.5)--(6.5,10);
		\draw [thick, line cap=round] (6.5,3.5)--(13,10);
		\draw[fill=black] (1,4.5) circle (9pt);
		\draw[fill=black] (2,1.5) circle (9pt);
		\draw[fill=black] (3,6.5) circle (9pt);
		\draw[fill=black] (4,-0.5) circle (9pt);
		\draw[fill=black] (5,-1.5) circle (9pt);
		\draw[fill=black] (6,9.5) circle (9pt);
		\draw[fill=black] (8.5,5.5) circle (9pt);
		\draw[fill=black] (11,8) circle (9pt);
	\end{tikzpicture}
\end{center}
\end{footnotesize}
\caption{``Straightening'' a member of a monotone grid class to show that it also lies in the corresponding geometric grid class.}
\label{fig-straighten-forest}
\end{figure}
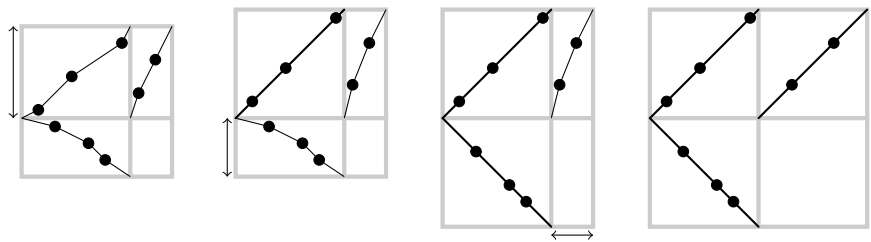

\begin{proposition}
\label{prop-forests-are-geoms}
The classes $\Grid(M)$ and $\Geom(M)$ are equal if and only if the cell graph of $M$ is a forest.
\end{proposition}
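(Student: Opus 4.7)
\textbf{Proof plan for Proposition~\ref{prop-forests-are-geoms}.}

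The inclusion $\Geom(M)\subseteq\Grid(M)$ is immediate from the definitions (a geometric drawing on $\Lambda_M$ automatically yields a monotone $M$-gridding). So both directions amount to deciding when the reverse inclusion holds.

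For the harder direction (``forest $\Rightarrow$ equal''), my plan is to induct on the number of nonzero cells of $M$. The base case of a single nonzero cell is trivial. For the inductive step, let $(i,j)$ be a leaf of the cell graph and let $M'$ be the matrix obtained from $M$ by zeroing out cell $(i,j)$; its cell graph is still a forest. Given $\pi\in\Grid(M)$ with an $M$-gridding, let $\pi'$ be the subpermutation formed by deleting the entries of $\pi$ lying in cell $(i,j)$. Then $\pi'\in\Grid(M')$, so by induction it has a drawing on $\Lambda_{M'}$. The plan is then to extend this drawing to a drawing of $\pi$ on $\Lambda_M$ by placing the deleted entries on the line segment $L_{ij}$. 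Here the leaf property is exactly what is needed: cell $(i,j)$ shares at most one row or column with any other nonzero cell of $M$, so only one of its two coordinate ranges (say, the $x$-range if its unique neighbor shares column $i$) is constrained by previously placed points, while the $y$-range of $L_{ij}$ is free. Since $L_{ij}$ projects onto the full $x$-interval $[i-1,i]$, we can choose $x$-coordinates on $L_{ij}$ that interleave correctly with those of $\pi'$ in column $i$, and the forced $y$-coordinates on $L_{ij}$ will automatically be monotone and lie in row $j$'s $y$-range (which, by the leaf property, contains no other points). A small additional argument (shifting the $\pi'$-points slightly within their own segments if needed) shows this extension can always be carried out.

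For the other direction (``cycle $\Rightarrow$ not equal''), the plan is to construct, from any cycle in the cell graph, an explicit permutation in $\Grid(M)\setminus\Geom(M)$. The worked example to imitate is $\pi = 351624$ in $\Grid(J_2)\setminus\Geom(J_2)$: in any putative geometric drawing of $\pi$ on $\Lambda_{J_2}$, following the forced inequalities around the $4$-cycle of cells yields $a_4>a_5$ while the column order demands $a_4<a_5$. In general, given a minimal cycle $(i_1,j_1),(i_2,j_1),(i_2,j_2),(i_3,j_2),\dots,(i_1,j_k),(i_1,j_1)$ in the cell graph, one places carefully chosen entries in each cell of the cycle (a small packet whose extreme coordinates pin the relevant placement to one end of the corresponding line segment). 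Translating the cell-to-cell ``extremity'' propagation into inequalities on the $x$-coordinates (in segments alternately constrained by shared columns and shared rows) forces an inequality that contradicts the cyclic column ordering, exactly as in the $J_2$ example. Because each column and row of $M$ imposes only a linear order and each cell $L_{ij}$ is a single line segment, the resulting inequalities are tight enough that the contradiction is unavoidable regardless of the sign pattern around the cycle.

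The hard part will be the forward direction: choosing the entries of the counterexample permutation so that the cycle-forcing argument works for an arbitrary (not necessarily $4$-) cycle with an arbitrary sign pattern. My plan there is to first reduce to a shortest cycle (so that no chord short-circuits the forcing), then to handle the two sign possibilities at each edge of the cycle uniformly by always pinning to whichever endpoint of $L_{ij}$ produces the next forced extremity in the walk. The forest direction, by contrast, is essentially bookkeeping once the leaf-peeling induction is set up.
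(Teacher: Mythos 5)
Your forest direction takes a genuinely different route from the paper's: the paper fixes a gridded plot of $\pi$, roots the tree, and propagates axis stretches cell by cell (Figure~\ref{fig-straighten-forest}), whereas you peel off a leaf and reinsert. The idea is salvageable, but as written the induction has a gap: the inductive hypothesis only gives you \emph{some} drawing of $\pi'$ on $\Lambda_{M'}$, and that drawing may realize a gridding of $\pi'$ different from the one inherited from $\pi$ (griddings are far from unique; cf.\ Figure~\ref{fig-epicene}). If an entry that the inherited gridding places in the leaf's row or column is drawn by the inductive drawing in some other cell, the reinsertion of the leaf entries can be blocked outright, and shifting points within their own segments cannot repair a point sitting on the wrong segment. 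The fix is to strengthen the statement you induct on: every $M$-\emph{gridded} permutation has a drawing on $\Lambda_M$ in which each entry lies on the segment of its own cell. With that strengthening the leaf-peeling works (note also that a leaf may share its constrained row or column with several nonzero cells, not just its unique neighbour, which is harmless since the free projection lets you realize any interleaving), and the argument becomes essentially a repackaging of the paper's stretching proof.

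The cycle direction is where the proposal genuinely fails. Your anchor example is false: $351624$ \emph{can} be drawn on $\Lambda_{J_2}$ --- place points at $(0.5,0.5)$, $(0.9,0.9)$, $(1.2,1.2)$ on the diagonal segments and at $(1.1,0.1)$, $(1.3,0.3)$, $(1.6,0.6)$ on the segment from $(1,0)$ to $(2,1)$; reading off the pattern gives exactly $351624$. This is not a bookkeeping slip but a symptom of why the local ``pinning'' scheme cannot work: nothing forces a given entry into a given cell of a geometric drawing, each segment spans the full extent of its row and column, and a small ``packet per cell'' can always slide onto unintended segments (here the long diagonal) and defuse the intended chain of inequalities. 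Genuine witnesses in $\Grid(M)\setminus\Geom(M)$ must be long --- spiral-like permutations winding around the cycle many times --- and verifying one directly is delicate. A much cheaper route, using results already quoted in the paper, avoids an explicit witness altogether: every geometrically griddable class is wpo (see the remark after Theorem~\ref{thm-ggc-strong-rat}), while $\Grid(M)$ is not wpo when the cell graph of $M$ contains a cycle (Theorem~\ref{thm-mono-grid-wpo}); hence the two classes cannot coincide. I recommend replacing your construction with this argument, or, if you want an explicit witness, with a carefully verified spiral construction rather than a fixed bounded configuration.
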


Figure~\ref{fig-straighten-forest} gives a sense of how to prove Proposition~\ref{prop-forests-are-geoms}. Suppose that the cell graph of $M$ is a tree (the forest case follows easily from the tree case) and choose one cell to be the root. Now take the plot of any gridded permutation $\pi\in\Grid(M)$ and stretch (or shrink) this row vertically so that the points lie on a line of slope $\pm 1$. Next, for every neighbor cell in the same row (there can be at most two), we stretch the $x$-axis, while for every neighbor cell in the same column we stretch the $y$-axis. Because $M$ is a tree, we can continue this process throughout all cells without having to revisit a cell.

Next we turn to the enumeration of geometric grid classes, where it turns out we can say a lot more than we could for monotone grid classes. First, though, we need a bit more precision. There are infinitely many ways to draw every permutation $\pi\in\Geom(M)$ on the standard figure $\Lambda_M$ because we can move the points by tiny amounts without changing the underlying permutation, but clearly we want to consider minute changes such as this as equivalent. Therefore we say that a \emph{gridded permutation} is a (drawing of a) permutation together with grid lines corresponding to $M$, and we denote the set of all gridded permutations in $\Geom(M)$ by $\Geom^\gridded(M)$. Every permutation in $\Geom(M)$ then corresponds to at least one gridded permutation in $\Geom^\gridded(M)$, and not more than ${n\choose t-1}{n\choose u-1}$, because there are only so many places we can insert the grid lines. Thus we obtain the following.

\begin{observation}
\label{obs-gr-gridded}
The (upper, lower, and proper) growth rates of $\Geom(M)$ and $\Geom^\gridded(M)$ are identical.
\end{observation}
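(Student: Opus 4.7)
The plan is to exploit the polynomial-factor sandwich that the paragraph preceding the observation has essentially handed us, and then invoke the standard fact that polynomial factors vanish under $n$th roots. First I would record the two-sided comparison: the map that forgets the grid lines gives a surjection $\Geom^\gridded(M) \to \Geom(M)$, so $|\Geom_n(M)| \le |\Geom^\gridded_n(M)|$, while the observation about gridding $\pi\in\Geom_n(M)$ in at most $\binom{n}{t-1}\binom{n}{u-1}$ ways (insert the $t-1$ vertical separators and $u-1$ horizontal separators among the $n$ positions/values) gives the reverse inequality
\[
|\Geom^\gridded_n(M)| \le \binom{n}{t-1}\binom{n}{u-1}\,|\Geom_n(M)|.
\]

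Next I would take $n$th roots throughout and let $p(n)=\binom{n}{t-1}\binom{n}{u-1}$, which is bounded above by $n^{t+u-2}$ and hence satisfies $p(n)^{1/n}\to 1$ as $n\to\infty$. This yields
\[
\sqrt[n]{|\Geom_n(M)|} \;\le\; \sqrt[n]{|\Geom^\gridded_n(M)|} \;\le\; p(n)^{1/n}\cdot\sqrt[n]{|\Geom_n(M)|},
\]
and taking $\limsup$ (resp.\ $\liminf$) across this chain shows $\ugr(\Geom^\gridded(M))=\ugr(\Geom(M))$ and $\lgr(\Geom^\gridded(M))=\lgr(\Geom(M))$. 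The proper growth rate statement is then immediate: if either common value exists (i.e., upper and lower coincide) for one of the two objects, the same holds for the other, and both proper growth rates are equal.

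There is no real obstacle here; the only thing to be slightly careful about is the formal justification that the number of griddings is at most $\binom{n}{t-1}\binom{n}{u-1}$, which is already stated verbatim in the paragraph immediately preceding the observation, so it may be quoted rather than reproved. The proof is therefore essentially a one-line application of $\lim_{n\to\infty} n^{c/n}=1$ for constant $c$, and can be written in just a few sentences.
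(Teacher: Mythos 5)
Your argument is correct and is exactly the reasoning the paper itself uses: the sentence immediately preceding the observation supplies the sandwich $|\Geom_n(M)| \le |\Geom^\gridded_n(M)| \le \binom{n}{t-1}\binom{n}{u-1}\,|\Geom_n(M)|$, and the observation follows because the polynomial factor disappears under $n$th roots. Nothing further is needed.
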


We would like to describe an encoding of the gridded permutations in $\Geom^\gridded(M)$ by words over a finite alphabet. First let us return to the permutations drawn on an $\textsf{X}$ to see an easy example before presenting the general construction. Below is our drawing from Figure~\ref{fig-drawnon} with two changes: first, the points are labeled by what quadrant they lie in (we consider the center of the $\textsf{X}$ to be the origin), and second, the line segments have been assigned orientations.
\begin{center}
\begin{footnotesize}
	\begin{tikzpicture}[scale=0.1, baseline=(current bounding box.center)]
		\draw [<->, thick, line cap=round] (-10,10)--(0,0)--(10,-10);
		\draw [<->, thick, line cap=round] (-10,-10)--(0,0)--(10,10);
		\useasboundingbox (current bounding box.south west) rectangle (current bounding box.north east);
		\draw[fill=black] (-8,-8) circle (20pt) node [below right] {$3$};
		\draw[fill=black] (-6,6) circle (20pt) node [below left] {$2$};
		\draw[fill=black] (-4,4) circle (20pt) node [below left] {$2$};
		\draw[fill=black] (-1,1) circle (20pt) node [below left] {$2$};
		\draw[fill=black] (2,2) circle (20pt) node [above left] {$1$};
		\draw[fill=black] (3,-3) circle (20pt) node [above right] {$4$};
		\draw[fill=black] (5,-5) circle (20pt) node [above right] {$4$};
		\draw[fill=black] (7,-7) circle (20pt) node [above right] {$4$};
	\end{tikzpicture}
\end{footnotesize}
\end{center}
To encode this gridded permutation, we order these points according to their distance from the beginning of their line segment, which in this case is also their distance from the center of the figure, and record the labels of the points in this order. While it may take a ruler to verify it, in our example above the encoding is $21242443$.

Suppose that we have an arbitrary $\zpm$ matrix $M$. The important property of the orientation above is that it is \emph{consistent}---in each column all lines are oriented either left or right, and in each row all lines are oriented either up or down. This is not possible for all matrices, for example, the standard figure of the matrix
$$
\fnmatrix{rr}{1&1\\1&-1}
$$
cannot be consistently oriented. But there is a way to remedy this. The grid lines of the standard figure consist of $x=i$ and $y=i$ for all (relevant) integers $i$. If we also add grid lines at half-integer values of $x$ and $y$, each cell is chopped into four but the geometric grid class itself is not changed.

In terms of the matrix we started with, this operation is equivalent to performing the substitutions
$$
0\leftarrow\fnmatrix{rr}{0&0\\0&0},
\quad\quad
1\leftarrow\fnmatrix{rr}{0&1\\1&0},
\quad\quad
-1\leftarrow\fnmatrix{rr}{-1&0\\0&-1},
$$
and we call the resulting matrix, written $M^{\times 2}$, the \emph{double refinement} of $M$. Thus the standard figure of $M^{\times 2}$ consists of $2\times 2$ blocks that are all subfigures of the $\textsf{X}$, and thus we can use its consistent orientation to define a consistent orientation of $M^{\times 2}$.

For the rest of this discussion let us suppose that we have a consistent orientation for the standard figure of $M$ (replacing $M$ by $M^{\times 2}$ if necessary). Next we fix a \emph{cell alphabet}, $\Sigma_M$, containing one letter corresponding to each nonempty cell of $M$. Finally, we encode a gridded drawing (of a permutation) by ordering its points by their distance from the beginning of their line segment (we can assume there are no ties by possibly moving points by a minuscule amount) and then recording the corresponding ``cell letter'' of each point in this order.

Formally, we have just defined a map
$$
\varphi^\gridded\st \Sigma_M^\ast\longrightarrow\Geom^\gridded(M).
$$
This is in general a many-to-one map, because the letters of $\Sigma_M$ often ``commute''; for example, in our encoding of permutations drawn on an $\textsf{X}$, interchanging adjacent occurrences of $1$ and $3$ does not change the image (the gridded permutation), and the same holds for $2$ and $4$. This is because the corresponding cells of the standard figure are \emph{independent}, meaning that they share neither a row nor a column. For general matrices $M$, we write $v\equiv_M w$ if $w$ can be obtained from $v$ via a sequence of interchanges of adjacent occurrences of letters corresponding to independent cells. It is not difficult to see that $\varphi^\gridded$ is actually a bijection when restricted to equivalence classes of words modulo $\equiv_M$, i.e., that the map
$$
\varphi^\gridded\st \Sigma_M^\ast/\equiv_M\longrightarrow \Geom^\gridded(M)
$$
is a bijection.

Objects such as $\Sigma^\ast/\equiv$ are known as \emph{trace (or partially commutative) monoids}. They were first introduced by Cartier and Foata~\cite{cartier:problemes-combi:} in 1969 who used them to give a combinatorial proof of MacMahon's Master Theorem. Cartier and Foata showed via M\"obius inversion that the generating function for equivalence classes of $\Sigma^\ast/\equiv$ (by length) is
$$
\frac{1}{1-(c_1x-c_2x^2+c_3x^3-c_4x^4+\cdots)},
$$
where $c_k$ is the number of $k$-elements subsets of $\Sigma$ that pairwise commute (see also Flajolet and Sedgewick~\cite[V.3.3]{flajolet:analytic-combin:}). In our example of permutations drawn on an $\textsf{X}$, we have that $c_0=1$, $c_1=4$, $c_2=2$, and $c_3=c_4=0$. Thus the generating function for the corresponding trace monoid is
$$
\frac{1}{1-4x+2x^2}.
$$
As Observation~\ref{obs-gr-gridded} tells us should be the case, this generating function has the same growth rate as the class $\mathcal{X}$.

Growth rates of trace monoids were studied by Goldwurm and Santini~\cite{goldwurm:clique-polynomi:}. Bevan~\cite{bevan:growth-rates-of:geom} used their work to express the growth rate of $\Geom(M)$ in terms of the row-column graph of $M$, giving a geometric analogue of his Theorem~\ref{thm-mono-grid-gr}. A \emph{${k}$-matching} of a graph is a set of $k$ edges, no two incident with the same vertex. Letting $m_k(G)$ denote the number of $k$-matchings of a graph on $n$ vertices, its \emph{matching polynomial} is defined by
$$
\mu_G(x)=\sum_{k\ge 0} (-1)^k m_k(G) x^{n-2k}.
$$
(No matching can contain over $\lfloor n/2\rfloor$ edges, so this is indeed a polynomial.) We state Bevan's result below in terms of the double refinement of $M$ because this is necessary in order to guarantee that we have a consistent orientation.

\begin{theorem}[Bevan~\cite{bevan:growth-rates-of:geom}]
The growth rate of $\Geom(M)$ exists and is equal to the square of the largest root of the matching polynomial of the row-column graph of $M^{\times 2}$.
\end{theorem}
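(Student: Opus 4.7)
The plan is to identify the ordinary generating function of $\Geom^\gridded(M^{\times 2})$ as $1/P(x)$ for a polynomial $P$ that is (up to a monomial substitution) the matching polynomial of the row-column graph $G$ of $M^{\times 2}$, and then to pin down the dominant singularity of this rational function.

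First I would reduce to counting gridded permutations via Observation~\ref{obs-gr-gridded}, after replacing $M$ by its double refinement $M^{\times 2}$. This replacement does not alter $\Geom(M)$ but it guarantees a consistent orientation of the standard figure, since each $2\times 2$ block of $M^{\times 2}$ is a subfigure of an $\textsf{X}$. The bijection $\varphi^\gridded$ then reduces the problem to counting equivalence classes in the trace monoid $\Sigma_{M^{\times 2}}^\ast / \equiv_{M^{\times 2}}$.

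Next I would apply the Cartier--Foata formula quoted in the excerpt, rewriting its denominator as $P(x) = \sum_{k \ge 0} (-1)^k c_k x^k$, where $c_k$ enumerates $k$-subsets of $\Sigma_{M^{\times 2}}$ of pairwise-commuting letters. The crucial identification is that letters of $\Sigma_{M^{\times 2}}$ correspond exactly to edges of $G$, and two letters commute precisely when their cells share neither a row nor a column, i.e., when the corresponding edges are non-adjacent in $G$. Hence $c_k = m_k(G)$, giving $P(x) = \sum_{k \ge 0} (-1)^k m_k(G)\, x^k$.

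The bridge to $\mu_G$ is then the substitution identity
\[
z^n P(1/z^2) \;=\; \sum_{k \ge 0}(-1)^k m_k(G)\, z^{n-2k} \;=\; \mu_G(z),
\]
where $n = |V(G)|$. Consequently $y > 0$ is a root of $P$ if and only if $y = 1/z^2$ for some real root $z$ of $\mu_G$. Invoking the Heilmann--Lieb theorem that $\mu_G$ has only real roots, every root of $P$ must be a positive real, and the smallest such is $1/\lambda^2$, where $\lambda$ is the largest root of $\mu_G$. Since the generating function $1/P(x)$ for $\Geom^\gridded(M^{\times 2})$ is rational with non-negative Taylor coefficients (and numerator $1$, so no singularity is cancelled), Pringsheim's theorem identifies its radius of convergence with this smallest positive singularity. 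The growth rate of $\Geom(M)$ therefore exists and equals $\lambda^2$.

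The main obstacle in this plan is the appeal to Heilmann--Lieb, which is a substantive result in its own right; without real-rootedness one can still obtain $\lambda^2$ as an upper bound (and even the precise radius of convergence from the positive real root given by Perron--Frobenius-type arguments on the coefficient matrix of $1/P$), but the cleanest statement ``largest root of $\mu_G$'' requires knowing that all roots are real so that ``largest'' is unambiguous and $1/\lambda^2$ is genuinely the singularity nearest the origin.
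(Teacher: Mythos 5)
Your proposal is correct and follows essentially the same route as the paper's sketch (and Bevan's cited argument): pass to gridded permutations of $M^{\times 2}$ via Observation~\ref{obs-gr-gridded}, identify them with traces via $\varphi^\gridded$, apply the Cartier--Foata formula, recognize pairwise-commuting sets of letters as matchings of the row-column graph, and locate the dominant singularity of $1/P(x)$ using real-rootedness of the matching polynomial. The only step to tighten is the final one: Pringsheim's theorem only pins down the radius of convergence (hence the upper growth rate), whereas existence of the proper growth rate follows because real-rootedness makes $1/\lambda^{2}$ the unique pole of minimal modulus, so standard rational-function asymptotics (rather than Pringsheim per se) give $\lim_{n\to\infty}\sqrt[n]{a_n}=\lambda^{2}$.
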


In contrast to Theorem~\ref{thm-mono-grid-gr}, this result shows that changing the sign of an entry of $M$ can change the growth rate of the corresponding geometric grid class. For example, Bevan~\cite{bevan:growth-rates-of:geom} notes that
$$
\gr\left(\Geom\fnmatrix{rrr}{-1&0&-1\\1&-1&1}\right)
=
4
<
3+\sqrt{2}
=
\gr\left(\Geom\fnmatrix{rrr}{1&0&-1\\1&-1&1}\right).
$$


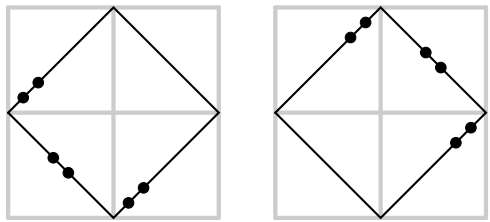
\begin{figure}
\begin{center}
\begin{footnotesize}
	\begin{tikzpicture}[scale=0.2, baseline=(current bounding box.center)]
		\foreach \i in {0,7,14}{
			\draw [lightgray, ultra thick, line cap=round] (0,\i)--(14,\i);
		}
		\foreach \i in {0,7,14}{
			\draw [lightgray, ultra thick, line cap=round] (\i,0)--(\i,14);
		}
		\draw [thick, line cap=round] (0,7)--(7,14)--(14,7)--(7,0)--(0,7);
		\draw[fill=black] (1,8) circle (10pt); 
		\draw[fill=black] (2,9) circle (10pt); 
		\draw[fill=black] (3,4) circle (10pt); 
		\draw[fill=black] (4,3) circle (10pt); 
		\draw[fill=black] (8,1) circle (10pt); 
		\draw[fill=black] (9,2) circle (10pt); 
	\end{tikzpicture}
\quad\quad
	\begin{tikzpicture}[scale=0.2, baseline=(current bounding box.center)]
		\foreach \i in {0,7,14}{
			\draw [lightgray, ultra thick, line cap=round] (0,\i)--(14,\i);
		}
		\foreach \i in {0,7,14}{
			\draw [lightgray, ultra thick, line cap=round] (\i,0)--(\i,14);
		}
		\draw [thick, line cap=round] (0,7)--(7,14)--(14,7)--(7,0)--(0,7);
		\draw[fill=black] (5,12) circle (10pt); 
		\draw[fill=black] (6,13) circle (10pt); 
		\draw[fill=black] (10,11) circle (10pt); 
		\draw[fill=black] (11,10) circle (10pt); 
		\draw[fill=black] (12,5) circle (10pt); 
		\draw[fill=black] (13,6) circle (10pt); 
	\end{tikzpicture}
\end{footnotesize}
\end{center}
\caption{Two very different griddings of the permutation $564312$.}
\label{fig-epicene}
\end{figure}

Having addressed the asymptotic enumeration of geometric grid classes, we move on to their exact enumeration. As in our study of monotonically griddable classes, we are interested not only in geometric grid classes themselves, but also in their subclasses. Thus we say that the class $\C$ is \emph{geometrically griddable} if $\C\subseteq\Geom(M)$ for some finite $\zpm$ matrix $M$. Unlike the case with monotonically griddable classes, there is no known characterization of geometrically griddable classes.

With exact enumeration we have a new type of problem. While our map $\varphi^\gridded$ restricts to a bijection between $\Sigma_M^\ast/\equiv_M$ and $\Geom^\gridded(M)$, a given permutation may have many different griddings, as demonstrated in Figure~\ref{fig-epicene}. In \cite{albert:geometric-grid-:}, this issue is addressed by imposing an order on all $\Geom^\gridded(M)$ griddings of a given permutation $\pi$. Among all of these griddings, we would like to select the minimal one, called the \emph{preferred gridding}. By using a trick involving ``marking entries'' it can be shown that the set of all preferred griddings of permutations in a given geometrically griddable class is in bijection with a regular language, establishing the following result.

\begin{theorem}[Albert, Atkinson, Bouvel, Ru\v{s}kuc, and Vatter~\cite{albert:geometric-grid-:}]
\label{thm-ggc-strong-rat}
Every geometrically griddable class is strongly rational and finitely based.
\end{theorem}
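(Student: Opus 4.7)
First, I would normalize the setup by replacing $M$ with its double refinement $M^{\times 2}$ if needed, so that the standard figure $\Lambda_M$ admits a consistent orientation, and fix the corresponding cell alphabet $\Sigma_M$ together with the surjection $\varphi^\gridded : \Sigma_M^\ast \twoheadrightarrow \Geom^\gridded(M)$. The key monotonicity property is that $\varphi^\gridded$ is order-preserving with respect to the subword order: deleting a letter from $w$ removes the corresponding point from $\varphi^\gridded(w)$, so $v \preceq w$ as subwords implies $\varphi^\gridded(v) \le \varphi^\gridded(w)$ as gridded (and hence plain) permutations. Higman's Lemma then gives that $\Sigma_M^\ast$ is well-quasi-ordered by the subword order, whence the image $\Geom(M)$ is wpo as a permutation class, and so is every subclass $\C \subseteq \Geom(M)$.

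For finite basis, I would first establish that $\Geom(M)$ itself has a finite basis. Then, for any subclass $\C \subseteq \Geom(M)$ with basis $B_\C$, write
\begin{equation*}
B_\C = \bigl(B_\C \cap \Geom(M)\bigr) \,\sqcup\, \bigl(B_\C \setminus \Geom(M)\bigr).
\end{equation*}
The first part is an antichain in the wpo class $\Geom(M)$, hence finite. For the second, if $\beta \in B_\C$ lies outside $\Geom(M)$, then every proper subpermutation of $\beta$ belongs to $\C \subseteq \Geom(M)$, making $\beta$ a minimal non-$\Geom(M)$ permutation—i.e., $\beta$ lies in the basis of $\Geom(M)$. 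Thus $B_\C \setminus \Geom(M)$ is contained in the (finite) basis of $\Geom(M)$, and $B_\C$ is finite. The remaining ingredient—finiteness of the basis of $\Geom(M)$ itself—can be established by showing that any minimal obstruction to drawability on $\Lambda_M$ has size bounded in terms of $M$, exploiting the rigid geometric structure of $\Lambda_M$.

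For strong rationality, define $\Geom^{pref}(M) \subseteq \Geom^\gridded(M)$ to consist of a single canonical gridding for each $\pi \in \Geom(M)$—say, the lexicographically minimum one with respect to column divisions followed by row divisions—so that the forgetful map $\Geom^{pref}(M) \to \Geom(M)$ is a bijection. Let $L_\C \subseteq \Sigma_M^\ast$ be the set of canonical words (one per $\equiv_M$-class, via a further tie-breaking rule) whose image under $\varphi^\gridded$ is the preferred gridding of some $\pi \in \C$. Then the generating function of $L_\C$ equals that of $\C$, so rationality of the latter follows from regularity of $L_\C$. Regularity of $L_\C$ decomposes as (i) regularity of the language of words encoding preferred griddings, and (ii) regularity of the subclass conditions: for each of the finitely many basis elements $\beta \in B_\C$, the words whose image avoids $\beta$ form a regular language via standard automaton-based tracking of partial embeddings of $\beta$. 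The main obstacle is (i): the preference condition looks global, but via the marking-entries trick one identifies bounded-size witness configurations whose presence or absence can be monitored by a finite automaton as the word is read—a point in cell $c$ can be shifted into an adjacent cell $c'$ (producing a strictly smaller gridding) if and only if a certain local pattern appears. Once this regularity is established, the transfer theorem for regular languages yields a rational generating function for $\C$, completing the proof of strong rationality.
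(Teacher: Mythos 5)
Your overall route matches the published argument and the sketch given in this survey: you establish well-partial-order first (Higman's Lemma on the subword order of $\Sigma_M^\ast$, pushed through the order-preserving map $\varphi^\gridded$, after normalizing by the double refinement to get a consistent orientation), and you obtain strong rationality by selecting preferred griddings, encoding them by canonical words, invoking the marking trick to get a regular language, tracking avoidance of the finitely many relevant basis elements by finite automata, and transferring regularity to a rational generating function. This is essentially the proof of Albert, Atkinson, Bouvel, Ru\v{s}kuc, and Vatter, including the point that wpo must be in place before the rest.

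The genuine gap is in the ``finitely based'' half. Your reduction of the basis of an arbitrary subclass $\C\subseteq\Geom(M)$ to an antichain inside $\Geom(M)$ together with the basis of $\Geom(M)$ is correct, but it leaves untouched exactly the hard instance of the claim: that $\Geom(M)$ itself is finitely based. Your one-sentence justification---that minimal obstructions to drawability on $\Lambda_M$ have size bounded in terms of $M$ by the ``rigid geometric structure''---is merely a restatement of that claim, so at the crux the argument is circular. The missing idea in the known proof is to show that every permutation all of whose one-point deletions lie in $\Geom(M)$ itself lies in $\Geom(M')$ for an explicitly constructed larger matrix $M'$ (obtained by suitably inflating/refining $M$); then the entire basis of $\C$, including the basis elements outside $\Geom(M)$, is an antichain in the wpo class $\Geom(M')$ and hence finite. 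Without an argument of this kind (or a genuine direct bound on minimal obstructions), the finite-basis assertion is not established. A smaller caution: the ``standard'' automaton for pattern containment in step (ii) works only because the consistent orientation guarantees that the relative positions of points in cells sharing a row or column can be read off from the order of their letters in the word; you set this up via the double refinement, but it should be said explicitly when claiming regularity there.
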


In particular, every geometrically griddable class is wpo (we have rewritten history a bit here; the wpo property plays an important role in the proof of Theorem~\ref{thm-ggc-strong-rat}, and thus must be established first). By Proposition~\ref{prop-forests-are-geoms}, Theorem~\ref{thm-ggc-strong-rat} thereby generalizes one direction of Theorem~\ref{thm-mono-grid-wpo}.

Continuing in this line of research, Albert, Ru\v{s}kuc, and Vatter studied the inflations and substitution closures of geometrically griddable classes. Their main result is the following generalization of Albert and Atkinson's Theorem~\ref{thm-fin-simples-alg}.

\begin{theorem}[Albert, Ru\v{s}kuc, and Vatter~\cite{albert:inflations-of-g:}]
\label{thm-ggc-inflations}
Let $\C$ be a geometrically griddable class. Its substitution closure, $\langle\C\rangle$, is strongly algebraic. Moreover, for every strongly rational class $\U$, the inflation $\C[\U]$ is strongly rational.
\end{theorem}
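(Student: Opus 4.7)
The plan is to prove both claims together by combining the unique substitution decomposition (Proposition~\ref{prop-simple-decomp-unique}) with the regular-language encoding of geometrically griddable classes developed in the proof of Theorem~\ref{thm-ggc-strong-rat}. I would tackle the inflation statement first and then handle the substitution closure via a related functional equation.

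For the inflation statement, fix a $\zpm$ matrix $M$ with $\C\subseteq\Geom(M)$ (after a double refinement to secure a consistent orientation). In the proof of Theorem~\ref{thm-ggc-strong-rat}, preferred $M$-griddings of permutations in $\C$ are identified with a regular language $L_\C$ over the cell alphabet $\Sigma_M$. I would enrich this encoding: each letter representing a gridded entry of $\sigma\in\C$ becomes labeled by an element of $\U$ that inflates that entry, with weight $x^{|u|}$ instead of $x$. The generating function of the resulting weighted regular language is rational in $G_\U(x)$ by the transfer-matrix method, so rationality of $G_\U$ yields rationality of the generating function of $\C[\U]$. For strong rationality, any subclass $\D\subseteq\C[\U]$ pulls back to a weighted regular sublanguage of the enriched encoding: a basis element $\beta$ of $\D$ forbids only finitely many enriched-word configurations at each length, since $\C$ is finitely based (Theorem~\ref{thm-ggc-strong-rat}) and each inflation involves elements of a strongly-rational subclass of $\U$.

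For the substitution closure, let $\mathcal{S}\subseteq\C$ denote the subclass of simple permutations in $\C$; by Theorem~\ref{thm-ggc-strong-rat}, $\mathcal{S}$ is strongly rational, with rational generating function $S(x)$. A nontrivial permutation lies in $\langle\C\rangle$ precisely when every simple in its substitution decomposition lies in $\mathcal{S}$. Writing $F$ for the generating function of nonempty permutations in $\langle\C\rangle$, and $F_\oplus = F_\ominus = F/(1+F)$ for the generating functions of its sum- and skew-indecomposable members (via the standard relation $F = F_\oplus/(1-F_\oplus)$), Proposition~\ref{prop-simple-decomp-unique} yields
\[
F \;=\; x \;+\; \frac{F_\oplus^{2}}{1 - F_\oplus} \;+\; \frac{F_\ominus^{2}}{1 - F_\ominus} \;+\; S^{\geq 4}(F),
\]
where $S^{\geq 4}$ is the rational truncation of $S$ to simples of length at least four. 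Since the right-hand side is rational in $F$ over $\mathbb{Q}(x)$, this is an algebraic equation and $F$ is algebraic. Strong algebraicity for arbitrary subclasses $\D\subseteq\langle\C\rangle$ then follows by induction on substitution depth: the simples of $\D$ form a subclass of $\mathcal{S}$ (still strongly rational), and the inflating components form a class of strictly smaller substitution depth that is strongly algebraic by the inductive hypothesis, at which point the inflation machinery above applies.

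The main obstacle, in both parts, is the passage from a single class to arbitrary subclasses, since $\D\subseteq\C[\U]$ (or $\D\subseteq\langle\C\rangle$) need not itself be of the form $\C'[\U']$. The clean way to absorb this is through a query-complete set of properties in the style of Brignall--Huczynska--Vatter~\cite{brignall:simple-permutat:}: the avoidance constraints imposed by the basis of $\D$ decompose into boolean combinations of finitely many pattern-avoidance and interval-type properties that transfer faithfully through the enriched regular-language encoding and the substitution recursion, preserving rationality and algebraicity at each step. Verifying that this query-complete framework meshes correctly with the geometric-grid enriched encoding is the real technical content of the proof.
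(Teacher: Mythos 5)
You should first note that the survey does not actually prove this theorem: it states explicitly that the proof is ``well beyond the scope of this survey'' and only names the ingredients of the proof in Albert--Ru\v{s}kuc--Vatter (query-complete sets of properties, Brignall's decomposition of permutations lying in $\C[\U]$, and the Albert--Atkinson--Vatter results on strongly rational classes). Your sketch correctly gestures at several of these ingredients, but as written it has genuine gaps at precisely the points where the real work lies. First, the enriched encoding does not count $\C[\U]$: a permutation of $\C[\U]$ typically has \emph{many} representations $\sigma[\alpha_1,\dots,\alpha_m]$ with $\sigma\in\C$ and $\alpha_i\in\U$ (uniqueness in Proposition~\ref{prop-simple-decomp-unique} holds only when $\sigma$ is simple, and even then the $\alpha_i$ need not lie in $\U$ with $\sigma\in\C$ in a canonical way), so substituting $G_\U(x)$ for $x$ in a transfer-matrix computation counts representations, not permutations. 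Producing a canonical representation is exactly what the cited work of Brignall on decompositions of members of $\C[\U]$ supplies, and it is absent from your argument. Second, your claim that the simple permutations of $\C$ have rational generating function ``by Theorem~\ref{thm-ggc-strong-rat}'' is a misuse of strong rationality: the simples do not form a downward-closed set, so strong rationality of $\C$ says nothing about them. (The survey cites the analogous statement for \emph{sum indecomposable} permutations in a strongly rational class, which is itself a nontrivial theorem, not a formality.) Without a rational or at least algebraic $S(x)$, your functional equation for $\langle\C\rangle$ does not yield algebraicity.

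Third, the passage to arbitrary subclasses is where your proposal stops rather than concludes. The induction on substitution depth is not well-founded: subclasses of $\langle\C\rangle$ (including $\langle\C\rangle$ itself, or the separable permutations as soon as $12,21\in\C$) contain permutations of unbounded substitution depth, so ``strictly smaller substitution depth'' never terminates; and membership of $\sigma[\alpha_1,\dots,\alpha_m]$ in a subclass $\D$ is not governed by membership of the $\alpha_i$ in a single smaller class, but by which finer properties (partial containments of basis elements of $\D$) each $\alpha_i$ satisfies. You correctly name query-complete sets of properties as the tool that repairs this, but you defer exactly the step that constitutes the proof: defining the finite property alphabet, showing the decorated gridding language remains regular, and extracting from it a rational (for $\C[\U]$) or algebraic (for $\langle\C\rangle$) system for every subclass. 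So the proposal is a reasonable outline of the strategy the literature actually uses, but it does not yet contain a proof, and two of its intermediate claims (the bijectivity of the enriched encoding and the rationality of the generating function for simples) are false or unjustified as stated.
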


It follows that all such classes are wpo. The proof of this result is well beyond the scope of this survey. Both parts of the theorem rely on the notion of ``query-complete sets of properties'' introduced by Brignall, Huczynska, and Vatter~\cite{brignall:simple-permutat:} in their generalization of Albert and Atkinson's Theorem~\ref{thm-fin-simples-alg}. In addition, the proof of the second part of the theorem relies on the work of Brignall~\cite{brignall:wreath-products:} on decompositions of permutations contained in inflations of the form $\C[\U]$ and the work of Albert, Atkinson, and Vatter~\cite{albert:subclasses-of-t:} on strongly rational classes (in particular, that the sum indecomposable permutations in a strongly rational class themselves have a rational generating function).

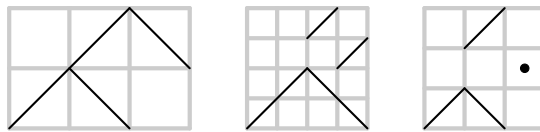
\begin{figure}
\begin{footnotesize}
$$
\begin{array}{ccccc}
	\begin{tikzpicture}[scale=0.8, baseline=(current bounding box.center)]
		\foreach \i in {0,1,2}{
			\draw [lightgray, ultra thick, line cap=round] (0,\i)--(3,\i);
		}
		\foreach \i in {0,1,2,3}{
			\draw [lightgray, ultra thick, line cap=round] (\i,0)--(\i,2);
		}
		\draw [thick, line cap=round] (0,0)--(2,2);
		\draw [thick, line cap=round] (1,1)--(2,0);
		\draw [thick, line cap=round] (2,2)--(3,1);
	\end{tikzpicture}
&&
	\begin{tikzpicture}[scale=0.4, baseline=(current bounding box.center)]
		\foreach \i in {0,1,2,3,4}{
			\draw [lightgray, ultra thick, line cap=round] (0,\i)--(4,\i);
		}
		\foreach \i in {0,1,2,3,4}{
			\draw [lightgray, ultra thick, line cap=round] (\i,0)--(\i,4);
		}
		\draw [thick, line cap=round] (0,0)--(2,2);
		\draw [thick, line cap=round] (2,2)--(4,0);
		\draw [thick, line cap=round] (2,3)--(3,4);
		\draw [thick, line cap=round] (3,2)--(4,3);
	\end{tikzpicture}
&&
	\begin{tikzpicture}[scale=0.533333333, baseline=(current bounding box.center)]
		\foreach \i in {0,1,2,3}{
			\draw [lightgray, ultra thick, line cap=round] (0,\i)--(3,\i);
		}
		\foreach \i in {0,1,2,3}{
			\draw [lightgray, ultra thick, line cap=round] (\i,0)--(\i,3);
		}
		\draw [thick, line cap=round] (0,0)--(1,1);
		\draw [thick, line cap=round] (1,1)--(2,0);
		\draw [thick, line cap=round] (1,2)--(2,3);
		\draw[fill=black] (2.5,1.5) circle (3pt); 
	\end{tikzpicture}
\end{array}
$$
\end{footnotesize}
\caption{The simple permutations in $\Av(3124, 4312)$ lie in the union of the two geometric grid classes shown on the left; the intersection of these two classes is the geometric grid class shown on the right.}
\label{fig-jay-class}
\end{figure}

Theorem~\ref{thm-ggc-inflations} greatly expands the applicability of the substitution decomposition, and has found numerous applications. One example is the work of Pantone~\cite{pantone:the-enumeration:}, who used this approach to enumerate the class $\Av(3124, 4312)$. Pantone showed that the simple permutations in this class lie in the union of the first and second geometric grid classes shown in Figure~\ref{fig-jay-class}. He then counted the class of interest by computing the intersection of these two geometric grid classes (shown on the right of this figure; here the dot is a single point), constructing regular languages in bijection with the simple permutations in all three geometric grid classes, and then applying the machinery of the substitution decomposition. Three simpler examples in the same spirit are described in Albert, Atkinson, and Vatter~\cite{albert:inflations-of-g:2x4}.

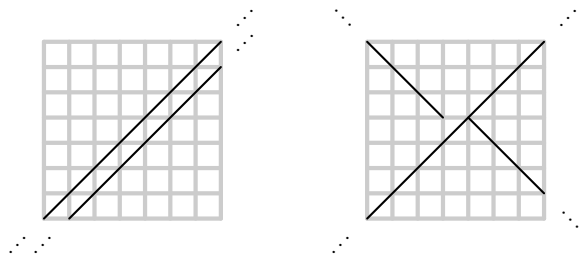
\begin{figure}
\begin{center}
\begin{footnotesize}
	\begin{tikzpicture}[scale=0.336, baseline=(current bounding box.center)]
		\foreach \i in {2,3,4,5,6,7,8,9}{
			\draw [lightgray, ultra thick, line cap=round] (2,\i)--(9,\i);
			\draw [lightgray, ultra thick, line cap=round] (\i,2)--(\i,9);
		}
		\draw [thick, line cap=round] (2,2)--(9,9);
		\draw [thick, line cap=round] (3,2)--(9,8);
		\node[rotate=45] at (2,1) {$\dots$};
		\node[rotate=45] at (1,1) {$\dots$};
		\node[rotate=45] at (10,10) {$\dots$};
		\node[rotate=45] at (10,9) {$\dots$};
	\end{tikzpicture}
\quad\quad
	\begin{tikzpicture}[scale=0.336, baseline=(current bounding box.center)]
		\foreach \i in {2,3,4,5,6,7,8,9}{
			\draw [lightgray, ultra thick, line cap=round] (2,\i)--(9,\i);
			\draw [lightgray, ultra thick, line cap=round] (\i,2)--(\i,9);
		}
		\draw [thick, line cap=round] (2,2)--(9,9);
		\draw [thick, line cap=round] (2,9)--(5,6);
		\draw [thick, line cap=round] (6,6)--(9,3);
		\node[rotate=45] at (1,1) {$\dots$};
		\node[rotate=135] at (1,10) {$\dots$};
		\node[rotate=135] at (10,2) {$\dots$};
		\node[rotate=45] at (10,10) {$\dots$};
	\end{tikzpicture}
\end{footnotesize}
\end{center}
\caption{Geometric descriptions of the $321$-avoiding permutations and the simple skew-merged permutations (up to symmetry).}
\label{fig-infinite-geom}
\end{figure}

Lately, though no general theory has been established, there has been some promising work done using geometric grid classes of \emph{infinite} matrices. Recall Waton's Proposition~\ref{prop-321-lines}, which shows that $\Av(321)$ is the class of permutations that can be drawn on two parallel lines. By taking these lines to have slope $1$ and adding the grid lines $x=i$ and $y=i$ for all integers $i$, we see that $\Av(321)$ is the infinite geometric grid class shown on the left of Figure~\ref{fig-infinite-geom}. This viewpoint, known as the \emph{staircase decomposition} has been used by Guillemot and Vialette~\cite{guillemot:pattern-matchin:} in their study of the complexity of the permutation containment problem, by Albert, Atkinson, Brignall, Ru\v{s}kuc, Smith, and West~\cite{albert:growth-rates-fo:} in their study of growth rates of classes of the form $\Av(321,\beta)$, and by Albert and Vatter~\cite{albert:generating-and-:} in their study of $321$-avoiding simple permutations. This latter work has since been refined by B\'ona, Homberger, Pantone, and Vatter~\cite{bona:pattern-avoidin:} to enumerate the involutions in the classes $\Av(1342)$ and $\Av(2341)$.

Albert and Vatter~\cite{albert:generating-and-:} also gave a geometric interpretation of the skew-merged permutations. It can be shown that every simple skew-merged permutation is, up to symmetry, an element of the infinite geometric grid class shown on the right of Figure~\ref{fig-infinite-geom}, and they used this fact to give a more structural derivation of the generating function for skew-merged permutations, originally due to Atkinson~\cite{atkinson:permutations-wh:}. Albert and Brignall~\cite{albert:enumerating-ind:} studied the class of permutations whose corresponding Schubert varieties are defined by inclusions. They showed that the simple permutations of this class lie in an infinite sequence of geometric grid classes that they called \emph{crenellations} (see Figure~\ref{fig-crenellation}), and used this insight to enumerate the class.

\index{crenellation}

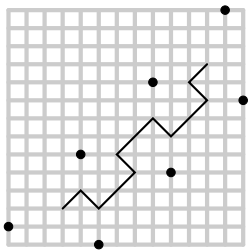
\begin{figure}
\begin{center}
	\begin{tikzpicture}[scale=0.24, baseline=(current bounding box.center)]
		\foreach \i in {0,1,2,3,4,5,6,7,8,9,10,11,12,13}{
			\draw [lightgray, ultra thick, line cap=round] (0,\i)--(13,\i);
			\draw [lightgray, ultra thick, line cap=round] (\i,0)--(\i,13);
		}
		\draw [thick, line cap=round] (3,2)--(4,3)--(5,2)--(7,4)--(6,5)--(8,7)--(9,6)--(11,8)--(10,9)--(11,10);
		\draw[fill=black] (0,1) circle (6.66666pt); 
		\draw[fill=black] (5,0) circle (6.66666pt);
		\draw[fill=black] (4,5) circle (6.66666pt);
		\draw[fill=black] (9,4) circle (6.66666pt);
		\draw[fill=black] (8,9) circle (6.66666pt);
		\draw[fill=black] (13,8) circle (6.66666pt);
		\draw[fill=black] (12,13) circle (6.66666pt);
	\end{tikzpicture}
\end{center}
\caption{A crenellation.}
\label{fig-crenellation}
\end{figure}

\subsection{Generalized grid classes}
\label{subsec-gen-grid}

\index{generalized grid class}

In a \emph{generalized grid class} the cells are allowed to contain nonmonotone permutations. Formally, let $\mathcal{M}$ be a $t\times u$ matrix of permutation classes. An $\mathcal{M}$-gridding of the permutation $\pi$ of length $n$ is a choice of column divisions $1=c_1\le\cdots\le c_{t+1}=n+1$ and row divisions $1=r_1\le\cdots\le r_{u+1}=n+1$ such that for all $i$ and $j$, $\pi([c_i,c_{i+1})\times [r_j,r_{j+1}))$ lies in the class $\mathcal{M}_{i,j}$.

The class of all permutations with $\mathcal{M}$-griddings is the grid class of $\mathcal{M}$, denoted $\Grid(\mathcal{M})$. In the context of monotone grid classes, we defined the class $\C$ to be monotonically griddable if $\C\subseteq\Grid(M)$ for some finite $\zpm$ matrix $M$. Now we define the class $\C$ to be \emph{${\G}$-griddable} if $\C\subseteq\Grid(\mathcal{M})$ for some finite matrix $\mathcal{M}$ whose entries are all subclasses of $\G$ (for the purposes of this definition, we may take them all to be equal to $\G$). From this perspective, a class is monotonically griddable if and only if it is $\left(\Av(21)\cup\Av(12)\right)$-griddable (though this fact takes a bit of thought).

As Theorem~\ref{thm-mono-griddable} characterizes monotonically griddable classes, our next result characterizes generalized grid classes.

\begin{theorem}[Vatter~\cite{vatter:small-permutati:}]
\label{thm-gridding-characterization}
The permutation class $\C$ is $\G$-griddable if and only if it does not contain arbitrarily long sums or skew sums of basis elements of $\G$, that is, if there exists a constant $m$ so that $\C$ does not contain $\beta_1\oplus\cdots\oplus\beta_m$ or $\beta_1\ominus\cdots\ominus\beta_m$ for any basis elements $\beta_1,\dots,\beta_m$ of $\G$.
\end{theorem}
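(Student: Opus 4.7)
The plan is to follow the strategy of the proof of Theorem~\ref{thm-mono-griddable}, replacing the special class $\Av(21)\cup\Av(12)$ by the general class $\G$, and replacing the patterns $21$ and $12$ by arbitrary basis elements of $\G$. First I would establish the analog of Proposition~\ref{prop-slice-nonmonotone-rects}: the class $\C$ is $\G$-griddable if and only if there is a constant $\ell$ such that for every $\pi\in\C$, the axes-parallel rectangles $R$ of $\pi$ with $\pi(R)\notin\G$ can be sliced by a collection of $\ell$ horizontal or vertical lines. The proof is identical in spirit to the monotone case: any such slicing defines an $\mathcal{M}$-gridding whose cells all lie in $\G$, and conversely any $\mathcal{M}$-gridding slices the non-$\G$ rectangles by its $t+u-2$ internal lines.

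The easy (forward) direction then runs by a pigeonhole argument. Suppose $\C\subseteq\Grid(\mathcal{M})$ for a $t\times u$ matrix $\mathcal{M}$ whose cells are subclasses of $\G$, and let $\sigma=\beta_1\oplus\cdots\oplus\beta_m\in\C$ where each $\beta_i$ is a basis element of $\G$. Fix an $\mathcal{M}$-gridding of $\sigma$. The position-value rectangles occupied by the $\beta_i$ are pairwise disjoint by the sum structure, so each of the $t+u-2$ internal gridlines intersects at most one such rectangle. On the other hand, each $\beta_i$ cannot fit in a single cell (since $\beta_i\notin\G$) and so must be cut by at least one internal gridline. Therefore $m\leq t+u-2$, and the skew sum case follows by symmetry.

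For the harder (backward) direction, the plan is to prove by induction on $a+b$ that there is a constant $\ell_\G(a,b)$ such that the non-$\G$ rectangles of every $\pi$ avoiding $\beta_1\oplus\cdots\oplus\beta_a$ and $\gamma_1\ominus\cdots\ominus\gamma_b$ for all basis elements $\beta_i,\gamma_j$ of $\G$ can be sliced by $\ell_\G(a,b)$ lines. In the base case $a=1$, such a $\pi$ avoids every basis element of $\G$ and therefore lies in $\G$, so there are no non-$\G$ rectangles to slice and one may take $\ell_\G(1,b)=0$; symmetrically $\ell_\G(a,1)=0$. For the inductive step with $a,b\geq 2$ and $\pi$ satisfying the bounds, either $\pi$ avoids every sum $\beta\oplus\beta'$ of two basis elements of $\G$, in which case induction with the smaller parameter pair $(2,b)$ completes the step, or $\pi$ contains such a sum. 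In the latter case, fix a copy of $\beta\oplus\beta'\leq\pi$ and draw the four lines bordering the rectangular hulls of $\beta$ and $\beta'$, partitioning the plane into nine cells with four corner regions (I), (II), (III), (IV) as in the proof of Theorem~\ref{thm-mono-griddable}. A direct extension argument shows that regions (I) and (III) each avoid sums of $a-1$ basis elements of $\G$: any such sum, combined with the opposite half $\beta'$ or $\beta$ of the fixed copy, would yield a forbidden sum of length $a$. The analogous argument shows that (II) and (IV) avoid skew sums of $b-1$ basis elements of $\G$, and induction yields the recurrence $\ell_\G(a,b)\leq 2\ell_\G(a-1,b)+2\ell_\G(a,b-1)+4$.

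The main obstacle is the skew-sum part of the induction for regions (II) and (IV). In the monotone case this step works because $21\oplus21=2143$ happens to contain a $12$ at positions $i_2,i_4$, and this internal $12$ provides the extra basis element of $\Av(12)$ needed to extend a skew chain in (II) or (IV) to a skew chain of length $b$ in $\pi$. For a general $\G$, a sum of two basis elements need not contain any third basis element serving as such a bridge, and the fix is to choose the copy of $\beta\oplus\beta'$ in $\pi$ extremally (say, with $\beta$ as far south-west and $\beta'$ as far north-east as possible, and with $\beta$ and $\beta'$ of minimum length as basis elements), so that the candidate skew extension inside (II) or (IV) must either encroach on the hull of $\beta$ or $\beta'$ and together with part of it yield a fresh basis-element term, or contradict extremality. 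Making this argument precise, together with checking that the five non-corner cells formed by the four partition lines cannot themselves harbour problematic non-$\G$ rectangles (again by extremality of the chosen copy), is the technical core of the proof.
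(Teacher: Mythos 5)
Your reformulation (the analogue of Proposition~\ref{prop-slice-nonmonotone-rects}) and your forward direction are both fine, but the backward direction has a genuine gap, and it sits exactly where you defer to ``the technical core.'' Two concrete problems. First, your induction has no valid base case when $a=2$ (or $b=2$): in that situation your first branch, ``$\pi$ avoids every sum of two basis elements,'' is just the hypothesis restated, and ``induction with the parameter pair $(2,b)$'' is circular. In the monotone proof of Theorem~\ref{thm-mono-griddable} the case $a=b=2$ is settled by an external fact --- Stankova's computation that $\Av(2143,3412)$ is exactly the class of skew-merged permutations --- and no analogue of that fact is available for a general cell class $\G$; indeed the $(2,2)$ case already amounts to showing that a family of axes-parallel rectangles with no two independent members can be sliced by a bounded number of lines, which is a nontrivial piercing theorem, not a triviality. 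Second, the ``direct extension argument'' for regions (II) and (IV) is false in general, as you yourself observe: for example with $\G=\Av(321)$ and a fixed copy of $321\oplus 321$, no subpermutation of the copy lying southeast of region (II) contains $321$, so a skew chain inside (II) need not extend to a longer forbidden skew sum in $\pi$. The proposed repair --- choosing the copy extremally --- does not obviously manufacture the missing bridge, because the obstruction is structural (there may be no suitably placed copy of any basis element anywhere in $\pi$), not an artifact of which copy of $\beta\oplus\beta'$ was selected. So the crux of the theorem remains unproved.

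The paper takes a different route precisely to avoid this. It stops at your reformulation: letting $\mathfrak{R}_\pi$ be the set of axes-parallel rectangles $R$ with $\pi(R)\notin\G$, griddability is equivalent to slicing $\mathfrak{R}_\pi$ with a bounded number of lines. It then notes that an increasing (resp.\ decreasing) sequence of $m$ independent rectangles in $\mathfrak{R}_\pi$ produces a sum (resp.\ skew sum) of $m$ basis elements of $\G$ inside $\pi$, so the hypothesis together with the Erd\H{o}s--Szekeres Theorem bounds the size of independent sets in $\mathfrak{R}_\pi$; the conclusion then follows at once from the Gy\'arf\'as--Lehel piercing theorem (Theorem~\ref{rectangles-lemma}), which says that any family of axes-parallel rectangles with no independent set of size greater than $m$ can be sliced by $f(m)$ horizontal and vertical lines. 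Your four-region induction is, in effect, an attempt to reprove that piercing theorem from scratch, and that is where it founders. To complete your argument you would either need to invoke Gy\'arf\'as--Lehel (after which the induction is unnecessary) or supply a genuine first-principles piercing argument, as is done in the original source.
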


We prove this result by appealing to a result of Gy{\'a}rf{\'a}s and Lehel from 1970, but a proof from first principles can be found in Vatter~\cite{vatter:small-permutati:}. One direction is again clear---if $\C$ contains arbitrarily long sums or skew sums of basis elements of $\G$, then it is not $\G$-griddable.

Now suppose that $\C$ does not contain arbitrarily long sums or skew sums of basis elements of $\G$ and consider the set
$$
\mathfrak{R}_\pi=\{\mbox{axes-parallel rectangles $R$}\st \pi(R)\notin\G\}.
$$
Thus for every $R\in\mathfrak{R}_\pi$, $\pi(R)$ contains a basis element of $\G$, so in any $\G$-gridding of $\pi$ every rectangle in $\mathfrak{R}_\pi$ must be sliced by a grid line. Clearly the converse is also true (if every rectangle in $\mathfrak{R}_\pi$ is sliced by a grid line, these grid lines define a $\G$-gridding of $\pi$), so $\C$ is $\G$-griddable if and only if there is a constant $\ell$ such that, for every $\pi\in\C$, the set $\mathfrak{R}_\pi$ can be sliced by $\ell$ horizontal or vertical lines. (This is the analogue of Proposition~\ref{prop-slice-nonmonotone-rects} for generalized grid classes.)

We say that two rectangles are {\it independent\/} if both their $x$- and $y$-axis projections are disjoint, and a set of rectangles is said to be independent if they are pairwise independent. An {\it increasing sequence\/} of rectangles is a sequence $R_1,\dots,R_m$ of independent rectangles such that $R_{i+1}$ lies above and to the right of $R_i$ for all $i\ge 0$. {\it Decreasing sequences\/} of rectangles are defined analogously. By our assumptions on $\C$, there is some constant $m$ such that for every $\pi\in\C$ the set $\mathfrak{R}_\pi$ contains neither an increasing nor a decreasing sequence of size $m$.

An independent set of rectangles corresponds to a permutation. Therefore, by the Erd\H{o}s-Szekeres Theorem, $\mathfrak{R}_\pi$ cannot contain an independent set of size greater than $(m-1)^2+1$. The proof of Theorem~\ref{thm-gridding-characterization} is therefore completed by the following result of Gy{\'a}rf{\'a}s and Lehel, which was later strengthened by K{\'a}rolyi and Tardos~\cite{karolyi:on-point-covers:}.

\begin{theorem}[Gy{\'a}rf{\'a}s and Lehel~\cite{gyarfas:a-helly-type-pr:}]
\label{rectangles-lemma}
There is a function $f(m)$ such that for any collection $\mathfrak{R}$ of axes-parallel rectangles in the plane that has no independent set of size greater than $m$, there exists a set of $f(m)$ horizontal and vertical lines that slice every rectangle in $\mathfrak{R}$.
\end{theorem}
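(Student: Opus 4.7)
I will prove the result by induction on $m$, the maximum size of an independent subset of $\mathfrak{R}$. The base case $m = 0$ is vacuous with $f(0) = 0$. For the inductive step I plan to produce a bounded number of horizontal and vertical lines that, together with the inductive hypothesis applied to a residual subfamily, pierce every rectangle in $\mathfrak{R}$.

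The key geometric step is to single out the rectangle $R^* = [a^*, b^*] \times [c^*, d^*] \in \mathfrak{R}$ whose right-edge $x$-coordinate $b^*$ is smallest, and to employ the three lines $V : x = b^* - \varepsilon$, $H_1 : y = c^* - \delta$, and $H_2 : y = d^* + \delta$ for suitably small positive $\varepsilon, \delta$. By minimality of $b^*$, the vertical line $V$ slices $R^*$ together with every rectangle whose $x$-projection is not strictly to the right of $b^*$; the horizontal lines $H_1$ and $H_2$ slice every rectangle whose $y$-projection extends below $c^*$ or above $d^*$. Together these three lines pierce all of $\mathfrak{R}$ except two subfamilies: the set $\mathfrak{B}$ of rectangles lying strictly to the right of $b^*$ with $y$-projection disjoint from $[c^*, d^*]$, and the set $\mathfrak{A}$ of rectangles strictly to the right of $b^*$ with $y$-projection strictly nested inside $(c^*, d^*)$.

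The subfamily $\mathfrak{B}$ surrenders immediately to the induction: each $R \in \mathfrak{B}$ is both $x$- and $y$-disjoint from $R^*$, hence independent from it, so the maximum independent set of $\mathfrak{B}$ is at most $m - 1$ (adjoining $R^*$ would otherwise exceed $m$), and the inductive hypothesis provides $f(m - 1)$ slicing lines for $\mathfrak{B}$. The subfamily $\mathfrak{A}$, however, lives in the cell $(b^*, \infty) \times (c^*, d^*)$ and need not have a smaller independent-set bound, since every $R \in \mathfrak{A}$ meets $R^*$ in its $y$-projection.

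The main obstacle is therefore to dispose of $\mathfrak{A}$ while keeping the line count bounded in $m$ alone. The plan is to iterate the extremal construction inside $\mathfrak{A}$, generating a sequence of extremal rectangles $R^{(1)} = R^*, R^{(2)}, R^{(3)}, \ldots$ with strictly rightward-marching $x$-projections and strictly nested $y$-projections, using three additional lines per iteration and producing at each step a ``$\mathfrak{B}$-type'' subfamily to which the inductive hypothesis applies. The delicate part — and, in my view, the hard part of the whole proof — is bounding the iteration count by a function of $m$ alone; I plan to accomplish this by combining the nested-chain structure of the successive $y$-projections with the perfection of interval graphs and the independent-set hypothesis, showing that any sufficiently long iteration forces, among its accumulated $\mathfrak{B}$-type pieces, an independent configuration of size exceeding $m$. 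Balancing these estimates should yield a recurrence of the form $f(m) \leq C(m) + c \cdot f(m - 1)$, closing the induction.
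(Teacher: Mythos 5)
Your decomposition step is essentially the easy part of the theorem, and it is fine up to routine boundary care (the choices of $\varepsilon,\delta$ need the family to be finite or an extra argument, and rectangles whose $y$-projection meets $[c^*,d^*]$ only at an endpoint fall into neither $\mathfrak{A}$ nor $\mathfrak{B}$); the treatment of $\mathfrak{B}$ via independence with $R^*$ is also correct. The genuine gap is exactly where you say the difficulty lies, and the mechanism you propose for closing it is not just unproven but false. Take $R_i=[2i,2i+1]\times[-1/i,1/i]$ for $i=1,\dots,N$: the $x$-projections are pairwise disjoint and the $y$-projections are nested, so no two rectangles are independent and $m=1$; yet your iteration selects each $R_i$ in turn as the extremal rectangle, every accumulated $\mathfrak{B}$-type piece is empty, and you spend on the order of $3N$ lines with $N$ unbounded, while the single line $y=0$ slices the whole family. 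So a long iteration does \emph{not} force an independent configuration of size exceeding $m$ among the $\mathfrak{B}$-pieces, and the hoped-for recurrence $f(m)\le C(m)+c\,f(m-1)$ cannot be extracted, because $C(m)$ is proportional to the iteration count, which is not a function of $m$.

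The missing idea is a way to handle arbitrarily long sequences of rectangles with disjoint $x$-projections but overlapping (e.g.\ nested) $y$-projections at a cost bounded by $m$, rather than paying per rectangle. The standard tool is Gallai's theorem for intervals: if a family of intervals has no $k+1$ pairwise disjoint members, it can be pierced by $k$ points. In particular, for any subfamily of $\mathfrak{R}$ whose $x$-projections are pairwise disjoint, the $y$-projections contain no $m+1$ pairwise disjoint intervals (that would produce $m+1$ pairwise independent rectangles), so $m$ horizontal lines handle that entire subfamily at once --- this is what disposes of the wedge-type configurations that defeat your iteration. The known proofs (Gy\'arf\'as--Lehel, and the strengthening of K\'arolyi--Tardos) are built on this kind of projection/piercing argument combined with induction, not on peeling one extremal rectangle at a time; note also that the present survey states the theorem as an external citation and does not prove it, pointing to \cite{vatter:small-permutati:} for a first-principles proof, so your induction would have to be completed along these lines to stand on its own.
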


Our next question is which permutation classes can be gridded by one of their proper subclasses. We call the class $\C$ \emph{grid irreducible} if it is not $\G$-griddable for any proper subclass $\G\subsetneq\C$. Somewhat surprisingly, this question has a concrete answer.


\begin{proposition}
\label{prop-grid-irreduce}
The class $\C$ is grid irreducible if and only if $\C=\{1\}$ or if for every $\pi\in\C$ either $\bigoplus\Sub(\pi)\subseteq\C$ or $\bigominus\Sub(\pi)\subseteq\C$.
\end{proposition}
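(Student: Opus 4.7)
My plan is to establish both directions via Theorem~\ref{thm-gridding-characterization}, supplemented by a direct argument when the auxiliary subclass is trivial. For the backward direction, the case $\C=\{1\}$ is handled directly: its only proper subclass is $\{\epsilon\}$, and no nonempty permutation can be gridded by a matrix whose cells are constrained to contain only the empty permutation, so $\C$ is not $\{\epsilon\}$-griddable. Under the alternative hypothesis, I take any proper subclass $\G\subsetneq\C$, pick $\pi\in\C\setminus\G$, and let $\sigma$ be a minimal subpermutation of $\pi$ not lying in $\G$; then $\sigma$ is a basis element of $\G$ that belongs to $\C$. The hypothesis applied to $\sigma$ gives either $\sigma^{\oplus m}\in\C$ or $\sigma^{\ominus m}\in\C$ for every $m$, so the condition of Theorem~\ref{thm-gridding-characterization} is violated no matter how large $m$ is taken, and hence $\C$ is not $\G$-griddable.

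For the forward direction I argue the contrapositive: assuming $\C\ne\{1\}$ and that some $\pi_0\in\C$ has $\bigoplus\Sub(\pi_0)\not\subseteq\C$ and $\bigominus\Sub(\pi_0)\not\subseteq\C$, I will produce a proper subclass $\G\subsetneq\C$ over which $\C$ is griddable. First I need to guarantee that such a $\pi_0$ may be taken of length at least two. If $\pi_0=1$, the failures translate to $\Av(21)\not\subseteq\C$ and $\Av(12)\not\subseteq\C$; since $\Av(21)\subseteq\bigoplus\Sub(\pi)$ and $\Av(12)\subseteq\bigominus\Sub(\pi)$ for every nonempty $\pi$, both failures in fact propagate to every nonempty element of $\C$. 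Because $\C\ne\{1\}$ and $\C\ne\{\epsilon\}$ (the latter satisfies the right-hand side of the proposition vacuously), such an element of length at least two exists in $\C$ and may be substituted for $\pi_0$.

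Now fix $\pi\in\C$ of length at least two, along with integers $k_1,k_2$ for which $\pi^{\oplus k_1},\pi^{\ominus k_2}\notin\C$, and set $\G=\C\cap\Av(\pi)$, a proper subclass of $\C$ which contains the permutation $1$. A short basis calculation identifies the basis of $\G$ as $\pi$ together with those basis elements of $\C$ that avoid $\pi$; the latter all lie outside $\C$ by definition. Consequently any sum of $m$ basis elements of $\G$ either equals $\pi^{\oplus m}$ (hence lies outside $\C$ once $m\ge k_1$, as it would otherwise contain $\pi^{\oplus k_1}$) or contains a basis element of $\C$ as a summand (hence lies outside $\C$), and the analogous analysis handles skew sums using $k_2$. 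Taking $m=\max(k_1,k_2)$ fulfills the hypothesis of Theorem~\ref{thm-gridding-characterization}, yielding the required $\G$-griddability. The subtle step is the passage to a $\pi$ of length at least two, which ensures $\G$ is nontrivial enough for Theorem~\ref{thm-gridding-characterization} to apply cleanly; this is precisely why the proposition must single out the exceptional class $\{1\}$, where no such $\pi$ is available.
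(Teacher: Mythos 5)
Your proof is correct and follows essentially the same route as the paper: both directions rest on Theorem~\ref{thm-gridding-characterization}, and the forward direction grids $\C$ by the same proper subclass $\C\cap\Av(\pi)$. The only real difference is in the handling of degenerate cases---where the paper disposes of finite classes separately and runs the $\Av(\pi)$ argument for infinite classes, you show a failure witness $\pi$ of length at least two can always be chosen (so that $1\in\C\cap\Av(\pi)$), and you make explicit the basis of $\C\cap\Av(\pi)$ and the bound $m=\max(k_1,k_2)$ that the paper leaves implicit.
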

\begin{proof}
One direction is immediate from Theorem~\ref{thm-gridding-characterization}: if $\C$ contains arbitrarily long sums or skew sums of all of its elements, then it is grid irreducible. Moreover, the only finite grid irreducible class is $\{1\}$, which takes care of that part of the other direction.

Now suppose that $\C$ is an infinite permutation class. If there is some $\pi\in\C$ such that neither $\bigoplus\Sub(\pi)$ nor $\bigominus\Sub(\pi)$ is contained in $\C$ then $\C$ is $\left(\C\cap\Av(\pi)\right)$-griddable by Theorem~\ref{thm-gridding-characterization}, and thus is not grid irreducible.
\end{proof}

As shown in the proof of this result, if neither $\bigoplus\Sub(\pi_1)$ nor $\bigominus\Sub(\pi_1)$ is contained in $\C$ then it is $\left(\C\cap\Av(\pi_1)\right)$-griddable. We may then apply this to $\C\cap\Av(\pi_1)$ to see that if neither $\bigoplus\Sub(\pi_2)$ nor $\bigominus\Sub(\pi_2)$ is contained in $\C\cap\Av(\pi_1)$, it is $\left(\C\cap\Av(\pi_1,\pi_2)\right)$-griddable. Continuing in this manner, we can construct a descending chain of classes
$$
\begin{array}{ccccccc}
\C
&\supsetneq&
\C\cap\Av(\pi_1)
&\supsetneq&
\C\cap\Av(\pi_1,\pi_2)
&\supsetneq&
\cdots
\\
\verteq&&\verteq&&\verteq
\\
\C^{(0)}
&\supsetneq&
\C^{(1)}
&\supsetneq&
\C^{(2)}
&\supsetneq&
\cdots
\end{array}
$$
such that $\C$ is $\C^{(i)}$-griddable for all $i$. Because there are infinite strictly descending chains of permutation classes, there is no guarantee that this process will terminate (see Vatter~\cite{vatter:small-permutati:} for such a construction). However, if $\C$ is wpo then it satisfies the descending chain condition by Proposition~\ref{prop-wpo-subclasses-dcc} and thus this process must stop. This proves the following.

\begin{proposition}
\label{prop-grid-irreduce-wpo}
If the class $\C$ is wpo then it is $\G$-griddable for the grid irreducible class
$$
\G=\{\pi\st \mbox{either } \bigoplus\Sub(\pi)\subseteq\C \mbox{ or } \bigominus\Sub(\pi)\subseteq\C\}.
$$
\end{proposition}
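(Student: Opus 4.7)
The plan is to combine the analysis from the proof of Proposition~\ref{prop-grid-irreduce} with the descending chain condition given by Proposition~\ref{prop-wpo-subclasses-dcc}. First I would verify that $\G$ is a permutation class and is grid irreducible. Downward closure of $\G$ is immediate, since $\sigma \le \pi$ forces $\Sub(\sigma) \subseteq \Sub(\pi)$, hence $\bigoplus\Sub(\sigma) \subseteq \bigoplus\Sub(\pi)$, with the analogous statement for skew sums. For grid irreducibility I would verify the criterion of Proposition~\ref{prop-grid-irreduce}: if $\pi \in \G$ with (say) $\bigoplus\Sub(\pi) \subseteq \C$, then any $\sigma \in \bigoplus\Sub(\pi)$ satisfies $\bigoplus\Sub(\sigma) \subseteq \bigoplus\Sub(\pi) \subseteq \C$, giving $\bigoplus\Sub(\pi) \subseteq \G$; the skew case is dual.

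The core step is producing a subclass of $\G$ that grids $\C$. Following the construction sketched between Propositions~\ref{prop-grid-irreduce} and \ref{prop-grid-irreduce-wpo}, I would iteratively build a descending chain $\C = \C^{(0)} \supsetneq \C^{(1)} \supsetneq \cdots$ of subclasses. At stage $i$, if $\C^{(i)}$ is not grid irreducible, I would invoke Proposition~\ref{prop-grid-irreduce} to choose $\pi_{i+1} \in \C^{(i)}$ with neither $\bigoplus\Sub(\pi_{i+1}) \subseteq \C^{(i)}$ nor $\bigominus\Sub(\pi_{i+1}) \subseteq \C^{(i)}$, and set $\C^{(i+1)} = \C^{(i)} \cap \Av(\pi_{i+1})$. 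The basis elements of $\C^{(i+1)}$ either coincide with basis elements of $\C^{(i)}$ (which lie outside $\C^{(i)}$ and so cannot appear as summands of any permutation in $\C^{(i)}$) or equal $\pi_{i+1}$ itself; the two assumptions on $\pi_{i+1}$ supply integers $k$ and $\ell$ beyond which sums and skew sums of $\pi_{i+1}$ escape $\C^{(i)}$. Theorem~\ref{thm-gridding-characterization} then yields that $\C^{(i)}$ is $\C^{(i+1)}$-griddable.

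Because $\C$ is wpo, Proposition~\ref{prop-wpo-subclasses-dcc} forbids an infinite strictly descending chain of its subclasses, so this process terminates at some grid irreducible class $\C^{(N)}$. Griddability is transitive in the obvious sense --- refining each cell of a $t\times u$ gridding by a $t'\times u'$ gridding of its contents gives a refined gridding whose dimensions are bounded solely in terms of $t$, $u$, $t'$, $u'$ --- so iterating along the chain shows $\C$ is $\C^{(N)}$-griddable. Finally, applying Proposition~\ref{prop-grid-irreduce} to $\C^{(N)}$ gives that each $\pi \in \C^{(N)}$ satisfies $\bigoplus\Sub(\pi) \subseteq \C^{(N)} \subseteq \C$ or $\bigominus\Sub(\pi) \subseteq \C^{(N)} \subseteq \C$, hence $\pi \in \G$; so $\C^{(N)} \subseteq \G$, and monotonicity of griddability upgrades $\C^{(N)}$-griddability of $\C$ to $\G$-griddability.

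The step I expect to require the most care is the single-step reduction $\C^{(i)}$ is $\C^{(i+1)}$-griddable via Theorem~\ref{thm-gridding-characterization}: it demands a careful identification of the basis of $\C^{(i)} \cap \Av(\pi_{i+1})$ (adding only $\pi_{i+1}$ to those surviving basis elements of $\C^{(i)}$, with the latter vacuously unable to appear as summands) and a translation of the hypotheses $\bigoplus\Sub(\pi_{i+1}) \not\subseteq \C^{(i)}$ and $\bigominus\Sub(\pi_{i+1}) \not\subseteq \C^{(i)}$ into explicit uniform length bounds. The remaining pieces --- closure properties of $\G$, termination of the chain via wpo, and transitivity of griddability --- are routine bookkeeping, but this inductive step is where the real content of the gridding characterization is used.
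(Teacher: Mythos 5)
Your proposal is correct and takes essentially the same route as the paper: iterate the single-step reduction of Proposition~\ref{prop-grid-irreduce} (passing from $\C^{(i)}$ to $\C^{(i)}\cap\Av(\pi_{i+1})$ via Theorem~\ref{thm-gridding-characterization}) and invoke the descending chain condition of Proposition~\ref{prop-wpo-subclasses-dcc} to terminate at a grid irreducible subclass. Your additional checks --- that $\G$ is a downward closed, grid irreducible class, that griddability composes under refinement of griddings, and that the terminal class of the chain is contained in $\G$ --- are exactly the details the paper leaves implicit, and they are carried out correctly.
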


We need a final result about generalized grid classes, which shows that atomic grid irreducible classes are very constrained.


\begin{proposition}
\label{prop-atomic-grid-irreduce}
Suppose that the class $\C$ is atomic. Then it is grid irreducible if and only if it is sum or skew closed.
\end{proposition}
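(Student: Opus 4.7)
The plan is to prove both directions, with the backward direction relying on decomposing $\C$ into two natural subclasses and using atomicity (more precisely, the joint embedding property from Theorem~\ref{atomic-tfae}) to collapse this decomposition.

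For the forward direction, suppose $\C$ is sum closed. Given any $\pi\in\C$, we have $\Sub(\pi)\subseteq\C$, and since $\C$ is closed under $\oplus$, every finite sum of elements of $\Sub(\pi)$ lies in $\C$; that is, $\bigoplus\Sub(\pi)\subseteq\C$. Thus the condition of Proposition~\ref{prop-grid-irreduce} holds, so $\C$ is grid irreducible. The skew-closed case is handled by the obvious symmetry.

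For the backward direction, assume $\C$ is atomic and grid irreducible (and, to avoid degeneracies, not the trivial class $\{1\}$). Define
\[
S=\{\pi\in\C\st \bigoplus\Sub(\pi)\subseteq\C\}
\quad\text{and}\quad
T=\{\pi\in\C\st \bigominus\Sub(\pi)\subseteq\C\}.
\]
First I would verify that $S$ and $T$ are themselves permutation classes: if $\sigma\le\pi\in S$, then $\Sub(\sigma)\subseteq\Sub(\pi)$, so every sum of elements of $\Sub(\sigma)$ is also a sum of elements of $\Sub(\pi)$ and hence lies in $\C$; thus $\sigma\in S$. The same argument works for $T$. By Proposition~\ref{prop-grid-irreduce}, grid irreducibility gives $\C=S\cup T$. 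Since $\C$ is atomic, it cannot be the union of two proper subclasses, so either $S=\C$ or $T=\C$.

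Suppose without loss of generality that $S=\C$; I claim this forces $\C$ to be sum closed. Pick any $\pi,\sigma\in\C$. By Theorem~\ref{atomic-tfae}, $\C$ satisfies the joint embedding property, so there exists $\rho\in\C$ with $\pi\le\rho$ and $\sigma\le\rho$. Then $\pi\oplus\sigma\le\rho\oplus\rho$, and $\rho\oplus\rho\in\bigoplus\Sub(\rho)\subseteq\C$ because $\rho\in S$. As $\C$ is downward closed, $\pi\oplus\sigma\in\C$, proving that $\C$ is sum closed. The case $T=\C$ similarly yields skew closure.

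The proof has no serious obstacle; the main conceptual step is recognizing that $S$ and $T$ are subclasses (the hereditary property under $\le$ is what makes this work), so that the atomicity of $\C$ can be brought to bear on the union $\C=S\cup T$. Once one class captures all of $\C$, the joint embedding property provides the single ``witness'' permutation $\rho$ needed to lift the condition $\bigoplus\Sub(\rho)\subseteq\C$ to full sum closure.
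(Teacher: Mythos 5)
Your proof is correct, and it takes a mildly different route than the paper. The paper argues the contrapositive: assuming $\C$ is atomic but neither sum nor skew closed, it picks witnesses for the two failures, uses the joint embedding property once to merge them into a single $\tau\in\C$, and then applies Theorem~\ref{thm-gridding-characterization} directly to conclude that $\C$ is $\left(\C\cap\Av(\tau)\right)$-griddable and hence not grid irreducible. You instead argue forward: Proposition~\ref{prop-grid-irreduce} lets you write $\C=S\cup T$, where $S$ and $T$ (which you rightly verify are downward closed) collect the $\pi$ with $\bigoplus\Sub(\pi)\subseteq\C$, respectively $\bigominus\Sub(\pi)\subseteq\C$; atomicity in its ``not a union of two proper subclasses'' form forces $S=\C$ or $T=\C$; and then a second appeal to atomicity, now as the joint embedding property, upgrades the pointwise condition to genuine sum (or skew) closure. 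The paper's contrapositive avoids that final upgrade and so is marginally shorter, while your version makes explicit that the grid-irreducibility criterion of Proposition~\ref{prop-grid-irreduce} decomposes $\C$ into two subclasses, which is exactly where atomicity bites; both proofs rest on the same two ingredients. Two small remarks: your last step can be streamlined, since $\pi\oplus\sigma$ already lies in $\bigoplus\Sub(\rho)$ once $\pi,\sigma\le\rho$, with no need to pass through $\rho\oplus\rho$ and downward closure; and your exclusion of $\C=\{1\}$ is sensible, as that degenerate class is atomic and grid irreducible by Proposition~\ref{prop-grid-irreduce} yet neither sum nor skew closed, so it must be read as implicitly outside the scope of the statement.
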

\begin{proof}
Theorem~\ref{thm-gridding-characterization} shows that every sum or skew closed class is grid irreducible, so it suffices to prove the reverse direction. Suppose that $\C$ is atomic, but neither sum nor skew closed. Thus there are permutations $\pi,\sigma\in\C$ such that $\bigoplus\Sub(\pi),\bigominus\Sub(\tau)\not\subseteq\C$. Because $\C$ is atomic, we can find a permutation $\tau\in\C$ containing both $\pi$ and $\sigma$. Thus $\C$ does not contain arbitrarily long sums or skew sums of $\tau$, so it is $\left(\C\cap\Av(\tau)\right)$-griddable (again by Theorem~\ref{thm-gridding-characterization}). This shows that $\C$ is not grid irreducible, completing the proof.
\end{proof}

We conclude by investigating a special case of generalized grid classes and their relation to the splittability question. We call the $2\times 1$ generalized grid class $\Grid(\D\ \E)$ the \emph{horizontal juxtaposition} of the classes $\D$ and $\E$ (the obvious symmetry of this construction is called a \emph{vertical juxtaposition}). Atkinson~\cite{atkinson:restricted-perm:} introduced juxtapositions and established the structure of their basis elements.

\index{juxtaposition}

Using our results on generalized grid classes we are able to completely characterize the classes which can be expressed as nontrivial juxtapositions. If $\C$ is not atomic then $\C\subseteq\D\cup\E$ for proper subclasses $\D,\E\subsetneq\C$, so $\C$ is also contained in the nontrivial juxtaposition $\Grid(\D\ \E)$. Suppose instead that $\C$ is atomic. If $\C$ is either sum or skew closed then it is grid irreducible by Proposition~\ref{prop-grid-irreduce} and thus not contained in a juxtaposition of proper subclasses. Otherwise, Proposition~\ref{prop-atomic-grid-irreduce} shows that $\C$ is $\D$-griddable for a proper subclass $\D\subsetneq\C$. Choose $\mathcal{M}$ to be a matrix of minimal possible size such that all of its entries are equal to $\D$ and $\C\subseteq\Grid(\mathcal{M})$. We may suppose by symmetry that $\mathcal{M}$ has at least two columns, and thus by slicing it by a vertical line we see that $\C$ is contained in the horizontal juxtaposition of two proper subclasses, establishing the following.


\begin{proposition}
\label{prop-grid-splittable}
A permutation class is contained in the juxtaposition of two proper subclasses if and only if it is neither sum nor skew closed.
\end{proposition}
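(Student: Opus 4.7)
The plan is to organize the argument around three mutually exclusive cases---non-atomic, atomic and sum/skew closed, and atomic and neither sum nor skew closed---together with one key preliminary observation that knits the dichotomy together.

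The preliminary observation is that any sum closed (or skew closed) class is automatically atomic. Given $\pi,\sigma\in\C$, the permutation $\pi\oplus\sigma$ (respectively $\pi\ominus\sigma$) lies in $\C$ and contains both $\pi$ and $\sigma$, verifying the joint embedding property and thus atomicity via Theorem~\ref{atomic-tfae}. Contrapositively, non-atomic classes are neither sum nor skew closed, which ensures that the three cases above account for all permutation classes.

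For the direction ``neither sum nor skew closed implies contained in a juxtaposition of two proper subclasses'' I will follow the discussion in the paragraph preceding the proposition. If $\C$ is not atomic, then $\C=\D\cup\E$ for proper subclasses $\D,\E\subsetneq\C$, and placing each permutation of $\D$ (respectively $\E$) with the right (respectively left) cell empty gives $\C\subseteq\Grid(\D\ \E)$. If instead $\C$ is atomic but neither sum nor skew closed, Proposition~\ref{prop-atomic-grid-irreduce} shows $\C$ is not grid irreducible, so $\C\subseteq\Grid(\mathcal{M})$ for a matrix $\mathcal{M}$ with all entries equal to some proper subclass $\D\subsetneq\C$. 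Taking $\mathcal{M}$ of minimal size and assuming by symmetry that $\mathcal{M}$ has at least two columns, I split $\mathcal{M}$ along a vertical line into $\mathcal{M}_1$ and $\mathcal{M}_2$, so that $\C$ is contained in the juxtaposition $\Grid(\Grid(\mathcal{M}_1)\ \Grid(\mathcal{M}_2))$; minimality of $\mathcal{M}$ forces $\Grid(\mathcal{M}_i)\subsetneq\C$.

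For the reverse direction, suppose toward a contradiction that $\C$ is sum closed and $\C\subseteq\Grid(\D\ \E)$ with $\D,\E\subsetneq\C$. Pick $\sigma\in\C\setminus\D$ and $\tau\in\C\setminus\E$; the element $\sigma\oplus\tau\in\C$ admits a $\Grid(\D\ \E)$-gridding with column divider at some position $c_2$. If $c_2\le|\sigma|$ then the right cell contains all $|\tau|$ entries that form $\tau$, forcing $\tau\in\E$; if $c_2>|\sigma|$ then the left cell contains all of $\sigma$, forcing $\sigma\in\D$. Either outcome contradicts the choice of $\sigma$ and $\tau$. The skew closed case is handled symmetrically with $\sigma\ominus\tau$ in place of $\sigma\oplus\tau$, and vertical juxtapositions are ruled out by the analogous arguments applied to values rather than positions.

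I expect the only delicate point to be the minimality argument in the second sub-case: if $\C\subseteq\Grid(\mathcal{M}_i)$ then $\mathcal{M}_i$ itself would be a strictly smaller matrix with entries in $\D$ gridding $\C$, contradicting the minimal choice of $\mathcal{M}$. With that detail pinned down, the rest of the proof is a clean synthesis of Propositions~\ref{prop-grid-irreduce} and~\ref{prop-atomic-grid-irreduce}.
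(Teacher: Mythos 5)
Your overall architecture is sound, and the forward direction follows the paper's own route: split on atomicity, handle the non-atomic case via $\C=\D\cup\E\subseteq\Grid(\D\ \E)$, and in the atomic case invoke Proposition~\ref{prop-atomic-grid-irreduce} and slice a minimal gridding matrix. For the converse you argue directly, gridding $\sigma\oplus\tau$ (or $\sigma\ominus\tau$) and checking where the divider can fall, instead of the paper's appeal to grid irreducibility via Theorem~\ref{thm-gridding-characterization} together with atomicity of sum/skew closed classes; your version is more elementary and self-contained, and it is correct, including the symmetric treatment of vertical juxtapositions.

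The one genuine flaw is the claim that ``minimality of $\mathcal{M}$ forces $\Grid(\mathcal{M}_i)\subsetneq\C$.'' Your own ``delicate point'' argument establishes only that $\C\not\subseteq\Grid(\mathcal{M}_i)$; it does not give $\Grid(\mathcal{M}_i)\subseteq\C$, and that containment is false in general: each $\mathcal{M}_i$ may have several cells, all filled with $\D$, and arranging two or more $\D$-patterns in separate cells can easily produce permutations outside $\C$ (for instance, if $\mathcal{M}_1$ has two cells in one column, $\Grid(\mathcal{M}_1)$ contains vertical stackings of pairs of $\D$-permutations, which need not lie in $\C$). Since the proposition requires the juxtaposed classes to be \emph{proper subclasses of $\C$}, the conclusion does not follow as you have written it. The repair is short: juxtapose $\D_i=\C\cap\Grid(\mathcal{M}_i)$ instead. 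Given $\pi\in\C$ with an $\mathcal{M}$-gridding, the entries to the left (respectively, right) of the slicing line form a permutation lying in $\Grid(\mathcal{M}_1)$ (respectively, $\Grid(\mathcal{M}_2)$) and also in $\C$, because $\C$ is closed downward; hence $\C\subseteq\Grid(\D_1\ \D_2)$, and each $\D_i$ is proper in $\C$ precisely because $\C\not\subseteq\Grid(\mathcal{M}_i)$ by minimality. With that substitution (and the trivial remark that the minimal $\mathcal{M}$ cannot be $1\times 1$, as that would give $\C\subseteq\D$), your proof is complete.
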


In particular, if a class is contained in the juxtaposition of two proper subclasses then it is splittable. Thus the splittability question is only interesting for sum or skew closed classes that are not substitution closed.

\subsection{Small permutation classes}
\label{subsec-spc}

Our goal in this subsection is to outline the proofs of Theorem~\ref{thm-spc-gr}, which specifies all possible growth rates of small permutations classes, and \ref{thm-spc-strong-rat}, which states that all small permutation classes are strongly rational, while glossing over some of the more technical details. Adopting a perspective that is slightly historically backward, we prove Theorem~\ref{thm-spc-strong-rat} first, by showing that every small permutation class is the inflation of a geometric grid class by a strongly rational class, and is thereby strongly rational itself. We then show that if $\C$ is a small permutation class, its growth rate is either equal to $0$, $1$, or $2$, or is equal to the growth rate of the largest sum or skew closed class contained in it (thereby establishing that these classes have proper growth rates). The actual list of possible growth rates provided in Theorem~\ref{thm-spc-gr} can then be produced by characterizing all possible enumerations of sum indecomposable permutations in classes with growth rates less than $\kappa$, though we do not go into that rather lengthy undertaking here.

For the moment, let $\gamma$ be an arbitrary real number and suppose we would like to grid all permutation classes of growth rate less than $\gamma$.  That is, we would like to find a single \emph{cell class} $\G$ such that every permutation of (upper) growth rate less than $\gamma$ is $\G$-griddable.

To this end, define
$$
\G_\gamma=\{\pi\st \mbox{either $\gr\left(\bigoplus\Sub(\pi)\right)<\gamma$ or $\gr\left(\bigominus\Sub(\pi)\right)<\gamma$}\}.
$$
If $\pi\in\G_\gamma$, then at least one of $\bigoplus\Sub(\pi)$ or $\bigominus\Sub(\pi)$ has growth rate less than $\gamma$, so in order to grid all permutation classes of growth rate less than $\gamma$ we must have $\pi$ in our cell class. Our next result shows that this cell class can indeed be used to grid all classes of growth rate less than $\gamma$.

\begin{proposition}
\label{prop-G-gamma}
For every real number $\gamma$, if the permutation class $\C$ satisfies $\ugr(\C)<\gamma$ then it is $\G_\gamma$-griddable.
\end{proposition}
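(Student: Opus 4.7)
The plan is to argue by contradiction: assume that $\C$ is not $\G_\gamma$-griddable. Theorem~\ref{thm-gridding-characterization} then forces $\C$ to contain either arbitrarily long sums or arbitrarily long skew sums of basis elements of $\G_\gamma$; by the obvious top–bottom symmetry I need only handle the first case. The crucial starting observation is that every basis element $\beta$ of $\G_\gamma$ satisfies \emph{both} $\gr(\bigoplus\Sub(\beta))\ge\gamma$ and $\gr(\bigominus\Sub(\beta))\ge\gamma$, because $\beta\notin\G_\gamma$ requires \emph{both} disjuncts defining membership in $\G_\gamma$ to fail.

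For each such $\beta$, let $S_\beta$ denote the finite set of its nonempty sum indecomposable subpermutations and let $f_\beta(x)=\sum_{\sigma\in S_\beta}x^{|\sigma|}$, a polynomial. Since $\bigoplus\Sub(\beta)$ is sum closed with sum indecomposables exactly $S_\beta$, its generating function is $1/(1-f_\beta(x))$; using that $f_\beta$ is increasing on $[0,\infty)$ with $f_\beta(0)=0$, the condition $\gr(\bigoplus\Sub(\beta))\ge\gamma$ translates to $f_\beta(1/\gamma)\ge 1$.

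Next, for each $m$, fix basis elements $\beta_1,\dots,\beta_m$ with $\beta_1\oplus\cdots\oplus\beta_m\in\C$. Every permutation of the form $\alpha_1\oplus\cdots\oplus\alpha_m$ with $\alpha_j\in S_{\beta_j}\cup\{\emptyset\}$ is a subpermutation of $\beta_1\oplus\cdots\oplus\beta_m$, hence lies in $\C$. The generating function enumerating these \emph{tuples} by total length is $\prod_{j=1}^m(1+f_{\beta_j}(x))$, which at $x=1/\gamma$ is at least $2^m$. Several tuples may produce the same permutation, but a permutation with exactly $k$ sum indecomposable components can arise from at most $\binom{m}{k}\le\binom{m}{\lfloor m/2\rfloor}$ tuples, since such a tuple is determined by the choice of which $k$ of the $m$ slots are nonempty. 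Dividing out this overcount gives
\[
\sum_{n\ge 0}|\C_n|(1/\gamma)^n \;\ge\; \frac{\prod_{j=1}^m(1+f_{\beta_j}(1/\gamma))}{\binom{m}{\lfloor m/2\rfloor}} \;\ge\; \frac{2^m}{\binom{m}{\lfloor m/2\rfloor}} \;=\; \Theta(\sqrt{m}).
\]

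Letting $m\to\infty$ forces $\sum_n|\C_n|(1/\gamma)^n=\infty$, contradicting the hypothesis $\ugr(\C)<\gamma$: the latter yields $|\C_n|\le(\gamma')^n$ for some $\gamma'<\gamma$ and all large $n$, which makes the sum converge geometrically. The main technical obstacle in this plan is controlling the overcounting of permutations by tuples; the bound $\binom{m}{\lfloor m/2\rfloor}\sim 2^m/\sqrt{m}$ is just barely enough, leaving the $\Theta(\sqrt{m})$ surplus that produces the contradiction. (One could alternatively use Maclaurin's inequality on elementary symmetric means of $f_{\beta_1}(1/\gamma),\dots,f_{\beta_m}(1/\gamma)$ to get a cleaner $\Omega(m)$ bound, but the crude $2^m/\binom{m}{\lfloor m/2\rfloor}$ estimate already suffices.)
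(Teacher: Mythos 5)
Your proof is correct and takes essentially the same route as the paper's: the same reduction (via Theorem~\ref{thm-gridding-characterization} and symmetry) to long sums $\beta_1\oplus\cdots\oplus\beta_m$ of basis elements of $\G_\gamma$, the same key observation that each basis element satisfies $f_{\beta_i}(1/\gamma)\ge 1$, and the same contradiction obtained by forcing the generating function of $\C$ evaluated at $1/\gamma$ to exceed every bound. The only difference is the final bookkeeping: the paper counts sums $\alpha_1\oplus\cdots\oplus\alpha_k$ of \emph{nonempty} sum indecomposables over prefixes $k\le m$, so uniqueness of the sum decomposition makes tuples correspond bijectively to permutations and gives the bound $s_1+s_1s_2+\cdots+s_1\cdots s_m\ge m$ with no overcounting, whereas your version with possibly empty slots must divide by $\binom{m}{\lfloor m/2\rfloor}$ and settle for a $\Theta(\sqrt{m})$ lower bound---which still suffices.
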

\begin{proof}
Suppose to the contrary that $\C$ satisfies $\ugr(\C)<\gamma$ but is not $\G_\gamma$-griddable, and let $f$ denote the generating function of $\C$. Now fix an arbitrary integer $m$.  We seek to derive a contradiction by showing that $f(1/\gamma)>m$, which, because $m$ is arbitrary, will imply that $\ugr(\C)\ge\gamma$. By Theorem~\ref{thm-gridding-characterization} and symmetry, we may assume that there is a permutation of the form $\beta_1\oplus\cdots\oplus\beta_m$ contained in $\C$ where each $\beta_i$ is a basis element of $\G_\gamma$. If $\gr(\bigoplus\beta_i)$ were less than $\gamma$ then $\beta_i$ would lie in $\G_\gamma$, so we know that $\gr(\bigoplus\beta_i)\ge\gamma$ for every index $i$.

Let $s_i$ denote the generating function for the nonempty sum indecomposable permutations contained in (or equal to) $\beta_i$, so that the generating function for $\bigoplus\beta_i$ is given by $1/(1-s_i)$.  These generating functions have positive singularities that are less than or equal to $1/\gamma$, and because each $s_i$ is a polynomial this singularity must be a pole.  Moreover, each $s_i$ has positive coefficients, which implies that the unique positive solution to $s_i(x)=1$ occurs for $x\le1/\gamma$.  In particular, $s_i(1/\gamma)\ge 1$ for all indices $i$.

Now consider the set of permutations of the form $\alpha_1\oplus\cdots\oplus\alpha_k$ for some $k\le m$, where for each $i$, $\alpha_i$ is a nonempty sum indecomposable permutation contained in $\beta_i$.  Clearly this is a subset (but likely not a subclass) of $\C$.  Moreover, the generating function for this set of permutations is
$$
s_1+s_1s_2+\cdots+s_1\cdots s_m,
$$
which is at least $m$ when evaluated at $1/\gamma$. This implies that the generating function for $\C$ is also at least $m$ when evaluated at $1/\gamma$, completing the contradiction.
\end{proof}

Proposition~\ref{prop-G-gamma}, which was not used in the original proof of Theorem~\ref{thm-spc-gr}, gives us an easily computable membership test to determine whether $\pi\in\G_\gamma$. The small permutation classes are all $\G_{\kappa-\epsilon}$-griddable for some $\epsilon>0$ so we begin by establishing some restrictions on $\G_\kappa$.

The reader may have noticed that both infinite antichains we have encountered in this survey (in Figures~\ref{fig-inc-inc-basis} and \ref{fig-infinite-antichain}) consist of variations on a single theme. While that particular coincidence should not be taken too far (there are many more infinite antichains based on different motifs, for example, all those lying in monotone grid classes with cycles), the theme of those two antichains plays a significant role in the characterization of small permutation classes. We define the \emph{increasing oscillating sequence} as
$$
4,1,6,3,8,5,\dots,2k+2,2k-1,\dots.
$$
(This sequence, itself listed as A065164 in the OEIS~\cite{sloane:the-on-line-enc:}, also arises in the study of juggling and genomics, see Buhler, Eisenbud, Graham, and Wright~\cite{buhler:juggling-drops-:} and Pevzner~\cite{pevzner:computational-m:}, respectively.)

\index{increasing oscillating sequence}

An  {\it increasing oscillation\/} is defined to be any sum indecomposable permutation that is order isomorphic to a subsequence of the increasing oscillating sequence. It is easily seen that there are only two increasing oscillations of each length, which are inverses of each other. Finally, a \emph{decreasing oscillation} is the reverse of an increasing oscillation, and an \emph{oscillation} is either an increasing or a decreasing oscillation.

Let $\mathcal{O}$ denote the downward closure of the set of all oscillations,
$$
\mathcal{O}=\Sub(\text{increasing and decreasing oscillations of all lengths}).
$$
It can be shown that their substitution closure $\langle\mathcal{O}\rangle$ has a finite basis (Vatter~\cite[Proposition A.2]{vatter:small-permutati:}). From this, it can then be computed that for every basis element $\beta$ of $\langle\mathcal{O}\rangle$, both $\bigoplus\Sub(\beta)$ and $\bigominus\Sub(\beta)$ have growth rates greater than $2.24$. Therefore Proposition~\ref{prop-G-gamma} immediately gives us the following.

\begin{proposition}
\label{prop-G-gamma-osc}
The cell class $\G_\kappa$ is contained in $\langle\mathcal{O}\rangle$.
\end{proposition}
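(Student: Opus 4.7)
The plan is to prove the contrapositive: if $\pi \notin \langle\mathcal{O}\rangle$, then $\pi \notin \G_\kappa$, i.e., both $\gr(\bigoplus\Sub(\pi)) \ge \kappa$ and $\gr(\bigominus\Sub(\pi)) \ge \kappa$. The key ingredients are already assembled in the paragraph preceding the statement. Namely, the substitution closure $\langle\mathcal{O}\rangle$ is finitely based (Vatter, Proposition A.2), and for every basis element $\beta$ of $\langle\mathcal{O}\rangle$, both $\gr(\bigoplus\Sub(\beta))$ and $\gr(\bigominus\Sub(\beta))$ exceed $2.24$. Since $\kappa \approx 2.2056 < 2.24$, these quantitative bounds comfortably beat $\kappa$.

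First I would observe that $\langle\mathcal{O}\rangle$, as a permutation class, is downward closed. Therefore $\pi \notin \langle\mathcal{O}\rangle$ means $\pi$ contains at least one element of the (finite) basis of $\langle\mathcal{O}\rangle$; call one such basis element $\beta$. Next I would invoke the elementary monotonicity principle that if $\beta \le \pi$, then $\Sub(\beta) \subseteq \Sub(\pi)$, and therefore $\bigoplus\Sub(\beta) \subseteq \bigoplus\Sub(\pi)$ and $\bigominus\Sub(\beta) \subseteq \bigominus\Sub(\pi)$. Since both of these latter classes are sum (resp.\ skew) closed, Proposition~\ref{prop-arratia-gr} guarantees their growth rates exist, and containment of classes gives
\[
\gr\!\left(\bigoplus\Sub(\pi)\right) \ge \gr\!\left(\bigoplus\Sub(\beta)\right) > 2.24 > \kappa,
\]
and likewise for the skew sum version.

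Combining these two inequalities shows that neither disjunct in the definition of $\G_\kappa$ is satisfied by $\pi$, so $\pi \notin \G_\kappa$, completing the contrapositive. There is no real obstacle here beyond citing the two facts Vatter has flagged (the finite basis of $\langle\mathcal{O}\rangle$ and the lower bounds on growth rates of sum/skew closures of its basis elements); the proof itself is just the observation that $\G_\kappa$ is automatically closed under the relevant comparisons, forced by Proposition~\ref{prop-arratia-gr} and the inclusion $\Sub(\beta) \subseteq \Sub(\pi)$.
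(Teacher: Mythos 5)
Your proof is correct and follows the same route the paper intends: the finite basis of $\langle\mathcal{O}\rangle$ together with the bound $\gr(\bigoplus\Sub(\beta)),\gr(\bigominus\Sub(\beta))>2.24>\kappa$ for each basis element $\beta$, applied via the contrapositive and monotonicity of growth rates, is exactly how the paper deduces the proposition (there stated as following ``immediately'' from the membership criterion for $\G_\gamma$). Nothing is missing; you have simply written out the one-line argument the paper leaves implicit.
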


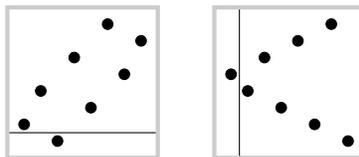
\begin{figure}
\begin{center}
\begin{footnotesize}
	\begin{tikzpicture}[scale=0.2222222222222, baseline=(current bounding box.center)]
		\draw (0,1.5)--(9,1.5);
		\draw [lightgray, ultra thick, line cap=round] (0,0) rectangle (9,9);
		\draw[fill=black] (1,2) circle (9pt);
		\draw[fill=black] (2,4) circle (9pt);
		\draw[fill=black] (3,1) circle (9pt);
		\draw[fill=black] (4,6) circle (9pt);
		\draw[fill=black] (5,3) circle (9pt);
		\draw[fill=black] (6,8) circle (9pt);
		\draw[fill=black] (7,5) circle (9pt);
		\draw[fill=black] (8,7) circle (9pt);
	\end{tikzpicture}
\quad\quad
	\begin{tikzpicture}[scale=0.2222222222222, baseline=(current bounding box.center)]
		\draw (1.5,0)--(1.5,9);
		\draw [lightgray, ultra thick, line cap=round] (0,0) rectangle (9,9);
		\draw[fill=black] (1,5) circle (9pt);
		\draw[fill=black] (2,4) circle (9pt);
		\draw[fill=black] (3,6) circle (9pt);
		\draw[fill=black] (4,3) circle (9pt);
		\draw[fill=black] (5,7) circle (9pt);
		\draw[fill=black] (6,2) circle (9pt);
		\draw[fill=black] (7,8) circle (9pt);
		\draw[fill=black] (8,1) circle (9pt);
	\end{tikzpicture}
\end{footnotesize}
\end{center}
\caption{Two permutations that show why the hypotheses of Theorem~\ref{thm-gen-grid-bdd-alts} are necessary On the left is a long increasing oscillation, while on the right is a permutation with large substitution depth, each with an extra grid line. Both situations require a large number of additional grid lines in order to grid them into independent rectangles.}
\label{fig-chop-problems}
\end{figure}

This gives us a place to start, but we need to get some control on the structure of small permutation classes. In particular, we need an analogue of Proposition~\ref{prop-chop-gridding} that will allow us to ``chop'' the griddings of these classes. The direct analogue of Proposition~\ref{prop-chop-gridding} is below, while Figure~\ref{fig-chop-problems} shows why we must impose these hypotheses.

\begin{theorem}[Vatter~{\cite[Theorem 5.4]{vatter:small-permutati:}}]
\label{thm-gen-grid-bdd-alts}
Suppose that the cell class $\G$ contains only finitely many simple permutations and has bounded substitution depth. If the class $\C$ is $\G$-griddable and the length of alternations in $\C$ is bounded, then $\C\subseteq\Grid(\mathcal{M})$ for a matrix $\mathcal{M}$ in which every nonempty entry is a subclass of $\G$ and no two nonempty entries share a row or a column.
\end{theorem}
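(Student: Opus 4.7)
The plan is to begin with an arbitrary finite matrix $\mathcal{M}_0$ witnessing the $\G$-griddability of $\C$, whose entries are all subclasses of $\G$, and to refine it by adding new row and column divisions so that no two nonempty entries share a row or column. Since $\G$ is a permutation class, any cell obtained by subdividing a cell of $\mathcal{M}_0$ is automatically a subclass of $\G$, so the task reduces to establishing a uniform bound on the number of extra grid lines required across all $\pi\in\C$.

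For the key step I would fix two nonempty cells $(i_1,j)$ and $(i_2,j)$ in the same row of $\mathcal{M}_0$ with $i_1<i_2$, and for each $\pi\in\C$ equipped with an $\mathcal{M}_0$-gridding, list the entries of $\pi$ lying in these two cells in order of value. This produces a sequence of cell-labels in $\{L,R\}$; a run of $\ell$ consecutive switches in this label sequence corresponds to an alternation of length $\ell+1$ in $\pi$ in the sense of Section~\ref{subsec-subst-decomp}. Since alternations in $\C$ are bounded by a constant $a$, there are at most $a$ label switches, and accordingly the value range of row $j$ partitions into at most $a+1$ sub-ranges, each of which contains entries from only one of the two cells. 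Inserting these as additional horizontal sub-divisions separates the two cells uniformly across all $\pi\in\C$, and the symmetric argument handles pairs of cells sharing a column.

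The main obstacle is that this local refinement is not self-contained: adding horizontal sub-divisions to separate cells in row $j$ introduces new sub-rows that interact with existing columns, potentially creating new pairs of nonempty cells sharing a column (and vice versa). To handle this I would induct on the substitution depth of $\G$. The base case (depth one) reduces essentially to the monotone situation of Proposition~\ref{prop-chop-gridding}, together with a direct analysis of the finitely many non-monotone simple permutations in $\G$. For the inductive step, the canonical substitution decomposition of each cell's permutation---whose outer skeleton is drawn from the finite set of simple permutations in $\G$, and whose inflating permutations lie in a class of strictly smaller substitution depth---lets one refine each cell according to the intervals of its substitution decomposition, reducing to a gridding whose cells live in a class of smaller substitution depth, where the inductive hypothesis applies.

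The hard part will be controlling the cumulative blowup: each inductive pass multiplies the number of required grid lines by a factor depending on $a$ and on the number of simple permutations in $\G$, and one must verify that after a number of passes equal to the substitution depth of $\G$ the process terminates with a finite grid. Figure~\ref{fig-chop-problems} illustrates exactly where each hypothesis is indispensable: an arbitrarily long oscillation in $\C$ produces an unbounded number of label switches, breaking the core step; while a cell-class of unbounded substitution depth permits nested alternation-like configurations inside a single cell, preventing the inductive step from terminating.
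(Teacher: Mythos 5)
Your opening step is sound: for two nonempty cells sharing a row, reading the entries of both cells in value order and extracting one representative per run of the $L/R$ label sequence does yield a pattern that is an alternation (split by the vertical grid line between the cells), so the bounded-alternation hypothesis on $\C$ does bound the number of value-interleavings between any two cells in a row, and dually for columns. The genuine gap is in how you propose to control the cascade that this local surgery sets off. Your inductive step asserts that each cell's content has a substitution decomposition ``whose outer skeleton is drawn from the finite set of simple permutations in $\G$,'' so that refining along the decomposition intervals costs boundedly many grid lines. That is false for the sum- and skew-decomposable case: bounded substitution depth bounds the \emph{nesting} of the decomposition tree, not the \emph{arity} of its $\oplus$ and $\ominus$ nodes. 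For instance, the class of layered permutations has substitution depth $2$ and only the simple permutations $1,12,21$, yet a single cell may contain a permutation with arbitrarily many layers (or one arbitrarily long decreasing layer); refining that cell ``according to the intervals of its substitution decomposition'' would require an unbounded number of lines, so the claimed reduction to a cell class of strictly smaller depth does not go through, and the induction has no uniform bound to pass down.

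This is not a cosmetic problem, because it is exactly where the two hypotheses on $\G$ must do their work. After you slice a row to separate two cells, each original cell is cut into pieces that all still occupy the original column block, and their column supports can genuinely interleave: if a value cut lands inside a decreasing layer, the lower piece sits to the right of the upper piece, so the fragments of a single cell occupy non-contiguous column ranges and create fresh pairs sharing rows and columns. Bounding how many such fragments arise, and showing that the row-splitting and column-splitting phases terminate with a bound depending only on $\G$ and the alternation bound, requires re-using the bounded-alternation hypothesis to limit how many sum/skew components of a cell can be cut or interleaved with other cells, together with the finiteness of simples and the depth bound to control the internal shape of each component. Your proposal names the right ingredients and correctly reads Figure~\ref{fig-chop-problems} as showing the hypotheses are necessary, but the quantitative bookkeeping that makes the refinement process terminate --- the actual content of the theorem --- is missing, and the induction as formulated would fail at its first sum-decomposable cell.
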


Clearly Theorem~\ref{thm-gen-grid-bdd-alts} is not enough for our purposes, since small permutation classes may contain arbitrarily long alternations. Still, the examples of Figure~\ref{fig-chop-problems} demonstrate obstructions to chopping any sort of gridding, and the actual chopping result used for small permutation classes (Theorem~5.4 of \cite{vatter:small-permutati:}) has precisely the same conditions on $\G$. Thus we need to verify that our cell classes $\G_{\kappa-\epsilon}$ have finitely many simple permutations and bounded substitution depth.

We tackle substitution depth first. It can be shown (Proposition~4.2 of \cite{vatter:small-permutati:}) that every permutation of substitution depth at least $8n$ contains a wedge alternation of length at least $n$. Furthermore, using the membership test provided by Proposition~\ref{prop-G-gamma} one can prove that for every $\gamma<1+\varphi\approx 2.62$, the cell class $\G_\gamma$ does not contain arbitrarily long wedge alternations. Thus there is some constant $d$ such that every permutation in $\G_\kappa$ has substitution depth at most $d$.

It is not difficult to establish that the growth rate of $\mathcal{O}$ is precisely $\kappa$. Moreover, using the fact that $\mathcal{O}$ is the union of two posets (corresponding to the increasing and decreasing oscillations) neither of which has an antichain containing more than three elements, it follows that for every $\epsilon>0$ we have $\G_{\kappa-\epsilon}\subseteq \langle\mathcal{O}^{k}\rangle$ for some $k$, where
$$
\mathcal{O}^{k}=\Sub(\mbox{oscillations of length at most $k$}).
$$

With these two computations handled, we can then chop the griddings of small permutation classes to obtain the following result.

\begin{theorem}[Vatter~{\cite[Theorem 5.4]{vatter:small-permutati:}}]
\label{thm-spc-gridding}
Suppose that $\ugr(\C)<\kappa-\epsilon$ for some $\epsilon>0$. Then $\C\subseteq\Grid(\mathcal{M})$ for a finite matrix $\mathcal{M}$ such that
\begin{enumerate}
\item[(1)] every entry is equal to $\G_{\kappa-\epsilon}$, the class of monotone permutations, or is empty,
\item[(2)] every nonempty entry that shares a row or column with another nonempty entry is equal to the class of monotone permutations, and
\item[(3)] no nonempty entry shares a row or column with more than one other nonempty entry.
\end{enumerate}
\end{theorem}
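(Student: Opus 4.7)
My plan is to start by applying Proposition~\ref{prop-G-gamma} to obtain an initial gridding $\C\subseteq\Grid(\mathcal{M}_0)$, where $\mathcal{M}_0$ is a finite matrix every entry of which is the cell class $\G_{\kappa-\epsilon}$. From there I would refine $\mathcal{M}_0$ in two passes: one that breaks out long alternations into monotone cells, and a bookkeeping pass that enforces the sharing restriction in condition~(3). The first pass is the substantive one and requires the two structural properties of $\G_{\kappa-\epsilon}$ collected in the lead-up to the statement.

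The first property I would use is that $\G_{\kappa-\epsilon}$ has bounded substitution depth. Since $\kappa-\epsilon<1+\varphi$, the membership test provided by Proposition~\ref{prop-G-gamma}, applied to a would-be long wedge alternation, shows that $\G_{\kappa-\epsilon}$ contains no long wedge alternations; combined with the result (Proposition~4.2 of \cite{vatter:small-permutati:}) that large substitution depth forces long wedge alternations, this gives a uniform bound $d$ on the depth of members of $\G_{\kappa-\epsilon}$. The second property is that $\G_{\kappa-\epsilon}$ has only finitely many simple permutations, which I would derive from the inclusion $\G_{\kappa-\epsilon}\subseteq\langle\mathcal{O}^k\rangle$ for some $k$, together with Proposition~\ref{prop-subst-closed} and the finiteness of $\mathcal{O}^k$. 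With these two hypotheses in hand, Theorem~\ref{thm-gen-grid-bdd-alts} is directly available to chop any subclass of $\C$ whose alternation length is bounded into a matrix whose nonempty entries are isolated from each other.

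The remaining obstruction is that $\C$ itself may admit arbitrarily long alternations. Long wedge alternations cannot arise in any $\G_{\kappa-\epsilon}$-cell by the first property above, so the only issue is parallel alternations. These, however, are benign: a long parallel alternation is just two interleaved monotone sequences of the same direction, and a single horizontal or vertical grid line inserted between the two halves of the alternation replaces the offending cell by two cells each of whose contents is now monotone. I would carry out this refinement cell by cell on $\mathcal{M}_0$: after finitely many cuts the residual (non-monotone) content of each cell has bounded alternation length, so Theorem~\ref{thm-gen-grid-bdd-alts} applies and further chops it into cells that are either monotone or lie in $\G_{\kappa-\epsilon}$ and share no row or column with any other nonempty cell. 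A final pass inserts extra grid lines to break up any monotone cell that happens to share a row with more than one other nonempty cell, or a column with more than one, yielding condition~(3).

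The main obstacle will be termination and uniform finiteness of this refinement. Because the substitution depth bound $d$ and the number of simples of $\G_{\kappa-\epsilon}$ are absolute, the possible ``shapes'' of parallel alternations that can be extracted from a single cell of $\mathcal{M}_0$ are themselves uniformly bounded; an induction on the pair (substitution depth, number of isolated pieces still to be produced) should then show that finitely many grid lines suffice, independently of $\pi\in\C$. This matches the proof of Theorem~5.4 of \cite{vatter:small-permutati:}, which carries out exactly this program and which I would follow for the detailed bookkeeping.
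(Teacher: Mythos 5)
Your preparatory steps are exactly the ones the survey takes: the initial gridding from Proposition~\ref{prop-G-gamma}, the bound on substitution depth obtained by excluding long wedge alternations from $\G_{\kappa-\epsilon}$ via Proposition~4.2 of \cite{vatter:small-permutati:}, and the finiteness of simple permutations from $\G_{\kappa-\epsilon}\subseteq\langle\mathcal{O}^{k}\rangle$. But the survey performs those two verifications precisely so that it can invoke Theorem~5.4 of \cite{vatter:small-permutati:}, whose hypotheses they are; it does not, and cannot, deduce the present statement from the bounded-alternation result Theorem~\ref{thm-gen-grid-bdd-alts} by a local cutting argument, and that is where your proposal has a genuine gap.

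Concretely, three things go wrong in your middle pass. First, you dismiss wedge alternations because they cannot occur inside a $\G_{\kappa-\epsilon}$-cell, but the hypothesis of Theorem~\ref{thm-gen-grid-bdd-alts} concerns alternations contained in members of $\C$, not in the cell class, and a small class can contain arbitrarily long wedge alternations: the horizontal juxtaposition of $\Av(21)$ and $\Av(12)$ has growth rate $2<\kappa$. Conditions (2) and (3) of the theorem exist precisely to accommodate unbounded alternations of \emph{both} kinds as pairs of monotone cells sharing a row or column. Second, inserting one grid line ``between the two halves'' of a parallel alternation does not leave two monotone cells: the offending cell generally contains entries not belonging to that alternation, different permutations in $\C$ have their alternations in different places, and nothing in your induction bounds, uniformly over $\pi\in\C$, the number of cuts needed for a fixed matrix $\mathcal{M}$. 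Third, and most fundamentally, ``the length of alternations in $\C$ is bounded'' is a property of the class itself --- which permutations it contains --- so it cannot be manufactured by refining a gridding; to use Theorem~\ref{thm-gen-grid-bdd-alts} you would have to apply it to some auxiliary class of residual cell contents and then stitch those griddings back into a single gridding of $\C$, and that stitching (showing the long alternations can be confined to isolated pairs of monotone cells while the genuinely nonmonotone material sits in isolated $\G_{\kappa-\epsilon}$ cells) is exactly the content of Theorem~5.4 of \cite{vatter:small-permutati:}, not routine bookkeeping. Ending by deferring to that theorem is legitimate --- the survey does the same --- but then the refinement you describe is not doing the work you attribute to it, and as stated it would not establish conditions (1)--(3).
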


Moreover, we have seen above that $\G_{\kappa-\epsilon}$ is a subclass of $\langle\mathcal{O}^{k}\rangle$ and contains only permutations of substitution depth at most $d$. This gives us the following.

\begin{proposition}
\label{prop-cell-class-strong-rat}
For every $\epsilon>0$, the cell class $\G_{\kappa-\epsilon}$ is strongly rational.
\end{proposition}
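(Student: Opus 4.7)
The plan is to apply the second (stronger) conclusion of Albert and Atkinson's Theorem~\ref{thm-fin-simples-alg}, which states that a permutation class with only finitely many simple permutations and bounded substitution depth is strongly rational. Thus the proof reduces to verifying these two hypotheses for $\G_{\kappa-\epsilon}$, and both are essentially already in hand from the paragraphs preceding the statement.

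For bounded substitution depth, recall that every permutation in $\G_\kappa$ has substitution depth at most $d$ for some absolute constant $d$ (this was derived from the fact that deep substitution structure forces a long wedge alternation, combined with the observation that $\G_\gamma$ for $\gamma < 1 + \varphi$ cannot contain arbitrarily long wedge alternations). Since $\G_{\kappa-\epsilon} \subseteq \G_\kappa$, the same bound $d$ applies to $\G_{\kappa-\epsilon}$.

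For finitely many simples, we use the containment $\G_{\kappa-\epsilon} \subseteq \langle \mathcal{O}^k \rangle$ for some positive integer $k$ (depending on $\epsilon$), which was derived from $\G_\kappa \subseteq \langle \mathcal{O}\rangle$ together with an antichain-width argument on $\mathcal{O}$. Because $\mathcal{O}^k$ is the downward closure of finitely many oscillations, it consists of only finitely many permutations, hence only finitely many simple ones. By the characterization of the substitution closure as the largest class sharing the same set of simple permutations, the simple permutations of $\langle \mathcal{O}^k\rangle$ coincide with those of $\mathcal{O}^k$, and so $\langle \mathcal{O}^k \rangle$ has only finitely many simples. Passing to the subclass $\G_{\kappa-\epsilon}$ can only restrict this set further.

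With both hypotheses verified, Theorem~\ref{thm-fin-simples-alg} applies to $\G_{\kappa-\epsilon}$, concluding that it is strongly rational. There is no real obstacle here; the proof is essentially bookkeeping, as the two key structural facts about $\G_{\kappa-\epsilon}$ (its containment in $\langle\mathcal{O}^k\rangle$ and the uniform bound on its substitution depth) have already been established in the discussion leading up to the proposition.
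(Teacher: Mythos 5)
Your proof is correct, but it runs through a different black box than the paper does. The paper's own proof never invokes Albert and Atkinson's Theorem~\ref{thm-fin-simples-alg}; instead it uses the inflation result for geometrically griddable classes (the second part of Theorem~\ref{thm-ggc-inflations}): setting $\tilde{\mathcal{O}}^{k}=\mathcal{O}^{k}\cup\Av(21)\cup\Av(12)$, it observes that bounded substitution depth gives the explicit containment of $\G_{\kappa-\epsilon}$ in a $d$-fold iterated inflation $\tilde{\mathcal{O}}^{k}[\tilde{\mathcal{O}}^{k}[\cdots]]$, where the monotone classes are adjoined precisely to absorb the sum and skew-sum levels of the decomposition, and then applies the inflation theorem $d$ times, noting that $\tilde{\mathcal{O}}^{k}$ is geometrically griddable because it is a finite class together with the monotone classes. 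Your route instead verifies the two hypotheses of the second part of Theorem~\ref{thm-fin-simples-alg} directly: finitely many simple permutations (via $\G_{\kappa-\epsilon}\subseteq\langle\mathcal{O}^{k}\rangle$ and the fact that a substitution closure has the same simples as the class it closes) and bounded substitution depth (inherited from $\G_\kappa$). Both arguments rest on exactly the same two structural facts established just before the proposition, and both are valid; yours is shorter given the paper's statement of Theorem~\ref{thm-fin-simples-alg}, while the paper's version stays inside the geometric-grid-class machinery that is needed anyway for Theorem~\ref{thm-spc-strong-rat} (Theorem~\ref{thm-fin-simples-alg} being the special case that Theorem~\ref{thm-ggc-inflations} generalizes) and produces a concrete iterated-inflation container for $\G_{\kappa-\epsilon}$ that is reusable later.
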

\begin{proof}
Recall that by the second part of Theorem~\ref{thm-ggc-strong-rat}, the inflation of a geometrically griddable class by a strongly rational class is itself strongly rational. Choose $d$ so that every permutation in $\G_{\kappa-\epsilon}$ has substitution depth at most $d$. Thus each of these permutations is either the inflation of a simple permutation by permutations of substitution depth at most $d-1$ or the sum or skew sum of such permutations. Thus if we define
$$
\tilde{\mathcal{O}}^{k}=\mathcal{O}^{k}\cup\Av(21)\cup\Av(12),
$$
we see that
$$
\G_{\kappa-\epsilon}
\subseteq
\underbrace{\tilde{\mathcal{O}}^{k}[\tilde{\mathcal{O}}^{k}[\cdots]]}_{\mbox{$d$ copies of $\tilde{\mathcal{O}}^{k}$}}.
$$
Because $\tilde{\mathcal{O}}^{k}$ is the union of a finite class with the class of all monotone permutations, it is geometrically griddable, and thus the strong rationality of $\G_{\kappa-\epsilon}$ follows by iteratively applying the second part of Theorem~\ref{thm-ggc-strong-rat}.
\end{proof}

It is possible to show the stronger result that $\G_\kappa$ is strongly rational, but this requires more work, and Proposition~\ref{prop-cell-class-strong-rat} is enough for our purposes. The first of these purposes is to prove Theorem~\ref{thm-spc-strong-rat}, which states that small permutation classes are strongly rational. By Theorem~\ref{thm-spc-gridding}, every small permutation class is contained in $\Grid(M)[\G_{\kappa-\epsilon}]$ for a matrix $M$ in which no nonzero entry shares a row or column with more than one other nonzero entry. It follows that $M$ is a forest so $\Grid(M)=\Geom(M)$ by Proposition~\ref{prop-forests-are-geoms}. The result now follows by the second part of Theorem~\ref{thm-ggc-inflations}.

Our last goal is the characterization of growth rates of small permutation classes (Theorem~\ref{thm-spc-gr}). We observed in Proposition~\ref{prop-grid-component} that the upper growth rate of $\Grid(M)$ is equal to the greatest upper growth rate of the monotone grid class of a connected component of $M$. A similar argument holds for generalized grid classes and their subclasses. Thus given a class $\C\subseteq\Grid(\mathcal{M})$, its upper growth rate is equal to the upper growth rate of the restriction of $\C$ to a connected component of $\mathcal{M}$. We can now sketch the reduction to sum closed classes.

\begin{theorem}\label{gr-atomic-grid-irreducible}
If the permutation class $\C$ satisfies $\ugr(\C)<\kappa$ then $\gr(\C)$ exists and is equal to $0$, $1$, $2$, or the growth rate of a subclass that is either sum or skew closed.
\end{theorem}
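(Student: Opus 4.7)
The plan is to reduce the problem, through the structural results of this section, to cases where either the growth rate is one of $0$, $1$, $2$, or a sum/skew closed subclass witnesses it (and Proposition~\ref{prop-arratia-gr} provides existence).

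Since $\ugr(\C)<\kappa$, Theorem~\ref{thm-spc-strong-rat} makes $\C$ strongly rational and hence wpo. Propositions~\ref{prop-wpo-atomic-union} and~\ref{prop-wpo-atomic-gr} then let us replace $\C$ by an atomic subclass whose upper growth rate equals $\ugr(\C)$; a sum or skew closed subclass of this atomic piece is automatically a sum or skew closed subclass of the original $\C$. If the atomic class is itself sum or skew closed, Proposition~\ref{prop-arratia-gr} concludes immediately, so we may assume it is not.

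By Proposition~\ref{prop-atomic-grid-irreduce} this atomic class is not grid irreducible, so Theorem~\ref{thm-spc-gridding} embeds it in $\Grid(\mathcal{M})$ for a matrix $\mathcal{M}$ whose nonempty entries are either monotone classes or subclasses of $\G_{\kappa-\epsilon}$, with the restriction that any cell sharing a row or column with another nonempty cell is monotone and no cell has more than one neighbor. A direct generalization of Proposition~\ref{prop-grid-component} to generalized grid classes equates $\ugr(\C)$ with the maximum upper growth rate over the restrictions of $\C$ to the connected components of $\mathcal{M}$, so we pass to a single component. The components fall into three types: an isolated monotone cell; a path of $\ge 2$ monotone cells (and since $\ugr(\C)<\kappa$ while Bevan's Theorem~\ref{thm-mono-grid-gr} would force a $\ge 3$-cell monotone path to have growth rate $\varphi^2>\kappa$, such a path must have exactly two cells); or an isolated $\G_{\kappa-\epsilon}$ cell, which is the only place a non-monotone class can arise.

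For an isolated monotone cell the growth rate is $0$ or $1$. For a two-cell monotone path the enveloping grid class has growth rate exactly $2$; the restriction of $\C$ might be a proper subclass with smaller growth rate, but the Kaiser--Klazar analysis of sub-$2$ growth rates, combined with the classification of sum-indecomposable permutations in this grid class, produces a sum or skew closed subclass of the form $\bigoplus\Sub(k\cdots21)$ (up to symmetry) realizing the same growth rate. For an isolated $\G_{\kappa-\epsilon}$ cell, I would close the argument by induction on substitution depth in $\langle\mathcal{O}^k\rangle$ (using Proposition~\ref{prop-G-gamma-osc} and the bounded substitution depth of $\G_{\kappa-\epsilon}$ observed before Theorem~\ref{thm-spc-gridding}), applying the substitution decomposition to peel off the outer simple skeleton: either it is the permutation $12$ or $21$ (yielding a sum or skew closed witness via Proposition~\ref{prop-arratia-gr}), or its sum-indecomposable factors have strictly smaller substitution depth and the hypothesis applies recursively, with Theorem~\ref{thm-ggc-inflations} controlling growth rates through inflation. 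The main obstacle will be the bookkeeping in this recursion: one must check that sum or skew closed witnesses produced deep inside inflations lift unchanged to subclasses of the ambient $\C$ (which is automatic since sum and skew sum operations descend to subclasses), and that the measure of substitution depth genuinely decreases at each step, which follows from the Ramsey-type consequences of Theorem~\ref{thm-bhv-ramsey} applied inside $\G_{\kappa-\epsilon}$.
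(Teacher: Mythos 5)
Your opening moves track the paper's strategy: pass to an atomic subclass of the same upper growth rate (via wpo and Propositions~\ref{prop-wpo-subclasses-dcc}--\ref{prop-wpo-atomic-gr}), grid it using Theorem~\ref{thm-spc-gridding}, and reduce to a single connected component, which by condition (3) of that theorem is a lone cell or a pair of monotone cells (so your detour through Bevan's Theorem~\ref{thm-mono-grid-gr} to exclude longer monotone paths is unnecessary, and also a non sequitur: $\C$ being \emph{contained} in a grid class of growth rate above $\kappa$ does not contradict $\ugr(\C)<\kappa$). In the two-cell case your appeal to Kaiser--Klazar is also shaky -- $\bigoplus\Sub(k\cdots21)$ is not a subclass of a $1\times 2$ monotone grid class for $k\ge 2$ -- whereas the paper simply uses that all subclasses of such a juxtaposition have growth rate $0$, $1$, or $2$. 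These are repairable; the real gap is in the isolated $\G_{\kappa-\epsilon}$ cell.

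There your argument breaks in two ways. First, there is no progress guarantee: the restriction to a single cell is just some subclass of $\G_{\kappa-\epsilon}$, and if the atomic class already lies in $\G_{\kappa-\epsilon}$ the gridding is trivial and you are back where you started; Theorem~\ref{thm-spc-gridding} by itself never forces the class to shrink. Second, your substitution-depth recursion cannot manufacture the required witness. The step ``outer skeleton $12$ or $21$ yields a sum or skew closed witness via Proposition~\ref{prop-arratia-gr}'' is unjustified: that proposition only asserts that sum closed classes have proper growth rates; if $\C$ is not sum closed, the sums of its sum indecomposable members need not lie in $\C$, so no sum closed subclass of $\C$ attaining $\gr(\C)$ is produced -- and it is exactly the unbounded-skeleton cases $12$/$21$ (whose number of components substitution depth does not control) that can push the growth rate above that of the factor classes, so the depth induction has no valid decreasing measure there; Theorem~\ref{thm-bhv-ramsey} is about long simple permutations and says nothing about depth decreasing. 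The paper's mechanism for precisely this difficulty is the alternation you omit: from the atomic subclass, pass to the grid irreducible class built from it (Proposition~\ref{prop-grid-irreduce-wpo}), regrid, restrict to a cell, take an atomic subclass again, and so on; the descending chain condition (Proposition~\ref{prop-wpo-subclasses-dcc}, available since the cell classes are strongly rational by Proposition~\ref{prop-cell-class-strong-rat}) forces the chain to stabilize at a class that is simultaneously atomic and grid irreducible, which Proposition~\ref{prop-atomic-grid-irreduce} then identifies as sum or skew closed. Without an analogue of that termination-plus-identification step, your recursion does not close.
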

\begin{proof}
Suppose that $\ugr(\C)<\kappa$ and set $\C^0=\C$. By our previous work, we can find a cell class $\G^0\subseteq\C\cap\G_{\kappa-\epsilon}$ for some $\epsilon>0$ such that $\C$ is $\G^0$-griddable. Next we can find a matrix $\mathcal{M}^0$ with all entries equal to subclasses of $\G^0$ and satisfying the conclusions of Theorem~\ref{thm-spc-gridding} such that $\C^0\subseteq\Grid(\mathcal{M}^0)$. By our remarks above, the upper growth rate of $\C^0$ is the upper growth rate of the restriction of $\C^0$ to a connected component of $\mathcal{M}^0$. This connected component is either a single cell (in which case we get a subclass of $\G^0$) or a pair of monotone cells. It can be shown that all subclasses of a $1\times 2$ monotone grid class have growth rates $0$, $1$, or $2$, so we are done if the latter situation holds. Thus we may assume the former situation holds and choose $\C^1\subseteq\G^0$ such that $\ugr(\C^0)=\ugr(\C^1)$.

Importantly, Proposition~\ref{prop-cell-class-strong-rat} shows that $\C^1$ is strongly rational, and thus wpo. By Proposition~\ref{prop-wpo-atomic-gr}, this implies that the upper growth rate of $\C^1$ is equal to that of one of its atomic subclasses, say $\mathcal{A}^1\subseteq\C^1$. By Proposition~\ref{prop-grid-irreduce-wpo}, we may now choose a grid irreducible class $\G^1$ such that $\mathcal{A}^1$ is $\G^1$-griddable. Then we choose a matrix $\mathcal{M}^1$ with all entries equal to subclasses of $\G^1$ which satisfies the conclusions of Theorem~\ref{thm-spc-gridding} and with $\mathcal{A}^1\subseteq\Grid(\mathcal{M}^1)$. The upper growth rate of $\mathcal{A}^1$ is then equal to that of a restriction of $\mathcal{A}^1$ to a connected component of $\mathcal{M}^1$. We are done as before if this component consists of two cells, so we may assume that $\ugr(\mathcal{A}^1)=\ugr(\C^2)$ for a subclass $\C^2\subseteq\G^1$.

By repeating this process indefinitely, we either find that the upper growth rate of $\C$ is equal to $0$, $1$, or $2$, or we construct an infinite descending chain
$$
\C=\C^0\supseteq\G^0\supseteq\C^1\supseteq\mathcal{A}^1\supseteq\G^1\supseteq\C^2\supseteq\mathcal{A}^2\supseteq\G^2\supseteq\C^3\supseteq\cdots,
$$
all with identical upper growth rates. Moreover, because $\G^0$ is wpo, Proposition~\ref{prop-wpo-subclasses-dcc} shows that this chain must terminate. Thus there is some $i$ such that $\C^i=\mathcal{A}^i=\G^i$. This implies that the class $\C^i$ is both atomic and grid irreducible, and thus it is either sum or skew closed by Proposition~\ref{prop-atomic-grid-irreduce}, proving the theorem.
\end{proof}

Thus we have reduced the characterization of growth rates of small permutation classes to the characterization of growth rates of small \emph{sum closed} permutation classes, as promised. From this point Theorem~\ref{thm-spc-gr} follows after a fairly technical analysis of sum indecomposable permutations.

\bigskip
\textbf{Acknowledgements.}
This chapter has greatly benefited by the comments, corrections, and suggestions of the referee as well as those of
Michael Albert,
David Bevan,
Jonathan Bloom,
Robert Brignall,
Cheyne Homberger,
V\'{\i}t Jel{\'{\i}}nek,
and
Jay Pantone.

\bibliographystyle{acm}
\bibliography{../../../refs}

\def\cprime{$'$}
\begin{thebibliography}{100}

\bibitem{albert:on-the-length-o:}
{\sc Albert, M.~H.}
\newblock On the length of the longest subsequence avoiding an arbitrary
  pattern in a random permutation.
\newblock {\em Random Structures Algorithms 31}, 2 (2007), 227--238.

\bibitem{albert:an-introduction:}
{\sc Albert, M.~H.}
\newblock An introduction to structural methods in permutation patterns.
\newblock In {\em Permutation Patterns\/} (2010), S.~Linton, N.~Ru{\v{s}}kuc,
  and V.~Vatter, Eds., vol.~376 of {\em London Mathematical Society Lecture
  Note Series}, Cambridge University Press, pp.~153--170.

\bibitem{albert:simple-permutat:}
{\sc Albert, M.~H., and Atkinson, M.~D.}
\newblock Simple permutations and pattern restricted permutations.
\newblock {\em Discrete Math. 300}, 1-3 (2005), 1--15.

\bibitem{albert:geometric-grid-:}
{\sc Albert, M.~H., Atkinson, M.~D., Bouvel, M., Ru{\v{s}}kuc, N., and Vatter,
  V.}
\newblock Geometric grid classes of permutations.
\newblock {\em Trans. Amer. Math. Soc. 365\/} (2013), 5859--5881.

\bibitem{albert:growth-rates-fo:}
{\sc Albert, M.~H., Atkinson, M.~D., Brignall, R., Ru{\v{s}}kuc, N., Smith, R.,
  and West, J.}
\newblock Growth rates for subclasses of $\textrm{Av}(321)$.
\newblock {\em Electron. J. Combin. 17\/} (2010), Paper 141, 16 pp.

\bibitem{albert:deflatability-o:}
{\sc Albert, M.~H., Atkinson, M.~D., Homberger, C., and Pantone, J.}
\newblock Deflatability of permutation classes.
\newblock arXiv:1409.5296 [math.CO].

\bibitem{albert:the-enumeration:}
{\sc Albert, M.~H., Atkinson, M.~D., and Klazar, M.}
\newblock The enumeration of simple permutations.
\newblock {\em J. Integer Seq. 6}, 4 (2003), Article 03.4.4, 18 pp.

\bibitem{albert:subclasses-of-t:}
{\sc Albert, M.~H., Atkinson, M.~D., and Vatter, V.}
\newblock Subclasses of the separable permutations.
\newblock {\em Bull. Lond. Math. Soc. 43\/} (2011), 859--870.

\bibitem{albert:inflations-of-g:2x4}
{\sc Albert, M.~H., Atkinson, M.~D., and Vatter, V.}
\newblock Inflations of geometric grid classes of permutations: three case
  studies.
\newblock {\em Australas. J. Combin. 58}, 1 (2014), 27--47.

\bibitem{albert:permutations-so:}
{\sc Albert, M.~H., and Bousquet-M{\'e}lou, M.}
\newblock Permutations sortable by two stacks in parallel and quarter plane
  walks.
\newblock {\em European J. Combin. 43\/} (2015), 131--164.

\bibitem{albert:enumerating-ind:}
{\sc Albert, M.~H., and Brignall, R.}
\newblock Enumerating indices of schubert varieties defined by inclusions.
\newblock {\em J. Combin. Theory Ser. A 123\/} (2014), 154--168.

\bibitem{albert:large-infinite-:}
{\sc Albert, M.~H., Brignall, R., and Vatter, V.}
\newblock Large infinite antichains of permtutations.
\newblock {\em Pure Math. Appl. (PU.M.A.) 24}, 2 (2013), 47--57.

\bibitem{albert:on-the-stanley-:}
{\sc Albert, M.~H., Elder, M., Rechnitzer, A., Westcott, P., and Zabrocki, M.}
\newblock On the {S}tanley--{W}ilf limit of $4231$-avoiding permutations and a
  conjecture of {A}rratia.
\newblock {\em Adv. in Appl. Math. 36}, 2 (2006), 96--105.

\bibitem{albert:growing-at-a-pe:}
{\sc Albert, M.~H., and Linton, S.}
\newblock Growing at a perfect speed.
\newblock {\em Combin. Probab. Comput. 18\/} (2009), 301--308.

\bibitem{albert:the-insertion-e:}
{\sc Albert, M.~H., Linton, S., and Ru\v{s}kuc, N.}
\newblock The insertion encoding of permutations.
\newblock {\em Electron. J. Combin. 12}, 1 (2005), Paper 47, 31 pp.

\bibitem{albert:inflations-of-g:}
{\sc Albert, M.~H., Ru{\v{s}}kuc, N., and Vatter, V.}
\newblock Inflations of geometric grid classes of permutations.
\newblock \emph{Israel J. Math.}, to appear. arXiv:1202.1833 [math.CO].

\bibitem{albert:generating-and-:}
{\sc Albert, M.~H., and Vatter, V.}
\newblock Generating and enumerating $321$-avoiding and skew-merged simple
  permutations.
\newblock {\em Electron. J. Combin. 20}, 2 (2013), Paper 44, 11 pp.

\bibitem{alon:on-the-number-o:}
{\sc Alon, N., and Friedgut, E.}
\newblock On the number of permutations avoiding a given pattern.
\newblock {\em J. Combin. Theory Ser. A 89}, 1 (2000), 133--140.

\bibitem{arratia:on-the-stanley-:}
{\sc Arratia, R.}
\newblock On the {S}tanley--{W}ilf conjecture for the number of permutations
  avoiding a given pattern.
\newblock {\em Electron. J. Combin. 6\/} (1999), Note 1, 4 pp.

\bibitem{atkinson:permutations-wh:}
{\sc Atkinson, M.~D.}
\newblock Permutations which are the union of an increasing and a decreasing
  subsequence.
\newblock {\em Electron. J. Combin. 5\/} (1998), Paper 6, 13 pp.

\bibitem{atkinson:restricted-perm:}
{\sc Atkinson, M.~D.}
\newblock Restricted permutations.
\newblock {\em Discrete Math. 195}, 1-3 (1999), 27--38.

\bibitem{atkinson:permuting-machi:}
{\sc Atkinson, M.~D.}
\newblock Permuting machines and permutation patterns.
\newblock In {\em Permutation Patterns\/} (2010), S.~Linton, N.~Ru{\v{s}}kuc,
  and V.~Vatter, Eds., vol.~376 of {\em London Mathematical Society Lecture
  Note Series}, Cambridge University Press, pp.~67--88.

\bibitem{atkinson:partially-well-:}
{\sc Atkinson, M.~D., Murphy, M.~M., and Ru{\v{s}}kuc, N.}
\newblock Partially well-ordered closed sets of permutations.
\newblock {\em Order 19}, 2 (2002), 101--113.

\bibitem{atkinson:sorting-with-tw:}
{\sc Atkinson, M.~D., Murphy, M.~M., and Ru{\v{s}}kuc, N.}
\newblock Sorting with two ordered stacks in series.
\newblock {\em Theoret. Comput. Sci. 289}, 1 (2002), 205--223.

\bibitem{atkinson:pattern-avoidan:}
{\sc Atkinson, M.~D., Murphy, M.~M., and Ru{\v{s}}kuc, N.}
\newblock Pattern avoidance classes and subpermutations.
\newblock {\em Electron. J. Combin. 12}, 1 (2005), Paper 60, 18 pp.

\bibitem{atkinson:substitution-cl:}
{\sc Atkinson, M.~D., Ru{\v{s}}kuc, N., and Smith, R.}
\newblock Substitution-closed pattern classes.
\newblock {\em J. Combin. Theory Ser. A 118\/} (2011), 317--340.

\bibitem{avis:on-pop-stacks-i:}
{\sc Avis, D., and Newborn, M.}
\newblock On pop-stacks in series.
\newblock {\em Utilitas Math. 19\/} (1981), 129--140.

\bibitem{babson:generalized-per:}
{\sc Babson, E., and Steingr{\'{\i}}msson, E.}
\newblock Generalized permutation patterns and a classification of the
  {M}ahonian statistics.
\newblock {\em S\'em. Lothar. Combin. 44\/} (2000), Article B44b, 18 pp.

\bibitem{babson:the-permutation:}
{\sc Babson, E., and West, J.}
\newblock The permutations $123p\sb 4\cdots p\sb m$ and $321p\sb 4\cdots p\sb
  m$ are {W}ilf-equivalent.
\newblock {\em Graphs Combin. 16}, 4 (2000), 373--380.

\bibitem{backelin:wilf-equivalenc:}
{\sc Backelin, J., West, J., and Xin, G.}
\newblock Wilf-equivalence for singleton classes.
\newblock {\em Adv. in Appl. Math. 38}, 2 (2007), 133--148.

\bibitem{baik:on-the-distribu:}
{\sc Baik, J., Deift, P., and Johansson, K.}
\newblock On the distribution of the length of the longest increasing
  subsequence of random permutations.
\newblock {\em J. Amer. Math. Soc. 12}, 4 (1999), 1119--1178.

\bibitem{balogh:hereditary-prop:ordgraphs}
{\sc Balogh, J., Bollob{\'a}s, B., and Morris, R.}
\newblock Hereditary properties of ordered graphs.
\newblock In {\em Topics in Discrete Mathematics}, M.~Klazar,
  J.~Kratochv{\'\i}l, M.~Loebl, J.~Matou{\v{s}}ek, R.~Thomas, and P.~Valtr,
  Eds., vol.~26 of {\em Algorithms Combin.} Springer, Berlin, 2006,
  pp.~179--213.

\bibitem{bassino:an-algorithm-fo:}
{\sc Bassino, F., Bouvel, M., Pierrot, A., and Rossin, D.}
\newblock An algorithm for deciding the finiteness of the number of simple
  permutations in permutation classes.
\newblock arXiv:1307.2006 [math.CO].

\bibitem{bevan:growth-rates-of:}
{\sc Bevan, D.}
\newblock Growth rates of permutation grid classes, tours on graphs, and the
  spectral radius.
\newblock \emph{Trans. Amer. Math. Soc.}, to appear. arXiv:1302.2037 [math.CO].

\bibitem{bevan:intervals-of-pe:}
{\sc Bevan, D.}
\newblock Intervals of permutation class growth rates.
\newblock arXiv:1410.3679 [math.CO].

\bibitem{bevan:a-large-set-of-:}
{\sc Bevan, D.}
\newblock Permutations avoiding $1324$ and patterns in {{\L}}ukasiewicz paths.
\newblock arXiv:1406.2890 [math.CO].

\bibitem{bevan:growth-rates-of:geom}
{\sc Bevan, D.}
\newblock Growth rates of geometric grid classes of permutations.
\newblock {\em Electron. J. Combin. 21}, 4 (2014), Paper 4.51, 17 pp.

\bibitem{bienstock:an-extremal-pro:}
{\sc Bienstock, D., and Gy{\H{o}}ri, E.}
\newblock An extremal problem on sparse {$0$}-{$1$} matrices.
\newblock {\em SIAM J. Discrete Math. 4}, 1 (1991), 17--27.

\bibitem{bloom:a-refinement-of:}
{\sc Bloom, J.}
\newblock A refinement of {W}ilf-equivalence for patterns of length 4.
\newblock {\em J. Combin. Theory Ser. A 124\/} (2014), 166--177.

\bibitem{bloom:pattern-avoidan:}
{\sc Bloom, J., and Elizalde, S.}
\newblock Pattern avoidance in matchings and partitions.
\newblock {\em Electron. J. Combin. 20}, 2 (2013), Paper 5, 38 pp.

\bibitem{bloom:a-simple-biject:}
{\sc Bloom, J., and Saracino, D.}
\newblock A simple bijection between 231-avoiding and 312-avoiding placements.
\newblock {\em J. Combin. Math. Combin. Comput. 89\/} (2014), 23--32.

\bibitem{bloom:two-vignettes-o:}
{\sc Bloom, J., and Vatter, V.}
\newblock Two vignettes on full rook placements.
\newblock \emph{Australas. J. Combin.}, to appear. arXiv:1310.6073 [math.CO].

\bibitem{bollobas:hereditary-and-}
{\sc Bollob{\'a}s, B.}
\newblock Hereditary and monotone properties of combinatorial structures.
\newblock In {\em Surveys in Combinatorics 2007\/} (2007), A.~Hilton and
  J.~Talbot, Eds., vol.~346 of {\em London Mathematical Society Lecture Note
  Series}, Cambridge University Press, pp.~1--39.

\bibitem{bona:a-new-record-fo:}
{\sc B{\'o}na, M.}
\newblock A new record for $1324$-avoiding permutations.
\newblock \emph{European J. Math.}, to appear. arXiv:1404.4033 [math.CO].

\bibitem{bona:on-the-best-upp:}
{\sc B{\'o}na, M.}
\newblock On the best upper bound for permutations avoiding a pattern of a
  given length.
\newblock arXiv:1209.2404 [math.CO].

\bibitem{bona:exact-enumerati:}
{\sc B{\'o}na, M.}
\newblock Exact enumeration of {$1342$}-avoiding permutations: a close link
  with labeled trees and planar maps.
\newblock {\em J. Combin. Theory Ser. A 80}, 2 (1997), 257--272.

\bibitem{bona:the-number-of-p:}
{\sc B{\'o}na, M.}
\newblock The number of permutations with exactly {$r$} {$132$}-subsequences is
  {$P$}-recursive in the size!
\newblock {\em Adv. in Appl. Math. 18}, 4 (1997), 510--522.

\bibitem{bona:permutations-wi:}
{\sc B{\'o}na, M.}
\newblock Permutations with one or two {$132$}-subsequences.
\newblock {\em Discrete Math. 181}, 1-3 (1998), 267--274.

\bibitem{bona:the-solution-of:}
{\sc B{\'o}na, M.}
\newblock The solution of a conjecture of {S}tanley and {W}ilf for all layered
  patterns.
\newblock {\em J. Combin. Theory Ser. A 85}, 1 (1999), 96--104.

\bibitem{bona:combinatorics-o:}
{\sc B{\'o}na, M.}
\newblock {\em Combinatorics of Permutations}.
\newblock Discrete Mathematics and its Applications (Boca Raton). Chapman \&
  Hall/CRC, Boca Raton, FL, 2004.

\bibitem{bona:the-limit-of-a-:}
{\sc B{\'o}na, M.}
\newblock The limit of a {S}tanley--{W}ilf sequence is not always rational, and
  layered patterns beat monotone patterns.
\newblock {\em J. Combin. Theory Ser. A 110}, 2 (2005), 223--235.

\bibitem{bona:new-records-in-:}
{\sc B{\'o}na, M.}
\newblock New records in {S}tanley--{W}ilf limits.
\newblock {\em European J. Combin. 28}, 1 (2007), 75--85.

\bibitem{bona:on-three-differ:}
{\sc B{\'o}na, M.}
\newblock On three different notions of monotone subsequences.
\newblock In {\em Permutation Patterns\/} (2010), S.~Linton, N.~Ru{\v{s}}kuc,
  and V.~Vatter, Eds., vol.~376 of {\em London Mathematical Society Lecture
  Note Series}, Cambridge University Press, pp.~89--114.

\bibitem{bona:a-walk-through-:}
{\sc B{\'o}na, M.}
\newblock {\em A Walk Through Combinatorics}, third~ed.
\newblock World Scientific Publishing Co. Pte. Ltd., Hackensack, NJ, 2011.

\bibitem{bona:pattern-avoidin:}
{\sc B{\'o}na, M., Homberger, C., Pantone, J., and Vatter, V.}
\newblock Pattern-avoiding involutions: exact and asymptotic enumeration.
\newblock arXiv:1310.7003 [math.CO].

\bibitem{bose:pattern-matchin:}
{\sc Bose, P., Buss, J.~F., and Lubiw, A.}
\newblock Pattern matching for permutations.
\newblock {\em Inform. Process. Lett. 65}, 5 (1998), 277--283.

\bibitem{bousquet-melou:four-classes-of:}
{\sc Bousquet-M{\'e}lou, M.}
\newblock Four classes of pattern-avoiding permutations under one roof:
  generating trees with two labels.
\newblock {\em Electron. J. Combin. 9}, 2 (2003), Paper 19, 31 pp.

\bibitem{bousquet-melou:counting-permut:}
{\sc Bousquet-M{\'e}lou, M.}
\newblock Counting permutations with no long monotone subsequence via
  generating trees and the kernel method.
\newblock {\em J. Algebraic Combin. 33}, 4 (2011), 571--608.

\bibitem{bousquet-melou:decreasing-subs:}
{\sc Bousquet-M{\'e}lou, M., and Steingr{\'{\i}}msson, E.}
\newblock Decreasing subsequences in permutations and {W}ilf equivalence for
  involutions.
\newblock {\em J. Algebraic Combin. 22}, 4 (2005), 383--409.

\bibitem{branden:mesh-patterns-a:}
{\sc Br{\"a}nd{\'e}n, P., and Claesson, A.}
\newblock Mesh patterns and the expansion of permutation statistics as sums of
  permutation patterns.
\newblock {\em Electron. J. Combin. 18}, 2 (2011), Paper 5, 14 pp.

\bibitem{brignall:wreath-products:}
{\sc Brignall, R.}
\newblock Wreath products of permutation classes.
\newblock {\em Electron. J. Combin. 14}, 1 (2007), Paper 46, 15 pp.

\bibitem{brignall:a-survey-of-sim:}
{\sc Brignall, R.}
\newblock A survey of simple permutations.
\newblock In {\em Permutation Patterns\/} (2010), S.~Linton, N.~Ru{\v{s}}kuc,
  and V.~Vatter, Eds., vol.~376 of {\em London Mathematical Society Lecture
  Note Series}, Cambridge University Press, pp.~41--65.

\bibitem{brignall:grid-classes-an:}
{\sc Brignall, R.}
\newblock Grid classes and partial well order.
\newblock {\em J. Combin. Theory Ser. A 119\/} (2012), 99--116.

\bibitem{brignall:decomposing-sim:}
{\sc Brignall, R., Huczynska, S., and Vatter, V.}
\newblock Decomposing simple permutations, with enumerative consequences.
\newblock {\em Combinatorica 28\/} (2008), 385--400.

\bibitem{brignall:simple-permutat:}
{\sc Brignall, R., Huczynska, S., and Vatter, V.}
\newblock Simple permutations and algebraic generating functions.
\newblock {\em J. Combin. Theory Ser. A 115}, 3 (2008), 423--441.

\bibitem{brignall:simple-permutat:b}
{\sc Brignall, R., Ru\v{s}kuc, N., and Vatter, V.}
\newblock Simple permutations: decidability and unavoidable substructures.
\newblock {\em Theoret. Comput. Sci. 391}, 1--2 (2008), 150--163.

\bibitem{brignall:a-simple-proof-:}
{\sc Brignall, R., and Vatter, V.}
\newblock A simple proof of a theorem of {S}chmerl and {T}rotter for
  permutations.
\newblock \emph{J. Comb.}, to appear. arXiv:1409.4725 [math.CO].

\bibitem{buhler:juggling-drops-:}
{\sc Buhler, J., Eisenbud, D., Graham, R., and Wright, C.}
\newblock Juggling drops and descents.
\newblock {\em Amer. Math. Monthly 101}, 6 (1994), 507--519.

\bibitem{burstein:enumeration-of-:}
{\sc Burstein, A.}
\newblock {\em Enumeration of Words with Forbidden Patterns}.
\newblock PhD thesis, University of Pennsylvania, 1998.

\bibitem{burstein:two-examples-of:}
{\sc Burstein, A., and Pantone, J.}
\newblock Two examples of unbalanced {W}ilf-equivalence.
\newblock \emph{J. Comb.}, to appear. arXiv:1402.3842 [math.CO].

\bibitem{cameron:homogeneous-per:}
{\sc Cameron, P.~J.}
\newblock Homogeneous permutations.
\newblock {\em Electron. J. Combin. 9}, 2 (2002/03), Paper 2, 9 pp.

\bibitem{cartier:problemes-combi:}
{\sc Cartier, P., and Foata, D.}
\newblock {\em Probl\`emes combinatoires de commutation et r\'earrangements}.
\newblock Lecture Notes in Mathematics, No. 85. Springer-Verlag, Berlin, 1969.

\bibitem{cherlin:the-classificat:}
{\sc Cherlin, G.~L.}
\newblock The classification of countable homogeneous directed graphs and
  countable homogeneous {$n$}-tournaments.
\newblock {\em Mem. Amer. Math. Soc. 131}, 621 (1998), xiv+161.

\bibitem{cherlin:forbidden-subst:}
{\sc Cherlin, G.~L.}
\newblock Forbidden substructures and combinatorial dichotomies: {WQO} and
  universality.
\newblock {\em Discrete Math. 311}, 15 (2011), 1543--1584.

\bibitem{cibulka:on-constants-in:}
{\sc Cibulka, J.}
\newblock On constants in the {F}\"uredi--{H}ajnal and the {S}tanley--{W}ilf
  conjecture.
\newblock {\em J. Combin. Theory Ser. A 116}, 2 (2009), 290--302.

\bibitem{claesson:upper-bounds-fo:}
{\sc Claesson, A., Jel{\'{\i}}nek, V., and Steingr{\'{\i}}msson, E.}
\newblock Upper bounds for the {S}tanley--{W}ilf limit of $1324$ and other
  layered patterns.
\newblock {\em J. Combin. Theory Ser. A 119\/} (2012), 1680--1691.

\bibitem{claesson:classification-:}
{\sc Claesson, A., and Kitaev, S.}
\newblock Classification of bijections between $321$- and $132$-avoiding
  permutations.
\newblock {\em S\'em. Lothar. Combin. 60\/} (2008), Art. B60d, 30 pp.

\bibitem{conway:on-the-growth-r:}
{\sc Conway, A.~R., and Guttmann, A.~J.}
\newblock On the growth rate of $1324$-avoiding permutations.
\newblock {\em Adv. in Appl. Math. 64\/} (2015), 50--69.

\bibitem{corteel:common-interval:}
{\sc Corteel, S., Louchard, G., and Pemantle, R.}
\newblock Common intervals of permutations.
\newblock {\em Discrete Math. Theor. Comput. Sci. 8}, 1 (2006), 189--214.

\bibitem{dokos:permutation-pat:}
{\sc Dokos, T., Dwyer, T., Johnson, B.~P., Sagan, B.~E., and Selsor, K.}
\newblock Permutation patterns and statistics.
\newblock {\em Discrete Math. 312}, 18 (2012), 2760--2775.

\bibitem{elder:problems-and-co:}
{\sc Elder, M., and Vatter, V.}
\newblock Problems and conjectures presented at the {T}hird {I}nternational
  {C}onference on {P}ermutation {P}atterns, {U}niversity of {F}lorida, {M}arch
  7--11, 2005.
\newblock arXiv:math/0505504 [math.CO].

\bibitem{elizalde:the-x-class-and:}
{\sc Elizalde, S.}
\newblock The {$\mathcal{X}$}-class and almost-increasing permutations.
\newblock {\em Ann. Comb. 15\/} (2011), 51--68.

\bibitem{elizalde:the-most-and-th:}
{\sc Elizalde, S.}
\newblock The most and the least avoided consecutive patterns.
\newblock {\em Proc. Lond. Math. Soc. 106}, 5 (2013), 957--979.

\bibitem{erdos:a-combinatorial:}
{\sc Erd{\H{o}}s, P., and Szekeres, G.}
\newblock A combinatorial problem in geometry.
\newblock {\em Compos. Math. 2\/} (1935), 463--470.

\bibitem{even:queues-stacks-a}
{\sc Even, S., and Itai, A.}
\newblock Queues, stacks, and graphs.
\newblock In {\em Theory of Machines and Computations (Proc. Internat. Sympos.,
  Technion, Haifa, 1971)}. Academic Press, New York, 1971, pp.~71--86.

\bibitem{flajolet:analytic-combin:}
{\sc Flajolet, P., and Sedgewick, R.}
\newblock {\em Analytic Combinatorics}.
\newblock Cambridge University Press, Cambridge, 2009.

\bibitem{fomin:the-generalized:}
{\sc Fomin, S.}
\newblock The generalized {R}obinson--{S}chensted--{K}nuth correspondence.
\newblock {\em Zap. Nauchn. Sem. Leningrad. Otdel. Mat. Inst. Steklov. (LOMI)
  155\/} (1986), 156--175.

\bibitem{fox:stanley-wilf-li:}
{\sc Fox, J.}
\newblock {S}tanley--{W}ilf limits are typically exponential.
\newblock arXiv:1310.8378 [math.CO].

\bibitem{fraisse:on-a-decomposit:}
{\sc Fra{\"{\i}}ss{\'e}, R.}
\newblock On a decomposition of relations which generalizes the sum of ordering
  relations.
\newblock {\em Bull. Amer. Math. Soc. 59\/} (1953), 389.

\bibitem{fraisse:sur-lextension-:}
{\sc Fra{\"{\i}}ss{\'e}, R.}
\newblock Sur l'extension aux relations de quelques propri\'et\'es des ordres.
\newblock {\em Ann. Sci. Ecole Norm. Sup. (3) 71\/} (1954), 363--388.

\bibitem{furedi:the-maximum-num:}
{\sc F{\"u}redi, Z.}
\newblock The maximum number of unit distances in a convex {$n$}-gon.
\newblock {\em J. Combin. Theory Ser. A 55}, 2 (1990), 316--320.

\bibitem{furedi:davenport-schin:}
{\sc F{\"u}redi, Z., and Hajnal, P.}
\newblock Davenport--{S}chinzel theory of matrices.
\newblock {\em Discrete Math. 103}, 3 (1992), 233--251.

\bibitem{gallai:transitiv-orien:}
{\sc Gallai, T.}
\newblock Transitiv orientierbare {G}raphen.
\newblock {\em Acta Math. Acad. Sci. Hungar 18\/} (1967), 25--66.

\bibitem{gallai:a-translation-o:}
{\sc Gallai, T.}
\newblock {\em A translation of {T}. {G}allai's paper: ``{T}ransitiv
  orientierbare {G}raphen''}.
\newblock Wiley-Intersci. Ser. Discrete Math. Optim. Wiley, Chichester, 2001,
  pp.~25--66.

\bibitem{gessel:symmetric-funct:}
{\sc Gessel, I.~M.}
\newblock Symmetric functions and {$P$}-recursiveness.
\newblock {\em J. Combin. Theory Ser. A 53}, 2 (1990), 257--285.

\bibitem{goldwurm:clique-polynomi:}
{\sc Goldwurm, M., and Santini, M.}
\newblock Clique polynomials have a unique root of smallest modulus.
\newblock {\em Inform. Process. Lett. 75}, 3 (2000), 127--132.

\bibitem{guillemot:pattern-matchin:}
{\sc Guillemot, S., and Vialette, S.}
\newblock Pattern matching for $321$-avoiding permutations.
\newblock In {\em Algorithms and Computation (20th International Symposium,
  ISAAC 2009, Honolulu, Hawaii, USA, December 16-18, 2009)\/} (Berlin, 2009),
  Y.~Dong, D.-Z. Du, and O.~Ibarra, Eds., vol.~5878 of {\em Lecture Notes in
  Comput. Sci.}, Springer, pp.~1064--1073.

\bibitem{guttmann:analysis-of-ser:}
{\sc Guttmann, A.~J.}
\newblock Analysis of series expansions for non-algebraic singularities.
\newblock arXiv:1405.5327 [math-ph].

\bibitem{gyarfas:a-helly-type-pr:}
{\sc Gy{\'a}rf{\'a}s, A., and Lehel, J.}
\newblock A {H}elly-type problem in trees.
\newblock In {\em Combinatorial Theory and its Applications, {II} ({P}roc.
  {C}olloq., {B}alatonf\"ured, 1969)}. North-Holland, Amsterdam, 1970,
  pp.~571--584.

\bibitem{hart:nonlinearity-of:}
{\sc Hart, S., and Sharir, M.}
\newblock Nonlinearity of {D}avenport--{S}chinzel sequences and of generalized
  path compression schemes.
\newblock {\em Combinatorica 6}, 2 (1986), 151--177.

\bibitem{hodges:model-theory:}
{\sc Hodges, W.}
\newblock {\em Model Theory}, vol.~42 of {\em Encyclopedia of Mathematics and
  its Applications}.
\newblock Cambridge University Press, Cambridge, 1993.

\bibitem{homberger:on-the-effectiv:}
{\sc Homberger, C., and Vatter, V.}
\newblock On the effective and automatic enumeration of polynomial permutation
  classes.
\newblock arXiv:1308.4946 [math.CO].

\bibitem{huczynska:pattern-classes:}
{\sc Huczynska, S., and Ru{\v{s}}kuc, N.}
\newblock Pattern classes of permutations via bijections between linearly
  ordered sets.
\newblock {\em European J. Combin. 29}, 1 (2008), 118--139.

\bibitem{huczynska:grid-classes-an:}
{\sc Huczynska, S., and Vatter, V.}
\newblock Grid classes and the {F}ibonacci dichotomy for restricted
  permutations.
\newblock {\em Electron. J. Combin. 13\/} (2006), R54, 14 pp.

\bibitem{jelinek:splittings-and-:}
{\sc Jel{\'{\i}}nek, V., and Valtr, P.}
\newblock Splittings and {R}amsey properties of permutation classes.
\newblock {\em Adv. in Appl. Math. 63\/} (2015), 41--67.

\bibitem{johansson:using-functiona:}
{\sc Johansson, F., and Nakamura, B.}
\newblock Using functional equations to enumerate 1324-avoiding permutations.
\newblock {\em Adv. in Appl. Math. 56\/} (2014), 20--34.

\bibitem{kaiser:on-growth-rates:}
{\sc Kaiser, T., and Klazar, M.}
\newblock On growth rates of closed permutation classes.
\newblock {\em Electron. J. Combin. 9}, 2 (2003), Paper 10, 20 pp.

\bibitem{kaplansky:the-asymptotic-:}
{\sc Kaplansky, I.}
\newblock The asymptotic distribution of runs of consecutive elements.
\newblock {\em Ann. Math. Statistics 16\/} (1945), 200--203.

\bibitem{karolyi:on-point-covers:}
{\sc K{\'a}rolyi, G., and Tardos, G.}
\newblock On point covers of multiple intervals and axis-parallel rectangles.
\newblock {\em Combinatorica 16}, 2 (1996), 213--222.

\bibitem{kezdy:partitioning-pe:}
{\sc K{\'e}zdy, A.~E., Snevily, H.~S., and Wang, C.}
\newblock Partitioning permutations into increasing and decreasing
  subsequences.
\newblock {\em J. Combin. Theory Ser. A 73}, 2 (1996), 353--359.

\bibitem{kitaev:a-survey-on-par:}
{\sc Kitaev, S.}
\newblock A survey on partially ordered patterns.
\newblock In {\em Permutation Patterns\/} (2010), S.~Linton, N.~Ru{\v{s}}kuc,
  and V.~Vatter, Eds., vol.~376 of {\em London Mathematical Society Lecture
  Note Series}, Cambridge University Press, pp.~115--135.

\bibitem{kitaev:patterns-in-per:}
{\sc Kitaev, S.}
\newblock {\em Patterns in Permutations and Words}.
\newblock Monographs in Theoretical Computer Science. An EATCS Series.
  Springer, Heidelberg, 2011.

\bibitem{klazar:a-general-upper:}
{\sc Klazar, M.}
\newblock A general upper bound in extremal theory of sequences.
\newblock {\em Comment. Math. Univ. Carolin. 33}, 4 (1992), 737--746.

\bibitem{klazar:the-furedi-hajn:}
{\sc Klazar, M.}
\newblock The {F}\"uredi--{H}ajnal conjecture implies the {S}tanley--{W}ilf
  conjecture.
\newblock In {\em Formal Power Series and Algebraic Combinatorics (Moscow,
  2000)}. Springer, 2000, pp.~250--255.

\bibitem{klazar:on-the-least-ex:}
{\sc Klazar, M.}
\newblock On the least exponential growth admitting uncountably many closed
  permutation classes.
\newblock {\em Theoret. Comput. Sci. 321}, 2-3 (2004), 271--281.

\bibitem{klazar:on-growth-rates:}
{\sc Klazar, M.}
\newblock On growth rates of permutations, set partitions, ordered graphs and
  other objects.
\newblock {\em Electron. J. Combin. 15}, 1 (2008), Paper 75, 22 pp.

\bibitem{klazar:overview-of-som}
{\sc Klazar, M.}
\newblock Overview of some general results in combinatorial enumeration.
\newblock In {\em Permutation Patterns\/} (2010), S.~Linton, N.~Ru{\v{s}}kuc,
  and V.~Vatter, Eds., vol.~376 of {\em London Mathematical Society Lecture
  Note Series}, Cambridge University Press, pp.~3--40.

\bibitem{knuth:the-art-of-comp:1}
{\sc Knuth, D.~E.}
\newblock {\em The Art of Computer Programming. {V}olume 1}.
\newblock Addison-Wesley Publishing Co., Reading, Mass., 1968.

\bibitem{knuth:the-art-of-comp:3}
{\sc Knuth, D.~E.}
\newblock {\em The Art of Computer Programming. {V}olume 3}.
\newblock Addison-Wesley Publishing Co., Reading, Mass., 1973.

\bibitem{kovari:on-a-problem-of:}
{\sc K{\"o}vari, T., S{\'o}s, V.~T., and Tur{\'a}n, P.}
\newblock On a problem of {K}.~{Z}arankiewicz.
\newblock {\em Colloquium Math. 3\/} (1954), 50--57.

\bibitem{krattenthaler:permutations-wi:}
{\sc Krattenthaler, C.}
\newblock Permutations with restricted patterns and {D}yck paths.
\newblock {\em Adv. in Appl. Math. 27}, 2-3 (2001), 510--530.

\bibitem{krattenthaler:growth-diagrams:}
{\sc Krattenthaler, C.}
\newblock Growth diagrams, and increasing and decreasing chains in fillings of
  {F}errers shapes.
\newblock {\em Adv. in Appl. Math. 37}, 3 (2006), 404--431.

\bibitem{lachlan:countable-homog:}
{\sc Lachlan, A.~H.}
\newblock Countable homogeneous tournaments.
\newblock {\em Trans. Amer. Math. Soc. 284}, 2 (1984), 431--461.

\bibitem{lachlan:countable-ultra:}
{\sc Lachlan, A.~H., and Woodrow, R.~E.}
\newblock Countable ultrahomogeneous undirected graphs.
\newblock {\em Trans. Amer. Math. Soc. 262}, 1 (1980), 51--94.

\bibitem{:permutation-pat:}
{\sc Linton, S., Ru{\v{s}}kuc, N., and Vatter, V.}, Eds.
\newblock {\em Permutation Patterns}, vol.~376 of {\em London Mathematical
  Society Lecture Note Series}.
\newblock Cambridge University Press, Cambridge, 2010.

\bibitem{macmahon:combinatory-ana:}
{\sc MacMahon, P.~A.}
\newblock {\em Combinatory Analysis}.
\newblock Cambridge University Press, London, 1915/16.

\bibitem{madras:random-pattern-:}
{\sc Madras, N., and Liu, H.}
\newblock Random pattern-avoiding permutations.
\newblock In {\em Algorithmic Probability and Combinatorics}, vol.~520 of {\em
  Contemp. Math.} Amer. Math. Soc., Providence, RI, 2010, pp.~173--194.

\bibitem{mansour:counting-occurr:}
{\sc Mansour, T., and Vainshtein, A.}
\newblock Counting occurrences of $132$ in a permutation.
\newblock {\em Adv. in Appl. Math. 28}, 2 (2002), 185--195.

\bibitem{marcus:excluded-permut:}
{\sc Marcus, A., and Tardos, G.}
\newblock Excluded permutation matrices and the {S}tanley--{W}ilf conjecture.
\newblock {\em J. Combin. Theory Ser. A 107}, 1 (2004), 153--160.

\bibitem{mcnamara:on-the-topology:}
{\sc McNamara, P., and Steingr{\'{\i}}msson, E.}
\newblock On the topology of the permutation pattern poset.
\newblock arXiv:1305.5569 [math.CO].

\bibitem{miner:the-shape-of-ra:}
{\sc Miner, S., and Pak, I.}
\newblock The shape of random pattern-avoiding permutations.
\newblock {\em Adv. in Appl. Math. 55\/} (2014), 86--130.

\bibitem{mohring:substitution-de:}
{\sc M{\"o}hring, R.~H., and Radermacher, F.~J.}
\newblock Substitution decomposition for discrete structures and connections
  with combinatorial optimization.
\newblock In {\em Algebraic and Combinatorial Methods in Operations Research},
  vol.~95 of {\em North-Holland Math. Stud.} North-Holland, Amsterdam, 1984,
  pp.~257--355.

\bibitem{murphy:profile-classes:}
{\sc Murphy, M.~M., and Vatter, V.}
\newblock Profile classes and partial well-order for permutations.
\newblock {\em Electron. J. Combin. 9}, 2 (2003), Paper 17, 30 pp.

\bibitem{noonan:the-enumeration:}
{\sc Noonan, J., and Zeilberger, D.}
\newblock The enumeration of permutations with a prescribed number of
  ``forbidden'' patterns.
\newblock {\em Adv. in Appl. Math. 17}, 4 (1996), 381--407.

\bibitem{pantone:the-enumeration:}
{\sc Pantone, J.}
\newblock The enumeration of permutations avoiding $3124$ and $4312$.
\newblock arXiv:1309.0832 [math.CO].

\bibitem{pevzner:computational-m:}
{\sc Pevzner, P.~A.}
\newblock {\em Computational Molecular Biology}.
\newblock Computational Molecular Biology. MIT Press, Cambridge, MA, 2000.

\bibitem{pierrot:2-stack-pushall:}
{\sc Pierrot, A., and Rossin, D.}
\newblock $2$-stack pushall sortable permutations.
\newblock arXiv:1303.4376 [cs.DM].

\bibitem{pierrot:2-stack-sorting:}
{\sc Pierrot, A., and Rossin, D.}
\newblock $2$-stack sorting is polynomial.
\newblock arXiv:1304.2860 [math.CO].

\bibitem{pouzet:some-relational:}
{\sc Pouzet, M., and Thi{\'e}ry, N.~M.}
\newblock Some relational structures with polynomial growth and their
  associated algebras {I}: {Q}uasi-polynomiality of the profile.
\newblock {\em Electron. J. Combin. 20}, 2 (2013), Paper 1, 35 pp.

\bibitem{pratt:computing-permu:}
{\sc Pratt, V.~R.}
\newblock Computing permutations with double-ended queues, parallel stacks and
  parallel queues.
\newblock In {\em STOC '73: Proceedings of the Fifth Annual ACM Symposium on
  the Theory of Computing\/} (New York, NY, USA, 1973), ACM Press,
  pp.~268--277.

\bibitem{presutti:packing-rates-o:}
{\sc Presutti, C.~B., and Stromquist, W.}
\newblock Packing rates of measures and a conjecture for the packing density of
  2413.
\newblock In {\em Permutation Patterns}, S.~Linton, N.~Ru{\v{s}}kuc, and
  V.~Vatter, Eds., vol.~376 of {\em London Math. Soc. Lecture Note Ser.}
  Cambridge Univ. Press, Cambridge, 2010, pp.~287--316.

\bibitem{regev:asymptotic-valu:}
{\sc Regev, A.}
\newblock Asymptotic values for degrees associated with strips of {Y}oung
  diagrams.
\newblock {\em Adv. in Math. 41}, 2 (1981), 115--136.

\bibitem{reiman:uber-ein-proble:}
{\sc Reiman, I.}
\newblock \"{U}ber ein {P}roblem von {K}.~{Z}arankiewicz.
\newblock {\em Acta. Math. Acad. Sci. Hungar. 9\/} (1958), 269--273.

\bibitem{rogers:ascending-seque:}
{\sc Rogers, D.~G.}
\newblock Ascending sequences in permutations.
\newblock {\em Discrete Math. 22}, 1 (1978), 35--40.

\bibitem{rotem:on-a-correspond:}
{\sc Rotem, D.}
\newblock On a correspondence between binary trees and a certain type of
  permutation.
\newblock {\em Inform. Process. Lett. 4}, 3 (1975/76), 58--61.

\bibitem{schensted:longest-increas:}
{\sc Schensted, C.~E.}
\newblock Longest increasing and decreasing subsequences.
\newblock {\em Canad. J. Math. 13\/} (1961), 179--191.

\bibitem{schmerl:critically-inde:}
{\sc Schmerl, J.~H., and Trotter, W.~T.}
\newblock Critically indecomposable partially ordered sets, graphs, tournaments
  and other binary relational structures.
\newblock {\em Discrete Math. 113}, 1-3 (1993), 191--205.

\bibitem{simion:restricted-perm:}
{\sc Simion, R., and Schmidt, F.~W.}
\newblock Restricted permutations.
\newblock {\em European J. Combin. 6}, 4 (1985), 383--406.

\bibitem{stankova:a-new-class-of-:}
{\sc Stankova, Z., and West, J.}
\newblock A new class of {W}ilf-equivalent permutations.
\newblock {\em J. Algebraic Combin. 15}, 3 (2002), 271--290.

\bibitem{stankova:forbidden-subse:}
{\sc Stankova, Z.~E.}
\newblock Forbidden subsequences.
\newblock {\em Discrete Math. 132}, 1-3 (1994), 291--316.

\bibitem{stanley:solution-to-pro:}
{\sc Stanley, R.~P.}
\newblock Solution to problem {E}2546.
\newblock {\em Amer. Math. Monthly 83}, 10 (1976), 813--814.

\bibitem{stanley:enumerative-com:2}
{\sc Stanley, R.~P.}
\newblock {\em Enumerative combinatorics. {V}ol. 2}, vol.~62 of {\em Cambridge
  Studies in Advanced Mathematics}.
\newblock Cambridge University Press, Cambridge, 1999.

\bibitem{stanley:recent-progress:}
{\sc Stanley, R.~P.}
\newblock Recent progress in algebraic combinatorics.
\newblock {\em Bull. Amer. Math. Soc. (N.S.) 40}, 1 (2003), 55--68.

\bibitem{steingrimsson:generalized-per:}
{\sc Steingr{\'{\i}}msson, E.}
\newblock Generalized permutation patterns---a short survey.
\newblock In {\em Permutation Patterns\/} (2010), S.~Linton, N.~Ru{\v{s}}kuc,
  and V.~Vatter, Eds., vol.~376 of {\em London Mathematical Society Lecture
  Note Series}, Cambridge University Press, pp.~137--152.

\bibitem{steingrimsson:some-open-probl:}
{\sc Steingr{\'{\i}}msson, E.}
\newblock Some open problems on permutation patterns.
\newblock In {\em Surveys in Combinatorics 2013\/} (2013), S.~R. Blackburn,
  S.~Gerke, and M.~Wildon, Eds., vol.~409 of {\em London Mathematical Society
  Lecture Note Series}, Cambridge University Press, pp.~239--263.

\bibitem{tarjan:sorting-using-n:}
{\sc Tarjan, R.}
\newblock Sorting using networks of queues and stacks.
\newblock {\em J. Assoc. Comput. Mach. 19\/} (1972), 341--346.

\bibitem{sloane:the-on-line-enc:}
{\sc {The {O}n-line {E}ncyclopedia of {I}nteger {S}equences}}.
\newblock {P}ublished electronically at \texttt{http://oeis.org/}.

\bibitem{tutte:a-census-of-pla:}
{\sc Tutte, W.~T.}
\newblock A census of planar maps.
\newblock {\em Canad. J. Math. 15\/} (1963), 249--271.

\bibitem{vatter:permutation-cla}
{\sc Vatter, V.}
\newblock Permutation classes of every growth rate above $2.48188$.
\newblock {\em Mathematika 56\/} (2010), 182--192.

\bibitem{vatter:small-configura:}
{\sc Vatter, V.}
\newblock Small configurations in simple permutations.
\newblock {\em European J. Combin. 31\/} (2010), 1781--1784.

\bibitem{vatter:small-permutati:}
{\sc Vatter, V.}
\newblock Small permutation classes.
\newblock {\em Proc. Lond. Math. Soc. (3) 103\/} (2011), 879--921.

\bibitem{vatter:on-partial-well:}
{\sc Vatter, V., and Waton, S.}
\newblock On partial well-order for monotone grid classes of permutations.
\newblock {\em Order 28\/} (2011), 193--199.

\bibitem{waton:on-permutation-:}
{\sc Waton, S.}
\newblock {\em On Permutation Classes Defined by Token Passing Networks,
  Gridding Matrices and Pictures: Three Flavours of Involvement}.
\newblock PhD thesis, Univ. of St Andrews, 2007.

\bibitem{west:permutations-wi:}
{\sc West, J.}
\newblock {\em Permutations with forbidden subsequences and stack-sortable
  permutations}.
\newblock PhD thesis, M.I.T., 1990.

\bibitem{wilf:the-patterns-of:}
{\sc Wilf, H.~S.}
\newblock The patterns of permutations.
\newblock {\em Discrete Math. 257}, 2-3 (2002), 575--583.

\bibitem{wolfowitz:note-on-runs-of:}
{\sc Wolfowitz, J.}
\newblock Note on runs of consecutive elements.
\newblock {\em Ann. Math. Statistics 15\/} (1944), 97--98.

\bibitem{zarankiewicz:problem-p101:}
{\sc Zarankiewicz, K.}
\newblock Problem {P}101.
\newblock {\em Colloq. Math. 2}, 3--4 (1951), 301.

\bibitem{zeilberger:opinion-58:-i-s:}
{\sc Zeilberger, D.}
\newblock Opinion 58: `{I} saw their proof now {I}'m a believer': {T}he
  {M}arcus-{T}ardos proof of the {F}{\"u}redi--{H}ajnal and {S}tanley--{W}ilf
  conjectures.
\newblock {\url{http://www.math.rutgers.edu/~zeilberg/Opinion58.html}}, Dec.
  2003.

\end{thebibliography}

\end{document}